\numberwithin{equation}{section}
\theoremstyle{plain}
\newtheorem{thm}{Theorem}[section]
\newtheorem{dfn}[thm]{Definition}
\newtheorem{cor}[thm]{Corollary}
\newtheorem{lemma}[thm]{Lemma}
\newtheorem*{lem*}{Lemma}
\newtheorem*{thm*}{Theorem}
\newtheorem*{cor*}{Corollary}
\theoremstyle{definition}
\newtheorem{example}{Example}
\newcommand{\del}{\backslash}
\DeclareMathOperator{\cl}{cl}
\DeclareMathOperator{\sort}{sort}
\title[A characterization of positroids]{A characterization of
  positroids, with applications to amalgams and excluded minors}
\author{Joseph E.~Bonin} \address{Department of Mathematics\\ The
  George Washington University\\ Washington, D.C. 20052, USA}
\email{jbonin@gwu.edu}
\keywords{positroid, base-sortable matroid, amalgam, excluded minor}
\date{\today.}
\begin{document}

\maketitle

\begin{abstract}
  A matroid of rank $r$ on $n$ elements is a positroid if it has a
  representation by an $r$ by $n$ matrix over $\mathbb{R}$, each $r$
  by $r$ submatrix of which has nonnegative determinant.  Earlier
  characterizations of connected positroids and results about direct
  sums of positroids involve connected flats and non-crossing
  partitions.  We prove another characterization of positroids of a
  similar flavor and give some applications of the characterization.
  We show that if $M$ and $N$ are positroids and the intersection of
  their ground sets is an independent set and a set of clones in both
  $M$ and $N$, then the free amalgam of $M$ and $N$ is a positroid,
  and we prove a second result of that type.  Also, we identify
  several multi-parameter infinite families of excluded minors for the
  class of positroids.
\end{abstract}

\section{Introduction}\label{sec:intro}

A \emph{positroid} is a matroid $M$ on a set $E(M)$, say of rank $r$
and with $|E(M)|=n$, that has a representation by an $r$ by $n$ matrix
over $\mathbb{R}$, each $r$ by $r$ submatrix of which has nonnegative
determinant.  Since Blum \cite{blum} (as base-orderable matroids) and
Postnikov \cite{post} introduced positroids, many papers have
developed this topic, spurred by the important connections with other
branches of mathematics that led to the introduction of positroids as
well as the wealth of lenses through which they can be viewed (e.g.,
Grassmann necklaces, reduced plabic graphs, decorated permutations).

Positroids are often defined to have a fixed linear order on the
ground set, as is inherent in the columns of a matrix (and so $E(M)$
is often taken to be $[n]=\{1,2,\ldots,n\}$).  With our focus on
matroids per se, we find it useful to separate the matroid and the
linear order, so in this paper, a positroid is a matroid $M$ for which
there is a linear order on the ground set $E(M)$, which we call a
\emph{positroid order} for $M$, so that the elements, when put in that
order, correspond to the columns of a matrix representation of the
type described above.

Blum \cite{blum} and Ardila, Rinc\'on, and Williams \cite{ARW} treat
some of the fundamental properties of the class of positroids.  In
particular, they show that the class of positroids is closed under the
operations of minors, duals, and direct sums.  Thus, a matroid is a
positroid if and only if the restrictions to its connected components
are positroids.  This raises the issue of determining how to combine
the positroid orders on the connected components to get a positroid
order on the direct sum.  Ardila, Rinc\'on, and Williams settled this
question using the well-studied notion of a \emph{non-crossing
  partition}, that is, a partition $\{X_1,X_2,\ldots,X_k\}$ of a
linearly ordered set $E$ for which, for any two blocks $X_i$ and
$X_j$, there is a subset $A$ of $E$ such that $X_i\subseteq A$,
$X_j\subseteq E- A$, and either $A$ or $E-A$ is an interval in the
linear order.  They proved the following result \cite[Theorem
7.6]{ARW}.

\begin{thm}\label{thm:arw1}
  Let $X_1,X_2,\ldots,X_k$ be the connected components of a matroid
  $M$.  For any positroid order for $M$, the partition
  $\{X_i\,:\,i\in[k]\}$ of $E(M)$ is non-crossing.  Conversely, any
  linear order on $E(M)$ for which the induced order on each $X_i$ is
  a positroid order for $M|X_i$ and the partition
  $\{X_i\,:\,i\in[k]\}$ of $E(M)$ is non-crossing is a positroid order
  for $M$.
\end{thm}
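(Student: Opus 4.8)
The plan is to prove the two implications separately, working directly with nonnegative‑minor matrix representations in both directions.

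For necessity, suppose a positroid order $\prec$ for $M$ makes the partition $\{E(M_i)\}$ crossing; I derive a contradiction. By the standard reformulation of ``crossing'' there are distinct components $M_i,M_j$ and elements $a\prec b\prec c\prec d$ with $a,c\in E(M_i)$ and $b,d\in E(M_j)$. Restricting $M$ to $E(M_i)\cup E(M_j)$ --- which equals $M_i\oplus M_j$ and, since positroids are closed under deletion and such a deletion may be taken to preserve the positroid order, is a positroid for which the induced order is a positroid order --- we may assume $M=M_1\oplus M_2$ with $M_1,M_2$ connected. Fix a representing matrix $A$ with columns in $\prec$-order and all maximal minors nonnegative. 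Because $M$ is a direct sum, the column spaces of the two blocks are linearly independent and together span $\mathbb{R}^r$ (submodularity of the rank function), so after left‑multiplying $A$ by some $G\in GL_r(\mathbb{R})$ with $\det G>0$, which keeps every maximal minor nonnegative, we may assume the columns indexed by $E(M_t)$ are supported on a coordinate subspace and form there a matrix $A_t$ representing $M_t$. For bases $B_1$ of $M_1$ and $B_2$ of $M_2$, $B_1\cup B_2$ is a basis of $M$, and permuting the columns of the submatrix $A[B_1\cup B_2]$ into the order $B_1,B_2$ yields
\[
\det A[B_1\cup B_2]=\varepsilon(B_1,B_2)\,\det A_1[B_1]\,\det A_2[B_2],\qquad \varepsilon(B_1,B_2)=(-1)^{\mathrm{inv}(B_1,B_2)},
\]
where $\mathrm{inv}(X,Y)=\#\{(x,y)\in X\times Y:\ y\prec x\}$. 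Each $A_t$ represents $M_t$, so all of these minors are nonzero, hence strictly positive. Forming the product of the four minors obtained by independently replacing $B_1,B_2$ by bases $B_1',B_2'$, the $\det A_t$-factors appear squared, so the product of the four signs must be $+1$; a routine parity computation identifies this product with $(-1)^{\mathrm{inv}(B_1\triangle B_1',\,B_2\triangle B_2')}$. Finally, since $M_1$ is connected, $a$ and $c$ lie on a common circuit $C$; extending $C\setminus a$ to a basis $B_1$ (so $c\in B_1$ and $a\notin B_1$) and setting $B_1'=B_1-c+a$, which is a basis because $C$ is the fundamental circuit of $a$ with respect to $B_1$, gives $B_1\triangle B_1'=\{a,c\}$, and likewise there are bases $B_2,B_2'$ of $M_2$ with $B_2\triangle B_2'=\{b,d\}$. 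For these, $\mathrm{inv}(\{a,c\},\{b,d\})=1$ --- only the pair $(c,b)$ is out of order --- so the product of signs is $-1$, a contradiction.

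For sufficiency I induct on the number $k$ of components; the case $k=1$ is trivial. Suppose $\prec$ restricts to a positroid order on each $E(M_i)$ and the partition $\{E(M_i)\}$ is non-crossing. The combinatorial input is that a non-crossing partition with at least two blocks has a block that is a $\prec$-interval: a block $B$ minimizing $\max B-\min B$ (as positions) must be an interval, for otherwise some other block meets the open interval between $\min B$ and $\max B$ and, being non-crossing with $B$, lies entirely within it, with strictly smaller span. Relabel so $E(M_k)$ is a $\prec$-interval, and set $M'=M_1\oplus\cdots\oplus M_{k-1}$ on $E'=E(M)\setminus E(M_k)$ with the induced order. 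Deleting an entire block cannot create an alternating four-element pattern, so the partition of $E'$ is non-crossing, and $\prec$ still restricts to positroid orders on the $E(M_i)$ with $i<k$; by induction the induced order on $E'$ is a positroid order for $M'$. So $\prec$ on $E(M)$ reads $F_1,\,E(M_k),\,F_2$, where $F_1$ is a prefix and $F_2$ a suffix of the order on $E'$.

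It remains to insert a representation of $M_k$ into the gap between $F_1$ and $F_2$. Take a nonnegative‑minor representation $B=[\,B_1\mid B_2\,]$ of $M'$ with columns in the order $F_1,F_2$ ($B_1$ for $F_1$, $B_2$ for $F_2$), a nonnegative‑minor representation $C$ of $M_k$, and form
\[
D=\begin{pmatrix} B_1 & 0 & (-1)^{r(M_k)}B_2\\ 0 & C & 0\end{pmatrix},
\]
with columns ordered $F_1,\,E(M_k),\,F_2$; its column matroid is $M'\oplus M_k=M$. Expanding an $r(M)\times r(M)$ minor of $D$ along the bottom $r(M_k)$ rows shows it is zero unless exactly $r(M_k)$ of the chosen columns lie in $E(M_k)$, in which case it equals $\pm\det C[S_k]\,\det B[S\setminus S_k]$, where $S_k$ is the set of chosen $E(M_k)$-columns; here $\det B[S\setminus S_k]$ is a maximal minor of $B$ because $F_1$ precedes $F_2$, and the overall sign is $+1$ because the Laplace sign incurred by placing $E(M_k)$ in the interior of the order is cancelled by the factor $(-1)^{r(M_k)}$ on the $F_2$-columns (using $|S\cap F_1|+|S\cap F_2|=r(M')$). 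Hence every maximal minor of $D$ is a product of a maximal minor of $C$ and one of $B$, so is nonnegative, and $\prec$ is a positroid order for $M$, completing the induction. The argument is elementary; the only delicate point is this last sign bookkeeping, where the factor $(-1)^{r(M_k)}$ is exactly what absorbs the shuffle sign from putting $E(M_k)$ in the interior of the order --- alternatively, one could bypass the computation by first cyclically rotating so that $E(M_k)$ becomes a terminal segment and invoking the cyclic invariance of the positroid condition.
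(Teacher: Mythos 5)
The paper does not give its own proof of this statement; it cites it verbatim from Ardila, Rinc\'on, and Williams \cite[Theorem 7.6]{ARW}, so there is no in-paper proof to compare against. Your argument is correct and self-contained, and it proceeds by a genuinely different route from \cite{ARW}, whose Section~7 establishes this via matroid polytopes and Minkowski sums, whereas you work directly with nonnegative-minor matrices. For necessity, reducing to two connected components by restriction, block-diagonalizing the representation, and observing that the product of the four shuffle signs $\varepsilon(B_1^{(i)},B_2^{(j)})$, $i,j\in\{0,1\}$, equals $(-1)^{\mathrm{inv}(B_1\triangle B_1',\,B_2\triangle B_2')}$ is a clean elementary device; producing symmetric differences $\{a,c\}$ and $\{b,d\}$ from connectivity via fundamental circuits closes the contradiction. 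One small phrasing slip: after writing $\det A[B_1\cup B_2]=\varepsilon\,\det A_1[B_1]\,\det A_2[B_2]$ you say ``all of these minors are nonzero, hence strictly positive,'' but only $\det A[B_1\cup B_2]$ is known nonnegative; the block minors $\det A_t[B_t]$ are merely nonzero and may be negative. This does not affect the argument, since the product-of-four step squares those factors away, but the sentence should be reworded. For sufficiency, peeling off an interval block of the non-crossing partition, inducting, and stacking the two representations block-diagonally with the $(-1)^{r(M_k)}$ column scaling correctly cancels the Laplace shuffle sign $(-1)^{r(M_k)\,|S\cap F_2|}$; as you note at the end, first cyclically rotating so that $E(M_k)$ is a terminal segment and invoking the cyclic invariance of the positroid condition avoids all of that bookkeeping and is arguably the cleaner presentation. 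The elementary matrix-level proof is a useful alternative to the polytopal argument, more closely in the spirit of this paper's other computations, though it buys nothing in generality.
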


That result along with the following characterization of positroid
orders for connected matroids \cite[Proposition 5.6]{ARW} give a
two-part characterization of positroid orders: Theorem \ref{thm:arw2}
characterizes the orders on the connected components while Theorem
\ref{thm:arw1} characterizes how the orders on those components are
assembled globally.

\begin{thm}\label{thm:arw2}
  Let $M$ be a connected matroid with $r(M)\geq 2$.  A linear order
  $\leq$ on $E(M)$ is a positroid order for $M$ if and only if, for
  every flat $F$ such that both $M|F$ and $M/F$ are connected, at
  least one of $F$ and $E(M)-F$ is an interval in $\leq$.
\end{thm}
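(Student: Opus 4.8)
The plan is to reduce to the cyclic order underlying a positroid order and then argue with matroid polytopes. It is standard (\cite{post,ARW}) that cyclically rotating a positroid order---with a compensating sign change on one column of a representing matrix---and reversing it both yield positroid orders, so being a positroid order is a property of a cyclic order on $E(M)$; under this translation, ``$F$ or $E(M)-F$ is an interval in $\leq$'' becomes ``$F$ is an arc of the cyclic order''. So the statement to prove is: for connected $M$ with $r(M)\geq 2$, a cyclic order on $E(M)$ is a positroid order if and only if every flat $F$ with $M|F$ and $M/F$ connected is an arc. Throughout I will use the description (due to Feichtner and Sturmfels, and Fujishige) of the facets of a matroid polytope: for connected $M$ with at least two elements, the facets of the base polytope $P(M)\subseteq\{x:\sum_i x_i=r(M)\}$ are precisely the hyperplanes $\sum_{i\in F}x_i=r_M(F)$ as $F$ ranges over the flats with $M|F$ and $M/F$ connected (the \emph{flacets} of $M$); the coordinate facets $x_i=1$ and $x_i=0$ are the special cases $F=\{i\}$ and $F=E(M)-\{i\}$, and singletons and their complements are always arcs.

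For necessity, recall the basic fact (\cite{post,ARW}) that if $\leq$ is a positroid order then $P(M)$ is cut out by the inequalities $0\leq x_i\leq 1$ together with $\sum_{i\in I}x_i\leq r_M(I)$ over the arcs $I$ of the cyclic order; this is immediate from the Grassmann necklace presentation of a positroid, since the necklace realizes $M$ as an intersection of Schubert matroids whose polytopes are cut out by arc-inequalities. Because the collection of facet hyperplanes of a polytope is determined by the polytope, every flacet hyperplane $\sum_{i\in F}x_i=r_M(F)$ of $P(M)$ must appear among the hyperplanes $\sum_{i\in I}x_i=r_M(I)$ with $I$ an arc, or among $x_i=0,1$. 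Since distinct subsets of $E(M)$ give distinct hyperplanes inside $\{\sum_i x_i=r(M)\}$, in every case the flacet $F$ is forced to be an arc (for $x_i=0$ or $x_i=1$ it is a complement of a point or a point). This is exactly the necessity direction.

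For sufficiency---the substantive direction---assume every flacet of $M$ is an arc, so by Feichtner--Sturmfels $P(M)$ is cut out by $0\leq x_i\leq 1$ and the arc-inequalities $\sum_{i\in I}x_i\leq r_M(I)$; what remains is to deduce that $M$ is a positroid in the cyclic order. The route I would take is through the Grassmann necklace $\mathcal I=(I_1,\dots,I_n)$ of $M$, where $I_k$ is the basis of $M$ minimal in the linear order starting at $k$, and the positroid $M'=M'(\mathcal I)$ of Postnikov, whose bases are exactly the $r(M)$-element sets $B$ with $|B\cap I|\leq r_M(I)$ for every arc $I$ (equivalently, $M'$ is the intersection of the $n$ associated Schubert matroids). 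By minimality of the $I_k$, every basis of $M$ is a basis of $M'$, so it suffices to prove $M'\subseteq M$. If $B$ were a basis of $M'$ that is not a basis of $M$, then, since $|B|=r(M)$, some cyclic flat $Z$ of $M$ satisfies $|B\cap Z|>r_M(Z)$; one then argues---by decomposing $M|Z$ into connected components, replacing $Z$ by the closure of a component that still violates the inequality, and iterating---that some flacet $Z'$ of $M$ also satisfies $|B\cap Z'|>r_M(Z')$. As $Z'$ is an arc, this contradicts $B$ being a basis of $M'$, so $M=M'$ is a positroid.

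The main obstacle is exactly this sufficiency argument, and within it two points: proving the converse of the basic fact invoked for necessity---that a matroid whose polytope is cut out by arc-inequalities and coordinate bounds is a positroid in the cyclic order---and carrying out the reduction from an arbitrary witnessing cyclic flat to a flacet while keeping control of the cyclic order (in effect, reconstructing the Schubert decomposition of $M$ from its arc-structured connected flats). I note that if one is willing to take the full polytope characterization of positroids as given---namely that $M$ is a positroid in a cyclic order if and only if every facet of $P(M)$ is a coordinate hyperplane or an arc-hyperplane---then sufficiency is immediate from Feichtner--Sturmfels in the same way as necessity, and the whole theorem becomes the translation of that characterization through the notion of a flacet.
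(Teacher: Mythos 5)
The paper does not prove this theorem: it is quoted as \cite[Proposition 5.6]{ARW} and used as background. So there is no internal proof to compare against; the closest the paper comes is its own, different characterization in Theorem~\ref{thm:char}, which is proved by a hands-on Grassmann-necklace argument (the implication (3)$\Rightarrow$(1) there plays the role of your ``sufficiency'') rather than through matroid polytopes and the Feichtner--Sturmfels facet description.

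Your overall plan---translate ``$F$ or $E(M)-F$ is an interval'' to ``$F$ is a cyclic arc,'' invoke the Feichtner--Sturmfels description of the facets of $P(M)$ for connected $M$ as flacet and coordinate hyperplanes, and match these against the arc-hyperplane description of positroid polytopes---is the right skeleton and is essentially how Ardila--Rinc\'on--Williams argue. But two steps you label as immediate are in fact the substantive inputs, and your attempted patch of one of them does not close.

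For necessity you assert that ``$P(M)$ is cut out by $0\le x_i\le 1$ and the arc inequalities $\sum_{i\in I}x_i\le r_M(I)$'' is immediate from the Grassmann necklace, because the necklace writes $\mathcal B(M)$ as $\bigcap_i\mathcal B(N(I_i,\leq_i))$. That intersection of \emph{base sets} gives only $P(M)\subseteq\bigcap_i P(N(I_i,\leq_i))$; the reverse inclusion (that arc inequalities already imply every rank inequality $\sum_{i\in S}x_i\le r_M(S)$) is a theorem in \cite{ARW}, not a formality, and it is exactly the polytope characterization of positroids. Without it, your facet-matching argument cannot even get started, since facets of a subpolytope need not be facets of the superpolytope.

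For sufficiency, the reduction you sketch---given an $r$-set $B$ and a cyclic flat $Z$ with $|B\cap Z|>r_M(Z)$, pass to a connected component of $M|Z$ and iterate to reach a flacet---only repairs the $M|F$-connectivity half of the flacet condition; nothing in the iteration forces $M/F$ to become connected, and you give no step that does. The clean completion, still within your framework, bypasses the iteration entirely: if $B\in\mathcal B(M')$ where $M'$ is the necklace positroid, then $e_B$ satisfies every arc inequality, hence every flacet inequality (since you are assuming all flacets are arcs), hence $e_B\in P(M)$ by Feichtner--Sturmfels, and a $0/1$ point of $P(M)$ on $\sum_i x_i=r(M)$ is automatically the indicator of a basis (take $S=B$ in $\sum_{i\in S}x_i\le r_M(S)$). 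This gives $\mathcal B(M')\subseteq\mathcal B(M)$ and hence $M=M'$, with no cyclic-flat-to-flacet reduction needed. You acknowledge at the end that granting the full polytope characterization makes the theorem a translation; I would say that is the honest statement of what your argument accomplishes, and the intermediate ``direct'' route via cyclic flats should either be dropped or repaired as above. By contrast, the paper's own route to the equivalent Theorem~\ref{thm:char} stays entirely at the level of bases and Gale orders, avoiding polytopes altogether; it is more self-contained but proves a differently phrased (though equivalent) criterion.
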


The following similar characterization of positroid orders for
connected matroids was given by Rinc\'{o}n, Vinzant, and Yu
\cite[Proposition 6.5]{pchar}, and likewise can be paired with Theorem
\ref{thm:arw1} to characterize positroids.

\begin{thm}\label{thm:pchar}
  Let $M$ be a rank-$r$, connected matroid.  Fix an integer $k$ with
  $1 < k \leq r$.  A linear order on $E(M)$ is a positroid order for
  $M$ if and only if, for each flag
  $$\emptyset=F_0\subsetneq F_1 \subsetneq F_2 \subsetneq\cdots
  \subsetneq F_{k-1}\subsetneq F_k= E(M)$$ of flats of $M$ for which
  each minor $M|F_i/F_{i-1}$, for $i\in[k]$, is connected, the set of
  their ground sets, $\{F_i-F_{i-1}\,:\,i\in[k]\}$, is a non-crossing
  partition of $E(M)$.
\end{thm}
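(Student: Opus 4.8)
\emph{Strategy.} The plan is to bootstrap from Theorem~\ref{thm:arw2}, which is exactly the case $k=2$: a partition of a linearly ordered set into two blocks is non‑crossing if and only if one of the blocks is an interval. Two facts are used throughout. First, a positroid order for a matroid restricts to a positroid order on each of its minors --- for deletion this is the passage to a submatrix, and for contraction it is a standard operation on positroid representations used already by Blum and by Ardila--Rinc\'on--Williams. Second, combining this with Theorem~\ref{thm:arw1}: if $\emptyset=F_0\subsetneq F_1\subsetneq\cdots\subsetneq F_k=E(M)$ is a flag with each $M|F_i/F_{i-1}$ connected, then each induced order on $F_i-F_{i-1}=E(M|F_i/F_{i-1})$ is automatically a positroid order (being that of a minor of $M$), so $\{F_i-F_{i-1}:i\in[k]\}$ is non‑crossing precisely when $\leq$ is a positroid order for the matroid $N:=\bigoplus_{i\in[k]}M|F_i/F_{i-1}$, whose connected components are the $M|F_i/F_{i-1}$. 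Hence the theorem asserts that $\leq$ is a positroid order for $M$ exactly when it is one for every such direct sum $N$.

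\emph{The ``only if'' direction.} Here I would prove, by strong induction on $|E(M)|$ and for all connected $M$ and all valid flags (of any length), that a positroid order gives a non‑crossing partition. Look at the top step: $M/F_{k-1}$ is connected. If $M|F_{k-1}$ is connected too, then $F_{k-1}$ is a flat with $M|F_{k-1}$ and $M/F_{k-1}$ both connected, so Theorem~\ref{thm:arw2} forces $F_{k-1}$ or $E(M)-F_{k-1}$ to be an interval, and this combines with the inductive conclusion for $M|F_{k-1}$ (under $\leq|F_{k-1}$, with the shorter valid flag $\emptyset\subsetneq F_1\subsetneq\cdots\subsetneq F_{k-1}$) because an interval block crosses no other block. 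If $M|F_{k-1}$ is disconnected, write its components as $C_1,\dots,C_t$ with ground sets $Z_1,\dots,Z_t$; each $Z_j$ is a flat of $M$ (because $F_{k-1}$ is), the valid flag of $M|F_{k-1}$ restricts to a valid flag on each $C_j$, and applying the inductive conclusion to each $C_j$ together with Theorem~\ref{thm:arw1} for $M|F_{k-1}$ shows that the blocks lying inside $F_{k-1}$ form a non‑crossing refinement of the non‑crossing partition $\{Z_1,\dots,Z_t\}$ of $F_{k-1}$. Since refining each block of a non‑crossing partition by a non‑crossing partition keeps it non‑crossing, what remains is the claim $(\star)$ that $\{Z_1,\dots,Z_t,\,E(M)-F_{k-1}\}$ is non‑crossing, equivalently (again by Theorem~\ref{thm:arw1}) that $\leq$ is a positroid order for $(M|F_{k-1})\oplus(M/F_{k-1})$. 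I would prove $(\star)$ by induction on $t$: the case $t=1$ is the connected case just handled, and for $t\ge2$ one peels off a component $C_j$ by passing to $M/\bigl(\bigcup_{l\ne j}Z_l\bigr)$, in which $Z_j$ is a flat with restriction $\cong C_j$ and contraction $\cong M/F_{k-1}$, both connected; when this contraction of $M$ is itself connected, Theorem~\ref{thm:arw2} forces $Z_j$ or $E(M)-F_{k-1}$ to be an interval of $\leq$ restricted to $E(M)-\bigcup_{l\ne j}Z_l$, hence $Z_j$ cannot cross $E(M)-F_{k-1}$; when it is disconnected one reduces further to its components. Making $(\star)$ fully rigorous --- especially taming the disconnected contractions --- is the step I expect to be the real obstacle; everything else is manipulation of non‑crossing partitions.

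\emph{The ``if'' direction.} Assume the $k$‑flag hypothesis and verify the criterion of Theorem~\ref{thm:arw2}. Let $F$ be a flat with $M|F$ and $M/F$ connected. Since $M|F$ and $M/F$ are connected of ranks $r(F)$ and $r-r(F)$ with $r(F)+\bigl(r-r(F)\bigr)=r\ge k$, a short lemma --- ultimately resting on the fact that every connected matroid of rank $\ge2$ has a proper nonempty flat whose restriction and contraction are both connected, which follows from standard connectivity arguments --- shows that $F$ extends to a valid flag of length exactly $k$, chosen as a refinement of $\emptyset\subsetneq F\subsetneq E(M)$. By hypothesis its partition is non‑crossing, and one wants to conclude that $F$ or $E(M)-F$ is an interval. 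The subtle point is that a \emph{coarsening} of a non‑crossing partition need not be non‑crossing, so one cannot simply quote non‑crossingness of the refined partition; I would arrange, where possible, that $F$ itself be the bottom block of the chosen flag --- refining only $M/F$ when $r-r(F)\ge k-1$ --- which reduces the situation to $k=2$‑type reasoning, and otherwise dualize (a flat $F$ of $M$ with $M|F$, $M/F$ connected corresponds to a connected restriction and contraction in $M^{*}$, and ``interval'' is invariant under order‑reversal) or run the same induction in parallel. This direction is less deep than $(\star)$, but requires the same care precisely because non‑crossingness does not survive coarsening.
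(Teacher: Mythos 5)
The paper does not prove this theorem; it is stated as background and cited to Rinc\'on, Vinzant, and Yu \cite[Proposition 6.5]{pchar}, so there is no ``paper's own proof'' to compare against. I will therefore assess the proposal on its own terms.

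Your framing via Theorems~\ref{thm:arw1} and~\ref{thm:arw2} is sensible, and your reformulation --- that $\{F_i-F_{i-1}\,:\,i\in[k]\}$ is non-crossing iff $\leq$ is a positroid order for $\bigoplus_i M|F_i/F_{i-1}$ --- is correct. But both directions, as sketched, have genuine gaps, only one of which you flag as such.

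In the ``only if'' direction you openly acknowledge that the claim $(\star)$ is unproved, and the induction on $t$ you sketch is incomplete precisely because the contraction $M/\bigl(\bigcup_{l\ne j}Z_l\bigr)$ can be disconnected, at which point the appeal to Theorem~\ref{thm:arw2} is unavailable; ``reduces further to its components'' is where a real argument would have to go. This is a genuine hole, not just a detail to polish.

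The ``if'' direction has a more serious and unflagged flaw. Your plan is to embed $F$ as the \emph{first} block $F_1=F$ of a length-$k$ flag, note the resulting partition is non-crossing by hypothesis, and conclude that $F$ or $E(M)-F$ is an interval. But that implication is false: a partition $\{F\}\cup\mathcal{P}$ of a linearly ordered $E$ can be non-crossing with $\mathcal{P}$ a partition of $E-F$ while neither $F$ nor $E-F$ is an interval. For instance, on $E=\{1,\dots,6\}$ with the usual order, the partition $\bigl\{\{1,4\},\{2,3\},\{5,6\}\bigr\}$ is non-crossing, yet neither $\{1,4\}$ nor $\{2,3,5,6\}$ is an interval (and $\{1,4\}$ and $\{2,3,5,6\}$ do cross). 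So having $F$ as a single block of a non-crossing $k$-partition is strictly weaker than $\{F,E(M)-F\}$ being non-crossing; extracting the $k=2$ conclusion from one $k$-flag's non-crossingness cannot work. The hypothesis quantifies over \emph{all} valid $k$-flags, and any correct proof must exploit several flags (or some other structure) rather than a single well-chosen one. Relatedly, the proposed dualization fallback does not type-check: for a flat $F$ of $M$, the complement $E(M)-F$ is a cyclic set of $M^*$ rather than a flat, so the $k$-flag hypothesis for $M^*$ does not directly speak to it. In short, the ``if'' direction rests on a false lemma and needs to be rethought from scratch.
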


In Theorem \ref{thm:char}, we give a similar characterization of
positroids, namely, the following result, which, along with its
corollaries, the rest of the paper exploits.  \begingroup
\addtolength\leftmargini{-0.75cm}
\begin{quote}
  \emph{Let $M$ be a matroid with no loops.  A linear order $\leq$ on
    $E(M)$ is a positroid order if and only if, for each connected
    flat $F$ of $M$, if $K$ is a connected component of the
    contraction $M/F$, then there is a subset $I$ of $E(M)$ such that
    $F\subseteq E(M)-I$, $K\subseteq I$, and either $I$ or $E(M)-I$ is
    an interval in $\leq$.}
\end{quote}
\endgroup

\noindent
This result applies to all loopless matroids, whether connected or
not, thereby making it easier to use.  There is much interest in
characterizations of positroid orders; for instance, see Agarwala,
Delaney, Yeats \cite[Remark 5.27]{susama}, which relates Theorem 5.17
in that paper to Theorem \ref{thm:char} in this paper.  In Section
\ref{sec:char} we also treat some corollaries of Theorem
\ref{thm:char} and, in Theorem~\ref{thm:equivchars}, relate it to
several earlier characterizations of positroids.

We develop more extensive applications of Theorem \ref{thm:char} in
Sections \ref{sec:bonding} and \ref{sec:exmin}, the first of which
concerns free amalgams.  The free amalgam of matroids $M$ and $N$,
which exists only for certain pairs $(M,N)$, is the freest matroid on
$E(M)\cup E(N)$ that has $M$ and $N$ as restrictions.  A subset $X$ of
$E(M)$ is a set of clones if each permutation of $E(M)$ that fixes
each element of $E(M)-X$ is an automorphism of $M$.  We show that if
$M$ and $N$ are positroids and $E(M)\cap E(N)$ is an independent set
of clones in both $M$ and $N$, then the free amalgam of $M$ and $N$
exists and is a positroid (Theorem \ref{thm:bondmain1}).  We also show
that the same conclusion holds under conditions on the independent set
$E(M)\cap E(N)$ that are in one way weaker and in another stronger
(Theorem \ref{thm:bondmain2}).  A contraction of the free amalgam of
$M$ and $N$ need not be the free amalgam of the corresponding
contractions of $M$ and $N$; because of this, we introduce and work
with a more general way to glue matroids together, one that has useful
contraction properties.  Defining and developing the basic properties
of this construction, which we call bonding, occupies much of Section
\ref{sec:bonding}.  Proving all relevant properties of bonding makes
the paper more self-contained; no knowledge of free amalgams is
required.  In Section \ref{sec:exmin}, we identify many excluded
minors for the class of positroids, including multi-parameter infinite
families of excluded minors.  We do not identify all excluded minors
for the class of positroids; for some excluded minors not given here,
see Park \cite{Park}.

In Section \ref{sec:background} we review the background that is
needed for these results.  To make the paper accessible to a wide
audience, we give complete background on, for instance, the orders
used when treating Grassmann necklaces, as well as sketches of some of
the less widely known topics in matroid theory.  We assume familiarity
with basic matroid theory, as set out in \cite{oxley}, and we follow
the matroid notation used in \cite{oxley}.

\section{Background}\label{sec:background}

We use $\mathbb{Z}$ for the set of integers, $\mathbb{N}$ for
$\{n\in\mathbb{Z}\,:\,n>0\}$, and $[n]$ for $\{1,2,\ldots,n\}$.

\subsection{Some orders related to a linear order}
For a linear order $\leq$ on a set $E$, say given by
$e_1<e_2<\cdots< e_n$, its \emph{$i$-shift} $\leq_i$ is the cyclic
shift in which $e_i$ is least, that is,
$$e_i<_i e_{i+1}<_i\cdots <_i e_n <_i e_1<_i \cdots<_i e_{i-1}.$$
A \emph{cyclic interval} of $\leq$ is an interval in some $i$-shift of
$\leq$.  Elements $e$ and $f$ of $E$ are \emph{cyclically consecutive}
in $\leq$ if $\{e,f\}$ is a cyclic interval of $\leq$. In an ordered
set $(P,\leq)$, an \emph{ideal} is a subset $I$ of $P$ for which if
$x\in I$ and $y\in P$ with $y\leq x$, then $y\in I$; also,
$F\subseteq P$ is a \emph{filter} if whenever $x\in F$ and $z\in P$
with $x\leq z$, then $z\in F$.  For a linear order, an ideal is an
interval that contains the least element; a filter is an interval that
contains the greatest element.  Note that for $A\subseteq E$, the
following statements are equivalent to saying that $A$ is a cyclic
interval: $E-A$ is a cyclic interval; $A$ or $E-A$ is an interval; $A$
is an ideal of some shift $\leq_i$ (so $e_i$ is the least element of
$A$ using $\leq_i$); $A$ is a filter of some shift $\leq_j$ (so
$e_{j-1}$ is the greatest element of $A$ using $\leq_j$).

In some examples we will need to show that no linear order on $E$ has
all sets in some set of sets being cyclic intervals; for instance,
$\{1,3,5\}$, $\{2,3,4\}$, $\{4,5,6\}$, and $\{1,2,6\}$ cannot all be
cyclic intervals in any linear order on $[6]$.  For this, the
following observation and lemma are useful.  Given two cyclic
intervals $A$ and $B$ in a linear order on $E$, if
$A\cap B\ne\emptyset$, then either $A\cup B=E$ or $A\cap B$ is a
cyclic interval.
  
\begin{lemma}\label{lem:noco}
  Let $A$, $B$, $C$, and $D$ be cyclic intervals in a linear order on
  $E$ with $A\not\subseteq B$, $A\cup B\ne E$, $A\cap B\ne \emptyset$,
  $C\not\subseteq B$, $B\cup C\ne E$, $B\cap C\ne \emptyset$, and
  $A\cap B\cap C= \emptyset$.  If $D$ is disjoint from $A\cap B$ and
  $B\cap C$ but not from $B-(A\cup C)$, then $D\subseteq B-(A\cup C)$.
\end{lemma}

\begin{proof}
  By replacing the order by a cyclic shift if needed, we may assume
  that $B$ is an interval.  The result follows by observing that, by
  the hypotheses, one of $A\cap B$ and $B\cap C$ is an ideal in the
  order that is induced on $B$, and the other is a filter, so only one
  of the two cyclic intervals whose union is $E-(B\cap (A\cup C))$ can
  contain elements of $D$.
\end{proof}

For a rank-$r$ matroid $M$, if the matrix representation $A$ shows
that $\leq$ is a positroid order for $M$, then the matrix $A'$ that
shifts the first column of $A$ to be last and multiplies that column
by $(-1)^{r-1}$ shows that $\leq_2$ is also a positroid order for $M$
since one can shift the first column of an $r$ by $r$ matrix to the
last place with $r-1$ column transpositions.  Thus, $\leq$ is a
positroid order for a matroid $M$ if and only if $\leq_i$ is a
positroid order for $M$.

Given a linear order $\leq$ on an $n$-element set $E$ and an integer
$k\in[n]$, we consider two associated orders on the set of $k$-element
subsets of $E$: for $k$-subsets $X$ and $Y$ of $E$ where the elements
of $X$ are, in order, $x_1<x_2<\cdots <x_k$, and likewise
$y_1<y_2<\cdots <y_k$ for $Y$, we have
\begin{itemize}[leftmargin=*]
\item the \emph{Gale order}: $X\leq_G Y$ if $x_i\leq y_i$ for all
  $i\in [k]$, and
\item the \emph{lexicographic order}: $X\leq_L Y$ if $X=Y$ or
  $x_i<y_i$ for the least $i$ with $x_i\ne y_i$.
\end{itemize}
While $\leq_L$ is a linear order, $\leq_G$ is not except for certain
$n$ and $k$.  Also, $\leq_L$ is an extension of $\leq_G$, that is, if
$X\leq_G Y$, then $X\leq_L Y$.

By the next lemma, as one would expect, replacing an element in a set
by a smaller element yields a smaller set in the Gale order.

\begin{lemma}\label{lem:a<b}
  Let $\leq$ be a linear order on $E$. Fix a subset $X$ of $E$ and
  elements $a\in X$ and $b\in E-X$.  If $b<a$, then $(X-a)\cup b<_GX$.
  If $a<b$, then $X<_G(X-a)\cup b$.
\end{lemma}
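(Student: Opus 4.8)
The plan is to unwind the definition of the Gale order by tracking precisely how the sorted list of $X$ changes when $a$ is replaced by $b$. Write the elements of $X$ in increasing order as $x_1<x_2<\cdots<x_k$ and let $j$ be the index with $a=x_j$; set $X'=(X-a)\cup b$ and write its sorted list as $x'_1<x'_2<\cdots<x'_k$. The point is that deleting $x_j$ and inserting $b$ merely shifts one contiguous block of entries by a single position and replaces one entry by $b$, so comparing $X'$ with $X$ position by position is immediate.

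Consider first the case $b<a$. Since $b\notin X$, there is a unique index $i$ with $1\leq i\leq j$ and $x_{i-1}<b<x_i$, reading $x_0$ as $-\infty$. Then $x'_\ell=x_\ell$ for $\ell<i$, while $x'_i=b<x_i$, $x'_\ell=x_{\ell-1}<x_\ell$ for $i<\ell\leq j$, and $x'_\ell=x_\ell$ for $\ell>j$; in particular $x'_\ell\leq x_\ell$ for all $\ell$, so $X'\leq_G X$, and $X'\neq X$ because $b\notin X$, giving $X'<_GX$. The case $a<b$ is symmetric: there is now a unique $i$ with $j\leq i\leq k$ and $x_i<b<x_{i+1}$, reading $x_{k+1}$ as $+\infty$, and one checks $x'_\ell=x_\ell$ for $\ell<j$, $x'_\ell=x_{\ell+1}>x_\ell$ for $j\leq\ell\leq i-1$, $x'_i=b>x_i$, and $x'_\ell=x_\ell$ for $\ell>i$, so $x'_\ell\geq x_\ell$ for all $\ell$ and $X<_GX'$.

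An alternative, index-free route uses the counting description of the Gale order: for $k$-subsets $X$ and $Y$ one has $X\leq_G Y$ if and only if $|\{x\in X:x\leq t\}|\geq|\{y\in Y:y\leq t\}|$ for every $t\in E$. Granting this, the lemma is immediate, since the count for $X'=(X-a)\cup b$ at any $t$ exceeds (resp.\ falls short of) that for $X$ by $\mathbf 1[b\leq t]-\mathbf 1[a\leq t]$, which is nonnegative when $b<a$ and nonpositive when $a<b$. I expect to present the first argument as the main proof, since establishing the counting characterization costs about as much effort as the direct computation. There is no genuine obstacle in this lemma; the only care needed is the bookkeeping of boundary cases in the index shift (e.g., $b$ below all of $X$, or $a=x_k$), which the conventions $x_0=-\infty$ and $x_{k+1}=+\infty$ handle uniformly.
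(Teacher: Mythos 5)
Your main argument is correct and is essentially the paper's own proof: in both, one writes the sorted lists of $X$ and $(X-a)\cup b$ and observes that replacing $a$ by a smaller $b$ shifts a contiguous block of entries one place to the right while inserting $b$, so every position weakly decreases (the only difference is that you use $j$ for the index of $a$ and $i$ for the insertion index, while the paper swaps those letters, and you spell out the $a<b$ case symmetrically where the paper simply says it follows from the first). Your alternative counting characterization of the Gale order is a nice observation but, as you say, offers no real savings here.
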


\begin{proof}
  Assume that $b<a$. To get $(X-a)\cup b<_GX$, compare the elements of
  $X$, in order, (the first line below) to those of $(X-a)\cup b$ (the
  second line):
  $$x_1<\cdots< x_{j-1}<x_j<x_{j+1}\cdots <x_{i-1}<a<x_{i+1}<\cdots<x_k$$
  $$x_1<\cdots< x_{j-1}<b<x_j<\cdots<x_{i-2}<x_{i-1}<x_{i+1}<\cdots<x_k$$
  where $j$ is the least integer with $b<x_j$.  The second statement
  follows from the first.
\end{proof}

The next lemma is from Gale \cite{gale}.

\begin{lemma}\label{lem:GaleIsMin}
  Let $M$ be a matroid.  Let $\leq$ be a linear order on $E(M)$.  If
  $B$ is the least basis of $M$ in the lexicographic order, then, in
  the Gale order, $B\leq_G B'$ for all bases $B'$ of $M$.
\end{lemma}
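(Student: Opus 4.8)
The plan is to show that the lexicographically least basis $B$ is ``greedy'' in the sense that it meets every initial segment of $\le$ to full rank, and then to translate that statement into the desired Gale inequality. Throughout, write $E(M)=\{e_1<e_2<\cdots<e_n\}$ and set $E_k=\{e_1,e_2,\ldots,e_k\}$.

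Step 1 is to prove that $|B\cap E_k|=r(E_k)$ for every $k\in[n]$. Suppose not, and let $k$ be least with $|B\cap E_k|<r(E_k)$. Minimality of $k$ forces $|B\cap E_{k-1}|=r(E_{k-1})$ (interpreting $E_0=\emptyset$), so $r(E_k)=r(E_{k-1})+1$, the element $e_k$ lies in $E_k-B$, and $(B\cap E_{k-1})\cup\{e_k\}$ is independent. Since $B$ is a basis and $e_k\notin B$, there is a (unique) fundamental circuit $C$ with $e_k\in C\subseteq B\cup\{e_k\}$. If $C-\{e_k\}\subseteq E_{k-1}$ then $C\subseteq(B\cap E_{k-1})\cup\{e_k\}$, contradicting the independence of the latter; hence there is $f\in C-\{e_k\}$ with $f>e_k$. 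Then $(B-f)\cup\{e_k\}$ is a basis, and since $e_k<f$, Lemma~\ref{lem:a<b} gives $(B-f)\cup\{e_k\}<_G B$, so (as $\le_L$ extends $\le_G$ and the two sets are distinct) $(B-f)\cup\{e_k\}<_L B$, contradicting the choice of $B$. This proves Step 1. (Equivalently, one could first identify $B$ with the basis produced by scanning $e_1,e_2,\ldots$ and adding each element that preserves independence, for which the prefix-rank identity is immediate; verifying that this greedy basis is lexicographically least uses essentially the same fundamental-circuit exchange.)

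Step 2 is immediate from Step 1: for any basis $B'$ of $M$ and any $k\in[n]$ we have $|B'\cap E_k|\le r(E_k)=|B\cap E_k|$. Step 3 converts this to the Gale order. Write $B=\{b_1<b_2<\cdots<b_r\}$ and $B'=\{b_1'<b_2'<\cdots<b_r'\}$, and fix $i\in[r]$. Choosing $k$ with $e_k=b_i'$, the elements of $B'$ that lie in $E_k$ are exactly $b_1',\ldots,b_i'$, so $|B'\cap E_k|=i$; hence $|B\cap E_k|\ge i$, which says that at least $i$ elements of $B$ are $\le e_k=b_i'$, and therefore $b_i\le b_i'$. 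As $i$ was arbitrary, $B\le_G B'$.

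The main obstacle is Step 1, and within it the one genuinely nontrivial move: extracting from a rank deficiency of $B$ on some $E_k$ a fundamental-circuit element $f>e_k$ whose replacement by $e_k$ yields a lexicographically smaller basis. Once that exchange is in hand, Steps 2 and 3 are routine bookkeeping with the Gale order.
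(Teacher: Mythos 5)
Your proof is correct, and it takes a genuinely different route from the paper's. The paper proves the lemma by induction on lexicographic distance from $B$, using the symmetric exchange property: given $a\in B'-B$, symmetric exchange produces $b\in B-B'$ with both $(B-b)\cup a$ and $(B'-a)\cup b$ bases; minimality of $B$ forces $b<a$, so Lemma~\ref{lem:a<b} and the inductive hypothesis applied to $(B'-a)\cup b$ finish the argument. You instead first establish a structural characterization of $B$ --- it is the ``greedy'' basis in the sense that $|B\cap E_k|=r(E_k)$ for every prefix $E_k$ --- using only the fundamental-circuit exchange (a more elementary fact than symmetric exchange), and then translate the prefix-rank identity into the Gale inequality by a direct counting argument. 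What your route buys is an explicit description of the Gale basis as the greedy basis, which is of independent interest and is sometimes taken as the definition; what the paper's route buys is a shorter argument that sidesteps prefix ranks entirely and leans only on the already-proved Lemma~\ref{lem:a<b} together with induction. Both proofs are sound, and both ultimately reduce the problem to single-element exchanges that move sets down in the Gale order.
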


\begin{proof}
  We induct on the distance between bases $B$ and $B'$ in the
  lexicographic order.  If $B'=B$, then $B\leq_G B'$, so we may assume
  that $B\ne B'$ and, inductively, that $B\leq_G A$ for all bases $A$
  for which $A<_LB'$.  Fix $a\in B'-B$.  By the symmetric exchange
  property, for some $b\in B-B'$, both $(B-b)\cup a$ and
  $(B'-a)\cup b$ are bases of $M$.  If $a<b$, then Lemma \ref{lem:a<b}
  would give $(B-b)\cup a<_GB$, so $(B-b)\cup a<_LB$, contrary to $B$
  being the least basis in the lexicographic order.  Thus, $b<a$, so
  Lemma \ref{lem:a<b} gives $(B'-a)\cup b<_GB'$.  By the inductive
  assumption, $B\leq_G (B'-a)\cup b$, so $B<_GB'$.
\end{proof}

The least basis of a matroid $M$ in the Gale order induced by a linear
order is the \emph{Gale basis} for that order.  (In some sources, the
Gale basis is taken to be the greatest basis.)

\subsection{Transversal and nested matroids}

Let $\mathcal{A}=(A_i\,:\,i\in[r])$ be an indexed family of subsets of
a set $E$.  A \emph{partial transversal} of $\mathcal{A}$ is a subset
$X$ of $E$ for which there is an injection $\phi:X\rightarrow [r]$
with $e\in A_{\phi(e)}$ for all $e\in X$.  Edmonds and
Fulkerson~\cite{ef} showed that the partial transversals of
$\mathcal{A}$ are the independent sets of a matroid on $E$.  This
matroid, denoted $M[\mathcal{A}]$, is a \emph{transversal matroid},
and $\mathcal{A}$ is a \emph{presentation} of it.  In \cite{aff},
Brylawski showed how to view transversal matroids geometrically:
transversal matroids of rank $r$ with no loops are precisely the
matroids that have geometric representations on an $r$-vertex simplex
in which, for all $k\in[r]$, each circuit of rank $k$ spans a
$k$-vertex face of the simplex (e.g., a $3$-element circuit must span
an edge of the simplex).

Given a linear order $\leq$ on $E$ and an $r$-subset $I$ of $E$, for
each $i\in I$, let $F_i$ be the filter $\{j\,:\,i\leq j\}$, and let
$\mathcal{A}=(F_i\,:\,i\in I)$.  Let $N(I,\leq)$ be the transversal
matroid $M[\mathcal{A}]$.  Note that if $i<k$, then
$F_k\subsetneq F_i$, so the presentation $\mathcal{A}$ of $N(I,\leq)$
is a chain of sets under inclusion.  Transversal matroids that have
presentations that are chains of sets are called \emph{nested
  matroids}. (They are also called shifted matroids and Schubert
matroids, among other names; see \cite[Section 4]{lpm2}.)  Note that
$I$ is the Gale basis of $N(I,\leq)$, and an $r$-subset $J$ of $E$ is a
basis of $N(I,\leq)$ if and only if $I\leq_G J$.

\subsection{Grassmann necklaces and base-sortable matroids}

For a matroid $M$ and linear order $e_1<e_2<\cdots<e_n$ on $E(M)$, if
$I_i$ is the Gale basis of $M$ using the $i$-shift $\leq_i$ of $\leq$,
then the $n$-tuple $(I_1,I_2,\ldots,I_n)$ of bases has the properties
\begin{itemize}[leftmargin=*]
\item if $e_i\in I_i$, then $I_{i+1} = (I_i-e_i)\cup e_j$ for some
  $j\in [n]$ ($j$ can be $i$), and
\item if $e_i\not\in I_i$, then $I_{i+1} = I_i$,
\end{itemize}
where $I_{n+1}=I_1$.  An $n$-tuple of $r$-element subsets of a
linearly ordered $n$-element set with these properties is a
\emph{Grassmann necklace of type $(n,r)$.}  The next result combines a
result of Postnikov \cite{post} (the first part) with one of Oh
\cite{oh} (the converse).  In this result and the rest of this paper,
$\mathcal{B}(M)$ denotes the set of bases of a matroid $M$.

\begin{thm}\label{thm:postoh}
  Let $\leq$ be a linear order on $E$.  If $(I_1,I_2,\ldots,I_n)$ is a
  Grassmann necklace of type $(n,r)$ on $E$, then
  \begin{equation}\label{eq:posordereqn}
    \bigcap_{i\in[n]}\mathcal{B}\bigl(N(I_i,\leq_i)\bigr)
  \end{equation}
  is the set of bases of a rank-$r$ positroid on $E$ and $\leq$ is a
  positroid order for this matroid.  Conversely, if $\leq$ is a
  positroid order for a matroid $M$, then $\mathcal{B}(M)$ is the
  intersection \emph{(\ref{eq:posordereqn})} where $I_i$ is the Gale
  basis of $M$ for $\leq_i$.
\end{thm}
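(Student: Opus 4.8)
The plan is to break each biconditional into two inclusions and to isolate the one purely combinatorial fact from the parts that genuinely need total nonnegativity. Start with the converse direction, and within it the easy inclusion, which uses no positivity: if $\leq$ is any linear order on $E(M)$ and $I_i$ denotes the Gale basis of $M$ for the shift $\leq_i$, then by Lemma~\ref{lem:GaleIsMin} the set $I_i$ is the lexicographically least basis of $M$ for $\leq_i$ and, being Gale-minimal, satisfies $I_i\leq_G J$ (in the $\leq_i$-Gale order) for every basis $J$ of $M$; since the bases of the nested matroid $N(I_i,\leq_i)$ are exactly the $r$-subsets $J$ with $I_i\leq_G J$, each basis of $M$ is a basis of $N(I_i,\leq_i)$, so $\mathcal{B}(M)\subseteq\bigcap_{i\in[n]}\mathcal{B}\bigl(N(I_i,\leq_i)\bigr)$.

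The reverse inclusion for the converse, $\bigcap_{i\in[n]}\mathcal{B}\bigl(N(I_i,\leq_i)\bigr)\subseteq\mathcal{B}(M)$ when $\leq$ is a positroid order for $M$, is Oh's contribution and is where positivity is unavoidable. I would fix a representation of $M$ by a totally nonnegative $r\times n$ matrix $A$ whose columns are ordered by $\leq$, take an $r$-subset $J$ belonging to every $\mathcal{B}\bigl(N(I_i,\leq_i)\bigr)$ (equivalently, with $I_i\leq_G J$ in $\leq_i$ for all $i$), and show that the $r\times r$ submatrix of $A$ on the columns $J$ is nonsingular. If instead $J$ were dependent in $M$, one derives --- from a circuit of $M$ contained in $J$ together with the sign constraints that total nonnegativity forces on the maximal minors through three-term Pl\"ucker relations --- that $I_i\not\leq_G J$ for some $i$, contradicting the choice of $J$. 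I expect this to be the main obstacle of the theorem; Oh carries it out by induction on $n$, deleting the $\leq$-least element of $E$ and tracking how both the Grassmann necklace and the nonnegative representation of $M$ restrict to the smaller ground set. Here one also uses the recorded equivalence that $\leq$ is a positroid order for $M$ if and only if every shift $\leq_i$ is.

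With the converse in hand, the forward direction is short. Given a Grassmann necklace $\mathcal{I}=(I_1,\ldots,I_n)$ of type $(n,r)$ on $E$, Postnikov's construction --- via the decorated permutation attached to $\mathcal{I}$ and an associated reduced plabic graph, equivalently a positive parametrization of the corresponding matroid stratum --- produces a totally nonnegative $r\times n$ matrix $A$ whose matroid $M$ has $\mathcal{I}$ as its Grassmann necklace for $\leq$. Then $\leq$ is a positroid order for $M$, so by the converse $\mathcal{B}(M)=\bigcap_{i\in[n]}\mathcal{B}\bigl(N(I_i,\leq_i)\bigr)$; hence this intersection is the base set of a rank-$r$ positroid with $\leq$ a positroid order for it, as claimed. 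The only real work on this side is producing $A$ and checking that its $\leq_i$-Gale bases are the prescribed $I_i$, which is the obstacle here. One could instead try to show directly that $\bigcap_{i\in[n]}\mathcal{B}\bigl(N(I_i,\leq_i)\bigr)$ obeys the basis-exchange axiom and is then realizable by a nonnegative matrix, but that exchange check is itself delicate, so I would favor importing Postnikov's representation and reducing to the converse.
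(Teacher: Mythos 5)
The paper does not prove Theorem~\ref{thm:postoh}; it states it as a combination of results due to Postnikov and Oh, citing \cite{post} and \cite{oh}, so there is no internal proof to compare against. Your proposal mirrors that attribution and is a reasonable blueprint rather than a self-contained argument. The one inclusion you actually carry out in full, $\mathcal{B}(M)\subseteq\bigcap_{i\in[n]}\mathcal{B}\bigl(N(I_i,\leq_i)\bigr)$ via Lemma~\ref{lem:GaleIsMin} together with the fact that the bases of $N(I,\leq)$ are exactly the $r$-sets Gale-above $I$, is correct and requires no positivity, exactly as you say; the paper itself re-uses this observation inside the proof of Theorem~\ref{thm:char} when passing from statement (3) to statement (1). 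The two genuinely hard steps you flag are the right ones: that for a positroid $M$ the intersection contains no non-bases of $M$, and that every Grassmann necklace is realized by some totally nonnegative matrix whose shifted Gale bases are the prescribed $I_i$. Reducing the forward direction to the converse once such a representative matrix is produced is a clean and valid logical move.

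One caveat on your description of Oh's step: the argument in \cite{oh} is combinatorial, proceeding through Postnikov's positroid cell decomposition and its indexing by decorated permutations and Le-diagrams, rather than by extracting a contradiction from a circuit of $J$ together with three-term Pl\"ucker sign constraints on a fixed totally nonnegative matrix. Your sketch of that step is a plausible alternative route but is not what Oh does, and its details (in particular, ensuring that the relevant Pl\"ucker relations alone pin down the sign pattern needed to force $I_i\not\leq_G J$ for some $i$) would still need to be filled in before it could be called a proof. Since your stated intent is to import the result from \cite{oh} rather than reprove it, this does not undermine the correctness of the outline.
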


Positroids first appeared, under the name of base-sortable matroids,
in Blum \cite{blum}.  Let $\leq$ be a linear order on the ground set
$E(M)$ of a matroid $M$ of rank $r$.  Given bases $B$ and $B'$ of $M$,
list the elements of the multiset union $B\cup B'$ in order, taking
multiplicities into account, as $x_1\leq x_2\leq \cdots \leq x_{2r}$.
Let $B_e=\{x_{2i}\,:\,i\in [r]\}$, the set of elements in the even
positions in the list, and $B_o=\{x_{2i-1}\,:\,i\in [r]\}$, the set of
elements in the odd positions.  Set $\sort_\leq(B,B')=\{B_e,B_o\}$.
Thus, $\sort_\leq(B,B')= \sort_\leq(B',B)$.  Under the order dual
$\leq^d$ of $\leq$ (i.e., $e\leq^d f$ if and only if $f\leq e$), the
elements in the even positions are switched with those in the odd
positions, so $\sort_{\leq^d}(B,B') =\sort_{\leq}(B,B')$.  Likewise,
for the $i$-shift, $\sort_{\leq_i}(B,B')=\sort_\leq(B,B')$.
 
\begin{dfn}
  A matroid $M$ is \emph{base-sortable} if there is a linear order on
  $E(M)$ for which, for every pair $B$, $B'$ of bases of $M$, the sets
  $B_e$ and $B_o$ in $\sort(B,B')$ are bases of $M$.  Such a linear
  order on $E(M)$ is called a \emph{base-sorting order} for $M$.
\end{dfn}

Base-sortable matroids are also often called \emph{sort-closed
  matroids}.  As Blum noted, the observations above about $i$-shifts
and the dual order show that $\leq$ is a base-sorting order for $M$ if
and only if $\leq_i$ is a base-sorting order for $M$, and likewise for
the dual order.

Lam and Postnikov \cite[Corollary 9.4]{LamPost} proved that positroids
and base-sortable matroids are the same.  (Key results are in
\cite[Section 4.2]{LamPostI}.)

\begin{thm}\label{thm:bs=p}
  Let $M$ be a matroid.  A linear order on $E(M)$ is a positroid order
  if and only if it is a base-sorting order.
\end{thm}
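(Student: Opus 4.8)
The statement is an equivalence, and I would prove the two implications separately, using as the main engine Theorem~\ref{thm:postoh}, which writes the bases of a positroid (for a fixed positroid order $\leq$) as the intersection $\bigcap_{i\in[n]}\mathcal{B}(N(I_i,\leq_i))$ of base sets of nested matroids, together with the fact noted in the excerpt that sorting is unchanged under passing to a shift, i.e.\ $\sort_{\leq_i}(B,B')=\sort_\leq(B,B')$ for every $i$. The latter lets me do all of the sorting bookkeeping relative to a single order $\leq$.

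\emph{Positroid order $\Rightarrow$ base-sorting order.} The heart of this direction is that each nested matroid $N(I,\leq)$ is base-sortable with base-sorting order $\leq$. Recall that an $r$-subset $J$ of $E$ is a basis of $N(I,\leq)$ if and only if $I\leq_G J$. So let $B,B'$ be bases of $N(I,\leq)$, list the multiset $B\uplus B'$ in $\leq$-order as $x_1\leq x_2\leq\cdots\leq x_{2r}$, and put $\{B_e,B_o\}=\sort_\leq(B,B')$ with $B_o=\{x_1,x_3,\ldots,x_{2r-1}\}$ and $B_e=\{x_2,x_4,\ldots,x_{2r}\}$. Since $x_{2t-1}\leq x_{2t}$ for each $t$, we get $B_o\leq_G B_e$. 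Also $I\leq_G B_o$: the $t$-th smallest element of $B_o$ is $x_{2t-1}$, and since $(t-1)+(t-1)<2t-1$, among the $2t-1$ smallest entries of $B\uplus B'$ at least $t$ come from $B$ or at least $t$ come from $B'$; in the first case the $t$-th smallest element of $B$ is one of those $2t-1$ entries and hence is $\leq x_{2t-1}$, so the $t$-th smallest element of $I$ is $\leq x_{2t-1}$ because $I\leq_G B$, and the second case is symmetric. Hence $I\leq_G B_o\leq_G B_e$, so $B_o$ and $B_e$ are bases of $N(I,\leq)$. Now if $\leq$ is a positroid order for $M$, then $\mathcal{B}(M)=\bigcap_i\mathcal{B}(N(I_i,\leq_i))$ by Theorem~\ref{thm:postoh}, and for bases $B,B'$ of $M$ the one pair $\sort_\leq(B,B')=\sort_{\leq_i}(B,B')$ lies in $\mathcal{B}(N(I_i,\leq_i))$ for every $i$ by the claim applied with the order $\leq_i$, hence lies in the intersection, hence consists of bases of $M$. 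So $\leq$ is a base-sorting order for $M$.

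\emph{Base-sorting order $\Rightarrow$ positroid order.} Let $\leq$ be a base-sorting order for $M$, of rank $r$ on an $n$-element set $E$, and let $I_i$ be the Gale basis of $M$ for $\leq_i$ (it exists by Lemma~\ref{lem:GaleIsMin}). As recorded in the excerpt, $(I_1,\ldots,I_n)$ is a Grassmann necklace, so by the first part of Theorem~\ref{thm:postoh} the set $\mathcal{B}':=\bigcap_i\mathcal{B}(N(I_i,\leq_i))$ is the set of bases of a positroid $M'$ having $\leq$ as a positroid order; it suffices to show $\mathcal{B}(M)=\mathcal{B}'$. The inclusion $\mathcal{B}(M)\subseteq\mathcal{B}'$ is immediate, since $I_i$ is Gale-minimal among the bases of $M$ for $\leq_i$, so every basis of $M$ is $\geq_G I_i$ in the $\leq_i$-Gale order and hence is a basis of $N(I_i,\leq_i)$. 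For the reverse inclusion, suppose $J\in\mathcal{B}'\setminus\mathcal{B}(M)$. Then $J$ is $M$-dependent and so contains an $M$-circuit $C$ with $|C|\geq 2$; moreover no element $j$ of $J$ is an $M$-loop, since taking the shift $\leq_k$ with $e_k=j$, the element $j$ lies in a set of the chain presentation of $N(I_k,\leq_k)$ only if $j\in I_k$, while $j\in J\in\mathcal{B}(N(I_k,\leq_k))$ forbids $j$ from being a loop of $N(I_k,\leq_k)$, forcing $j\in I_k$. It remains to derive a contradiction from base-sortability of $M$ together with the Gale inequalities $I_i\leq_G J$ (in particular the one coming from the shift that makes the $\leq$-largest element of $C$ least), for instance by exchanging along $C$ into bases of $M$, sorting them, and tracking the parities of positions. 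I expect \emph{this last step — equivalently, showing that every $r$-set satisfying all the Grassmann-necklace Gale inequalities is a basis of the base-sortable matroid $M$ — to be the main obstacle}; it is in essence the nontrivial direction of the Lam--Postnikov equivalence \cite{LamPost,LamPostI}, closely tied to the combinatorics behind Theorem~\ref{thm:postoh}, and can be handled by adapting Oh's arguments.

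For a self-contained treatment one should also record the routine auxiliary facts used implicitly: $\leq$ is a base-sorting order for $M$ if and only if each $\leq_i$ and $\leq^d$ is (immediate from $\sort_{\leq_i}=\sort_\leq=\sort_{\leq^d}$), and base-sortability passes to minors — for contraction by a non-loop $e$, a basis of $M/e$ has the form $B-e$ with $e\in B\in\mathcal{B}(M)$, and since $e$ then occurs twice in $B\uplus B'$ it occupies consecutive positions and so lies in both sets of $\sort_\leq(B,B')$, giving $\sort_{\leq|_{E-e}}(B-e,B'-e)=\{B_e-e,\,B_o-e\}\subseteq\mathcal{B}(M/e)$, with the deletion case following by duality.
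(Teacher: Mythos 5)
Your forward direction (positroid order $\Rightarrow$ base-sorting order) is correct and essentially matches what the paper does for the implication (1) $\Rightarrow$ (2) in the proof of Theorem~\ref{thm:char}: for each shift $\leq_i$, counting how many of the $2j-1$ smallest entries of the multiset $B\cup B'$ can lie in $B$ or in $B'$ shows that the $j$th element of the Gale basis $I_i$ is at most $x_{2j-1}\leq x_{2j}$, so $B_o,B_e\in\mathcal{B}(N(I_i,\leq_i))$; then intersect over all $i$ using Theorem~\ref{thm:postoh}.

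The reverse direction has a genuine gap, which you flag yourself. You reduce it to showing that any $r$-set $J$ satisfying $I_i\leq_G J$ for all $i$ is a basis of the base-sortable matroid $M$, and then defer the crux to "adapting Oh's arguments." But Oh's theorem, as packaged in Theorem~\ref{thm:postoh}, already assumes $M$ is a positroid; it cannot be used to show that a merely base-sortable matroid coincides with the positroid carved out by its Grassmann necklace, so invoking it here is circular. Your one-line sketch of exchanging along $C$ and "tracking the parities of positions" gestures at the right idea but is not a proof.

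The paper itself does not give a direct proof of Theorem~\ref{thm:bs=p} (it cites Lam--Postnikov), but it does re-derive the equivalence in the loopless proof of Theorem~\ref{thm:char} by routing (2) through the structural condition (3): for every connected flat $F$ with $2\leq|F|\leq|E(M)|-2$, the contraction $M/F$ is the direct sum of its restrictions to the maximal cyclic intervals $U_1,\ldots,U_h$ disjoint from $F$. The ingredient you are missing is the parity argument that derives (3) from base-sortability. Since $M|F$ is connected one can pick a circuit $C\subseteq F$ containing the two elements of $F$ that cyclically flank $U_1$, extend part of $C$ to a basis $B$ of $M|F$, and, for bases $B_1,B_2\supseteq B$, compare the sorted lists of $B_1\cup B_2$ and $B_1'\cup B_2$, where $B_1'$ is $B_1$ with one flanking element swapped along $C$ for the other; base-sortability forces $|B_1\cap U_1|+|B_2\cap U_1|$ to be even, and iterating single exchanges sharpens this to the equality $|B_1\cap U_j|=|B_2\cap U_j|$ for all $j$, which yields the direct-sum decomposition. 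The step (3) $\Rightarrow$ (1) is then exactly the argument you wanted: an $r$-set containing a nonspanning circuit with closure $F$ has fewer than $r(M/F|U_j)$ elements in some $U_j$ and so fails the Gale inequality for the shift in which $U_j$ is a filter. The obstacle you identified is thus where the work lies, and the connected-flat/cyclic-interval machinery is what the paper uses to overcome it.
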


\subsection{Connectivity}

For a matroid $M$, let $e,f\in E(M)$ be related precisely when $e=f$
or there is a circuit $C$ of $M$ with $\{e,f\}\subseteq C$.  This
relation is an equivalence relation and the equivalence classes are
the \emph{connected components} of $M$ (see \cite[Proposition
4.1.2]{oxley}).  A matroid $M$ is \emph{connected} if it has just one
connected component; equivalently, for each pair of distinct elements
$e,f\in E(M)$, there is a circuit $C$ of $M$ with
$\{e,f\}\subseteq C$.  A \emph{separator} of $M$ is a union of
connected components of $M$.  The \emph{trivial separators} are
$\emptyset$ and $E(M)$.  For a partition $\{X_1,X_2,\ldots,X_k\}$ of
$E(M)$ with $k>1$, we have
\begin{equation}\label{eq:dsdecomp}
  M=(M|X_1)\oplus(M|X_2)\oplus\cdots\oplus (M|X_k)
\end{equation}
if and only if $M$ and the direct sum have the same circuits, and that
holds if and only if each set $X_i$ is a nontrivial separator of $M$.
Since taking direct sums and the dual commute, it follows that a
matroid and its dual have the same connected components, so $M$ is
connected if and only if for each pair of distinct elements
$e,f\in E(M)$, there is a cocircuit $C^*$ of $M$ with
$\{e,f\}\subseteq C^*$.

\begin{lemma}\label{lem:dsdecomprank}
  For a matroid $M$ and a partition $\{X_1,X_2,\ldots,X_k\}$ of
  $E(M)$, Equation (\ref{eq:dsdecomp}) holds if and only if
  $r(M)=r(X_1)+r(X_2)+\cdots+r(X_k)$.
\end{lemma}

\begin{proof}
  Equation (\ref{eq:dsdecomp}) clearly gives
  $r(M)=r(X_1)+\cdots+r(X_k)$.  For the converse, it suffices to prove
  that if the rank equality holds and $C$ is a circuit of $M$, then
  $C\subseteq X_i$ for some $i\in [k]$.  Assume that this is false.
  Set $Y_i=X_i\cap C$ for $i\in [k]$. By our assumption, each set
  $Y_i$ is independent.  Let $B_i$ be a basis of $M|X_i$ with
  $Y_i\subseteq B_i$.  This gives the contradiction that
  $B_1\cup \cdots\cup B_k$ spans $M$, has size $r(M)$, but contains
  the circuit $C$.
\end{proof}

We say that a set $X$ in a matroid $M$ is \emph{connected} if the
restriction $M|X$ is connected.

\begin{lemma}\label{lem:connflcontr}
  Let $F$ be a nonempty connected flat of a matroid $M$.  If $A$ is a
  nonempty connected flat of the contraction $M/F$, then either $A$ or
  $A\cup F$ is a connected flat of $M$.
\end{lemma}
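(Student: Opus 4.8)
The plan is to work directly with circuits and closure, using the correspondence between connectivity in $M/F$ and circuits of $M$ that meet the complement of $F$. First I would recall the basic fact: the circuits of $M/F$ are the minimal nonempty members of the collection $\{C - F : C \text{ a circuit of } M,\ C - F \neq \emptyset\}$. Since $A$ is connected in $M/F$ with $A\ne\emptyset$, any two elements $e,f$ of $A$ lie in a common circuit of $M/F$, hence there is a circuit $C$ of $M$ with $\{e,f\}\subseteq C$ and $C - F \subseteq A$ (replacing $C$ by a circuit contained in $C$ if necessary, using that $C-F$ contains a circuit of $M/F$ through both $e$ and $f$ once $|A|\ge 2$; if $|A|=1$ the claim is immediate since a single element spanning a point is handled directly). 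The key observation is that any such circuit $C$ satisfies $C\subseteq A\cup F$: indeed $C - F\subseteq A$ by construction and $C\cap F\subseteq F$ trivially. Moreover $C\cap F$ can be absorbed because $F$ is connected.

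Next I would show $A\cup F$ is connected in $M$. Take $C$ as above joining $e,f\in A$; if $C\cap F=\emptyset$ then $e,f$ already lie in a common circuit contained in $A\cup F$. If $C\cap F\ne\emptyset$, pick $g\in C\cap F$; since $F$ is a connected flat with $|F|\ge 2$ (note $F$ nonempty and connected, and if $|F|=1$ then $M/F = M\del F$ on that coloop-free... actually $F$ a connected flat of size $1$ means $F$ is a loop, excluded since we may assume $F$ loopless here—if $F$ is a single non-loop it is a coloop-free singleton only when it's a loop, contradiction; so either $|F|\ge 2$ or $F$ is a single loop, and the loop case makes $M/F=M\del F$ with the statement trivial), every pair in $F$ lies in a common circuit of $M|F\subseteq M$, and one can splice circuits through $F$ to connect all of $C\cap F$ into one connected piece together with $e$ and $f$; since connectivity is an equivalence-like relation generated by "lying in a common circuit," this shows all of $A\cup F$ is connected in $M$ (every element of $A$ is joined to $F$, and $F$ itself is connected).

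It remains to decide when $A\cup F$ is a flat, and to handle the case where it is not. I would compute the closure: an element $x\in E(M)-(A\cup F)$ lies in $\cl_M(A\cup F)$ iff $r_M(A\cup F\cup x)=r_M(A\cup F)$. Using $r_{M/F}(A) = r_M(A\cup F)-r_M(F)$ and the fact that $A$ is a flat of $M/F$, we get $r_{M/F}(A\cup x) > r_{M/F}(A)$ for $x\notin A$, i.e. $r_M(A\cup F\cup x) > r_M(A\cup F)$ for $x\notin A\cup F$. This says $A\cup F$ \emph{is} already a flat of $M$ whenever $F$ is a flat of $M$. Wait—but then the lemma's "either $A$ or $A\cup F$" disjunction suggests $A$ alone can sometimes be the flat; that happens precisely when $F\subseteq\cl_M(A)$, i.e. when adjoining $F$ to $A$ does not increase rank in $M$, equivalently $A$ is a flat of $M$ that already contains $F$ in its span—but $A\subseteq E(M)-F$, so this forces $F\subseteq\cl_M(A)$ with $A\cap F=\emptyset$. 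In that degenerate situation $\cl_M(A) = A\cup(\text{stuff})$; one checks $A$ itself is then a connected flat of $M$ (connectivity of $A$ in $M$ follows because the contraction $M/F$ restricted to $A$ equals $(M|\cl_M(A))/F$ and $F\subseteq\cl_M(A)$, so the circuits are inherited). The main obstacle I anticipate is this careful case analysis around closure—deciding cleanly whether $\cl_M(A\cup F)$ equals $A\cup F$ or properly contains it, and verifying that in the latter case one can retreat to $A$ being the desired flat rather than ending up with something strictly between $A$ and $A\cup F$; pinning down that the dichotomy is exhaustive (no third option) is the delicate point, and I would resolve it by showing $\cl_M(A\cup F)-F = A$ always (since $A$ is a flat of $M/F$), so $\cl_M(A\cup F)\subseteq A\cup F$, forcing equality, which makes $A\cup F$ a flat outright; the "$A$" alternative in the statement then covers exactly the sub-case $F\subseteq\cl_M(A)$ where $\cl_M(A)=A$ and one prefers the smaller flat. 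The connectivity half, handled above via circuit-splicing through the connected flat $F$, I expect to be routine.
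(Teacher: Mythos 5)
The proposal has a genuine gap, and it is the central one. You assert that $A\cup F$ is always connected in $M$, with the justification tucked into a parenthetical: ``every element of $A$ is joined to $F$, and $F$ itself is connected.'' But the first half of that parenthetical is never proved, and it is false in general. Take $M = U_{1,2}\oplus U_{1,2}$ on $\{a,b\}\cup\{c,d\}$, let $F=\{a,b\}$ (a connected flat), so $M/F\cong U_{1,2}$ on $\{c,d\}$, and let $A=\{c,d\}$ (a connected flat of $M/F$). Then $A\cup F=E(M)$ is disconnected, and the only circuit of $M$ meeting $A$ is $\{c,d\}$, which is disjoint from $F$; no element of $A$ is joined to $F$. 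What your circuit argument actually proves is only that any two elements of $A$ lie in a common circuit contained in $A\cup F$, i.e.\ that $A$ lies inside a single connected component of $M|(A\cup F)$. It does not follow that this component meets $F$.

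This error then propagates into a misreading of why the lemma has a disjunction. Your closure computation is correct: $A\cup F$ is \emph{always} a flat of $M$ (this is immediate from $A$ being a flat of $M/F$; no case analysis is needed). The ``either $A$ or $A\cup F$'' in the statement is therefore not about which of the two is a flat but about which is \emph{connected}. Relatedly, the sub-case you propose for when $A$ is the answer, namely $F\subseteq\cl_M(A)$, cannot occur: since $A$ is a flat of $M/F$, the set $A\cup F$ is a flat of $M$, so $\cl_M(A)\subseteq A\cup F$, and if $F\subseteq\cl_M(A)$ then $\cl_M(A)=A\cup F\supsetneq A$, meaning $A$ is \emph{not} a flat of $M$ --- the opposite of what you want. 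The argument that actually works is to decompose $M|(A\cup F)$ into its connected components $M_1\oplus\cdots\oplus M_k$: since $F$ is connected it lies in one component, say $E(M_1)$, and then reading off the decomposition of $M/F|A$ and using that $A$ is connected in $M/F$ forces either $k=1$ (so $A\cup F$ is the connected flat) or $F=E(M_1)$ and $k=2$ (so $A=E(M_2)$ is the connected flat). Your circuit-splicing machinery is not wrong where you apply it, but it never resolves this dichotomy, which is the actual content of the lemma.
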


\begin{proof}
  Now $A\cup F$ is a flat of $M$ since $A$ is a flat of $M/F$.  Let
  $M|(A\cup F)$ be the direct sum
  $M_1\oplus M_2\oplus \cdots \oplus M_k$ where each $M_i$ is
  connected.  Since $F$ is connected in $M$, we may assume that
  $F\subseteq E(M_1)$.  If $F= E(M_1)$, then
  $M/F|A=M_2\oplus \cdots \oplus M_k$, so, since $A$ is a connected
  flat of $M/F$, we must have $k=2$, and therefore $A$ is a connected
  flat of $M$.  If $F\subsetneq E(M_1)$, then, since
  $M/F|A=(M_1/F)\oplus M_2\oplus \cdots \oplus M_k$ and $A$ is a
  connected flat of $M/F$, we must have $k=1$, so the flat $A\cup F$
  of $M$ is connected.
\end{proof}

The simple properties in the next lemma will come up at several points.

\begin{lemma}\label{lem:cocofcon}
  All circuits of a restriction of a matroid $M$ are circuits of
  $M$. Dually, all cocircuits of a contraction of $M$ are cocircuits
  of $M$.  In particular, coloops of contractions of $M$ are coloops
  of $M$.
\end{lemma}

Recall that the closure operator $\cl$ of a matroid $M$ can be cast in
terms of circuits as follows: for $X\subseteq E(M)$, we have
$\cl(X)=X\cup \{e\,:\, Y\cup e \text{ is a circuit for some }
Y\subseteq X\}$.

\begin{lemma}\label{lem:conncond}
  Let $M$ be a matroid with no loops.
  \begin{itemize}
  \item[\emph{(1)}] If $C$ is a circuit of $M$, then $\cl(C)$ is
    connected.
  \item[\emph{(2)}] If $M$ has a spanning circuit $C$, i.e.,
    $r(C)=r(M)$, then $M$ is connected.
  \item[\emph{(3)}] If some $e\in E(M)$ is not a coloop and is in no
    proper connected nonsingleton flat of $M$, then $M$ is connected,
    as is $M/F$ for any flat $F$ of $M$ with $e\not\in F$.
  \end{itemize}
\end{lemma}

\begin{proof}
  Part (1) follows from two items recalled above, the equivalence
  relation, applied to $M|\cl(C)$, and the formulation of $\cl(C)$.
  Part (2) follows from that since $\cl(C)=E(M)$.  For part (3), since
  $e$ is not a coloop, $e$ is in at least one circuit, say $C$, and
  $C$ spans $M$ since $\cl(C)$ cannot be a proper flat, so $M$ is
  connected by part (2).  For the second assertion, it suffices to
  show that the hypotheses of part (3) hold for $M/F$.  Now $e$ is not
  a coloop of $M/F$ by Lemma \ref{lem:cocofcon}.  Let the connected
  components of $M|F$ be $F_1,F_2,\ldots,F_t$.  No proper connected
  nonsingleton flat of $M/F_1$ contains $e$ since otherwise $e$ would
  be in a proper connected nonsingleton flat of $M$ by Lemma
  \ref{lem:connflcontr}.  The same argument, using $M/F_1$, then shows
  that $e$ is in no proper connected nonsingleton flat of
  $M/(F_1\cup F_2)$.  Iterating this show that, as needed, $e$ is in
  no proper connected nonsingleton flat of $M/F$.
\end{proof}
  
\subsection{Cyclic flats,  clones, and paving matroids}

Some of the excluded minors for the class of positroids that we
present in Section \ref{sec:exmin} will be defined by giving their
cyclic flats and the ranks of those flats, so we briefly review this
perspective on matroids.  Given a matroid $M$, a subset $A$ of $E(M)$
is \emph{cyclic} if the restriction $M|A$ has no coloops.
Equivalently, $A$ is cyclic if it is a (possibly empty) union of
circuits. A \emph{cyclic flat} is a flat that is cyclic.  Let
$\mathcal{Z}(M)$ denote the set of cyclic flats of $M$.  Note that
$\mathcal{Z}(M)$ is a lattice: the join of cyclic flats $A$ and $B$ is
$\cl(A\cup B)$, and their meet is $(A\cap B)-C$ where $C$ is the set
of coloops of $M|(A\cap B)$.  All connected flats $F$ with $|F|\geq 2$
are cyclic flats, but not conversely.

There is no connection between the terms \textsl{cyclic interval} and
\textsl{cyclic flat}, both of which are well established in the
literature.  For instance, using the usual order, in the cycle matroid
of the graph shown in Figure \ref{fig:cyclicvscyclic}, which is a
positroid, the set $\{3,6,9\}$ is a cyclic flat that is not a cyclic
interval, while $\{6,7\}$ is a cyclic interval that is a flat, but it
is not a cyclic flat.

\begin{figure}
  \centering
  \begin{tikzpicture}[scale=0.8]
    \tikzset{VertexV/.style = {circle, scale=0.9, fill=black!30,
        draw=black, thick}}

    \node[VertexV] (1) at (90:1) {};%
    \node[VertexV] (2) at (210:1) {};%
    \node[VertexV] (3) at (330:1) {};%
    \node[VertexV] (a) at (30:2) {};%
    \node[VertexV] (b) at (150:2) {};%
    \node[VertexV] (c) at (270:2) {};%

    \foreach \from/\to in {1/2,2/3,3/1,1/a,1/b,2/b,2/c,3/c,3/a}
    \draw[very thick](\from)--(\to);

    \node at (0,-0.75) {\small$9$};%
    \node at (0.65,0.3) {\small$6$};%
    \node at (-0.65,0.3) {\small$3$};%
    \node at (-1,1.2) {\small$2$};%
    \node at (1,1.2) {\small$4$};%
    \node at (-1.5,0.2) {\small$1$};%
    \node at (1.5,0.2) {\small$5$};%

     \node at (-0.6,-1.4) {\small$7$};%
     \node at (0.6,-1.4) {\small$8$};%

  \end{tikzpicture}
  \caption{A graph whose cycle matroid is a
    positroid.}\label{fig:cyclicvscyclic}
\end{figure}

For any set $X$ in a matroid $M$, if $F$ is the closure of the union
of the circuits of $M|X$, then $F$ is a cyclic flat of $M$ and
$r(X)=r(F)+|X-F|$ since the elements of $X-F$ are coloops of $M|X$.
If $A\subseteq E(M)$, then $r(X)\leq r(X\cap A)+|X-A|\leq r(A)+|X-A|$.
Thus, $r(X)=\min\{r(F)+|X-F|\,:\,F\in\mathcal{Z}(M)\}$.  So $M$ is
determined by its cyclic flats and their ranks, that is, by the set
$\{(A,r(A))\,:\,A\in \mathcal{Z}(M)\}$.  The next result, from
\cite{sims,cycflats}, provides an axiom scheme for matroids from this
perspective.

\begin{thm}\label{thm:cfaxioms}
  For a set $\mathcal{Z}$ of subsets of a set $E$ and a function
  $r : \mathcal{Z}\to \mathbb{Z}$, there is a matroid $M$ on $E$ with
  $\mathcal{Z}(M) = \mathcal{Z}$ and $r_M(X) = r(X)$ for all
  $X\in\mathcal{Z}$ if and only if
  \begin{itemize}
  \item[\emph{(Z0)}] $(\mathcal{Z},\subseteq)$ is a lattice,
  \item[\emph{(Z1)}] $r(0_\mathcal{Z})=0$, where $0_\mathcal{Z}$ is
    the least set in $\mathcal{Z}$,
  \item[\emph{(Z2)}] $0<r(Y)-r(X)<|Y-X|$ for all $X,Y \in \mathcal{Z}$
    with $X\subsetneq Y$, and
  \item[\emph{(Z3)}] for all sets $X, Y$ in $\mathcal{Z}$ (or,
    equivalently, just incomparable sets in $\mathcal{Z}$),
    \begin{equation}\label{ineq:submcyc}
      r(X\vee Y)+r(X\wedge Y)+|(X\cap Y)-(X\wedge Y)|\leq r(X)+r(Y).
    \end{equation}
  \end{itemize}
\end{thm}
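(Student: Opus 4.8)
The plan is to prove the two implications separately. For necessity, let $M$ be a matroid with $\mathcal{Z}(M)=\mathcal{Z}$ and with $r$ the restriction of $r_M$ to $\mathcal{Z}$. Condition (Z0) is the fact, noted just above, that the cyclic flats of a matroid form a lattice, and (Z1) holds because the least cyclic flat is the set of loops of $M$, which has rank $0$. For (Z2): if $X\subsetneq Y$ in $\mathcal{Z}$, then choosing $e\in Y-X$ and using that $X$ is a flat gives $r(X)<r(X\cup e)\le r(Y)$; and since $M|Y$ has no coloops and $X\subseteq Y$, one checks that $(Y-e)\cup X=Y-e$, so no element of the nonempty set $Y-X$ is a coloop of $(M|Y)/X$, whence $Y-X$ is dependent in $M/X$ and $r(Y)-r(X)=r_{M/X}(Y-X)<|Y-X|$. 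For (Z3): the join $X\vee Y$ is $\cl(X\cup Y)$, so $r(X\vee Y)=r(X\cup Y)$, and the meet $X\wedge Y$ is $X\cap Y$ with the coloops of $M|(X\cap Y)$ removed, so $r(X\wedge Y)+|(X\cap Y)-(X\wedge Y)|=r(X\cap Y)$; substituting these, (Z3) becomes the submodular inequality $r(X\cup Y)+r(X\cap Y)\le r(X)+r(Y)$.

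For sufficiency, suppose $\mathcal{Z}$ and $r$ satisfy (Z0)--(Z3). I would define $\rho\colon 2^E\to\mathbb{Z}$ by
$$\rho(A)=\min\{\,r(Z)+|A-Z|\ :\ Z\in\mathcal{Z}\,\},$$
the minimum being over the nonempty finite set $\mathcal{Z}$, and show that $\rho$ is a matroid rank function. From (Z1) and (Z2), every $Z\in\mathcal{Z}$ has $r(Z)\ge 0$, so every term of the minimum is nonnegative while the term for the least cyclic flat $0_{\mathcal{Z}}$ is at most $|A|$; hence $0\le\rho(A)\le|A|$ and $\rho(\emptyset)=0$. Monotonicity is immediate from the formula. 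The crux is submodularity: given $Z_A,Z_B\in\mathcal{Z}$ attaining $\rho(A)$ and $\rho(B)$, I would bound $\rho(A\cup B)$ using $Z_A\vee Z_B$ and $\rho(A\cap B)$ using $Z_A\wedge Z_B$; applying (Z3) to $Z_A,Z_B$ then reduces $\rho(A)+\rho(B)\ge\rho(A\cup B)+\rho(A\cap B)$ to a purely combinatorial inequality among $A$, $B$, and the four sets $Z_A\wedge Z_B$, $Z_A$, $Z_B$, $Z_A\vee Z_B$, which I would verify by checking the contribution of an arbitrary $x\in E$ to each side according to which of the regions determined by $Z_A\wedge Z_B\subseteq Z_A\cap Z_B\subseteq Z_A,Z_B\subseteq Z_A\cup Z_B\subseteq Z_A\vee Z_B$ contains $x$. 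Arranging this reduction so that (Z3) exactly absorbs the surplus term $|(Z_A\cap Z_B)-(Z_A\wedge Z_B)|$, and then dispatching the region-by-region check, is the step I expect to be most delicate.

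It then remains to show that the matroid $M=M_\rho$ so obtained has $\mathcal{Z}(M)=\mathcal{Z}$ and $r_M=r$ on $\mathcal{Z}$. First I would show that, for each $Z\in\mathcal{Z}$, the minimum defining $\rho(Z)$ is attained only at $Z$ itself, so $\rho(Z)=r(Z)$: for $Z'\in\mathcal{Z}$, combining (Z3) for $Z,Z'$, monotonicity of $r$ on $\mathcal{Z}$, (Z2) for $Z\wedge Z'\subseteq Z$, and the identity $|Z-(Z\wedge Z')|=|Z-Z'|+|(Z\cap Z')-(Z\wedge Z')|$ yields $r(Z')+|Z-Z'|\ge r(Z)$, with equality forcing $Z'=Z$. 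Using this strictness, a short check shows $\rho(Z-e)=\rho(Z)$ for every $e\in Z$ and $\rho(Z\cup e)>\rho(Z)$ for every $e\in E-Z$, so each $Z\in\mathcal{Z}$ is a cyclic flat of $M$; hence $\mathcal{Z}\subseteq\mathcal{Z}(M)$.

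For the reverse inclusion, let $F$ be a cyclic flat of $M$ and pick $Z_0\in\mathcal{Z}$ with $\rho(F)=r(Z_0)+|F-Z_0|$. If some $e$ lay in $F-Z_0$, then $\rho(F-e)\le r(Z_0)+|F-Z_0|-1<\rho(F)$, contradicting that $M|F$ has no coloops; so $F\subseteq Z_0$ and $\rho(F)=r(Z_0)$. If some $e$ lay in $Z_0-F$, then $F\cup e\subseteq Z_0$ would give $\rho(F\cup e)\le r(Z_0)=\rho(F)$, contradicting that $F$ is a flat; so $Z_0\subseteq F$. Thus $F=Z_0\in\mathcal{Z}$, which gives $\mathcal{Z}(M)\subseteq\mathcal{Z}$, and combined with the previous paragraph this completes the proof.
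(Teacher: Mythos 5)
The paper does not prove Theorem~\ref{thm:cfaxioms}; it cites it to Sims \cite{sims} and Bonin--de Mier \cite{cycflats}, so there is no in-paper proof to compare against. Your argument is, however, correct and follows the approach of \cite{cycflats}: necessity by interpreting $\vee$, $\wedge$, and the coloop-count term so that (Z3) is literally submodularity, and sufficiency via the rank formula $\rho(A)=\min\{r(Z)+|A-Z| : Z\in\mathcal{Z}\}$, with submodularity of $\rho$ reduced by (Z3) to a pointwise counting inequality, and with the strictness of $r(Z')+|Z-Z'|\ge r(Z)$ (for $Z'\ne Z$) driving the identification $\mathcal{Z}(M_\rho)=\mathcal{Z}$. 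I checked the two steps you flagged as delicate: the pointwise inequality
\[
|(A\cup B)-(Z_A\vee Z_B)|+|(A\cap B)-(Z_A\wedge Z_B)|\le |A-Z_A|+|B-Z_B|+|(Z_A\cap Z_B)-(Z_A\wedge Z_B)|
\]
does hold element by element (the only non-obvious case, $x\in A\cap B$, reduces to $\max(a,b)\le M$ in the obvious indicator notation), and the chain (Z3) $\Rightarrow$ $r(Z')\ge r(Z\wedge Z')+|(Z\cap Z')-(Z\wedge Z')|$, then adding $|Z-Z'|$ and invoking (Z2) on $Z\wedge Z'\subsetneq Z$, does give the needed strict inequality whenever $Z'\ne Z$ (with the comparable cases handled directly). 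So the proposal is sound and matches the standard proof of the cited result.
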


A set $A$ is cyclic in a matroid $M$ if and only if $E(M)-A$ is a flat
of the dual matroid, $M^*$, since $A$ being a union of circuits of $M$
is equivalent to $E(M)-A$ being an intersection of hyperplanes
(cocircuit complements) of $M^*$.  Thus, $X$ is a cyclic flat of $M$
if and only if $E(M)-X$ is a cyclic flat of $M^*$, and so
$\mathcal{Z}(M^*)$, the lattice of cyclic flats of $M^*$, is
isomorphic to the order dual of $\mathcal{Z}(M)$.  In particular, just
as $\cl_M(\emptyset)$, the set of loops of $M$, is the least cyclic
flat of $M$, the set $E(M)-\cl_{M^*}(\emptyset)$ is the greatest
cyclic flat of $M$.  The first assertion in the lemma below is easy to
see directly, and the second follows by duality.

\begin{lemma}\label{lem:cyclicminor}
  Let $X$ be a cyclic flat of $M$.  The lattice $\mathcal{Z}(M|X)$ of
  cyclic flats of the restriction $M|X$ is the interval
  $[\cl(\emptyset),X]$ in $\mathcal{Z}(M)$.  The lattice
  $\mathcal{Z}(M/X)$ of cyclic flats of the contraction $M/X$ is
  $\{F-X\,:\,F \in\mathcal{Z}(M) \text{ and } X\subseteq F\}$, which
  is isomorphic to the upper interval $[X,E(M)-\cl_{M^*}(\emptyset)]$
  of cyclic flats that contain $X$ in $\mathcal{Z}(M)$.
\end{lemma}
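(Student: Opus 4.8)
The plan is to prove the two assertions separately, using the description of the cyclic flats of restrictions directly and then invoking duality for contractions. For the first assertion, I would argue that $\mathcal{Z}(M|X)$ consists exactly of those cyclic flats $F$ of $M$ with $F\subseteq X$. First I would observe that for a flat $X$ of $M$ and any $F\subseteq X$, the set $F$ is a flat of $M|X$ if and only if it is a flat of $M$ (since $X$ is closed, $\cl_{M|X}(F)=\cl_M(F)\cap X=\cl_M(F)$ when $F\subseteq X$, and $\cl_M(F)\subseteq\cl_M(X)=X$). Next I would note that cyclicity is intrinsic to the restriction: a subset $A\subseteq X$ is a union of circuits of $M|X$ if and only if it is a union of circuits of $M$, because the circuits of $M|X$ are precisely the circuits of $M$ contained in $X$. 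Combining these, $F\in\mathcal{Z}(M|X)$ iff $F$ is a cyclic flat of $M$ with $F\subseteq X$, which, since $\cl(\emptyset)$ (the loops of $M$, which equal the loops of $M|X$) is the least such, is exactly the interval $[\cl(\emptyset),X]$ in $\mathcal{Z}(M)$. One should also check the rank function agrees, but $r_{M|X}=r_M$ on subsets of $X$ by definition of restriction, so there is nothing to do there.

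For the second assertion, the cleanest route is duality. Since $X$ is a cyclic flat of $M$, the complement $E(M)-X$ is a cyclic flat of $M^*$ (as recalled in the paragraph preceding the lemma). Now use the standard identity $(M/X)^* = M^*\del X = M^*|(E(M)-X)$. Applying the first assertion to the cyclic flat $E(M)-X$ of $M^*$ gives that $\mathcal{Z}\bigl(M^*|(E(M)-X)\bigr)$ is the interval $[\cl_{M^*}(\emptyset),\,E(M)-X]$ in $\mathcal{Z}(M^*)$; that is, $\mathcal{Z}((M/X)^*)$ is this interval. Dualizing back — using that $\mathcal{Z}(N^*)$ is the order dual of $\mathcal{Z}(N)$ via complementation — we get that $\mathcal{Z}(M/X)$ is obtained by complementing within $E(M/X)=E(M)-X$: explicitly, $F'\in\mathcal{Z}(M/X)$ iff $(E(M)-X)-F'\in[\cl_{M^*}(\emptyset),E(M)-X]$ in $\mathcal{Z}(M^*)$, iff its complement in $E(M)$, namely $F'\cup X$, is a cyclic flat of $M$ containing $X$. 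This yields $\mathcal{Z}(M/X)=\{F-X : F\in\mathcal{Z}(M),\ X\subseteq F\}$. The identification with the upper interval $[X,\,E(M)-\cl_{M^*}(\emptyset)]$ of $\mathcal{Z}(M)$ is then immediate: the map $F\mapsto F-X$ is an order isomorphism from $\{F\in\mathcal{Z}(M):X\subseteq F\}$ onto $\mathcal{Z}(M/X)$, and the greatest cyclic flat of $M$ is $E(M)-\cl_{M^*}(\emptyset)$, already noted before the lemma.

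The only mild subtlety — the step I'd watch most carefully — is bookkeeping with the loops. One must confirm that the least element of $\mathcal{Z}(M|X)$ really is $\cl_M(\emptyset)$ and not merely some set contained in $X$; this holds because every loop of $M$ lies in $X$ (loops lie in every flat) and is a loop of $M|X$, and conversely loops of $M|X$ are loops of $M$. Dually, in the contraction, one must track that the least cyclic flat of $M/X$ is $\cl_{M/X}(\emptyset)=\cl_M(X)-X=X-X=\emptyset$ — consistent with $X$ itself being the least element of the source interval — and that the top element $E(M)-\cl_{M^*}(\emptyset)$ does contain $X$, which follows since the greatest cyclic flat contains every cyclic flat, in particular $X$. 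Everything else is a direct translation through the duality $\mathcal{Z}(N^*)\cong\mathcal{Z}(N)^{\mathrm{op}}$ and the minor identities $(M/X)^*=M^*\del X$ and $(M|X)^*=M^*/(E(M)-X)$, both of which are standard and may be used freely.
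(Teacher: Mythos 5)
Your proposal is correct and follows exactly the route the paper indicates: the paper disposes of this lemma with the single remark that the first assertion is easy to see directly and the second follows by duality, and your write-up simply fills in those details — the restriction case via the fact that flats and circuits of $M|X$ inside the flat $X$ are precisely the flats and circuits of $M$ contained in $X$, and the contraction case via $(M/X)^* = M^*|(E(M)-X)$ together with the complementation isomorphism $\mathcal{Z}(N^*)\cong\mathcal{Z}(N)^{\mathrm{op}}$. The bookkeeping about loops (and, dually, coloops) that you flag at the end is the right point to be careful about, and your resolution of it is correct.
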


From the remarks before Theorem \ref{thm:cfaxioms}, it follows that
the automorphisms of $M$ are the permutations of $E(M)$ that map each
cyclic flat of $M$ to a cyclic flat of the same rank.  Elements $e$
and $f$ in a matroid $M$ are \emph{clones of $M$} if the transposition
$(e,f)$, which maps $e$ to $f$, and $f$ to $e$, and fixes all other
elements of $E(M)$, is an automorphism of $M$. Since $M$ and $M^*$
have the same automorphisms, two elements are clones in a matroid if
and only if they are clones in its dual.  A subset $X$ of $E(M)$ is a
\emph{set of clones of $M$} if $e,f\in X$ are clones whenever
$e\ne f$. The next lemma holds since any permutation of $E(M)$ that
fixes each element that is not in a set $X$ of clones is a composition
of transpositions of clones.

\begin{lemma}\label{lem:autclone}
  For any matroid $M$ and set $X$ of clones of $M$, any permutation
  $\phi$ of $E(M)$ for which $\phi(e)=e$ for all $e\in E(M)-X$ is an
  automorphism of $M$,
\end{lemma}
  
Note that $e$ and $f$ are clones in $M$ if and only if they are in the
same cyclic flats of $M$, and, if $M$ has no loops, that is equivalent
to $e$ and $f$ being in the same connected flats $F$ of $M$ with
$|F|\geq 2$ since such flats, which are cyclic, are the connected
components of all cyclic flats.  Thus, if $X$ is a set of clones and
$F$ is a connected flat with $|F|\geq 2$, then either
$X\cap F=\emptyset$ or $X\subseteq F$.  For any $Y\subseteq E(M)$, the
closure in $M$ of any cyclic flat of $M|Y$ is a cyclic flat of $M$, so
clones in $M$ that are in $Y$ are clones in $M|Y$.  By duality, the
same holds for $M/Y$.  Thus, if $X$ is a set of clones of $M$, then
$X$ is a set of clones in any minor $M'$ of $M$ for which
$X\subseteq E(M')$.  The characterization of clones in terms of cyclic
flats makes it evident that the relation on $E(M)$ in which
$e,f\in E(M)$ are related when $e=f$ or $e$ and $f$ are clones is an
equivalence relation; the equivalence classes are the \emph{clonal
  classes} of $M$.

A matroid $M$ of rank $r$ is a \emph{paving matroid} if each subset
$X$ of $E(M)$ with $|X|<r$ is independent.  Thus, each set of size at
most $r-2$ is a flat, so the only flats of a paving matroid $M$ that
may be dependent are $E(M)$ and the hyperplanes.  A paving matroid
that has no dependent hyperplanes is a uniform matroid: in the
\emph{uniform matroid} $U_{r,n}$ on an $n$-element set $E$, the bases
are all $r$-subsets of $E$.  The cyclic flats of a connected paving
matroid $M$ are $\emptyset$, $E(M)$, and (if any) the dependent
hyperplanes.  A paving matroid is \emph{sparse paving} if all of its
dependent hyperplanes are circuits (that is, circuit-hyperplanes). For
example, the rank-$3$ matroids $\mathcal{W}_3$ and $\mathcal{W}^3$
shown in Figure \ref{fig:whirl} and discussed in the next subsection
are sparse paving matroids; in both, $\{1,2,6\}$, $\{2,3,4\}$, and
$\{4,5,6\}$ are circuit-hyperplanes, while $\{1,3,5\}$ is a
circuit-hyperplane of $\mathcal{W}_3$ but not of $\mathcal{W}^3$.  It
follows from Theorem \ref{thm:cfaxioms} that a connected paving
matroid $M$ on $E$ of rank $r\geq 2$ that is not uniform can be
specified by giving subsets $H_1,H_2,\ldots,H_t$ (the dependent
hyperplanes) of $E$ for which (a) $r\leq |H_i|\leq |E|-2$, for all
$i\in [t]$, and (b) $|H_i\cap H_j|\leq r-2$, for all distinct
$i,j\in [t]$; given such sets, letting $\mathcal{Z}$ be
$\{\emptyset, E, H_1,H_2,\ldots,H_t\}$ and setting $r(\emptyset)=0$,
$r(H_i)=r-1$ for all $i\in[t]$, and $r(E)=r$ defines a paving matroid
(condition (a) gives property (Z2) and condition (b) gives the only
nontrivial cases of property (Z3)).

\subsection{Lattice path and multi-path matroids}\label{ssec:lpmmpm}

Two special classes of positroids are the class of lattice path
matroids \cite{lpm} and the larger class of multi-path matroids
\cite{multipath}.  A matroid $M$ is a \emph{lattice path matroid} if
and only if it is transversal and there is a linear order on $E(M)$
and a presentation $\mathcal{A}$ of $M$ that is an antichain of
intervals in $E(M)$, that is, each set in $\mathcal{A}$ is an interval
in $E(M)$ and no set in $\mathcal{A}$ is a subset of another set in
$\mathcal{A}$.  A \emph{multi-path matroid} is a transversal matroid
$M$ that has a presentation by an antichain of cyclic intervals in
some linear order on $E(M)$.

An important matroid that is a multi-path matroid but not a lattice
path matroid is the \emph{$n$-whirl}, $\mathcal{W}^n$, for $n\geq 3$.
This rank-$n$ matroid is formed from the cycle matroid $\mathcal{W}_n$
of an $n$-spoke wheel (an $n$-vertex cycle with one more vertex, the
hub, that is adjacent to all other vertices) by relaxing a
circuit-hyperplane.  (Only when $n=3$ does $\mathcal{W}_n$ have more
than one circuit-hyperplane.)  The sets
$\{1,2,3\}, \{3,4,5\},\ldots,\{2n-1,2n,1\}$ give a presentation of
$\mathcal{W}^n$ by cyclic intervals in the ground set $[2n]$ under the
usual linear order.  See Figure \ref{fig:whirl}.

\begin{figure}
  \centering
  \begin{tikzpicture}[scale=1.2]
   \tikzset{VertexV/.style = {circle, scale=0.8, fill=black!30,
        draw=black, thick}}

    \node[VertexV] (1) at (0,0) {};%
    \node[VertexV] (2) at (0,1) {};%
    \node[VertexV] (3) at (0.866,-0.5) {};%
    \node[VertexV] (4) at (-0.866,-0.5) {};%

    \foreach \from/\to in {1/2,2/3,3/1,1/4,2/4,3/4}%
    \draw[very thick](\from)--(\to);
   
    \node at (0.12,0.3) {\small$6$};%
    \node at (-0.3,-0.33) {\small$2$};%
    \node at (0.3,-0.33) {\small$4$};%
    \node at (0.7,0.1) {\small$5$};%
    \node at (-0.7,0.1) {\small$1$};%
    \node at (0,-0.7) {\small$3$};%

    \draw[thick] (3,1)--(3.866,-0.5)--(2.134,-0.5)--(3,1);%
    \draw [thick] (2.567,0.25) to [out=270,in=180] (3,-0.5) to
    [out=0,in=270] (3.433,0.25);%
    
    \filldraw (3,1) node[above=2] {\small$6$} circle (2.5pt); %
    \filldraw (3,-0.5) node[below=3] {\small$3$} circle (2.5pt);%
    \filldraw (2.567,0.25) node[left=2] {\small$1$} circle (2.5pt);%
    \filldraw (3.866,-0.5) node[below=3] {\small$4$} circle (2.5pt);%
    \filldraw (2.134,-0.5) node[below=3] {\small$2$} circle (2.5pt);%
    \filldraw (3.433,0.25) node[right=2] {\small$5$} circle (2.5pt);%

    \node at (3,-1.2) {\small$\mathcal{W}_3$};%

    \draw[thick] (6,1)--(6.866,-0.5)--(5.134,-0.5)--(6,1);%
    
    \filldraw (6,1) node[above=2] {\small$6$} circle (2.5pt); %
    \filldraw (6,-0.5) node[below=3] {\small$3$} circle (2.5pt);%
    \filldraw (5.567,0.25) node[left=2] {\small$1$} circle (2.5pt);%
    \filldraw (6.866,-0.5) node[below=3] {\small$4$} circle (2.5pt);%
    \filldraw (5.134,-0.5) node[below=3] {\small$2$} circle (2.5pt);%
    \filldraw (6.433,0.25) node[right=2] {\small$5$} circle (2.5pt);%

    \node at (6,-1.2) {\small$\mathcal{W}^3$};%
  \end{tikzpicture}
  \caption{The $3$-spoke wheel, its cycle matroid $\mathcal{W}_3$, and
    the $3$-whirl $\mathcal{W}^3$. The $3$-spoke wheel is the complete
    graph $K_4$, so $\mathcal{W}_3$ is $M(K_4)$.}\label{fig:whirl}
\end{figure}

By \cite[Theorem 6.3]{multipath}, restricting a multi-path matroid to
a proper flat gives a lattice path matroid.  The paragraph after that
result notes that, with that result, some properties of lattice path
matroids automatically extend to multi-path matroids.  For this paper,
the most relevant such property is \cite[Theorem 3.11]{lpm2}: any
connected flat in a lattice path matroid $M$ is an interval in the
linear order on $E(M)$.  Thus, any connected flat in a multi-path
matroid $M$ is a cyclic interval in the linear order on $E(M)$.

\subsection{Amalgams and parallel connections}
For matroids $M$ and $N$, any matroid $K$ on $E(M)\cup E(N)$ for which
$K|E(M)=M$ and $K|E(N)=N$ is an \emph{amalgam} of $M$ and $N$.  The
equality $M|T=N|T$, where $T$ is $E(M)\cap E(N)$, is a necessary (but
not sufficient) condition for amalgams to exist.  As Figure
\ref{fig:amlgamexample} shows, when $M$ and $N$ have an amalgam, there
may be many amalgams; in that example, a different rank-$4$ amalgam
results if $\{d,f,g,i\}$ is a circuit-hyperplane, and another rank-$3$
amalgam has, instead of the line $\{a,e,f,i,h\}$, the lines
$\{a,e,f\}$ and $\{a,i,h\}$.  If $K$ is the freest amalgam of $M$ and
$N$ (i.e., any set that is independent in any amalgam of $M$ and $N$
is independent in $K$), then $K$ is the \emph{free amalgam} of $M$ and
$N$.  Amalgams of $M$ and $N$ may exist without the free amalgam of
$M$ and $N$ existing.  The next result is one implication in
\cite[Proposition 11.4.3]{oxley}.

\begin{thm}\label{thm:free}
  Let $K$ be an amalgam of $M$ and $N$, and set $T=E(M)\cap E(N)$.
  If, for each flat $F$ of $K$, we have
  $$r(F)=r(F\cap E(M))+r(F\cap E(N))-r(F\cap T),$$ then $K$ is the
  free amalgam of $M$ and $N$.
\end{thm}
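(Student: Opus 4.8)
The plan is to verify directly that $K$ satisfies the defining property of the free amalgam: every set independent in some amalgam of $M$ and $N$ is independent in $K$. I would get this from the stronger statement that $r_{K'}(X)\le r_K(X)$ for every amalgam $K'$ of $M$ and $N$ and every $X\subseteq E(M)\cup E(N)$; granting that, if $X$ is independent in $K'$ then $|X|=r_{K'}(X)\le r_K(X)\le|X|$, so $X$ is independent in $K$. Thus the whole task reduces to this rank inequality, and note that we never need the abstract existence of a free amalgam nor the converse half of \cite[Proposition~11.4.3]{oxley}.

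The first step is a submodularity bound that holds for \emph{every} amalgam. Let $K'$ be an amalgam of $M$ and $N$ and let $Y\subseteq E(M)\cup E(N)$. Putting $A=Y\cap E(M)$ and $B=Y\cap E(N)$, we have $A\cup B=Y$ and $A\cap B=Y\cap T$, so submodularity of $r_{K'}$ gives $r_{K'}(Y)+r_{K'}(Y\cap T)\le r_{K'}(A)+r_{K'}(B)$. Since $K'|E(M)=M$ and $A\subseteq E(M)$, we have $r_{K'}(A)=r(Y\cap E(M))$; likewise $r_{K'}(B)=r(Y\cap E(N))$; and since $T\subseteq E(M)$ the restriction of $K'$ to $T$ equals $M|T$, so $r_{K'}(Y\cap T)=r(Y\cap T)$. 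Hence $r_{K'}(Y)\le r(Y\cap E(M))+r(Y\cap E(N))-r(Y\cap T)$, and the right-hand side is exactly the quantity that, by hypothesis, equals $r_K(F)$ whenever $Y$ is a flat $F$ of $K$.

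The second step combines this bound with the hypothesis through closure in $K$. Fix $X\subseteq E(M)\cup E(N)$ and set $F=\cl_K(X)$, so $r_K(X)=r_K(F)$. Applying the first step with $Y=F$ and invoking the hypothesis on the flat $F$ gives $r_{K'}(F)\le r(F\cap E(M))+r(F\cap E(N))-r(F\cap T)=r_K(F)$. Since $X\subseteq F$, monotonicity of $r_{K'}$ yields $r_{K'}(X)\le r_{K'}(F)\le r_K(F)=r_K(X)$, the desired inequality.

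I do not anticipate a real obstacle: the argument is just submodularity, monotonicity, and the hypothesis, chained together. The only care needed is bookkeeping---that any amalgam has ground set $E(M)\cup E(N)$, so $X$ and $\cl_K(X)$ both lie there; that ranks of subsets of $E(M)$, of $E(N)$, and of $T$ are computed identically in $M$, $N$, $M|T$, $K$, and $K'$ because these matroids agree on the relevant restrictions; and, most pointedly, that the displayed rank identity is assumed only for flats of $K$, which is exactly why one must feed $\cl_K(X)$, rather than $X$, into the submodularity bound.
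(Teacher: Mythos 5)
Your argument is correct, and it is a genuinely self-contained route that the paper does not spell out: the paper simply cites one direction of Oxley's Proposition~11.4.3, which is phrased in terms of the proper amalgam and its rank-type function $\eta$. Oxley's treatment builds the proper amalgam from a candidate rank function (roughly, the minimum of $\eta$ over a filter of flats) and then shows that when it is a matroid it is the freest amalgam; the present theorem is extracted as one implication, with a footnote translating ``proper'' to ``free.'' Your proof bypasses all of that: you prove directly, for any amalgam $K'$, the pointwise bound $r_{K'}\le r_K$, by (i) applying submodularity of $r_{K'}$ to the decomposition $Y=(Y\cap E(M))\cup(Y\cap E(N))$, which only uses that $K'$ restricts correctly to $E(M)$, $E(N)$, and $T$, and (ii) invoking the hypothesis on the flat $\cl_K(X)$, together with monotonicity, to conclude. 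This is cleaner in two respects: it never needs the abstract existence theory of the proper amalgam, and it isolates exactly where the ``flats of $K$'' restriction in the hypothesis is used (one must close up before applying the identity, since the equality is assumed only on flats while the submodularity bound is an inequality for arbitrary $Y$). The one thing you should make explicit when writing it up is the identification $r_K(F\cap E(M))=r_M(F\cap E(M))$ and its two companions, which you use tacitly in equating the hypothesis' right-hand side with the submodularity bound; this holds because $K$ is itself an amalgam, so $K|E(M)=M$, $K|E(N)=N$, and $K|T=M|T$.
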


Note that \cite[Proposition 11.4.3]{oxley} is about the proper
amalgam, for which we refer to \cite[Section 11.4]{oxley} for the
definition and a complete account.  The proper amalgam, when it
exists, is the free amalgam, but the free amalgam can exist without
the proper amalgam existing.  Two general conditions under which the
proper amalgam is known to exist are (a) when $T$ is a modular flat of
either $M$ or $N$, and (b) when the common restriction, $M|T=N|T$, is
a modular matroid.  Condition (b) applies to the amalgams that we
consider here since, in what we treat, $T$ will be independent in both
$M$ and $N$.

When $T$ is the singleton $\{p\}$ and $r_M(p)=r_N(p)$, the free
amalgam of $M$ and $N$ is the \emph{parallel connection}, $P(M,N)$, of
$M$ and $N$ at the base point $p$.  (When $r_M(p)=r_N(p)$, parallel
connection generalizes the operation of gluing two graphs together
along an edge.  This operation is often defined by giving the set of
circuits, as in \cite[Proposition 7.1.4]{oxley}. By \cite[Exercise
7.1.1]{oxley} and Theorem \ref{thm:free}, when $r_M(p)=r_N(p)$, the
parallel connection, as defined in \cite{oxley}, is the free amalgam,
and thinking about parallel connection as the free amalgam is more
useful for this paper, so we take that as the definition.)  If $p$ is
a loop of just one of $M$ and $N$, then although there are no amalgams
of $M$ and $N$, the parallel connection is still defined: if $p$ is a
loop of $M$ but not of $N$, then $P(M,N)$ is defined to be
$M\oplus (N/p)$; if $p$ is a loop of just $N$, then $P(M,N)$ is
defined to be $(M/p)\oplus N$.

\begin{figure}
  \centering
  \begin{tikzpicture}[scale=0.45]
    \draw[thick, black!30](14,-1)--(18,0)--(18,6)--(14,5)--(10,6)
    --(10,0)--(14,-1);%
    \draw[thick, black!30](14,-1)--(14,5);%

    \draw[very thick](0,2)--(3,0)--(3,4)--(0,2);%
    \filldraw (3,2) node[left=1] {\small$b$} circle (5pt);%
    \filldraw (3,0) node[below=1] {\small$a$} circle (5pt);%
    \filldraw (1.5,1) node[below=1] {\small$f$} circle (5pt);%
    \filldraw (0,2) node[above=1] {\small$e$} circle (5pt);%
    \filldraw (1.5,3) node[above=1] {\small$d$} circle (5pt);%
    \filldraw (3,4) node[above=1] {\small$c$} circle (5pt);%

    \draw[very thick](5,0)--(5,4)--(8,2)--(5,0);%
    \filldraw (5,2) node[right =1] {\small$b$} circle (5pt);%
    \filldraw (5,0) node[below =1] {\small$a$} circle (5pt);%
    \filldraw (6.5,1) node[below=1] {\small$i$} circle (5pt);%
    \filldraw (8,2) node[above =1] {\small$h$} circle (5pt);%
    \filldraw (6.5,3) node[above =1] {\small$g$} circle (5pt);%
    \filldraw (5,4) node[above =1] {\small$c$} circle (5pt);%

    \draw[very thick](14,0)--(14,4);%
    \draw[very thick](14,0)--(17,2)--(14,4);%
    \draw[very thick](14,0)--(11,2)--(14,4);%
    
    \filldraw (14,2) node[right =1] {\small$b$} circle (5pt);%
    \filldraw (14,0) node[right =1] {\small$a$} circle (5pt);%
    \filldraw (14,4) node[right =1] {\small$c$} circle (5pt);%
    \filldraw (12.5,1) node[below left] {\small$f$} circle (5pt);%
    \filldraw (12.5,3) node[above left] {\small$d$} circle (5pt);%
    \filldraw (11,2) node[left =1] {\small$e$} circle (5pt);%
    \filldraw (15.5,1) node[below right] {\small$i$} circle (5pt);%
    \filldraw (17,2) node[right =1] {\small$h$} circle (5pt);%
    \filldraw (15.5,3) node[above right] {\small$g$} circle (5pt);%

    \draw[very thick](20,0)--(26,0) -- (23,4)--(20,0);%
    \draw[very thick](23,0)-- (23,4);%

    \filldraw (20,0) node[below =1] {\small$e$} circle (5pt);%
    \filldraw (21.5,0) node[below =1] {\small$f$} circle (5pt);%
    \filldraw (23,0) node[below =1] {\small$a$} circle (5pt);%
    \filldraw (24.5,0) node[below =1] {\small$i$} circle (5pt);%
    \filldraw (26,0) node[below =1] {\small$h$} circle (5pt);%
    \filldraw (23,4) node[above =1] {\small$c$} circle (5pt);%
    \filldraw (23,1.6) node[left =1] {\small$b$} circle (5pt);%
    \filldraw (24.5,2) node[right =1] {\small$g$} circle (5pt);%
    \filldraw (21.5,2) node[left =1] {\small$d$} circle (5pt);%

  \end{tikzpicture}
  \caption{Two rank-$3$ whirls, their free amalgam (which
    has rank $4$), and a rank-$3$ amalgam.  }
  \label{fig:amlgamexample}
\end{figure}

\subsection{Principal extension, series extension, quotients, and
  truncation}
A matroid $N$ is an \emph{extension} of a matroid $M$ if $M$ is a
restriction of $N$.  Single-element extensions have been studied
extensively, starting with Crapo \cite{henry} (see also \cite[Section
7.2]{oxley}).  We focus on principal extensions.  Given a matroid $M$,
a set $X\subseteq E(M)$, and an element $e\not\in E(M)$, define
$r':2^{E(M)\cup e}\to \mathbb{Z}$ by, for all $Y\subseteq E(M)$,
setting $r'(Y)=r_M(Y)$ and
$$r'(Y\cup e)=\begin{cases}
  r_M(Y), & \text{if } X\subseteq\cl_M(Y),\\
  r_M(Y)+1, & \text{otherwise.}
\end{cases}$$ It is routine to show that $r'$ is the rank function of
a matroid on $E(M)\cup e$.  This matroid, which is denoted $M+_X e$,
is the \emph{principal extension} of $M$ in which $e$ is \emph{added
  freely} to $X$.  The \emph{free extension} of $M$ by $e$ is
$M+_{E(M)}e$; we shorten $M+_{E(M)}e$ to $M+e$.  If $r_M(f)=1$, then
$M+_fe$ is a \emph{parallel extension} of $M$.  The parallel extension
$M+_fe$ is the parallel connection of $M$ and the rank-$1$ uniform
matroid on $\{e,f\}$.

Note that $e$ is in the closure of $Y$ in $M+_Xe$ if and only if
either $e\in Y$ or $X\subseteq \cl_M(Y)$.  Also,
$M+_X e = M+_{\cl(X)} e$.  It is easy to check that if
$e,f\not\in E(M)$ and $X$ and $Y$ are subsets of $E(M)$, then
$(M+_Xe)+_Yf=(M+_Yf)+_Xe$, so we do not need to specify an order for
multiple principal extensions.  If $M$ has no loops, then the flat
$\cl_M(X)\cup e$ of $M+_Xe$ is connected by Lemma \ref{lem:conncond};
in particular, $M+e$ is connected.  If $M$ has no loops and
$f\in E(M)$, then there is a minimum nonsingleton connected flat of
$M$ that contains $f$ if and only if $M$ is a principal extension
$(M\del f)+_Xf$ of $M\del f$ for some $X\subseteq E(M\del f)$.

\emph{Series extension} (or, more properly, series coextension) is
dual to parallel extension: $M\times_fe =((M^*)+_fe)^*$.  Thus,
$M\times_fe$ is defined when $f$ is not a coloop of $M$; also,
$r(M\times_fe)=r(M)+1$.  The cocircuits of the parallel extension
$M+_fe$ are those of $M$, except that those that contain $f$ are
augmented by $e$, so in the series extension $M\times_fe$ , the
circuits are those of $M$ except that those that contain $f$ are
augmented by $e$.

For a matroid $M$, a matroid $Q$ with $E(Q)=E(M)$ is a \emph{quotient}
of $M$ if, for some extension $N$ of $M$ with $r(N)=r(M)$, we have
$Q=N/A$ where $A=E(N)-E(M)$. For example, if $M$ is the rank-$4$
matroid in Figure \ref{fig:amlgamexample} and $X=\{a,e,f,i,h\}$, the
quotient $(M+_X j)/j$ is the right-most matroid shown.  If
$A=\{e_1,e_2,\ldots,e_k\}$ with $k\leq r(M)$ and
$N= (((M+e_1)+e_2)\cdots)+e_k$, then $Q=N/A$ is the \emph{truncation}
of $M$ to rank $r(M)-k$, and $r_Q(X)=\min\{r_M(X),r(M)-k\}$ for all
$X\subseteq E(Q)$.  Observe that each basis of a rank-$r$ matroid $M$
is a spanning circuit of its truncation to rank $r-1$, so, if $M$ has
no loops, its truncations to positive ranks less than $r$ are
connected by Lemma \ref{lem:conncond}.

\section{A characterization of positroids and positroid
  orders}\label{sec:char}

Theorem \ref{thm:char} below, the proof of which uses Theorem
\ref{thm:postoh}, characterizes positroids with no loops by
characterizing positroid orders.  The equivalence of statements (1)
and (2) is known (Theorem \ref{thm:bs=p}, due to Lam and Postnikov),
but we include (2) since it fits naturally into our proof of the
equivalence of (1) and (3).  Statement (4), which uses the following
definition, recasts (3) in a way that will prove to be useful.

\begin{dfn}
  For a matroid $M$ that has no loops, a linear order on $E(M)$ has
  the \emph{cyclic interval property} if, for each proper connected
  flat $F$ of $M$ with $|F|\geq 2$ and each connected component $K$ of
  the contraction $M/F$ with $|K|\geq 2$, the set $K$ is a subset of a
  cyclic interval that is disjoint from $F$.
\end{dfn}

We require $|F|\geq 2$ and $|K|\geq 2$ so that we can focus on the
substantial cases when applying Theorem \ref{thm:char}; the condition
trivially holds if either inequality fails.  We assume that $M$ has no
loops since otherwise the condition on connected flats would hold
vacuously.  A matroid is a positroid if and only if the matroid
obtained by deleting all of its loops is a positroid, and the
placement of loops has no impact on whether a linear order is a
positroid order, so the assumption of no loops does not limit the
applicability of Theorem \ref{thm:char}.  In this result, the matroid
is not assumed to be connected.

\begin{thm}\label{thm:char}
  Let $M$ be a matroid that has no loops and let $\leq$ be a linear
  order on $E(M)$.  The following statements are equivalent:
  \begin{itemize}
  \item[\emph{(1)}] $\leq$ is a positroid order for $M$,
  \item[\emph{(2)}] $\leq$ is a base-sorting order for $M$,
  \item[\emph{(3)}] if $F$ is a flat of $M$ for which
    $2\leq |F|\leq |E(M)|-2$ and $M|F$ is connected, then
    \begin{equation}\label{eq:dirsum}
      M/F = (M/F|U_1)\oplus (M/F|U_2)\oplus \cdots \oplus (M/F|U_h)
    \end{equation}
    where $U_1,U_2,\ldots,U_h$ are the maximal cyclic intervals that
    are disjoint from $F$, and
  \item[\emph{(4)}] $\leq$ has the cyclic interval property.
 \end{itemize}
\end{thm}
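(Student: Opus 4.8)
The equivalence of (1) and (2) is Theorem~\ref{thm:bs=p}, so the real task is to tie these to (3) and (4); the equivalence \textup{(3)}$\Leftrightarrow$\textup{(4)} is just unwinding definitions. Since $|F|\geq 2$ the flat $F$ is nonempty, so the maximal cyclic intervals of $\leq$ disjoint from $F$ — call them $U_1,\dots,U_h$, the ``gaps'' between cyclically consecutive elements of $F$ — partition $E(M)-F$, and every cyclic interval disjoint from $F$ lies inside a single $U_j$. If \eqref{eq:dirsum} holds, then each connected component of $M/F$, being a connected component of one summand $M/F|U_j$, has its ground set inside a single $U_j$, which is exactly the cyclic interval property. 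Conversely, if the cyclic interval property holds then every connected component of $M/F$ (trivially so if it is a singleton) lies inside some $U_j$; hence each $U_j$ is a union of connected components of $M/F$, and \eqref{eq:dirsum} follows by grouping components according to the $U_j$ that contains them.

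\emph{Reformulating (1).} For the equivalence of (1) and (3) the plan is to use Theorem~\ref{thm:postoh}. Write $\leq$ as $e_1<\dots<e_n$ and let $I_i$ be the Gale basis of $M$ for $\leq_i$, so $(I_1,\dots,I_n)$ is the Grassmann necklace of $M$. By Lemma~\ref{lem:GaleIsMin}, $\mathcal{B}(M)\subseteq\mathcal{B}(N(I_i,\leq_i))$ for every $i$, so Theorem~\ref{thm:postoh} tells us that $\leq$ is a positroid order for $M$ if and only if every $r$-subset $J$ lying in all of $\mathcal{B}(N(I_1,\leq_1)),\dots,\mathcal{B}(N(I_n,\leq_n))$ is already a basis of $M$. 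Now an $r$-set $J$ lies in $\mathcal{B}(N(I_i,\leq_i))$ exactly when $I_i\leq_G J$ in $\leq_i$, equivalently $|J\cap S|\leq|I_i\cap S|$ for every initial segment $S$ of $\leq_i$; the greedy construction of the Gale basis makes $I_i\cap S$ a maximal independent subset of $S$, so $|I_i\cap S|=r_M(S)$; and the initial segments of the shifts $\leq_i$ are precisely the cyclic intervals of $\leq$. Thus statement (1) is equivalent to the following condition, which I will call $(\star)$: \emph{every $r$-subset $J$ of $E(M)$ with $|J\cap S|\leq r_M(S)$ for all cyclic intervals $S$ of $\leq$ is a basis of $M$}. (The $r$-sets satisfying these inequalities are always a superset of $\mathcal{B}(M)$ and are exactly the bases of the positroid supplied by Theorem~\ref{thm:postoh}, so $(\star)$ asserts that $M$ is that positroid.)

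\emph{Proving $(\star)\Leftrightarrow(3)$.} Coloops play no role in $(\star)$, (3), or (4), so I may assume $M$ has no coloops; then a circuit $C$ always spans a connected flat $\cl_M(C)$ with $2\leq|\cl_M(C)|\leq n-2$. The plan is induction on $n=|E(M)|$. For $(3)\Rightarrow(\star)$: suppose a dependent $r$-set $J$ satisfies the inequalities of $(\star)$; pick a circuit $C\subseteq J$ and put $F=\cl_M(C)$, so $\cl_M(J\cap F)=F$, whence $J\cap F$ spans $F$ and $J-F$ fails to span $M/F$. Using (3) at $F$, so $M/F=\bigoplus_j M/F|U_j$, the inequalities for the cyclic intervals $E(M)-U_j$ force $|J\cap U_j|\geq r_{M/F}(U_j)$ for each $j$; summing and comparing with $|J-F|=r-|J\cap F|\leq r-r_M(F)=r(M/F)$ pins down $|J\cap F|=r_M(F)$ and $|J\cap U_j|=r_{M/F}(U_j)$ for every $j$. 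Hence $J-F$ is a non-basis of $M/F=\bigoplus_jM/F|U_j$ of full cardinality, so $(J-F)\cap U_{j^{*}}$ is a non-basis of $M/F|U_{j^{*}}$ of cardinality $r(M/F|U_{j^{*}})$ for some $j^{*}$; since $|U_{j^{*}}|<n$ and (as one checks) condition (3) is inherited by $(M/F|U_{j^{*}},\leq|_{U_{j^{*}}})$, the inductive hypothesis makes $\leq|_{U_{j^{*}}}$ a positroid order for $M/F|U_{j^{*}}$, i.e.\ the analogue of $(\star)$ holds there, and $(J-F)\cap U_{j^{*}}$ (which one checks satisfies the relevant cyclic-interval inequalities inside $M/F|U_{j^{*}}$) contradicts it. The converse $(\star)\Rightarrow(3)$ runs the same way in reverse: if (3) fails at a connected flat $F$, some connected component of $M/F$ meets two gaps, so by connectivity $M/F$ has a circuit $D$ with elements in two distinct $U_j$, and a basis of $M|F$ together with a suitable extension of $D$ lying under a hyperplane of $M/F$ gives a dependent $r$-set of the right cardinality that one shows satisfies the inequalities of $(\star)$ — the key being that a cyclic interval $S$ collecting elements of $J$ from several gaps must also collect enough elements of $F$ to raise $r_M(S)$ — contradicting $(\star)$.

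\emph{Expected main obstacle.} The delicate part is making the inductive step rigorous: one must show that condition (3) for $(M,\leq)$ entails condition (3) for each $(M/F|U_j,\leq|_{U_j})$, and that the cyclic-interval rank inequalities of $(\star)$ holding for $J$ in $M$ imply the corresponding inequalities for $J\cap U_j$ inside $M/F|U_j$. The trouble is that a cyclic interval of $\leq|_{U_j}$ need not be a cyclic interval of $\leq$ (it may ``wrap around'' the ends of the arc $U_j$), and that contraction by $F$ can strictly lower the rank of subsets of $U_j$. I expect to handle both by enlarging a troublesome interval $S'\subseteq U_j$ by a carefully chosen piece of the flanking flat $F$ — which is spanned by $J\cap F$, so the extra elements of $J$ that the enlarged interval absorbs are paid for — together with Lemma~\ref{lem:connflcontr} to move connected flats between $M$ and $M/F$. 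With these in hand, the equivalence of (1) with (3), hence with (2) and (4), follows.
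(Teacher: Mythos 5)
Your reformulation of (1) as the ``Gale inequalities'' condition $(\star)$ is correct and is in fact the engine behind the paper's proof of (3)$\Rightarrow$(1). But you significantly over-engineer the direction $(3)\Rightarrow(\star)$: the data you assemble already gives an immediate contradiction with no induction at all. You observe that $C\subseteq J\cap F$ where $C$ is a circuit with $\cl_M(C)=F$; since $|C|=r_M(C)+1=r_M(F)+1$, this forces $|J\cap F|\geq r_M(F)+1$. Combined with your own inequalities $|J\cap U_j|\geq r_{M/F}(U_j)$ (which sum to $r-r_M(F)$), you get $|J|\geq r+1$, contradicting $|J|=r$. So the conclusion you ``pin down'' ($|J\cap F|=r_M(F)$) is already impossible, and the entire inductive machinery — together with the obstacles you flag about cyclic intervals wrapping around $U_j$ and contraction lowering ranks — is unnecessary. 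This is in fact precisely the counting the paper uses for (3)$\Rightarrow$(1): it chooses a shift $\leq_i$ making some $U_j$ with $|B\cap U_j|<s_j$ a filter, and concludes $B\notin\mathcal{B}(N(I_i,\leq_i))$.

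The genuine gap is the converse $(\star)\Rightarrow(3)$. Your sketch — a basis of $M|F$ together with a suitable extension of a circuit $D$ of $M/F$ that meets two gaps, ``lying under a hyperplane of $M/F$'' — does not work as described. Consider $M=M(K_4)$ with ground set $\{1,\dots,6\}$ labeled so that the $3$-point lines are $\{1,2,3\},\{1,4,5\},\{3,4,6\},\{2,5,6\}$, and $\leq$ the natural order; condition (3) fails at $F=\{1,4,5\}$. Here $M/F\cong U_{1,3}$ on $\{2,3,6\}$ has no nonspanning flats other than $\emptyset$, so ``lying under a hyperplane of $M/F$'' is vacuous, and any set of the form $B_F\cup D$ with $D$ a circuit of $M/F$ has size $r+1$. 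The actual $r$-set violating $(\star)$ is $J=\{1,4,5\}=F$ itself, which is not of the form your construction produces. The paper sidesteps this direction entirely: rather than proving $(\star)\Rightarrow(3)$, it proves (1)$\Rightarrow$(2)$\Rightarrow$(3), and the key step (2)$\Rightarrow$(3) is a parity argument with the base-sorting property (swapping a single element of $F$ and tracking how elements of $U_1$ shift positions in the sorted multiset $B_1\cup B_2$), followed by a chain of single-element exchanges. This argument has no analogue in your proposal, and I do not see how to repair your construction of a witness $J$ in general. You correctly identify $(\star)\Rightarrow(3)$ as the delicate step, but the proposed route does not close it.
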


\begin{proof}
  It is easy to see that statements (3) and (4) are equivalent.  Below
  we show that statements (1)--(3) are equivalent.  Let $n=|E(M)|$ and
  $r=r(M)$.

  Assume that statement (1) holds.  By Theorem \ref{thm:postoh}, the
  set of bases of $M$ is
  $$\bigcap_{i\in[n]}\mathcal{B}\bigl(N(I_i,\leq_i)\bigr)$$ where
  $I_i$ is the Gale basis of $M$ in the $i$-shift $\leq_i$ of $\leq$.
  Let $B$ and $B'$ be bases of $M$, and let
  $\sort_\leq(B,B')=\{B_e,B_o\}$.  To show that $B_e$ and $B_o$ are
  bases of $M$, we show that
  $B_e,B_o\in\mathcal{B}\bigl(N(I_i,\leq_i))$ for all $i\in[n]$.  Now
  $B,B'\in\mathcal{B}\bigl(N(I_i,\leq_i))$ since both $B$ and $B'$ are
  bases of $M$, so, for each $j\in [r]$, each of $B$ and $B'$ has at
  most $j-1$ elements that are less than the $j$th element of $I_i$
  using $\leq_i$; thus, the elements in positions $2j-1$ and $2j$ when
  we list the elements in the multiset union $B\cup B'$ in order,
  using $\leq_i$, are both at least the $j$th element of $I_i$.  The
  elements in positions $2j-1$ and $2j$ are the $j$th elements of
  $B_o$ and $B_e$, so $B_e,B_o\in \mathcal{B}\bigl(N(I_i,\leq_i))$, so
  statement (2) holds.

  Now assume that statement (2) holds.  Let $F$ be a connected flat
  $F$ of $M$ for which $2\leq |F|\leq |E(M)|-2$.  Let
  $U_1,U_2,\ldots,U_h$ be the maximal cyclic intervals that are
  disjoint from $F$.  By replacing $\leq$ by a shift $\leq_j$ if
  needed, we may assume that the elements of $E(M)$ are, in order,
  $$f^1_1,f^1_2,\ldots,f^1_{p_1},
  u^1_1,u^1_2,\ldots,u^1_{q_1},\ldots, f^h_1,f^h_2,\ldots,f^h_{p_h},
  u^h_1,u^h_2,\ldots,u^h_{q_h},$$ where all $p_i,q_i\in \mathbb{N}$
  and $f^i_j\in F$, and $U_i=\{ u^i_1,u^i_2,\ldots,u^i_{q_i}\}$.  Set
  $F_i = \{f^i_1,f^i_2,\ldots,f^i_{p_i}\}$ for $i\in [h]$.  Statement
  (3) holds trivially if $h=1$, so assume that $h>1$.

  Since $M|F$ is connected, there is a circuit $C\subseteq F$ with
  $\{f^1_{p_1},f^2_1\}\subseteq C$.  Let $B$ be a basis of $M|F$ with
  $C-f^1_{p_1}\subseteq B$.  Let $B_1$ and $B_2$ be bases of $M$ with
  $B\subseteq B_1\cap B_2$.  Let $t$ be
  $|B_1\cap U_1|+ |B_2\cap U_1|$.  We claim that $t$ is even, so
  $|B_1\cap U_1|$ and $|B_2\cap U_1|$ have the same parity.  To see
  this, let the elements in the multiset union $B_1\cup B_2$, listed
  in order, be
  $$a_1=a_1<a_2=a_2<\cdots <a_i=a_i< z_1\leq z_2\leq \cdots\leq z_t<
  f^2_1=f^2_1< \cdots
  $$
  where $\{a_1,a_2,\ldots,a_i\}\subseteq F_1-f^1_{p_1}$ and
  $z_j\in U_1$ for $j\in[t]$.  Since
  $\{f^1_{p_1},f^2_1\}\subseteq C$ and $C-f^1_{p_1}\subseteq B$, the
  set $B'_1=(B_1-f^2_1)\cup f^1_{p_1}$ is a basis of $M$.  The
  elements in the multiset union $B'_1\cup B_2$, listed in order, are
  $$a_1= a_1<a_2=a_2<\cdots <a_i= a_i<f^1_{p_1}< z_1\leq
  z_2\leq\cdots\leq z_t< f^2_1< \cdots .
  $$
  Both lists have $2 r(F)$ elements of $F$, so since the sets in
  $\sort_\leq(B_1, B_2)$ and $\sort_\leq(B'_1, B_2)$ are bases, in
  each list, $r(F)$ elements of $F$ are in even positions and the
  other $r(F)$ elements of $F$ are in odd positions.  The only
  difference in the positions of elements in $F$ in the two lists is
  that the first copy of $f^2_1$ in the first list changes to
  $f^1_{p_1}$ and moves $t$ places earlier.  Therefore $t$ must be
  even, and so $|B_1\cap U_1|$ and $|B_2\cap U_1|$ have the same
  parity.
  
  We next draw a sharper conclusion: $|B_1\cap U_1| = |B_2\cap U_1|$
  for any bases $B_1$ and $B_2$ of $M$ with $B\subseteq B_1\cap B_2$.
  This holds since we can get $B_2$ from $B_1$ by a sequence of
  single-element exchanges, and no such exchange involves elements of
  $F$ (since $B\subseteq B_1\cap B_2$), so, by the parity result
  above, each exchange either exchanges an element in $U_1$ for an
  element in $U_1$, or an element in $E(M)-(F\cup U_1)$ for an element
  in $E(M)-(F\cup U_1)$.

  For any two bases $B$ and $B'$ of $M|F$, and any subset $I$ of
  $E(M)-F$, the set $B\cup I$ is a basis of $M$ if and only if
  $B'\cup I$ is a basis of $M$.  This gives the stronger conclusion
  that for all bases $B_1$ and $B_2$ of $M$ that contain bases of
  $M|F$, we have $|B_1\cap U_1| = |B_2\cap U_1|$.
  
  The same argument applied to cyclic shifts shows that the same
  result holds for each of $U_2,\ldots,U_h$.  For each $j\in[h]$, let
  $s_j=r(F\cup U_j) - r(F)$, so $s_j=r(M/F|U_j)$.  We can extend a
  basis $B$ of $M|F$ to a basis of $M|(F\cup U_j)$, and then extend
  that to a basis of $M$, so what we just showed implies that for each
  basis $B_1$ of $M$ with $B\subseteq B_1$, we have
  $|B_1\cap U_j|=s_j$ for each $j\in[h]$.  Thus,
  $r(M) =r(F)+s_1+s_2+\cdots +s_h$, and so
  \begin{equation}\label{eq:rank}
    r(M/F) = r(M/F|U_1)+ r(M/F|U_2)+ \cdots + r(M/F|U_h),
  \end{equation}
  and so the direct sum decomposition in statement (3) follows by
  Lemma \ref{lem:dsdecomprank}.
  
  Now assume that statement (3) holds.  Recall that $n=|E(M)|$ and
  $r=r(M)$.  Let $I_i$ be the Gale basis of $M$ using $\leq_i$.  By
  Theorem \ref{thm:postoh}, the matroid $M'$ for which
  $$\mathcal{B}(M') =
  \bigcap_{i\in[n]}\mathcal{B}\bigl(N(I_i,\leq_i)\bigr)$$ is a
  positroid.  Since $I_i$ is the Gale basis of $M$ using $\leq_i$, we
  have $\mathcal{B}(M)\subseteq \mathcal{B}\bigl(N(I_i,\leq_i)\bigr)$
  by Lemma \ref{lem:GaleIsMin}, so
  $\mathcal{B}(M)\subseteq\mathcal{B}(M')$.  Showing equality in that
  inclusion will give $M=M'$, and so $M$ is a positroid and $\leq$ is
  a positroid order, thus proving statement (1).  To show that
  equality, it suffices to show that no $r$-subset of $E(M)$ that
  contains a (necessarily non-spanning) circuit of $M$ is in
  $\mathcal{B}(M')$.  Let $C$ be a non-spanning circuit of $M$, let
  $F$ be the connected flat $\cl_M(C)$ of $M$, and let
  $C\subseteq D\subseteq E(M)$ where $|D|=r$.  Let
  $U_1,U_2,\ldots,U_h$ be the maximal cyclic intervals that are
  disjoint from $F$, and let $s_j$ be $r(M/F|U_j)$.  Equation
  (\ref{eq:dirsum}) gives equation (\ref{eq:rank}), so
  $r(M) =r_M(F)+s_1+s_2+\cdots + s_h$.  Since
  $|D\cap F|\geq |C|>r_M(F)$, we have $|D\cap U_j|<s_j$ for at least
  one set $U_j$.  Pick the $i$ for which $U_j$ is a filter in the
  $i$-order $\leq_i$.  Equation (\ref{eq:dirsum}) gives
  $M/F|U_j =M/(E(M)-U_j)$, so the Gale basis $I_i$ must contain $s_j$
  elements of $U_j$. Thus, each basis of $N(I_i,\leq_i)$ contains at
  least $s_j$ elements in the filter $U_j$ of the $i$-order $\leq_i$.
  Since $|D\cap U_j|<s_j$, we have
  $D\not\in\mathcal{B}(N(I_i,\leq_i))$, so $D\not\in\mathcal{B}(M')$,
  as needed.
\end{proof}

The corollary below is obtained from condition (3) in Theorem
\ref{thm:char} by duality and the following results: $F$ is a flat of
$M$ if and only if $E(M)-F$ is a cyclic set of $M^*$; the positroid
orders for $M$ are the same as the positroid orders for $M^*$.

\begin{cor}\label{cor:chardual}
  Let $M$ be a matroid that has no coloops.  A linear order $\leq$ on
  $E(M)$ is a positroid order for $M$ if and only if, for all cyclic
  sets $A$ of $M$ for which $M/A$ is connected and
  $2\leq |A|\leq |E(M)|-2$, we have
  $M|A = (M|V_1)\oplus (M|V_2)\oplus \cdots \oplus (M|V_h)$ where
  $V_1,V_2,\ldots,V_h$ are the maximal cyclic intervals that are
  subsets of $A$.
\end{cor}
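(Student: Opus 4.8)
The plan is to derive Corollary~\ref{cor:chardual} from condition~(3) of Theorem~\ref{thm:char} by applying that theorem to the dual matroid $M^*$, translating the flat/cyclic-interval language through duality. First I would record the two dualization facts stated just before the corollary: (i) $A$ is a cyclic set of $M$ if and only if $E(M)-A$ is a flat of $M^*$ (since a union of circuits corresponds to an intersection of hyperplanes), and (ii) a linear order $\leq$ on $E(M)=E(M^*)$ is a positroid order for $M$ if and only if it is a positroid order for $M^*$. Because $M$ has no coloops, $M^*$ has no loops, so Theorem~\ref{thm:char} applies to $M^*$: $\leq$ is a positroid order for $M^*$ exactly when, for every flat $F^*$ of $M^*$ with $2\le|F^*|\le|E(M)|-2$ and $M^*|F^*$ connected, the contraction $M^*/F^*$ decomposes as the direct sum of its restrictions to the maximal cyclic intervals disjoint from $F^*$.

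Next I would set $F^* = E(M)-A$ and translate each piece. By (i), $F^*$ is a flat of $M^*$ precisely when $A$ is cyclic in $M$; the size condition $2\le|F^*|\le|E(M)|-2$ becomes $2\le|A|\le|E(M)|-2$; and the condition that $M^*|F^*$ is connected is the condition that $M^*|(E(M)-A)$ is connected, which by standard duality ($\;(M^*|(E(M)-A))^* = M/A\;$, since $M^*|(E(M)-A) = (M/A)^*$) is equivalent to $M/A$ being connected. So the hypothesis ``$A$ cyclic, $M/A$ connected, $2\le|A|\le|E(M)|-2$'' matches exactly the hypothesis on $F^*$ in Theorem~\ref{thm:char}(3) for $M^*$. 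Now the maximal cyclic intervals of $\leq$ disjoint from $F^*=E(M)-A$ are precisely the maximal cyclic intervals contained in $A$; call them $V_1,\dots,V_h$. Theorem~\ref{thm:char}(3) for $M^*$ then asserts
$$M^*/F^* = (M^*/F^*|V_1)\oplus\cdots\oplus(M^*/F^*|V_h).$$
Dualizing a direct-sum decomposition of $M^*/F^* = M^*/(E(M)-A)$, and using $(M^*/(E(M)-A))^* = M|A$ together with $(M^*/(E(M)-A))|V_j$ dualizing to $(M|A)|V_j = M|V_j$, gives exactly
$$M|A = (M|V_1)\oplus\cdots\oplus(M|V_h),$$
since the dual of a direct sum is the direct sum of the duals (on each ground-set block). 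Running this equivalence in both directions yields the stated biconditional.

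I do not expect a serious obstacle here; the only point requiring a little care is the bookkeeping that ``maximal cyclic intervals disjoint from $E(M)-A$'' and ``maximal cyclic intervals contained in $A$'' are literally the same sets of subsets of $E(M)$ — this is immediate since a subset is disjoint from $E(M)-A$ iff it is contained in $A$ — and the dual-of-a-direct-sum identity restricted to a flat, namely that $M^*/(E(M)-A)$ is the dual of $M|A$ and that contracting the dual by the complement of a block corresponds to restricting $M|A$ to that block. Both are standard, so the proof is essentially a translation exercise; the substantive content is Theorem~\ref{thm:char} itself.
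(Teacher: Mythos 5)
Your proposal is correct and takes exactly the approach the paper intends: the paper itself gives only a one-sentence justification (``by duality'' plus the two facts you cite), and your argument fills in the routine translation in the natural way. The one place worth tightening is the assertion that ``$(M^*/(E(M)-A))|V_j$ dualizes to $(M|A)|V_j$,'' which is not a general identity (the dual of a restriction is a contraction of the dual); it holds here because, once each $V_j$ is a separator of $M^*/F^*$, it is also a separator of the dual $M|A$, and for separators restriction and contraction by the complement agree.
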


The next corollary is useful both for limiting the search for
potential positroid orders and for deducing that certain matroids are
not positroids.  This result is part of \cite[Proposition 5.6]{ARW} by
Ardila, Rinc\'on, and Williams, as cast in Theorem \ref{thm:arw2}
above.

\begin{cor}\label{cor:convinterval}
  Let $F$ be a flat in a matroid $M$ that has no loops.  If both $M|F$
  and $M/F$ are connected, then $F$ is a cyclic interval in any
  positroid order for $M$.
\end{cor}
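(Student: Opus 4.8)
The plan is to read the conclusion off statement~(4), the cyclic interval property, in Theorem~\ref{thm:char}, after first disposing of the cases in which $F$ or $E(M)-F$ has fewer than two elements. Throughout I use the equivalence recorded in Section~\ref{sec:background} that a subset $A$ of $E(M)$ is a cyclic interval of $\leq$ if and only if $E(M)-A$ is; so it is enough to show that $E(M)-F$ is a cyclic interval of $\leq$.

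First the degenerate cases. If $|F|\leq 1$, then $F$ is empty or a singleton, hence an interval and so a cyclic interval; likewise, if $|E(M)-F|\leq 1$, then $E(M)-F$, and therefore $F$, is a cyclic interval. So assume $2\leq |F|\leq |E(M)|-2$. Then $F$ is a proper connected flat of $M$ with $|F|\geq 2$ (the matroid is loopless by the standing hypothesis), and, since $M/F$ is connected with ground set $E(M)-F$ of size at least $2$, the contraction $M/F$ has exactly one connected component $K$, namely $K=M/F$ itself, with $|E(K)|\geq 2$.

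Because $\leq$ is a positroid order for $M$, statement~(1) of Theorem~\ref{thm:char} holds, hence so does statement~(4): there is a cyclic interval $U$ of $\leq$ with $U\cap F=\emptyset$ and $E(K)\subseteq U$, that is, $E(M)-F\subseteq U$. Since $U$ is a subset of $E(M)$ disjoint from $F$, we also have $U\subseteq E(M)-F$, and hence $U=E(M)-F$. Thus $E(M)-F$ is a cyclic interval of $\leq$, and therefore so is $F$. (One could argue instead from statement~(3): connectedness of $M/F$ forces the direct sum there to have a single summand, so the unique maximal cyclic interval disjoint from $F$ is all of $E(M)-F$.)

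I do not anticipate any real obstacle; the only points requiring care are the low-cardinality cases, where the hypotheses of Theorem~\ref{thm:char} do not directly apply but the conclusion is immediate, and the elementary observation that a cyclic interval disjoint from $F$ and containing $E(M)-F$ must equal $E(M)-F$.
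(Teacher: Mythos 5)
Your proof is correct and takes essentially the same route as the paper: the paper's one-line argument observes that connectedness of $M/F$ forces the integer $h$ in condition (3) of Theorem~\ref{thm:char} to equal $1$, which is the parenthetical alternative you note at the end; your main argument via condition (4) is an equivalent rephrasing, and your explicit treatment of the low-cardinality cases is a harmless extra precaution the paper leaves implicit.
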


\begin{proof}
  This follows since the integer $h$ in condition (3) in Theorem
  \ref{thm:char} must be $1$.
\end{proof}

For example, the cycle matroid $M(K_4)$ is not a positroid since the
four $3$-point lines are connected, as are the corresponding
contractions, but, by Lemma \ref{lem:noco}, there is no linear order
on the ground set in which each of those lines is a cyclic interval.
(The example before Lemma \ref{lem:noco} uses the $3$-point lines of
$M(K_4)$ as labeled in Figure \ref{fig:whirl}.)  This idea is taken
much further in Examples \ref{example:genK4}--\ref{ex:K4last} of
Section \ref{sec:exmin}.

\begin{example}\label{rank5example}
  Let $M$ be the cycle matroid of the graph in Figure
  \ref{fig:cyclicvscyclic}, which has rank five and is the parallel
  connection of four copies of the uniform matroid $U_{2,3}$, one on
  each of the sets $\{3,6,9\}$, $\{1,2,3\}$, $\{4,5,6\}$, and
  $\{7,8,9\}$.  By Theorem \ref{thm:char}, $M$ is a positroid since
  the cyclic interval property clearly holds for each proper connected
  flat with at least two elements that is a cyclic interval (so,
  $\{1,2,3\}$, $\{4,5,6\}$, and $\{7,8,9\}$), and all other connected
  flats $F$ with $|F|\geq 2$ contain $\{3,6,9\}$, so each connected
  component of $M/F$ is one of the intervals $\{1,2\}$, $\{4,5\}$, and
  $\{7,8\}$.

  To illustrate a point that is used at the end of the proof of
  Theorem \ref{thm:char}, we explain how each $5$-element subset $X$
  of $E(M)$ that contains a non-spanning circuit of $M$ fails to be a
  basis of some matroid $N(I_i,\leq_i)$.  If $\{1,2,3\}\subseteq X$,
  then $X$ is not a basis of $N(I_1,\leq_1)$ since $I_1=\{1,2,4,5,7\}$
  but at most two elements $j\in X$ satisfy $4\leq_1 j$.  The cases of
  $\{4,5,6\}\subseteq X$ (use $I_4$) and $\{7,8,9\}\subseteq X$ (use
  $I_7$) are similar.  If $X$ contains a $4$- or $5$-circuit, then $X$
  is disjoint from $\{1,2\}$, $\{4,5\}$, or $\{7,8\}$.  If
  $X\cap \{1,2\}=\emptyset$, then $X$ is not a basis of
  $N(I_3,\leq_3)$ since the fifth element of $X$ is less than that of
  $I_3=\{3,4,5,7,1\}$ in $\leq_3$.  The cases of
  $X\cap \{4,5\}=\emptyset$ (use $I_6$) and $X\cap \{7,8\}=\emptyset$
  (use $I_9$) are similar.

  Figure \ref{fig:counterexample} shows the rank-$4$ truncation of
  $M$.  Applying Corollary \ref{cor:convinterval} to the connected
  flat $\{3,6,9\}$ shows that the usual order is not a positroid order
  for the truncation of $M$ to rank $3$ or $4$.  Indeed, these
  truncations are excluded minors for the class of positroids (see
  Section \ref{sec:exmin}).  Thus, the class of positroids is not
  closed under truncation. \hfill$\circ$
\end{example}

\begin{figure}
  \centering
  \begin{tikzpicture}[scale=1]
    \draw[thick,gray!60](0,0)--(0,2)--(1.5,3)--(1.5,1)--(0,0)
    --(-1.5,1)--(-1.5,3)--(0,2);%
    \draw[very thick](0,0.3)--(1.3,1.2);%
    \draw[very thick](0,0.3)--(0,1.7);%
    \draw[very thick](0.5,3.3)--(0,1.7);%
    \draw[very thick](0,1)--(-1.3,1.8);%
    \filldraw (0,0.3) node[above left =-1] {\small$3$} circle
    (2.75pt);%
    \filldraw (0,1) node[above right=-1] {\small$6$} circle (2.75pt);%
    \filldraw (1.3,1.2) node[above=1] {\small$1$} circle (2.75pt);%
    \filldraw (0.65,0.75) node[above=1] {\small$2$} circle (2.75pt);%
    \filldraw (0,1.7) node[right=1] {\small$9$} circle (2.75pt);%
    \filldraw (-1.3,1.8) node[above =1] {\small$4$} circle (2.75pt);%
    \filldraw (-0.65,1.4) node[above =1] {\small$5$} circle (2.75pt);%
    \filldraw (0.5,3.3) node[left =1] {\small$7$} circle (2.75pt);%
    \filldraw (0.25,2.5) node[left =1] {\small$8$} circle (2.75pt);%
  \end{tikzpicture}
  \caption{A rank-$4$ excluded minor for the class of positroids that
    is the truncation of a rank-$5$ positroid.}
  \label{fig:counterexample}
\end{figure}

The proof of the next result shows that (1) the cyclic interval
property, (2) a condition that Blum conjectured to characterize
base-sorting orders \cite[Conjecture 4.9]{blum}, and (3) a
characterization of positroid orders via forbidden induced orders on
certain $4$-element rank-$2$ minors can easily be derived from each
other.  The third characterization, while widely known among
researchers in the field, seems to never have appeared in the
literature.

\begin{thm}\label{thm:equivchars}
  Let $M$ be a matroid that has no loops and let $\leq$ be a linear
  order on $E(M)$.  The following statements are equivalent:
  \begin{itemize}
  \item[\emph{(1)}] $\leq$ has the cyclic interval property;
  \item[\emph{(2)}] for every minor $N$ of $M$ that has neither loops
    nor coloops, each circuit-hyperplane of $N$ is a cyclic interval
    in the linear order that $\leq$ induces on $E(N)$;
  \item[\emph{(3)}] for every minor $N$ of $M$ for which $E(N)$ is the
    disjoint union of a $2$-element circuit $C$ and a $2$-element
    cocircuit, the circuit $C$ is a cyclic interval in the linear
    order that $\leq$ induces on $E(N)$.
  \end{itemize}
\end{thm}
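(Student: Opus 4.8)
The plan is to prove the cyclic chain $(1)\Rightarrow(2)\Rightarrow(3)\Rightarrow(1)$. Since a loop of $M$ is a loop in every minor of $M$ that contains it, deleting the loops of $M$ changes neither statement (2) nor statement (3), and the cyclic interval property in (1) is in any case a statement about a loopless matroid; so I may and do assume that $M$ has no loops. I will repeatedly use the following translation, which is immediate from the equivalences listed for cyclic intervals in Section~\ref{sec:background}: for disjoint subsets $P$ and $Q$ of $E(M)$, the set $Q$ is contained in a cyclic interval of $\leq$ disjoint from $P$ \emph{if and only if} there do not exist $a,b\in P$ and $c,d\in Q$ occurring in the cyclic order $a,c,b,d$ in $\leq$. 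Granting this, the implication $(2)\Rightarrow(3)$ is essentially immediate: if $N$ is a minor of $M$ whose ground set is the disjoint union of a $2$-element circuit $C$ and a $2$-element cocircuit $C^*$, then $N$ has no loops and no coloops and, as $E(N)-C=C^*$ is a cocircuit, $C$ is a hyperplane; thus $C$ is a circuit-hyperplane of $N$, and (2) applied to $N$ is exactly the conclusion of (3).

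For $(1)\Rightarrow(2)$ I would argue by contradiction. Write a loopless, coloopless minor $N$ of $M$ as $N=M/X\del Y$ with $X$ a flat of $M$ (so $M/X$ has no loops), let $H$ be a circuit-hyperplane of $N$, and suppose $H$ is not a cyclic interval in the induced order; then there are $a,b\in H$ and $c,d\in E(N)-H$ in cyclic order $a,c,b,d$. Since $H$ is a circuit of $M/X$, the standard description of circuits of a contraction gives a circuit $C$ of $M$ with $C-X=H$; since $E(N)-H$ is a cocircuit of $N=(M/X)\del Y$, the dual description gives a cocircuit $\widehat C$ of $M$ with $\widehat C\cap X=\emptyset$ and $\widehat C-Y=E(N)-H$. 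A short inspection of the relevant intersections (using $X\cap Y=\emptyset$ and $\{a,b\},\{c,d\}\subseteq E(N)$) shows that $C$ and $\widehat C$ are disjoint. Now put $F=\cl_M(C)$: as the closure of a connected set it is a connected flat, with $a,b\in C\subseteq F$; since $C$ misses the cocircuit $\widehat C$, $F$ lies in the hyperplane $E(M)-\widehat C$, so $F$ is a proper flat and $c,d\in\widehat C$ miss $F$; and $\widehat C$, a cocircuit of $M$ disjoint from $F$, is a cocircuit of $M/F$, so $c,d$ lie in a common connected component $K$ of $M/F$ with $|E(K)|\ge2$. The cyclic interval property applied to $F$ and $K$, together with the translation above, now contradicts the fact that $a,b\in F$ and $c,d\in E(K)$ are in cyclic order $a,c,b,d$.

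For $(3)\Rightarrow(1)$ I would argue by contradiction, assuming the cyclic interval property fails for $M$ and $\leq$: by the translation there are a proper connected flat $F$ with $|F|\ge2$, a connected component $K$ of $M/F$ with $|E(K)|\ge2$, and elements $a,b\in F$, $c,d\in E(K)$ in cyclic order $a,c,b,d$. Because $M|F$ is connected there is a circuit $C$ of $M$ with $\{a,b\}\subseteq C\subseteq F$; because $K$ is connected there is a cocircuit $C^*$ of $M/F$ --- equivalently a cocircuit of $M$ disjoint from $F$ --- with $\{c,d\}\subseteq C^*\subseteq E(K)$. In the minor $N'=M/(C-\{a,b\})\del(C^*-\{c,d\})$, the set $\{a,b\}$ is a circuit and $\{c,d\}$ is a cocircuit: contracting a proper independent subset of a circuit leaves the rest of the circuit a circuit, deleting a proper coindependent subset of a cocircuit leaves the rest of the cocircuit a cocircuit, and the two operations do not interfere since $C-\{a,b\}\subseteq F$ and $C^*-\{c,d\}\subseteq E(M)-F$ are disjoint. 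I would then pare $N'$ down to the ground set $\{a,b,c,d\}$ by removing the remaining elements one at a time: for the current element $e$, if $e$ is parallel to $a$ or to $b$ I delete $e$, and otherwise I contract $e$; in either case $\{a,b\}$ stays a circuit and $\{c,d\}$ stays a cocircuit. The outcome is a minor of $M$ with ground set $\{a,b,c,d\}$ that is the disjoint union of the $2$-element circuit $\{a,b\}$ and the $2$-element cocircuit $\{c,d\}$, so (3) forces $\{a,b\}$ to be a cyclic interval in the order induced on $\{a,b,c,d\}$ --- contradicting the cyclic order $a,c,b,d$.

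I expect the crux to be the last reduction in $(3)\Rightarrow(1)$: one must verify that for each element $e$ at least one of ``delete $e$'' and ``contract $e$'' preserves simultaneously that $\{a,b\}$ is a $2$-element circuit and that $\{c,d\}$ is a $2$-element cocircuit. Deleting $e$ can only spoil the cocircuit, and does so exactly when $\{c,e\}$ or $\{d,e\}$ is a cocircuit; contracting $e$ can only spoil the circuit, and does so exactly when $\{a,e\}$ or $\{b,e\}$ is a circuit; and the case analysis closes because these two obstructions cannot occur together, a parallel pair lying in exactly the same cocircuits (if $\{a,e\}$ is a circuit then every cocircuit through $e$ contains $a$, so neither $\{c,e\}$ nor $\{d,e\}$ is a cocircuit). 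The remaining ingredients --- the translation at the start, the fact that the closure of a connected set is connected, and the standard descriptions of circuits and cocircuits of minors --- are routine.
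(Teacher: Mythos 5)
Your proof is correct, and while the skeleton ($(1)\Rightarrow(2)\Rightarrow(3)\Rightarrow(1)$, with the contrapositive approach for the last implication) matches the paper's, the internal arguments differ in two genuine ways.  For $(1)\Rightarrow(2)$ the paper routes through Theorem~\ref{thm:char} and Corollary~\ref{cor:convinterval}: statement (1) makes $\leq$ a positroid order, minors of positroids are positroids, and the corollary then forces a circuit-hyperplane to be a cyclic interval.  You instead give a direct, self-contained argument that never invokes the positroid machinery: you lift the circuit $H$ to a circuit $C$ of $M$ and the cocircuit $E(N)-H$ to a cocircuit $\widehat C$ of $M$, check they are disjoint, take $F=\cl_M(C)$, and apply the cyclic interval property in $M$ itself.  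This has the virtue of making Theorem~\ref{thm:equivchars} logically independent of Theorem~\ref{thm:char}, which the paper's proof is not.  For $(3)\Rightarrow(1)$ (by contrapositive) the two proofs share the key device — a circuit $C\subseteq F$ through $a,b$ and a cocircuit through $c,d$ disjoint from $F$ — but differ in how they shrink to the four-element minor.  The paper does it in one shot: extend $C-a$ to a basis $B$ of $M\backslash C^*$, contract $B-b$, and restrict to $\{a,b,e,f\}$.  You instead contract $C-\{a,b\}$, delete $C^*-\{c,d\}$, and then remove the remaining elements one at a time, verifying at each step that one of the two operations preserves both the $2$-circuit and the $2$-cocircuit (the orthogonality observation — a parallel mate of $a$ lies in every cocircuit through $a$ — closes the case analysis).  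Your step-by-step reduction is longer but each step is easy to audit; incidentally, by taking the cocircuit inside $E(K)$ in $M/F$ rather than in $M/(C-\{a,b\})$, you also guarantee that the cocircuit avoids $a$ and $b$, a point the paper passes over.  The cyclic-order translation you state at the outset (no $a,b\in P$, $c,d\in Q$ alternating iff $Q$ fits in a cyclic interval avoiding $P$) is the standard non-crossing fact and is used correctly in both directions.
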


\begin{proof}
  Assume that statement (1) holds, so $\leq$ is a positroid order by
  Theorem \ref{thm:char}. Let $N$ be a minor of $M$ that has neither
  loops nor coloops and let $H$ be a circuit-hyperplane of $N$.  By
  Corollary \ref{cor:convinterval} applied to the positroid $N$ and
  its connected flat $H$, the set $H$ is a cyclic interval in the
  linear order that $\leq$ induces on $E(N)$.  Thus, statement (2)
  holds, as does (3), which is a special case of (2).

  Now assume that statement (1) fails.  We claim that statement (3),
  and hence (2), also fail.  Let $F$ be a connected flat of $M$ and
  let $K$ be a connected component of $M/F$ that is not contained in
  any cyclic interval of $E(M)$ that is disjoint from $F$.  Thus,
  there are elements $a,b\in F$ and $e,f\in K$ for which $\{a,b\}$ is
  not a cyclic interval in the induced order on $\{a,b,e,f\}$.  Since
  $F$ is connected, some circuit $C$ of $M|F$ has $a,b\in C$.  Since
  $M/F|K$ is connected, some cocircuit $C^*$ of $M/F$ has
  $e,f\in C^*$, and $C^*$ is a cocircuit of $M$ by Lemma
  \ref{lem:cocofcon}.  Extend $C-a$ to a basis $B$ of $M-C^*$.  Set
  $N=M/(B- b)| \{a,b,e,f\}$.  Thus, $N$ has rank two, $\{a,b\}$ is a
  circuit of $N$, and $\{e,f\}$ is a cocircuit of $N$.  Since
  $\{a,b\}$ is not a cyclic interval in the induced order on
  $\{a,b,e,f\}$, statement (3) fails, as claimed.
\end{proof}

Statement (3) in the result above is an excluded-minor
characterization of positroids if one considers the linear order to be
part of the positroid.  In contrast, in this paper, we distinguish
between the matroid and positroid orders for it, so the excluded
minors that we consider in Section \ref{sec:exmin} are for the class
of matroids for which positroid orders exist.

The cyclic interval property clearly holds under the hypotheses of the
next corollary, which are less restrictive than the hypotheses of
\cite[Proposition 4.5]{blum}.

\begin{cor}\label{cor:intervalsimpliespositroid}
  Let $M$ be a matroid with no loops.  If there is a linear order on
  $E(M)$ in which each connected flat of $M$ is a cyclic interval,
  then $M$ is a positroid and the linear order is a positroid order.
\end{cor}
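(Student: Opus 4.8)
The plan is to reduce immediately to Theorem~\ref{thm:char}: I will show that the stated hypothesis forces the given linear order $\leq$ to have the cyclic interval property, and then invoke the equivalence of statements~(1) and~(4) in Theorem~\ref{thm:char} (available since $M$ has no loops) to conclude that $\leq$ is a positroid order, hence that $M$ is a positroid.

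To check the cyclic interval property, I would fix a proper connected flat $F$ of $M$ with $|F|\geq 2$ and a connected component $K$ of $M/F$ with $|E(K)|\geq 2$, and produce a cyclic interval containing $E(K)$ and disjoint from $F$. By hypothesis $F$ is itself a cyclic interval in $\leq$; and, as recorded in the background discussion of cyclic intervals, a subset of $E(M)$ is a cyclic interval if and only if its complement is, so $E(M)-F$ is a cyclic interval. Since the ground set of $M/F$ is $E(M)-F$, we have $E(K)\subseteq E(M)-F$, and $E(M)-F$ is trivially disjoint from $F$. Thus $E(M)-F$ is the desired cyclic interval, and since $F$ and $K$ were arbitrary, $\leq$ has the cyclic interval property.

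The remaining bookkeeping is light: the definition of the cyclic interval property already restricts attention to proper connected flats $F$ with $|F|\geq2$ and to components $K$ with $|E(K)|\geq 2$, so there is nothing to check for small or improper flats; and the loopless hypothesis on $M$ is exactly what is needed to apply Theorem~\ref{thm:char}. Note that one does not even need Lemma~\ref{lem:connflcontr} here, since we are not claiming $E(K)$ is a connected flat of $M$ but only that it sits inside the cyclic interval $E(M)-F$.

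There is no substantive obstacle: the hypothesis is strong enough that the cyclic interval property is essentially automatic, which is precisely the remark preceding the corollary. Indeed the argument makes transparent why the hypothesis is more restrictive than necessary — the cyclic interval property only requires $E(K)$ to be contained in \emph{some} cyclic interval avoiding $F$, whereas here we are handed the much stronger fact that every connected flat, and in particular $F$ itself, is a cyclic interval.
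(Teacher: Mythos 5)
Your argument is correct and takes the same route as the paper, which simply remarks that ``the cyclic interval property clearly holds under the hypotheses of the next corollary'' and then invokes Theorem~\ref{thm:char}. You merely make the ``clearly'' explicit: since each connected flat $F$ is a cyclic interval, so is its complement $E(M)-F$, which contains $E(K)$ for every component $K$ of $M/F$ and is disjoint from $F$, giving the required cyclic interval at once.
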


In particular, any matroid with no loops in which the proper
nonsingleton connected flats are pairwise disjoint is a positroid.
Thus, every rank-$2$ matroid is a positroid.

If a matroid $M$ satisfies the hypothesis of Corollary
\ref{cor:intervalsimpliespositroid}, then so do its truncations since
all proper connected flats of a truncation are proper connected flats
of $M$.  The same does not apply to quotients other than truncations.
Also, the same is not true of Theorem \ref{thm:char}.  Indeed, the
class of positroids is not closed under truncations, as Example
\ref{rank5example} shows.

Oh \cite{oh} proved that lattice path matroids are positroids.  From
Corollary \ref{cor:intervalsimpliespositroid} and the remarks in
Section \ref{ssec:lpmmpm}, noting our observation on loops at the
beginning of this section, it is easy to see that the same is true of
the larger class of multi-path matroids (this also follows from Blum
\cite[Theorem 5.2]{blum}), and likewise for truncations of lattice
path matroids (which need not be lattice path), and truncations of
multi-path matroids (which need not be multi-path).  As an aside, we
note that the class of lattice path matroids is closed under direct
sums, but that of multi-path matroids is not; also, truncations of
direct sums of multi-path matroids (e.g., truncations of direct sums
of whirls) need not be positroids.

We next show that the class of positroids is closed under
circuit-hyperplane relaxation and a more general relaxation operation
that we now discuss.  Assume that a cyclic flat $X$ of a matroid $M$
is neither $\cl_M(\emptyset)$ nor $E(M)-\cl_{M^*}(\emptyset)$, that
the only cyclic flat that properly contains $X$ is
$E(M)-\cl_{M^*}(\emptyset)$, and that the only cyclic flat that $X$
properly contains is $\cl_M(\emptyset)$.  (Thus, $X$ could be a
circuit-hyperplane of a connected matroid of rank at least two, but
$X$ is not limited to such sets.)  Observe that properties (Z0)--(Z3)
in Theorem \ref{thm:cfaxioms} hold for
$\mathcal{Z}=\mathcal{Z}(M)-\{X\}$ and $r:\mathcal{Z}\to\mathbb{Z}$
where $r(A)=r_M(A)$ for each $A\in\mathcal{Z}$.  By that result, this
reduced cyclic flat and rank data defines another matroid, a
generalized relaxation of $M$, in which $X$ is independent.

\begin{cor}
  Assume that $X$ is a proper, nonempty cyclic flat of a matroid $M$
  that has no loops and no coloops, and that no other proper nonempty
  cyclic flat of $M$ either contains $X$ or is contained in $X$.  Let
  $M'$ be the matroid for which
  $\mathcal{Z}(M')=\mathcal{Z}(M) -\{X\}$ and $r_{M'}(A)=r_M(A)$ for
  all $A\in \mathcal{Z}(M')$.  If $M$ is a positroid, then so is $M'$,
  and any positroid order for $M$ is a positroid order for $M'$.
  Thus, any circuit-hyperplane relaxation of a connected positroid of
  rank at least two is a positroid.
\end{cor}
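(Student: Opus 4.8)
The plan is to use the characterization of positroids via the cyclic interval property (Theorem~\ref{thm:char}) together with the explicit description of how the cyclic flats of $M'$ relate to those of $M$. We are told that $\mathcal{Z}(M') = \mathcal{Z}(M) - \{X\}$ with the rank function inherited from $M$. First I would fix a positroid order $\leq$ for $M$, which exists by hypothesis, and show it is also a positroid order for $M'$. Since $M$ has no loops, neither does $M'$ (the least cyclic flat is unchanged), so Theorem~\ref{thm:char} applies: it suffices to verify that $\leq$ has the cyclic interval property for $M'$, i.e., for every proper connected flat $F'$ of $M'$ with $|F'|\geq 2$ and every connected component $K'$ of $M'/F'$ with $|E(K')|\geq 2$, the set $E(K')$ lies in a cyclic interval of $\leq$ disjoint from $F'$.

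The key step is to relate connected flats and their contractions in $M'$ to those in $M$. A connected flat of a loopless matroid with at least two elements is a cyclic flat; so every connected flat $F'$ of $M'$ with $|F'| \geq 2$ lies in $\mathcal{Z}(M') \subseteq \mathcal{Z}(M)$, hence is a cyclic flat of $M$ with the same rank. Moreover $M'|F' $ and $M|F'$ have the same cyclic flats and ranks (Lemma~\ref{lem:cyclicminor}), so $F'$ is connected in $M'$ iff it is connected in $M$. Thus the proper connected flats $F$ with $|F|\geq 2$ are exactly the same for $M$ and $M'$ — except that $X$ itself, if it happened to be connected in $M$, is no longer a flat of $M'$ (it has become independent), so it simply drops out of the list of constraints we must check. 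For each surviving connected flat $F$, I would compare $M'/F$ with $M/F$: by Lemma~\ref{lem:cyclicminor} the cyclic flats of $M/F$ (resp.\ $M'/F$) are the sets $G - F$ with $G \in \mathcal{Z}(M)$, $F \subseteq G$ (resp.\ $G \in \mathcal{Z}(M')$), with inherited ranks. Since $\mathcal{Z}(M')$ and $\mathcal{Z}(M)$ differ only in $X$, and $F$ is a proper connected flat with $|F|\geq 2$, the hypothesis that $X$ contains no proper nonempty cyclic flat other than $\cl_M(\emptyset)$ forces $F \not\subseteq X$ unless $F = X$ (which we have excluded). Hence $X$ is not among the cyclic flats $G$ with $F \subseteq G$, so $M'/F$ and $M/F$ have identical cyclic flats and ranks, i.e., $M'/F = M/F$. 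Consequently the connected components $K'$ of $M'/F$ coincide with those of $M/F$, and the cyclic interval property for $M'$ at $F$ is exactly the cyclic interval property for $M$ at $F$, which holds because $\leq$ is a positroid order for $M$.

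To finish, I would handle the degenerate possibility that $X$ is a flat of $M$ but is \emph{not} connected there (so it never imposed a constraint) and the possibility that $F \subseteq G = X$ for some $G$: the structural hypothesis rules out the latter for $F$ proper with $|F| \geq 2$, and the former case only removes a vacuous or already-satisfied constraint. Therefore every constraint in the cyclic interval property for $M'$ is a constraint already satisfied by $\leq$ in $M$, so $\leq$ has the cyclic interval property for $M'$ and $M'$ is a positroid by Theorem~\ref{thm:char}. The last sentence of the statement is the special case where $X$ is a circuit-hyperplane of a connected matroid of rank at least two: then $\cl_M(\emptyset) = \emptyset$, $E(M) - \cl_{M^*}(\emptyset) = E(M)$, $M$ has no loops or coloops, and the only cyclic flats comparable to $X$ are $\emptyset$ and $E(M)$, so the hypotheses apply and $M'$ is the relaxation of $M$ at $X$. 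The main obstacle, and the point that needs the most care, is verifying $M'/F = M/F$ for all relevant $F$ — in particular checking that the deletion of $X$ from the cyclic-flat lattice never affects the upper interval above a proper connected flat $F$ with $|F| \geq 2$; this is where the hypothesis that nothing proper and nonempty lies strictly between $\cl_M(\emptyset)$ and $X$ (and nothing proper other than $E(M)-\cl_{M^*}(\emptyset)$ lies strictly above $X$) does the essential work.
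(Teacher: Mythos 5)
Your proof is correct and takes essentially the same route as the paper: fix a positroid order for $M$, use Lemma~\ref{lem:cyclicminor} together with the hypothesis that $X$ covers $\emptyset$ and is covered by $E(M)$ in $\mathcal{Z}(M)$ to show that $M'|F=M|F$ and $M'/F=M/F$ for every proper nonempty cyclic flat $F$ of $M'$, and then conclude that the cyclic interval property for $M$ transfers to $M'$. You spell out in more detail why $X$ never lies in the relevant interval of $\mathcal{Z}(M)$, which the paper leaves implicit, but the argument is the same.
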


\begin{proof}
  It follows from Lemma \ref{lem:cyclicminor} that for any proper,
  nonempty cyclic flat $F$ of $M'$, we have $M|F=M'|F$ and $M/F=M'/F$.
  Thus, the proper connected flats $F$ of $M'$ with $|F|\geq 2$ (all
  of which are cyclic flats of $M'$) are those of $M$ other than $X$.
  For a positroid order for $M$, the cyclic interval property holds
  for $M$, so the cyclic interval property also holds for $M'$, so the
  linear order is a positroid order for $M'$.
\end{proof}

The same result and proof apply to the more general relaxation $M'$ of
$M$ where $\mathcal{Z}(M')$ is $\mathcal{Z}(M)-\mathcal{S}$ for some
$\mathcal{S}\subseteq \mathcal{Z}(M)-\{\emptyset,E(M)\}$ where
$\mathcal{S}\cup\{\emptyset,E(M)\}$ is both a filter and an ideal of
$\mathcal{Z}(M)$.

Since the class of positroids is closed under contraction but not
under truncation, it is not closed under free extension. (It is
therefore also not closed under the operation of free product, of
which free extension is a special case; see \cite{free1,free2,oxley}.)
The following corollary characterizes when free extensions of
positroids are positroids.

\begin{cor}\label{cor:freeextcor}
  Let $M$ be a matroid with no loops, and fix $e\not\in E(M)$.  The
  free extension $M+e$ is a positroid if and only if there is a linear
  order on $E(M)$ in which each proper connected flat of $M$ is an
  interval.
\end{cor}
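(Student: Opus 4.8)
The plan is to derive both implications from Corollary~\ref{cor:intervalsimpliespositroid}, after first pinning down the connected flats of the free extension $M+e$. We may assume $r(M)\geq 1$, since if $r(M)=0$ then $E(M)=\emptyset$ and both $M$ and $M+e$ are trivially positroids while the displayed condition holds vacuously. Recall that $r(M+e)=r(M)$, and that $M+e$ has no loops (as $M$ has none and $e$ is not a loop once $r(M)\geq 1$), so Corollaries~\ref{cor:intervalsimpliespositroid} and~\ref{cor:convinterval} both apply to $M+e$.

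First I would work out which subsets are connected flats of $M+e$. Since a subset of $E(M)$ spans $M$ exactly when its closure in $M+e$ contains $e$, a short computation with the rank function of a principal extension gives: the flats of $M+e$ that miss $e$ are precisely the proper flats of $M$, with unchanged rank; the flats of $M+e$ that contain $e$ are $E(M)\cup e$ itself together with the sets $H\cup e$ where $H$ is a flat of $M$ with $r_M(H)\leq r(M)-2$; and in each $(M+e)|(H\cup e)$ the element $e$ is a coloop. Hence the proper connected flats $G$ of $M+e$ with $|G|\geq 2$ are exactly the proper connected flats of $M$ with at least two elements, and $(M+e)|G=M|G$ for such $G$. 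I would also record that $M+e$ is connected: for any basis $B$ of $M$ the set $\{e\}\cup B$ is a circuit of $M+e$, and from such circuits one checks that every two elements lie in a common circuit. Finally, for a proper flat $F$ of $M$ the contraction $M/F$ is loopless of rank $r(M)-r_M(F)\geq 1$, and a rank-function check gives $(M+e)/F=(M/F)+e$, which by the same circuit argument is connected.

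For the ``if'' direction, given a positroid order $\leq$ for $M$ in which every proper connected flat of $M$ is an interval, I would take the order $\leq'$ on $E(M)\cup e$ that puts $e$ first and then lists $E(M)$ according to $\leq$. Every connected flat of $M+e$ is either trivial (at most one element), equal to $E(M)\cup e$, or a proper connected flat of $M$; in the last case it is an interval of $\leq$ that avoids $e$, hence still an interval of $\leq'$. So each connected flat of $M+e$ is a cyclic interval of $\leq'$, and Corollary~\ref{cor:intervalsimpliespositroid} shows $M+e$ is a positroid. For the ``only if'' direction, let $\leq'$ be a positroid order for $M+e$. For each proper connected flat $F$ of $M$ with $|F|\geq 2$, both $(M+e)|F=M|F$ and $(M+e)/F=(M/F)+e$ are connected, so Corollary~\ref{cor:convinterval} makes $F$ a cyclic interval of $\leq'$. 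Passing to the shift of $\leq'$ that makes $e$ least preserves positroid orders and cyclic intervals, and a cyclic interval not containing the least element is an interval; restricting that shift to $E(M)$ therefore yields an order $\leq$ in which every proper connected flat of $M$ is an interval (the ones with at most one element trivially). Then every connected flat of $M$ is a cyclic interval of $\leq$, so Corollary~\ref{cor:intervalsimpliespositroid} shows $\leq$ is a positroid order for $M$, as required.

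The step I expect to be the main obstacle is the flat analysis of $M+e$ in the second paragraph, specifically verifying that a flat of $M+e$ containing $e$ is either the whole ground set or has $e$ as a coloop of the induced restriction: this is what forces every nontrivial connected flat of $M+e$ into $E(M)$ and thereby links the two sides of the equivalence. The identifications $(M+e)|F=M|F$ and $(M+e)/F=(M/F)+e$ and the connectivity of $(M/F)+e$ are routine once one is careful with the side conditions ($F$ proper, $|F|\geq 2$, $M/F$ loopless), but keeping accurate track of those conditions is where the care lies.
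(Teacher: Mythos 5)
Your proof is correct and takes essentially the same approach as the paper: identify the proper connected flats of $M+e$ with those of $M$, place $e$ at one end of the linear order by a cyclic shift, and apply Corollaries~\ref{cor:convinterval} and~\ref{cor:intervalsimpliespositroid}. The only difference is that you spell out the flat analysis of $M+e$ in detail, whereas the paper asserts the key fact (that the proper connected flats of $M+e$ are exactly those of $M$) without proof.
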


\begin{proof}
  First assume that $M+e$ is a positroid, and let $\leq$ be a
  positroid order for $M+e$.  By replacing $\leq$ by a cyclic shift if
  needed, we may assume that $e$ is the greatest element of $E(M+e)$.
  Since $e$ is neither a coloop of $M+e$ nor in any proper connected
  nonsingleton flat of $M+e$, by Lemma \ref{lem:conncond} any
  contraction $(M+e)/F$ by a flat $F$ with $e\not\in F$ is connected.
  Let $F$ be a proper connected flat of $M$.  Then $F$ is a proper
  connected flat of $M+e$, and $(M+e)/F$ is connected, so $F$ is a
  cyclic interval in $\leq$ by Corollary \ref{cor:convinterval}.
  Therefore $F$ is an interval in $\leq$ since $e\not\in F$.  Thus, in
  the positroid order that $\leq$ induces on $E(M)$, every proper
  connected flat of $M$ is an interval.

  The converse follows easily from Corollary
  \ref{cor:intervalsimpliespositroid} since the proper connected
  nonsingleton flats of $M$ are precisely those of $M+e$, so if there
  is a linear order on $E(M)$ in which each proper connected flat of
  $M$ is an interval, then the same holds for $M+e$ by extending that
  positroid order for $M$ to make $e$ the greatest element in the
  linear order.
\end{proof}

\begin{cor}
  If the ground sets of $M_1,M_2,\ldots,M_t$ are pairwise disjoint and
  the free extension $M_i+e$, for each $i\in[t]$, is a positroid, then
  any iterated free extension of any truncation of
  $M_1\oplus M_2\oplus \cdots\oplus M_t$ is a positroid.
\end{cor}

\begin{proof} Let $N=M_1\oplus M_2\oplus \cdots\oplus M_t$.  For each
  $i\in[t]$, since $M_i+e$ is a positroid, there is a linear order
  $e_{i,1},e_{i,2},\ldots,e_{i,n_i}$ on $E(M_i)$ in which each
  connected flat of $M_i$ is an interval.  Concatenating these linear
  orders as
  $$e_{1,1},e_{1,2},\ldots,e_{1,n_1},
  e_{2,1},e_{2,2},\ldots,e_{2,n_2},\ldots,
  e_{t,1},e_{t,2},\ldots,e_{t,n_t}$$ gives a linear order on $E(N)$ in
  which each connected flat of $N$ is an interval.  By Corollary
  \ref{cor:freeextcor}, adding elements freely to $N$ yields a
  positroid, and contracting some of them gives an iterated free
  extension of a truncation of $N$.
\end{proof}
    
We next identify some cases in which principal extensions of
positroids are positroids.  The assumption in this result amounts to
$M$ being a principal extension of $M\del f$.

\begin{cor}
  Let $M$ be a positroid with no loops. Fix $e\not\in E(M)$.  Assume
  that for some nonsingleton connected flat $A$ of $M$ and
  $f\in E(M)$, for each nonsingleton connected flat $F$ of $M$, we
  have $f\in F$ if and only if $A\subseteq F$.  Then the principal
  extension $M+_Ae$ of $M$ is a positroid.
\end{cor}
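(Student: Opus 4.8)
The plan is to show that a positroid order for $M$ can be converted into one for $M+_Ae$ by inserting $e$ next to $f$, and then to invoke the characterization of positroid orders. Since $M$ is a positroid, fix a positroid order $\leq$ for $M$. Note that $f\in A$ (take $F=A$ in the hypothesis), that $M+_Ae$ has no loops (because $|A|\ge 2$), and that $f$ is not a coloop of $M$ (a coloop of $M$ would be a coloop of $M|A$, contradicting the connectedness of $M|A$); this last fact is used below. I would form the order $\leq'$ on $E(M)\cup\{e\}$ by inserting $e$ immediately after $f$, and verify that $\leq'$ has the cyclic interval property for $M+_Ae$; then $M+_Ae$ is a positroid by Theorem~\ref{thm:char}. (Inserting $e$ next to $f$ is the natural choice, since one can check from the description of flats below that $e$ and $f$ lie in exactly the same connected flats of $M+_Ae$ of size at least two, so they are clones of $M+_Ae$.)

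First I would describe the proper connected flats of $M+_Ae$ with at least two elements. A flat of $M+_Ae$ not containing $e$ is precisely a flat $G$ of $M$ with $A\not\subseteq G$, and then $(M+_Ae)|G=M|G$; if such a $G$ is connected then $f\notin G$ by the hypothesis. A connected flat of $M+_Ae$ containing $e$ and having at least two elements must be of the form $G'\cup\{e\}$ with $G'$ a connected flat of $M$ and $A\subseteq G'$: writing $G'=G-e$ (a flat of $M$), if $A\not\subseteq G'$ then $e\notin\cl_{M+_Ae}(G')$, so $e$ is a coloop of $(M+_Ae)|(G'\cup\{e\})$, contradicting connectedness; and then $(M+_Ae)|(G'\cup\{e\})=(M|G')+_Ae$ forces $M|G'$ connected, while $e\in\cl_{M+_Ae}(G')$ gives $(M+_Ae)/(G'\cup\{e\})=M/G'$. (The set $E(M+_Ae)$ is not a proper flat and $E(M)$ is not a flat of $M+_Ae$, so neither is relevant.) Hence a proper connected flat $G$ of $M+_Ae$ with $|G|\ge 2$ is either (i) $G=G'\cup\{e\}$ for a proper connected flat $G'$ of $M$ with $|G'|\ge 2$, with $(M+_Ae)/G=M/G'$; or (ii) $e\notin G$, with $G$ a proper connected flat of $M$ with $f\notin G$ and $|G|\ge 2$, and, by the contraction formula for principal extensions, $(M+_Ae)/G=(M/G)+_{Z}e$ where $Z=\cl_{M/G}(A-G)$ and $f\in Z$.

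The main point is to identify the connected components of $(M/G)+_Ze$ in case (ii). The crucial claim is that $A-G$ lies in the connected component $L_0$ of $M/G$ that contains $f$. Indeed, $f$ is not a coloop of $M$, hence not of $M/G$, so $|E(L_0)|\ge 2$; the set $E(L_0)$ is a separator of the loopless matroid $M/G$, hence a flat, and it is connected, so by Lemma~\ref{lem:connflcontr} one of $E(L_0)$ and $E(L_0)\cup G$ is a connected flat of $M$ — a connected flat of size at least two containing $f$ — hence it contains $A$ by the hypothesis, giving $A-G\subseteq E(L_0)$. Therefore $Z\subseteq\cl_{M/G}(E(L_0))=E(L_0)$; and since the circuits of $(M/G)+_Ze$ through $e$ are the sets $D\cup\{e\}$ with $D$ a basis of $(M/G)|Z$ (so $D\subseteq Z$), the element $e$ is joined in $(M+_Ae)/G$ only to elements of $Z\subseteq E(L_0)$. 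Consequently the component of $e$ in $(M+_Ae)/G$ is exactly $E(L_0)\cup\{e\}$, and every other component of $(M+_Ae)/G$ is a component of $M/G$ other than $L_0$, unchanged.

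Finally I would transfer cyclic intervals. For a flat as in (i) (apply the cyclic interval property of $\leq$ for $M$ to $G'$) or (ii) (apply it to $G$), each relevant connected component of the contraction of $M$ lies in a cyclic interval $\Gamma$ of $\leq$ disjoint from $G'$, respectively $G$. Because $\leq'$ merely inserts $e$ right after $f$, the set $\Gamma$ (when $f\notin\Gamma$) or $\Gamma\cup\{e\}$ (when $f\in\Gamma$, which happens precisely for the component $L_0$ of $f$) is a cyclic interval of $\leq'$; it is still disjoint from the relevant flat (since $e$ belongs to neither $G'$ nor $G$), and it contains the corresponding component of the contraction of $M+_Ae$. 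Thus $\leq'$ has the cyclic interval property, so $M+_Ae$ is a positroid. The hard part, and the place where the full hypothesis on $A$ and $f$ is essential, is the claim in the previous paragraph that contracting $G$ does not split the connected flat $A$ across several components of $M/G$: without it, the freely added point $e$ could fuse components of $M/G$ whose union need not lie in any single cyclic interval of $\leq$, and Lemma~\ref{lem:connflcontr} together with the minimality of $A$ built into the hypothesis is exactly what rules this out.
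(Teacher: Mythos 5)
Your approach matches the paper's: insert $e$ adjacent to $f$ (the paper first rotates so $f$ is last, then appends $e$, which is the same up to cyclic shift), classify the connected flats of $M+_Ae$ into those of the form $G'\cup e$ with $G'$ a connected flat containing $A$ and those equal to a connected flat $G$ of $M$ avoiding $f$, describe the components of the contractions, and transfer cyclic intervals. In fact you go further than the paper, which merely \emph{asserts} the component descriptions as observations without proof; you justify the key one (that $e$ attaches only to $f$'s component of $M/G$) by using the hypothesis on $A$ and $f$ together with Lemma~\ref{lem:connflcontr}. That is the right idea, and the rest of the argument is sound.

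There is, however, one incorrect step. You claim that the circuits of $(M/G)+_{Z}e$ through $e$ are the sets $D\cup\{e\}$ with $D$ a basis of $(M/G)|Z$, hence $D\subseteq Z$. This is false. The circuits of a principal extension $N+_Ze$ through $e$ are the sets $C\cup\{e\}$ with $C$ a minimal independent set of $N$ satisfying $Z\subseteq\cl_N(C)$, and such $C$ need not lie inside $Z$. For example, in $U_{2,4}+_{\{1\}}e$ (so $Z=\{1\}$), the set $\{2,3,e\}$ is a circuit. So ``$e$ is joined only to elements of $Z$'' does not hold, and the inference as written has a gap: it could not by itself rule out $e$ being in a circuit with elements of components other than $L_0$. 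The correct way to finish this step is the direct-sum observation: since $Z\subseteq E(L_0)$ and $L_0$ is a connected component of $M/G$, the principal extension factors as
\[
(M/G)+_{Z}e \;=\; \bigl(L_0+_{Z}e\bigr)\oplus(\text{the other components of }M/G),
\]
and $L_0+_Ze$ is connected. This gives exactly your stated conclusion (the component of $e$ is $E(L_0)\cup\{e\}$, the other components are unchanged) without the erroneous circuit claim, and with that patch your argument goes through.
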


\begin{proof}
  By construction, $e$ and $f$ are clones in $M+_Ae$.  Let $\leq$ be a
  positroid order for $M$.  We may assume that $f$ is the greatest
  element of $E(M)$ under $\leq$.  Define $\leq'$ on $E(M)\cup e$ by
  appending $e$ as the new greatest element, so $E(M)$ is an ideal in
  the extension $\leq'$ of $\leq$.  The connected flats of $M+_Ae$
  with at least two elements are of two types: (i) connected flats $F$
  of $M$ with $|F|\geq 2$ and $f\not\in F$, and (ii) $F\cup e$ where
  $F$ is a connected flat of $M$ with $|F|\geq 2$ and $f\in F$.  For a
  connected flat $F$ of $M+_Ae$ of type (i), the connected components
  of $(M+_Ae)/F$ are those of $M/F$, but with the one that contains
  $f$ augmented by $e$.  Also, for a connected flat $F\cup e$ of
  $M+_Ae$ of type (ii), the connected components of
  $(M+_Ae)/(F\cup e)$ are those of $M/F$.  With those observations, it
  is easy to see that $\leq'$ has the cyclic interval property.
\end{proof}

There are infinite antichains of positroids in the minor quasi-order
(where $N\leq M$ if $N$ is isomorphic to a minor of $M$), so the class
of positroids is not well-quasi-ordered.  Some examples of infinite
antichains of positroids are: the excluded minors for the class of
nested matroids (the truncation, to rank $n$, of the direct sum of two
$n$-circuits, for $n\geq 2$; see \cite{nested}); apart from $M(K_4)$,
the excluded minors for the class of lattice path matroids (see
\cite{lpmexmin}); for all $n\geq 3$, the truncation of the rank-$n$
whirl $\mathcal{W}^n$ to rank $3$.
 
The next result will be useful in Section \ref{sec:bonding}.

\begin{cor}\label{cor:clone}
  Let $X_1,X_2,\ldots,X_d$ be pairwise disjoint sets of clones in a
  positroid $M$ for which, in some positroid order for $M$, some
  $x\in X_1$ and $y\in X_2$ are cyclically consecutive.  Then there is
  a positroid order for $M$ in which each $X_i$, for $i\in[d]$, is a
  cyclic interval, as is $X_1\cup X_2$.  In particular, there is a
  positroid order for $M$ in which each clonal class is a cyclic
  interval.
\end{cor}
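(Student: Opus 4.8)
The plan is to move the given positroid order to a convenient normal form and then finish by rearranging inside clonal classes. Two observations make the rearrangements legitimate. First, if $e$ and $f$ are clones of $M$, then the transposition $(e,f)$ is an automorphism of $M$; applying an automorphism to a positroid order yields a positroid order, so one may permute the elements inside any single clonal class arbitrarily and remain at a positroid order. Second, a set of clones $X$ is compatible with the block decomposition coming from any connected flat: from the description of clones via connected flats, together with Lemma \ref{lem:connflcontr}, one gets that for every connected flat $F$ of $M$ with $|F|\ge 2$ and every connected component $K$ of $M/F$ with $|E(K)|\ge 2$, either $X\subseteq F$, or $X\subseteq E(K)$, or $X\cap(F\cup E(K))=\emptyset$. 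By Theorem \ref{thm:char}, being a positroid order is equivalent to the cyclic interval property, a conjunction of constraints on the sets $E(K)$; the compatibility just noted says that each such constraint is blind to the internal order of each clonal class.

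The core is a normal-form lemma: \emph{if $\leq$ is a positroid order for $M$ and $x,y$ are cyclically consecutive, then there is a positroid order for $M$ in which $x$ and $y$ are still cyclically consecutive, every clonal class is a cyclic interval, $x$ is the end of its clonal class adjacent to $y$, and $y$ is the end of its clonal class adjacent to $x$.} I would prove this by induction on the number of clonal classes failing the displayed condition. For the inductive step, take an offending clonal class $C$; then $C$ has at least two maximal runs of cyclically consecutive $C$-elements, and one shifts such a run to fuse it with a neighbouring run of $C$. By the compatibility property, every run of $C$ lies wholly in $F$, wholly in some $E(K)$, or wholly outside $F\cup E(K)$ for each constraining pair $(F,K)$, so the only constraints of Theorem \ref{thm:char}(4) that the shift can break are broken at a single boundary and can be repaired by enlarging or shrinking the witnessing cyclic interval within the same block $F\cup E(K)$; choosing the direction of the shift and being careful near $x$ and $y$ (and, when $x$ and $y$ are clones, treating their common class with its two runs placed adjacently) finishes the step. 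Taken with any cyclically consecutive pair, the lemma immediately gives the ``in particular'' statement.

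Granting the lemma, the corollary is short. Given $X_1,\dots,X_d$ and a positroid order with $x\in X_1$ and $y\in X_2$ cyclically consecutive, replace it by the order supplied by the lemma; now every clonal class is a cyclic interval, the clonal class $C_1$ of $x$ cyclically immediately precedes the clonal class $C_2$ of $y$ (these classes coincide when $x$ and $y$ are clones), and each $X_i$ sits inside one clonal class. Permuting inside clonal classes — legitimate by the first observation — arrange, inside each clonal class, the $X_i$'s it contains as pairwise disjoint contiguous sub-blocks, with $X_1$ pushed to the $C_2$-end of $C_1$ keeping $x$ last and $X_2$ pushed to the $C_1$-end of $C_2$ keeping $y$ first. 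The result is a positroid order for $M$ in which each $X_i$ is a cyclic interval and $X_1\cup X_2$, being the final block of $C_1$ followed by the initial block of $C_2$, is a cyclic interval.

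The main obstacle is the inductive step of the normal-form lemma: showing that a run-merging shift preserves the cyclic interval property calls for a careful analysis of how the shift meets the constraints $(F,K)$, with particular attention to runs consisting of coloops — whose clonal classes can have disconnected closure — and to holding $x$ and $y$ in place.
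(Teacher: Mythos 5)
Your overall plan is reasonable — reduce to a normal form where clonal classes are cyclic intervals, then finish by permuting inside clonal classes (legitimate, since transpositions of clones are automorphisms). And the compatibility observation is correct: a set of clones is never split by a pair $(F,K)$ in the sense of Theorem \ref{thm:char}(4). But the proof is not complete, because the inductive step of your normal-form lemma — that a run-merging shift of a clonal class preserves the cyclic interval property — is never actually proved. You state it, gesture at "enlarging or shrinking the witnessing cyclic interval within the same block $F\cup E(K)$," and then explicitly list it as "the main obstacle," naming the very difficulties (coloop runs with disconnected closure, holding $x$ and $y$ fixed through repeated shifts) that would need to be resolved. That is the content of the corollary, so as written this is a gap, not a proof.

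The paper takes a different and more easily verifiable route. Rather than merging runs one at a time and rather than normalizing all clonal classes at once, it processes the given sets $X_1,\dots,X_d$ one at a time, and for each it performs a single global rearrangement: move all of $X_i$ to one end of the order while preserving the induced orders on $X_i$ and on $E(M)-X_i$. It then checks the cyclic interval property of Theorem \ref{thm:char}(4) directly by a three-way split: $X_i\subseteq F$ (the old witnessing interval $I$ lies inside one block of non-$X_i$ elements, so survives); $X_i\cap F=\emptyset$ and $X_i\cap E(K)=\emptyset$ (replace $I$ by $I-X_i$ or $I\cup X_i$ depending on whether $I$ met the first $X_i$-run); $X_i\subseteq E(K)$ (then $X_i\subseteq I$, so $E(M)-I$ sits inside one non-$X_i$ block and remains an interval). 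The single-block move is much easier to analyze than a run-by-run merge because the complement $E(M)-X_i$ keeps its internal order verbatim; this is exactly what makes those three cases go through cleanly. To repair your argument, drop the run-merging induction, prove instead that this one-shot "pull $X_i$ to the front" move preserves positroid orders (via the case split above), and iterate — at the second step starting from the order dual so that the prescribed adjacency of $x$ and $y$ is preserved.
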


\begin{proof}
  Assume that the following linear order, $\leq$, on $E(M)$, written
  as a list from the minimum to the maximum element, is a positroid
  order for $M$:
  $$x^1_1,\ldots, x^1_{t_1},a^1_1,\ldots,a^1_{s_1}, x^2_1,\ldots ,
  x^2_{t_2},a^2_1,\ldots,a^2_{s_2},\ldots, x^k_1,\ldots ,
  x^k_{t_k},a^k_1,\ldots,a^k_{s_k},$$ where all $t_i$, $s_i$ are in
  $\mathbb{N}$, each $x_i^j$ is in $X_1$, and each $a_i^j$ is in
  $E(M)-X_1$.  Also, we can take the elements $x$ and $y$ in the
  hypothesis to be $x=x_1^1$ and $y=a_{s_k}^k$.  (By using a cyclic
  shift and the order dual as needed, any other pair of cyclically
  consecutive elements $x\in X_1$ and $y\in X_2$ can be brought into
  the analogous position.)  
  Consider the linear order $\leq_1$ that, written as
  a list, is given by
  $$x^1_1,\ldots, x^1_{t_1}, x^2_1,\ldots ,
  x^2_{t_2},\ldots,x^k_1,\ldots ,
  x^k_{t_k},a^1_1,\ldots,a^1_{s_1},a^2_1,\ldots,a^2_{s_2},\ldots,
  a^k_1,\ldots,a^k_{s_k},$$ so $\leq$ and $\leq_1$ induce the same
  linear orders on $X_1$ and on $E(M)-X_1$; also, $X_1$ is an ideal of
  $\leq_1$.  If $k=1$, then $\leq_1$ is the positroid order $\leq$ for
  $M$.  We next show that $\leq_1$ is a positroid order for $M$ even
  if $k>1$.
  
  Let $F$ be a connected flat of $M$ with $|F|\geq 2$ and let $K$ be a
  connected component of $M/F$ with $|K|\geq 2$.  By Theorem
  \ref{thm:char}, there is a cyclic interval $I$ for $\leq$ so that
  $K\subseteq I$ and $I\cap F=\emptyset$.  We must show that
  $K\subseteq I'$ and $I'\cap F=\emptyset$ for some cyclic interval
  $I'$ for $\leq_1$.  If $X_1\subseteq F$, then since $F$ and the
  cyclic interval $I$ are disjoint,
  $I\subseteq \{a_1^i,a_2^i,\ldots,a_{s_i}^i\}$ for some $i\in [k]$,
  and so $I$ is a cyclic interval for $\leq_1$ and we can take $I'=I$.
  Now assume that $X_1\not\subseteq F$. Then $X_1\cap F=\emptyset$
  since $X_1$ is a set of clones of $M$.  Since $X_1$ is also a set of
  clones of $M/F$, either $X_1\cap K=\emptyset$ or $X_1\subseteq K$.
  First assume that $X_1\cap K=\emptyset$.  If
  $\{x^1_1,\ldots, x^1_{t_1}\}\cap I=\emptyset$, then $I'=I-X_1$ is a
  cyclic interval for $\leq_1$ and satisfies $K\subseteq I'$ and
  $I'\cap F=\emptyset$.  If
  $\{x^1_1,\ldots, x^1_{t_1}\}\cap I\ne\emptyset$, then $I'=I\cup X_1$
  is a cyclic interval for $\leq_1$ and satisfies $K\subseteq I'$ and
  $I'\cap F=\emptyset$.  Now assume that $X_1\subseteq K$. Then
  $X_1\subseteq I$, so the cyclic interval $E(M)-I$ for $\leq$, which
  contains $F$, is $\{ a^h_p, a^h_{p+1}, \ldots,a^h_q\}$ for some $h$,
  $p$, and $q$.  Since $\{ a^h_p, a^h_{p+1}, \ldots,a^h_q\}$ is also a
  cyclic interval for $\leq_1$, so is $I$, so we can take $I'=I$.
  
  Starting with the dual of $\leq_1$, apply the argument above to
  $X_2$ in place of $X_1$, keeping $x=x_1^1$ and $y=a_{s_k}^k$
  cyclically consecutive, giving $\leq_2$.  Since the restrictions of
  the orders $\leq_1$ and $\leq_2$ to $E(M)-X_2$ are the same, $X_1$
  is a cyclic interval in $\leq_2$, as are $X_2$ and $X_1\cup X_2$.
  To complete the proof, apply the same modification of the linear
  order, one step at a time, for each of $X_3,\ldots,X_d$.
\end{proof}

We close this section with a corollary of Theorem \ref{thm:char} that
we will use in Section \ref{sec:exmin}.

\begin{cor}\label{cor:3cyc}
  Let $M$ be a matroid that has exactly three proper, nonempty cyclic
  flats, $Z_1$, $Z_2$, and $Z_3$. If $E(M)=Z_1\cup Z_2\cup Z_3$ and
  $Z_1\cap Z_2\cap Z_3=\emptyset$, then $M$ is a positroid.
\end{cor}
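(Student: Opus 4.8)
The plan is to write down an explicit linear order on $E(M)$ under which each $Z_i$ is a cyclic interval, and then quote Corollary~\ref{cor:intervalsimpliespositroid}. First I would dispense with loops: the set of loops $\cl_M(\emptyset)$ lies in every flat, hence in $Z_1\cap Z_2\cap Z_3=\emptyset$, so $M$ is loopless. Since every connected flat with at least two elements is a cyclic flat, each proper connected flat $F$ of $M$ with $|F|\geq 2$ is one of $Z_1,Z_2,Z_3$; the remaining connected flats of $M$ (singletons and $E(M)$ itself) are cyclic intervals of any order. Thus it is enough to produce a linear order in which $Z_1$, $Z_2$, and $Z_3$ are all cyclic intervals, for then every connected flat of $M$ is a cyclic interval and Corollary~\ref{cor:intervalsimpliespositroid} applies.

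To build such an order I would partition $E(M)$, using the Venn diagram of $Z_1,Z_2,Z_3$, into the (possibly empty) blocks $A_i=Z_i-\bigcup_{j\ne i}Z_j$ for $i\in\{1,2,3\}$ and $B_{ij}=(Z_i\cap Z_j)-Z_k$ for $\{i,j,k\}=\{1,2,3\}$; these six blocks exhaust $E(M)$ because $E(M)=Z_1\cup Z_2\cup Z_3$, and they are pairwise disjoint because $Z_1\cap Z_2\cap Z_3=\emptyset$. Take any linear order $\leq$ that lists the blocks in the cyclic pattern $A_1,B_{12},A_2,B_{23},A_3,B_{13}$ (the internal order of each block is irrelevant). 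Reading this pattern as a cyclic sequence, $Z_1=B_{13}\cup A_1\cup B_{12}$, $Z_2=B_{12}\cup A_2\cup B_{23}$, and $Z_3=B_{23}\cup A_3\cup B_{13}$ are each a union of three consecutive blocks, hence a cyclic interval of $\leq$. This is just the cyclic analogue of drawing three pairwise-overlapping arcs on a circle, and degenerate situations (empty blocks, or two of the $Z_i$ nested) cause no trouble: an empty block simply disappears from the cyclic pattern.

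I do not expect a genuine obstacle here; the content is the observation that three sets with empty triple intersection and full union can always be realized simultaneously as cyclic intervals, together with the reduction, via Theorem~\ref{thm:char} (in the form of Corollary~\ref{cor:intervalsimpliespositroid}), of being a positroid to a statement about cyclic intervals of connected flats. The only care needed is the routine check that the six blocks partition $E(M)$ and that the three displayed triples really are consecutive around the cycle $A_1,B_{12},A_2,B_{23},A_3,B_{13}$. If one preferred to avoid Corollary~\ref{cor:intervalsimpliespositroid}, one could instead verify condition~(4) of Theorem~\ref{thm:char} directly: for a proper connected flat $F=Z_i$, the set $E(M)-Z_i$ is a cyclic interval disjoint from $F$ containing every connected component of $M/F$.
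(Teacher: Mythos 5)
Your proof is correct and essentially identical to the paper's: the paper partitions $E(M)$ into the same six (possibly empty) blocks $Z_1\cap Z_3$, $Z_1-(Z_2\cup Z_3)$, $Z_1\cap Z_2$, $Z_2-(Z_1\cup Z_3)$, $Z_2\cap Z_3$, $Z_3-(Z_1\cup Z_2)$, concatenates them in that order, and invokes Corollary~\ref{cor:intervalsimpliespositroid}. The only difference is cosmetic (a cyclic shift of the block ordering) plus your added observation that the hypotheses already force $M$ to be loopless, which the paper leaves implicit but is needed to apply Corollary~\ref{cor:intervalsimpliespositroid}.
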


\begin{proof}
  Any proper connected flat with at least two elements is cyclic and
  so must be $Z_1$, $Z_2$, or $Z_3$, so, by Corollary
  \ref{cor:intervalsimpliespositroid}, it suffices to find a linear
  order on $E(M)$ in which $Z_1$, $Z_2$, and $Z_3$ are cyclic
  intervals.  Consider the following six subsets of $E(M)$, some of
  which may be empty: $Z_1\cap Z_3$, $Z_1-(Z_2\cup Z_3)$,
  $Z_1\cap Z_2$, $Z_2-(Z_1\cup Z_3)$, $Z_2\cap Z_3$, and
  $Z_3-(Z_1\cup Z_2)$; for each set, pick a linear order on the
  elements, and then concatenate these six linear orders in the order
  given for these sets (so, for example, $Z_1\cap Z_3$ is an ideal in
  this linear order).  The result is a positroid order for $M$.
\end{proof}

Similarly, any matroid that has at most four cyclic flats (and so at
most one pair of incomparable cyclic flats) is a positroid.

\section{Bonding and its application to amalgams of
  positroids}\label{sec:bonding}

In this section, we prove Theorem \ref{thm:bondmain1}: if $M$ and $N$
are positroids and $E(M)\cap E(N)$ is an independent set of clones in
$M$ and in $N$, then the free amalgam of $M$ and $N$ is a positroid.
We also prove a second result of this type (Theorem
\ref{thm:bondmain2}).

A difficulty that arises is that a contraction of an amalgam of $M$
and $N$ need not be an amalgam of the corresponding contractions of
$M$ and $N$.  To deal with this, we introduce a more general way to
glue matroids $M$ and $N$ together along $T=E(M)\cap E(N)$ that
coincides with the free amalgam when $T$ is independent in both $M$
and $N$, but that has crucial contraction properties that come from
the greater generality of the construction.

This section has two parts.  In the first, we define the new
construction, which we call bonding, and develop the properties of
bonding that we need for our work on positroids, and then we treat
those applications.  In the second, we prove other properties of
bonding that may help the future development of this topic.

\subsection{Bonding and its application  to positroids}
The notation that we introduce next is used throughout this section.
Let $M$ and $N$ be matroids for which $E(M)\cap E(N)$ is nonempty; let
$T$ denote this intersection.  We form the \emph{bonding of $M$ and
  $N$ at $T$}, denoted $B_T(M,N)$, via the following steps.
\begin{itemize}[leftmargin=*]
\item Say that $T$ is $\{t_1,t_2,\ldots,t_k\}$.  Fix sets
  $S=\{s_1,s_2,\ldots,s_k\}$ and $Q=\{q_1,q_2,\ldots,q_k\}$ that are
  disjoint from each other and from $E(M)$ and $E(N)$.
\item Form $N'$ from $N$ by, for each $i\in [k]$, relabeling $t_i$ as
  $s_i$.  Thus, $E(M)\cap E(N')=\emptyset$.
\item Extend the direct sum $M\oplus N'$ by, for each integer
  $i\in [k]$, adding $q_i$ freely to the flat
  $\cl_{M\oplus N'}(\{t_i,s_i\})$, giving the matroid $H$ on
  $E(M)\cup E(N')\cup Q$.
\item The bonding $B_T(M,N)$ is the minor $H/Q\del S$ of $H$.
\end{itemize}
Thus, $E(B_T(M,N))=E(M)\cup E(N)$. Note that $B_T(M,N)=B_T(N,M)$.
Also, the point $q_i$ is added freely to the line
$\cl_{M\oplus N'}(\{t_i,s_i\})$ of $M\oplus N'$ if $t_i$ is a loop of
neither $M$ nor $N$, while it is parallel to $s_i$ if $t_i$ is a loop
of $M$ only, and parallel to $t_i$ if $t_i$ is a loop of $N$ only, and
$q_i$ is a loop if $t_i$ is a loop of both $M$ and $N$.

\begin{figure}
  \centering
  \begin{tikzpicture}[scale=0.87]
    \draw[thick,
    black!30](6,-0.5)--(8,0.25)--(8,3.5)--(6,2.75)--(4,3.5)
    --(4,0.25)--(6,-0.5);%
    \draw[thick, black!30](6,1.15)--(6,2.75);%
    \draw[thick, black!30](6,0.85)--(6,0.75);%
    \draw[thick, black!30](6,-0.5)--(6,0.6);%

    \draw[very thick](2,0)--(2,3);%
    \filldraw (2,0) node[left=2] {\small$t_1$} circle (3pt);%
    \filldraw (2,1) node[left=2] {\small$t_2$} circle (3pt);%
    \filldraw (2,2) node[left=2] {\small$c$} circle (3pt);%
    \filldraw (2,3) node[left=2] {\small$d$} circle (3pt);%
    \node at (2,-1.1) {$N$};%

    \draw[very thick](0,0)--(0,3);%
    \filldraw (0.1,0) node[right=2] {\small$t_1$} circle (3pt);%
    \filldraw (-0.1,0) node[left=2] {\small$t_2$} circle (3pt);%
    \filldraw (0,1.5) node[right=2] {\small$a$} circle (3pt);%
    \filldraw (0,3) node[right=2] {\small$b$} circle (3pt);%
    \node at (0,-1.1) {$M$};%

    \draw[very thick](4.4,0.5)--(5.6,2.6);%
    \filldraw (4.4,0.5) node[below right] {\small$s_1$} circle
    (3pt);%
    \filldraw (4.8,1.2) node[left=1] {\small$s_2$} circle (3pt);%
    \filldraw (5.2,1.9) node[left=2] {\small$c$} circle (3pt);%
    \filldraw (5.6,2.6) node[left=2] {\small$d$} circle (3pt);%

    \draw[very thick](7,0.8)--(7,2.4);%
    \filldraw (7.1,0.8) node[right=2] {\small$t_1$} circle (3pt);%
    \filldraw (6.9,0.8) node[below =2] {\small$t_2$} circle
    (3pt);%
    \filldraw (7,1.6) node[right=2] {\small$a$} circle (3pt);%
    \filldraw (7,2.4) node[right=2] {\small$b$} circle (3pt);%

    \draw[very thick](7,0.8)--(4.4,0.5);%
    \draw[very thick](7,0.8)--(4.8,1.2);%
    \filldraw (5.7,0.65) node[below=2] {\small$q_1$} circle (3pt);%
    \filldraw (5.9,1) node[above left] {\small$q_2$} circle (3pt);%
     \node at (6,-1.1) {$H$};%

     \draw[very thick](10,0.25)--(10,2.45);%
     \filldraw (10.1,2.25) node[below right] {\small$t_1$} circle
     (3pt);%
     \filldraw (9.9,2.25) node[below left=-1] {\small$t_2$} circle
     (3pt);%
     \filldraw (9.9,2.45) node[above left] {\small$c$} circle (3pt);%
     \filldraw (10.1,2.45) node[above right] {\small$d$} circle
     (3pt);%
     \filldraw (10,0.25) node[right=2] {\small$a$} circle (3pt);%
     \filldraw (10,1.25) node[right=2] {\small$b$} circle (3pt);%
     \node at (10,-1.1) {$B_T(M,N)$};%
  \end{tikzpicture}
  \caption{Two rank-$2$ matroids $M$ and $N$, the matroid $H$, and the
    bonding $B_T(M,N)$, which is $H/\{q_1 ,q_2\}\del\{s_1,s_2\}$.  }
  \label{fig:bondex1}
\end{figure}

Figure \ref{fig:bondex1} gives a simple example that will show the
necessity of certain hypotheses in some results below.  Figure
\ref{fig:bondrev} gives an example that is more representative of what
we will see: if $T$ is independent in both $M$ and $N$, then
$B_T(M,N)$ is the free amalgam of $M$ and $N$; also, if, in addition,
$T$ is a set of clones, and $M$ and $N$ are positroids, then
$B_T(M,N)$ is a positroid.  That example shows that bonding two
transversal matroids (in fact, multi-path matroids) need not yield a
transversal matroid.  If $M$ and $N$ are the rank-$2$ uniform matroids
on $[q+1]$ and $[2q]-[q-1]$, respectively, so $T=\{q,q+1\}$, then
$B_T(M,N)$ is the rank-$2$ uniform matroid on $[2q]$.  Thus, bonding
need not preserve representability over a fixed finite field.
However, bonding preserves membership in classes of matroids that are
closed under direct sum, deletion, contraction, and principal
extension, such as the class of matroids that are representable over a
given infinite field.

\begin{figure}
  \centering
  \begin{tikzpicture}[scale=0.55]
    \draw[thick, black!30](16,-0.5)--(20,0.5)--(20,6.5)--(16,5.5)--(12,6.5)
    --(12,0.5)--(16,-0.5);%
    \draw[thick, black!30](16,-0.5)--(16,5.5);%

    \draw[very thick](0,2)--(3,0)--(3,4)--(0,2);%
    \filldraw (3,2.7) node[right=1] {\small$1$} circle (5pt);%
    \filldraw (3,1.3) node[right=1] {\small$2$} circle (5pt);%
    \filldraw (3,0) node[right=1] {\small$3$} circle (5pt);%
    \filldraw (1.5,1) node[below left] {\small$4$} circle (5pt);%
    \filldraw (0,2) node[left=1] {\small$5$} circle (5pt);%
    \filldraw (1.5,3) node[above left] {\small$6$} circle (5pt);%
    \filldraw (3,4) node[right=1] {\small$7$} circle (5pt);%

    \draw[very thick](6,0)--(6,4)--(9,2)--(6,0);%
    \filldraw (6,2.7) node[left =1] {\small$1$} circle (5pt);%
    \filldraw (6,1.3) node[left =1] {\small$2$} circle (5pt);%
    \filldraw (6,0) node[left =1] {\small$8$} circle (5pt);%
    \filldraw (7.5,1) node[below right] {\small$9$} circle (5pt);%
    \filldraw (9,2) node[right =1] {\small$10$} circle (5pt);%
    \filldraw (7.5,3) node[above right] {\small$11$} circle (5pt);%
    \filldraw (6,4) node[left =1] {\small$12$} circle (5pt);%

    \draw[very thick](16,0)--(16,5);%
    \draw[very thick](16,1)--(19,3)--(16,5);%
    \draw[very thick](16,0)--(13,2)--(16,4);%
    
    \filldraw (16,3) node[right =1] {\small$1$} circle (5pt);%
    \filldraw (16,2) node[right =1] {\small$2$} circle (5pt);%
    \filldraw (16,1) node[left =1] {\small$8$} circle (5pt);%
    \filldraw (16,0) node[right =1] {\small$3$} circle (5pt);%
    \filldraw (16,4) node[right =1] {\small$7$} circle (5pt);%
    \filldraw (16,5) node[left =1] {\small$12$} circle (5pt);%

    \filldraw (14.5,1) node[below left] {\small$4$} circle (5pt);%
    \filldraw (14.5,3) node[above left] {\small$6$} circle (5pt);%
    \filldraw (13,2) node[left =1] {\small$5$} circle (5pt);%
    \filldraw (17.5,2) node[below right] {\small$9$} circle (5pt);%
    \filldraw (19,3) node[right =1] {\small$10$} circle (5pt);%
    \filldraw (17.5,4) node[above right] {\small$11$} circle (5pt);%

    \node at (1.5,-1.5) {$M$};%
    \node at (7.5,-1.5) {$N$};%
    \node at (16,-1.5) {$B_T(M,N)$};%
  \end{tikzpicture}
  \caption{Two rank-$3$ positroids $M$ and $N$, and the bonding (their
    free amalgam) $B_T(M,N)$. Here, $T=\{1,2\}$ is an independent set
    of clones in $M$ and $N$.  The bonding is a rank-$4$ positroid.}
  \label{fig:bondrev}
\end{figure}

The way that bonding interacts with direct sums, which we treat in the
next lemma, is crucial in our work.

\begin{lemma}\label{lem:bds}
  Let $M$ be $M_1\oplus M_2\oplus \cdots \oplus M_s$ and $N$ be
  $N_1\oplus N_2\oplus \cdots \oplus N_t$.  Let $h\leq \min(s,t)$ and
  let $\{T_1,T_2,\ldots,T_h\}$ be a partition of $T$.  If, for each
  $i\in [h]$, we have $T_i=E(M_i)\cap E(N_i)$, then the bonding
  $B_T(M,N)$ is the direct sum of the bondings $B_{T_i}(M_i,N_i)$, for
  $i\in[h]$, along with $M_{h+1}, \ldots, M_s$ and
  $N_{h+1}, \ldots, N_t$.  Also, for $i$ with $h<i\leq s$, $E(M_i)$ is
  a connected component of $M$ if and only if $E(M_i)$ is a connected
  component of $B_T(M,N)$, and likewise for $E(N_j)$ with $h<j\leq t$.
\end{lemma}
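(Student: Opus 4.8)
The plan is to trace the four-step definition of $B_T(M,N)$ through the given direct-sum decompositions, using that every operation in that definition --- direct sum, relabeling of elements, principal extension, deletion, contraction --- respects the block structure coming from the partition $\{T_1,\dots,T_h\}$. First I would list $T=\{t_1,\dots,t_k\}$ so that each block $T_i$ occupies a contiguous set of indices $J_i$, i.e.\ $T_i=\{t_j:j\in J_i\}$ with $J_1,\dots,J_h$ partitioning $[k]$, and correspondingly set $S_i=\{s_j:j\in J_i\}$, $Q_i=\{q_j:j\in J_i\}$, so that $S=\bigsqcup_{i=1}^h S_i$ and $Q=\bigsqcup_{i=1}^h Q_i$. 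I would then record the easy consequence of the hypotheses that $E(M_i)\cap E(N_j)=\emptyset$ whenever $i\neq j$, that $E(M_i)\cap E(N)=\emptyset$ for $i>h$, and that $E(N_j)\cap E(M)=\emptyset$ for $j>h$: any element of such an intersection lies in $T$, hence in some block $T_\ell=E(M_\ell)\cap E(N_\ell)$, and the disjointness of the summands of $M$ and of $N$ then forces a contradiction. In particular the relabeling $t_j\mapsto s_j$ producing $N'$ alters only elements of $\bigcup_{i\leq h}E(N_i)$, so $N'=\bigl(\bigoplus_{i=1}^h N'_i\bigr)\oplus\bigl(\bigoplus_{j>h}N_j\bigr)$, where $N'_i$ denotes $N_i$ with $T_i$ relabeled by $S_i$.

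The one genuinely load-bearing fact is that a principal extension along a subset of a single direct summand affects only that summand: if $R=A\oplus B$, $X\subseteq E(A)$, and $e\notin E(R)$, then $R+_X e=(A+_X e)\oplus B$. This is immediate from the paper's description of closure in a principal extension ($e\in\cl_{R+_Xe}(Y)$ iff $e\in Y$ or $X\subseteq\cl_R(Y)$) together with the fact that closure in a direct sum is computed componentwise (so $X\subseteq\cl_R(Y)$ iff $X\subseteq\cl_A(Y\cap E(A))$ when $X\subseteq E(A)$), or by a one-line rank-function check. Since $M+_Xe=M+_{\cl(X)}e$, adding $q_j$ freely to the flat $\cl_{M\oplus N'}(\{t_j,s_j\})$ --- the third step of the definition --- is the same as $+_{\{t_j,s_j\}}q_j$, and $\{t_j,s_j\}$ is contained in the single summand $M_i\oplus N'_i$ of $M\oplus N'$, where $i$ is the block index of $j$. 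Applying the observation once for each $j\in[k]$ (in any order, by the commutativity of principal extensions noted in the paper) yields
$$H=\Bigl(\bigoplus_{i=1}^h H_i\Bigr)\oplus\Bigl(\bigoplus_{i>h}M_i\Bigr)\oplus\Bigl(\bigoplus_{j>h}N_j\Bigr),$$
where $H_i$ is exactly the matroid obtained from $M_i\oplus N'_i$ by adding each $q_j$ ($j\in J_i$) freely to $\cl_{M_i\oplus N'_i}(\{t_j,s_j\})$ --- that is, $H_i$ is the ``$H$'' in the construction of $B_{T_i}(M_i,N_i)$ carried out with the relabeling $N_i\to N'_i$ and auxiliary sets $S_i,Q_i$. (A short remark that $B_{T_i}(M_i,N_i)$ depends only on $M_i$, $N_i$, $T_i$, not on which disjoint auxiliary sets are used, licenses this choice.)

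To finish, I would pass to the minor $H/Q\del S$. Because deletion and contraction distribute over direct sums, and $Q_i,S_i\subseteq E(H_i)$ while none of the summands $M_i$ ($i>h$) or $N_j$ ($j>h$) meets $Q\cup S$,
$$B_T(M,N)=H/Q\del S=\Bigl(\bigoplus_{i=1}^h(H_i/Q_i\del S_i)\Bigr)\oplus\Bigl(\bigoplus_{i>h}M_i\Bigr)\oplus\Bigl(\bigoplus_{j>h}N_j\Bigr),$$
and $H_i/Q_i\del S_i=B_{T_i}(M_i,N_i)$ by definition; this is the claimed decomposition. For the last sentence, observe that for $i>h$ the matroid $M_i$ is a direct summand both of $M$ and (by what was just proved) of $B_T(M,N)$, and a direct summand of a matroid is a connected component exactly when it is itself connected; hence $M_i$ is a connected component of $M$ iff $M_i$ is connected iff $M_i$ is a connected component of $B_T(M,N)$, and symmetrically for $N_j$ with $h<j\leq t$. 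The only place I would expect any real difficulty is the load-bearing observation of the second paragraph --- specifically, taking care that replacing the flat $\cl_{M\oplus N'}(\{t_j,s_j\})$ (which also contains the loops of the other summands) by the set $\{t_j,s_j\}$ legitimately localizes the extension to a single summand; once that is in hand, everything else is bookkeeping with operations that commute with direct sums.
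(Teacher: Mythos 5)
Your proposal is correct and takes essentially the same approach as the paper: the paper's proof consists of a single sentence asserting that the auxiliary matroid $H$ decomposes as the direct sum of the corresponding auxiliary matroids for the $B_{T_i}(M_i,N_i)$ together with the remaining summands, and you simply supply the supporting details (the observation that a principal extension along a subset contained in one summand, combined with $M+_Xe=M+_{\cl(X)}e$, localizes to that summand; that deletion and contraction distribute over direct sums; and the brief argument for the final sentence). Your care about replacing the closure $\cl_{M\oplus N'}(\{t_j,s_j\})$ by $\{t_j,s_j\}$ before invoking the localization is exactly the right precaution, since that closure can pick up loops from other summands.
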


\begin{proof}
  This follows from the construction since the matroid $H$ that is
  used to construct $B_T(M,N)$ is the direct sum of its counterparts
  in the construction of $B_{T_i}(M_i,N_i)$, for $i\in[h]$, with the
  other direct sum factors $M_{h+1}, \ldots, M_s$ and
  $N_{h+1}, \ldots, N_t$.
\end{proof}

We next relate the flats of $B_T(M,N)$ to those of $H$.

\begin{lemma}\label{lem:bflat}
  Let $F$ be a flat of $B_T(M,N)$, and set
  $S_F=\{s_i\,:\,t_i\in F\cap T\}$ and
  $Q_F=\{q_i\,:\,t_i\in F\cap T\}$.  Then $F\cup Q\cup S_F$ is a flat
  of $H$, each $q_j\in Q-Q_F$ is a coloop of $H|(F\cup Q\cup S_F)$,
  and $B_T(M,N)|F= H|(F\cup Q_F\cup S_F)/Q_F\del S_F$.
\end{lemma}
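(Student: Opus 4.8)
The plan is to work directly from the construction of $H$ by principal extensions, using the standard minor identities $(P/C)|X=(P|(X\cup C))/C$ and $(P/C)\del D=(P\del D)/C$ together with the description of the flats of a contraction. The one preliminary I would establish first is a local fact about each added point $q_i$: since $q_i$ is added freely to the flat $P_i:=\cl_{M\oplus N'}(\{t_i,s_i\})$ of $M\oplus N'$, and closures contain all loops, $P_i$ contains both $t_i$ and $s_i$; writing $H=L_i+_{P_i}q_i$, for every $Z\subseteq E(H)-q_i$ with $q_i\in\cl_H(Z)$ we get $P_i\subseteq\cl_{L_i}(Z)\subseteq\cl_H(Z)$, so in particular $t_i\in\cl_H(Z)$. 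A short case analysis — according to whether $t_i$ is a loop of $M$, of $N$, of both, or of neither — shows moreover that $r_{H/Q}(\{t_i,s_i\})\le 1$, and hence that $t_i\in\cl_{H/Q}(Z)$ if and only if $s_i\in\cl_{H/Q}(Z)$ for every $Z\subseteq E(H/Q)$.

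For the first assertion: $F$ is a flat of $B_T(M,N)=(H/Q)\del S$ precisely when $\cl_{H/Q}(F)-S=F$, so $\cl_{H/Q}(F)\subseteq F\cup S$. Since $t_i\notin S$, the equality $\cl_{H/Q}(F)-S=F$ gives $t_i\in\cl_{H/Q}(F)\iff t_i\in F$; combining this with the symmetry $t_i\in\cl_{H/Q}(F)\iff s_i\in\cl_{H/Q}(F)$ yields $\cl_{H/Q}(F)\cap S=\{s_i:t_i\in F\}=S_F$, and therefore $\cl_{H/Q}(F)=F\cup S_F$. Thus $F\cup S_F$ is a flat of $H/Q$; since the flats of $H/Q$ are exactly the sets $G-Q$ with $G$ a flat of $H$ containing $Q$, and $Q$ is disjoint from $F\cup S_F$, the set $F\cup Q\cup S_F$ is a flat of $H$.

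For the second assertion, fix $q_j\in Q-Q_F$, so $t_j\notin F$ and hence $t_j\notin F\cup Q\cup S_F$. Since $F\cup Q\cup S_F$ is a flat of $H$, we have $\cl_H\bigl((F\cup Q\cup S_F)-q_j\bigr)\subseteq F\cup Q\cup S_F$, a set not containing $t_j$; by the local fact, $q_j\in\cl_H(Z)$ forces $t_j\in\cl_H(Z)$, so $q_j\notin\cl_H\bigl((F\cup Q\cup S_F)-q_j\bigr)$, i.e.\ $q_j$ is a coloop of $H|(F\cup Q\cup S_F)$. The same computation (deleting $S_F$ plays no role here) shows that each $q_j\in Q-Q_F$ is a coloop of $H|(F\cup Q)$ as well.

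For the third assertion, note $F\cap S=\emptyset$, so the minor identities give $B_T(M,N)|F=(H/Q)|F=(H|(F\cup Q))/Q$. By the second assertion the elements of $Q-Q_F$ are coloops of $H|(F\cup Q)$, so $H|(F\cup Q)$ is the direct sum of $H|(F\cup Q_F)$ with the free matroid on $Q-Q_F$; contracting $Q=Q_F\cup(Q-Q_F)$ then discards those coloops and leaves $H|(F\cup Q_F)/Q_F$. Finally $H|(F\cup Q_F)=H|(F\cup Q_F\cup S_F)\del S_F$, and since deletion commutes with contraction, $H|(F\cup Q_F)/Q_F=H|(F\cup Q_F\cup S_F)/Q_F\del S_F$, as required. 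The step that needs the most care is the first assertion — specifically the identification $\cl_{H/Q}(F)\cap S=S_F$ — since it relies on the $t_i\leftrightarrow s_i$ symmetry of closures in $H/Q$, and checking that symmetry in the degenerate cases where some $t_i$ is a loop of $M$ or of $N$ is the only genuinely fiddly part.
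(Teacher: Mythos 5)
Your proof is correct, and it follows essentially the same line of reasoning as the paper's: extract the needed closure information from the freeness of each $q_i$ in its extension, identify $F\cup Q\cup S_F$ as a flat of $H$, observe that the $q_j$ with $t_j\notin F$ are coloops there, and then push that through the minor identities. The only cosmetic difference is the route through the construction: the paper reads off the flat $F\cup Q$ of $H\del S$ and computes $\cl_H(F\cup Q)=F\cup Q\cup S_F$ directly using the fact that any two of $t_i,s_i,q_i$ span the third, whereas you work in $H/Q$, show $\cl_{H/Q}(F)=F\cup S_F$ via the one-directional ``local fact'' $q_i\in\cl_H(Z)\Rightarrow t_i\in\cl_H(Z)$ together with the $t_i\leftrightarrow s_i$ parallelism in $H/Q$, and then lift to $H$. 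Both are sound; the paper's ``any two of the three spans the third'' formulation packages your local fact and symmetry check into a single symmetric statement, which makes for a slightly shorter proof, but you've correctly identified and verified all the degenerate loop cases that make that statement true.
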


\begin{proof}
  Since $F$ is a flat of $B_T(M,N)$, which is $H\del S/Q$, the set
  $F\cup Q$ is a flat of $H\del S$.  Its closure $\cl_H(F\cup Q)$ in
  $H$ is $F\cup Q\cup S_F$ since any two of $t_i$, $s_i$, and $q_i$
  have the third in their closure.  If $q_j\in Q-Q_F$, then $q_j$ is a
  coloop of $H|(F\cup Q\cup S_F)$ since $q_j$ was added freely to the
  flat spanned by $t_j$ and $s_j$, which are not in the flat
  $F\cup Q\cup S_F$.  Thus,
  $$B_T(M,N)|F=H|(F\cup Q\cup S_F)/Q\del S_F= H|(F\cup Q_F\cup
  S_F)/Q_F\del S_F.\qedhere$$
\end{proof}

The next lemma allows us to deduce that certain flats of $B_T(M,N)$
are disconnected.  In this lemma and throughout this section, for
$X\subseteq E(M)\cup E(N)$, we shorten $X\cap E(M)$ to $X_M$, and
likewise set $X_N=X\cap E(N)$.

\begin{lemma}\label{lem:discflats}
  Let $F$ be a flat of $B_T(M,N)$.  If there are separators $X$ of
  $M|F_M$ and $Y$ of $N|F_N$, not both trivial, for which
  $X\cap T=Y\cap T$, then $X\cup Y$ is a nontrivial separator of
  $B_T(M,N)|F$, so $B_T(M,N)|F$ is disconnected.
\end{lemma}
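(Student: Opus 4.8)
The plan is to reduce, using Lemma~\ref{lem:bflat}, to the case in which $F$ is all of $E(M)\cup E(N)$, and then to read off the conclusion from Lemma~\ref{lem:bds}. By Lemma~\ref{lem:bflat}, $B_T(M,N)|F = H|(F\cup Q_F\cup S_F)/Q_F\del S_F$. I would then observe that the matroid $H|(F\cup Q_F\cup S_F)$ is precisely the matroid playing the role of $H$ in constructing the bonding of $M|F_M$ and $N|F_N$ along $F\cap T$: it is obtained from $(M|F_M)\oplus(N|F_N)$, with the elements of $F\cap T$ relabeled, by freely adding, for each $t_i\in F\cap T$, the point $q_i$ to the line through $t_i$ and its relabeled copy. (This comes down to the fact that adding a point freely to a flat commutes with restriction to a flat containing it. If $F\cap T=\emptyset$ then $Q_F=S_F=\emptyset$ and $B_T(M,N)|F$ is just the direct sum $(M|F_M)\oplus(N|F_N)$, which I would dispose of at once; so assume $F\cap T\ne\emptyset$.) Hence $B_T(M,N)|F = B_{F\cap T}(M|F_M,N|F_N)$, and so, replacing $(M,N,T)$ by $(M|F_M,N|F_N,F\cap T)$, I may assume that $F=E(M)\cup E(N)$, that $X$ is a separator of $M$ and $Y$ a separator of $N$ with $X\cap T=Y\cap T$, not both trivial, and the task becomes to show that $X\cup Y$ is a nontrivial separator of $B_T(M,N)$.

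Since $X$ is a separator of $M$, we have $M=(M|X)\oplus(M|X')$ with $X'=E(M)-X$, and likewise $N=(N|Y)\oplus(N|Y')$ with $Y'=E(N)-Y$. Put $T_1=X\cap T=Y\cap T$ and $T_2=X'\cap T=Y'\cap T$, so $\{T_1,T_2\}$ is a partition of $T$ into possibly empty parts. Using $T_1\cap T_2=\emptyset$, a short check gives $E(M|X)\cap E(N|Y)=T_1$, $E(M|X')\cap E(N|Y')=T_2$, and $E(M|X)\cap E(N|Y')=E(M|X')\cap E(N|Y)=\emptyset$. Now I would apply Lemma~\ref{lem:bds}, according to which of $T_1,T_2$ is empty. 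If both are nonempty, Lemma~\ref{lem:bds} gives $B_T(M,N)=B_{T_1}(M|X,N|Y)\oplus B_{T_2}(M|X',N|Y')$, so $X\cup Y=E(M|X)\cup E(N|Y)$ is a separator of $B_T(M,N)$. If $T_1=\emptyset$ (so $T_2=T\ne\emptyset$), then Lemma~\ref{lem:bds}, applied with the single nonempty block $T$ of the partition and with $M|X$ and $N|Y$ as the ``extra'' summands, gives $B_T(M,N)=B_T(M|X',N|Y')\oplus(M|X)\oplus(N|Y)$, so $X$ and $Y$, and hence $X\cup Y$, are separators of $B_T(M,N)$. The case $T_2=\emptyset$ is symmetric and shows that $X'\cup Y'$, hence its complement $X\cup Y$, is a separator.

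It remains to check that $X\cup Y$ is nontrivial. If $X\cup Y=\emptyset$ then $X=Y=\emptyset$, and if $X\cup Y=E(M)\cup E(N)$ then $X'\cup Y'=\emptyset$, so $X=E(M)$ and $Y=E(N)$; in either case $X$ and $Y$ are both trivial separators, contrary to hypothesis. Hence $X\cup Y$ is a nontrivial separator of $B_T(M,N)$, and so $B_T(M,N)|F$ is disconnected.

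The one genuinely delicate point I expect is the identification $B_T(M,N)|F = B_{F\cap T}(M|F_M,N|F_N)$ in the first step: once it is in place, everything else is routine bookkeeping with separators together with a direct appeal to Lemma~\ref{lem:bds}.
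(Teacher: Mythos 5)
Your proof is correct, but it routes through a restriction identity that is not in the paper, whereas the paper's own proof decomposes the relevant restriction of $H$ directly. The paper starts from Lemma~\ref{lem:bflat}, writes $B_T(M,N)|F=H|(F\cup Q_F\cup S_F)/Q_F\backslash S_F$, and then exhibits $H|(F\cup Q_F\cup S_F)$ as the direct sum of $H|(X\cup Y\cup Q_{T'}\cup S_{T'})$ (with $T'=X\cap T$) and the restriction of $H$ to the complementary piece, from which the separator falls out at once. You instead prove a ``restriction to a flat'' identity $B_T(M,N)|F=B_{F\cap T}(M|F_M,N|F_N)$, reduce to $F=E(M)\cup E(N)$, and then invoke Lemma~\ref{lem:bds}. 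The two arguments rest on the same underlying decomposition of $H$ into pieces; your version packages it as a reduction to the global case plus the general direct-sum lemma, while the paper does the decomposition ad hoc for the particular flat. Your packaging is arguably cleaner conceptually, and the identity you isolate is a genuine strengthening of Lemma~\ref{lem:brest} (which requires $T\subseteq X$ rather than $X$ being a flat), so it may be independently useful.

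The one spot that needs more than the single sentence you give is precisely the point you flag as delicate. To see that $H|(F\cup Q_F\cup S_F)$ is the $H$-matroid for $B_{F\cap T}(M|F_M,N|F_N)$, it is not enough to say ``adding a point freely to a flat commutes with restriction to a flat containing it''; one must also check that, for each $t_i\in F\cap T$, the flat $\cl_{M\oplus N'}(\{t_i,s_i\})$ to which $q_i$ is added is actually contained in the restriction set $F_M\cup F'_N$ (where $F'_N$ is $F_N$ with its $T$-elements relabeled into $S$). That containment is equivalent to $\cl_M(t_i)\subseteq F_M$ and $\cl_{N'}(s_i)\subseteq F'_N$, and it does hold, because Lemma~\ref{lem:bflat} shows $F\cup Q\cup S_F$ is a flat of $H$, the $q_j$ with $j\notin F\cap T$ are coloops of the restriction to it, and a flat minus a coloop of its restriction is again a flat; hence $F\cup Q_F\cup S_F$ is a flat of $H$, and from this $F_M$ and $F'_N$ are flats of $M$ and $N'$ respectively (note this part of Corollary~\ref{cor:flatscomefromflats} does not actually need the independence hypothesis). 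Also worth noting: the identity $B_T(M,N)|X=B_{X\cap T}(M|X_M,N|X_N)$ fails for general $X$ not containing $T$, so the flat hypothesis is doing real work. Once that is in place, the rest of your argument---the case analysis on whether $T_1,T_2$ are empty, the appeal to Lemma~\ref{lem:bds}, and the nontriviality check---is correct and, as you say, routine.
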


\begin{proof}
  We have $B_T(M,N)|F= H|(F\cup Q_F\cup S_F)/Q_F\del S_F$ where $S_F$
  and $Q_F$ are as in Lemma \ref{lem:bflat}.  By symmetry, we may
  assume that the separator $X$ of $M|F_M$ is neither $\emptyset$ nor
  $F_M$.  Set $T'= X\cap T=Y\cap T$, and let
  $S_{T'}=\{s_i\,:\,t_i\in T'\}$ and $Q_{T'}=\{q_i\,:\,t_i\in T'\}$.
  Thus, $H|(F\cup Q_F\cup S_F)$ is the direct sum
  $$(H|(X\cup Y\cup Q_{T'}\cup S_{T'}))\oplus(H |((F-(X\cup Y))\cup
  (Q_F-Q_{T'})\cup (S_F-S_{T'}))),$$ and neither $X\cup Y$ nor
  $F-(X\cup Y)$ is empty.  Therefore $X\cup Y$ is a nontrivial
  separator of $H|(F\cup Q_F\cup S_F)/Q_F\del S_F$, that is, of
  $B_T(M,N)|F$.
\end{proof}

We next show that bonding and restriction commute when restricting to
supersets of $T$.

\begin{lemma}\label{lem:brest}
  If $T\subseteq X\subseteq E(M)\cup E(N)$, then
  $B_T(M,N)|X= B_T(M|X_M,N|X_N)$.
\end{lemma}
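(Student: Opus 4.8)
The plan is to unwind both sides through the explicit construction of bonding and check they are built from the same matroid $H$, restricted and contracted in compatible ways. Recall that $B_T(M,N)$ is obtained by relabeling $N$ to $N'$ (sending $t_i\mapsto s_i$), forming $H = (M\oplus N')+_{\{t_1,s_1\}}q_1+_{\{t_2,s_2\}}q_2+\cdots$ by principal extensions, and then taking $H/Q\del S$. So first I would write down the analogous matroid $H'$ used to construct $B_T(M|X_M, N|X_N)$: here the relabeled copy is $(N|X_N)'$, which (since $T\subseteq X$, so $T\subseteq X_N$) equals $N'|(X_N \del T\cup S) = N'|X'_N$ where $X'_N$ denotes $X_N$ with each $t_i$ replaced by $s_i$; then $H' = ((M|X_M)\oplus (N|X_N)')+_{\{t_1,s_1\}}q_1+\cdots$, and $B_T(M|X_M,N|X_N) = H'/Q\del S$.

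The key step is to show $H' = H|(X_M\cup X'_N\cup Q)$, i.e.\ that restricting the direct sum before adding the $q_i$ freely gives the same matroid as adding them freely and then restricting, provided $X$ contains all of $T$. Since $T\subseteq X$, each two-element set $\{t_i,s_i\}$ lies entirely inside $X_M\cup X'_N$, so its closure in $M\oplus N'$ is contained in $\cl(X_M\cup X'_N)$, and adding $q_i$ freely to $\cl_{M\oplus N'}(\{t_i,s_i\})$ restricts correctly: the general fact I would invoke (or quickly verify from the rank formula for principal extension given in the paper) is that if $Z\subseteq E(P)$ with $\cl_P(W)\subseteq \cl_P(Z)$ (which holds automatically when $W\subseteq Z$), then $(P+_W q)|(Z\cup q) = (P|Z)+_W q$. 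Iterating this over $i=1,\dots,k$ — the principal extensions commute, as noted in the paper — gives $H|(X_M\cup X'_N\cup Q) = ((M|X_M)\oplus(N|X_N)')+_{\{t_1,s_1\}}q_1+\cdots+_{\{t_k,s_k\}}q_k = H'$. Here one uses that $(M\oplus N')|(X_M\cup X'_N) = (M|X_M)\oplus(N|X_N)' = (M|X_M)\oplus(N'|X'_N)$, which is just how direct sums restrict together with the relabeling being a bijection on ground sets.

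Finally I would push the restriction through the minor operations at the end. Since $Q\cup X'_N\cup X_M$ is a subset of $E(H)$ containing $Q$, and since deletion/contraction commute with restriction to a superset of the deleted/contracted set, we get
$$B_T(M|X_M,N|X_N) = H'/Q\del S = \bigl(H|(X_M\cup X'_N\cup Q)\bigr)/Q\del S = \bigl(H/Q\del S\bigr)|X = B_T(M,N)|X,$$
where the last equality uses that $X = X_M\cup X_N$ corresponds under the relabeling to $X_M\cup X'_N$ inside $E(H)\del S/Q$, and $S\cap X = \emptyset$ while $Q$ has already been contracted. The one point requiring a little care — and the main obstacle — is the bookkeeping around the relabeling $N\rightsquigarrow N'$ and the requirement $T\subseteq X$: this hypothesis is exactly what guarantees that $X_N$ is closed under the relabeling (i.e.\ it never splits a pair $\{t_i,s_i\}$) and that every $q_i$ is genuinely relevant to the restricted construction rather than becoming a coloop or loop that behaves differently; I would make sure the statement $(P+_Wq)|(Z\cup q)=(P|Z)+_Wq$ is applied only when $W\subseteq Z$, which is the case here precisely because $W=\{t_i,s_i\}\subseteq T\cup S\subseteq X_M\cup X'_N$.
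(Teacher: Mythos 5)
Your proof is correct and takes essentially the same route as the paper's: both identify $H$ restricted to $X_M\cup X'_N\cup Q$ (the paper writes this loosely as $H|(X\cup Q\cup S)$) with the auxiliary matroid $H'$ in the construction of $B_T(M|X_M,N|X_N)$, and then commute the restriction to $X$ past $\del S$ and $/Q$. You have simply filled in, via the sub-lemma that $(P+_Wq)|(Z\cup q)=(P|Z)+_Wq$ when $W\subseteq Z$, the step the paper asserts without proof; your verification of that sub-lemma and its iteration (using commutativity of principal extensions) is sound, the only tiny imprecision being that you state the sub-lemma under the hypothesis $\cl_P(W)\subseteq\cl_P(Z)$, which by itself does not make $(P|Z)+_Wq$ well-defined — but you correctly note that you only ever invoke it when $W\subseteq Z$, so nothing is lost.
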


\begin{proof}
  Now $B_T(M,N)|X = H/Q\del S|X=H|(X\cup Q\cup S)/Q\del S$ since
  deletion and contraction commute.  With that, the lemma follows by
  showing that $H|(X\cup Q\cup S)$ plays the role of $H$ in the
  construction of $B_T(M|X_M,N|X_N)$.  That holds since (a)
  $H|(X\cup S)$ is $(M|X_M)\oplus (N'|X_{N'})$ where
  $X_{N'}= (X_N-T)\cup S$ and (b) for each $t_i\in T$, the point $q_i$
  is added freely to $\cl_{M\oplus N'}(\{t_i,s_i\})$ in $M\oplus N'$
  to get $H$, so $q_i$ is added freely to
  $\cl_{H|(X\cup S)}(\{t_i,s_i\})$ in $H|(X\cup S)$ to get
  $H|(X\cup Q\cup S)$.
\end{proof}

We next treat the counterpart for contraction.

\begin{lemma}\label{lem:contP}
  If  
  $T\subseteq X\subseteq E(M)\cup E(N)$, then
  $B_T(M,N)/X=(M/X_M)\oplus(N/X_N)$.
\end{lemma}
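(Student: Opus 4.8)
The plan is to unwind the definition $B_T(M,N)=H/Q\del S$, push the contraction by $X$ down to the principal extension $H$, and exploit $T\subseteq X$ to collapse the glued structure into a direct sum. Since deletion and contraction commute and $X$ is disjoint from $Q\cup S$, we have $B_T(M,N)/X=(H/Q\del S)/X=H/(X\cup Q)\del S$. The first genuine step is to show that $S$ is a set of loops of $H/(X\cup Q)$, so that the deletion of $S$ may be replaced by contraction and $B_T(M,N)/X=H/(X\cup Q\cup S)$. For this it suffices to check $s_i\in\cl_H(X\cup Q)$ for each $i\in[k]$: since $q_i$ was added freely to $\cl_{M\oplus N'}(\{t_i,s_i\})$, the set $\{t_i,s_i,q_i\}$ has rank at most two in $H$, and by the exchange property $s_i\in\cl_H(\{t_i,q_i\})$ unless $q_i\in\cl_H(\{t_i\})$; by the defining property of principal extension, the latter occurs only when $s_i\in\cl_{M\oplus N'}(\{t_i\})$, which, as $t_i$ and $s_i$ lie in different summands of $M\oplus N'$, forces $s_i$ to be a loop. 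In either case $s_i\in\cl_H(\{t_i,q_i\})\cup\cl_H(\emptyset)\subseteq\cl_H(T\cup Q)\subseteq\cl_H(X\cup Q)$ because $t_i\in T\subseteq X$.

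Next I would compute $H/(T\cup Q\cup S)$ directly from its rank function. Because $q_i\in\cl_H(\{t_i,s_i\})\subseteq\cl_H(T\cup S)$ for every $i$, adjoining $Q$ changes no ranks of sets contained in $E(M\oplus N')$, so for any $A\subseteq E(M\oplus N')$ we get $r_H(A\cup T\cup Q\cup S)=r_{M\oplus N'}(A\cup T\cup S)$; splitting $A$ as $A_M\cup A_{N'}$ across the two summands and using the direct-sum rank formula yields $r_{H/(T\cup Q\cup S)}(A)=r_{M/T}(A_M)+r_{N'/S}(A_{N'})$. Hence $H/(T\cup Q\cup S)=(M/T)\oplus(N'/S)$, a matroid on $(E(M)\setminus T)\cup(E(N)\setminus T)$.

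Finally, since $X$ is disjoint from $Q\cup S$ and $T\subseteq X$, the set $X\cup Q\cup S$ is the disjoint union of $T\cup Q\cup S$ with $X\setminus T=(X_M\setminus T)\cup(X_N\setminus T)$, where $X_M\setminus T$ lies in the $M/T$ summand and $X_N\setminus T$ in the $N'/S$ summand of $(M/T)\oplus(N'/S)$. Contracting these remaining elements therefore splits across the direct sum, giving $B_T(M,N)/X=\bigl((M/T)/(X_M\setminus T)\bigr)\oplus\bigl((N'/S)/(X_N\setminus T)\bigr)=(M/X_M)\oplus(N'/X_{N'})$, using $T\subseteq X_M$. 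Undoing the relabeling $t_i\leftrightarrow s_i$, which fixes $E(N)\setminus T$ and carries $X_{N'}$ to $X_N$, identifies $N'/X_{N'}$ with $N/X_N$ on their common ground set $E(N)\setminus X_N$, and we conclude $B_T(M,N)/X=(M/X_M)\oplus(N/X_N)$. I expect the one delicate point to be the first step, verifying that $S$ becomes a set of loops after the contraction — in particular handling the degenerate cases in which some $t_i$ is a loop of $M$ or of $N$; everything after that is routine bookkeeping with direct sums and rank functions.
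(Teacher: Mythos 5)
Your proof is correct and shares the same underlying idea as the paper's: after contracting, the sets $T$, $S$, and $Q$ become mutually interchangeable, so the auxiliary elements can be cleaned away and the gluing collapses into a direct sum. The paper is slicker in how it uses this: it first reduces to the case $X=T$, observes the symmetric closure identity $\cl_H(S\cup T)=\cl_H(S\cup Q)=\cl_H(Q\cup T)$, and uses it to swap the roles of $S$ and $Q$, writing $B_T(M,N)/T = H/(Q\cup T)\del S = H/(S\cup T)\del Q = H\del Q/(S\cup T) = (M\oplus N')/(S\cup T)$ directly (since $H\del Q = M\oplus N'$ by construction), whence the result. You instead prove one direction of that closure fact (that $S\subseteq\cl_H(Q\cup X)$, so the elements of $S$ are loops of $H/(X\cup Q)$ and can be contracted rather than deleted), and then verify the resulting identity by an explicit rank computation for $H/(T\cup Q\cup S)$, finishing with a direct-sum splitting for the remaining elements of $X-T$. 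Both routes are sound; the paper's is shorter because the closure identity lets it avoid the rank bookkeeping, while yours makes the "loops after contraction" step and the handling of degenerate loop cases fully explicit, which is a reasonable trade-off. One small notational slip: the object you call $X_{N'}$ is never formally defined (since $X\subseteq E(M)\cup E(N)$, it has no elements in $E(N')$); what you mean is $S\cup(X_N\setminus T)$, which under the relabeling corresponds to $X_N$, and the surrounding sentence makes the intent clear, but it's worth stating precisely.
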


\begin{proof}
  We show the special case $B_T(M,N)/T=(M/T)\oplus(N/T)$, from which
  the general result follows.  Observe that
  $\cl_H(S\cup T)=\cl_H(S\cup Q)=\cl_H(Q\cup T)$.  Thus,
  \begin{align*}
    B_T(M,N)/T 
    & = H/(Q\cup T)\del S\\
    & = H/(S\cup T)\del Q\\
    & = H\del Q/(S\cup T)\\
    & = (M/T)\oplus (N'/S)\\
    &= (M/T)\oplus (N/T).\qedhere
  \end{align*}
\end{proof}

We next treat a key contraction property for bonding that shows the
traction we gain by using bonding.  When $T$ is independent in both
$M$ and $N$, as we will see, $B_T(M,N)$ is the free amalgam of $M$ and
$N$, but the bonding $B_T (M/X,N/Y)$ in the next lemma need not be an
amalgam of $M/X$ and $N/Y$; indeed, we may have $M/X|T\ne N/Y|T$.

\begin{lemma}\label{lem:contrinB}
  If $X\subseteq E(M)-T$ and $Y\subseteq E(N)-T$, then
  $$B_T(M,N)/(X\cup Y) = B_T (M/X,N/Y).$$
\end{lemma}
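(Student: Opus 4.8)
The plan is to unwind both sides using the definition of bonding together with the fact that the auxiliary matroid $H$ in the construction is itself built from a direct sum by principal extension, and that deletion and contraction commute.

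First I would fix notation. Writing $T=\{t_1,\ldots,t_k\}$ with the corresponding sets $S$ and $Q$, let $H$ be the matroid on $E(M)\cup E(N')\cup Q$ obtained from $M\oplus N'$ by adding each $q_i$ freely to $\cl_{M\oplus N'}(\{t_i,s_i\})$, so that $B_T(M,N)=H/Q\del S$. Since $X\subseteq E(M)-T$ and $Y\subseteq E(N)-T$ are disjoint from $T$, in $H$ the set $X\cup Y$ is disjoint from $S\cup Q$, and crucially adding $q_i$ freely to the line through $t_i,s_i$ commutes with contracting by $X\cup Y$: because $X\cup Y$ lies in the ground set of $M\oplus N'$ and none of $t_i,s_i$ is in $\cl(X\cup Y)$ (they are clones-free of $X\cup Y$ in the direct sum, being in different components or simply not in the closure), the element $q_i$ is still added freely to $\cl_{(M\oplus N')/(X\cup Y)}(\{t_i,s_i\})$. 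More carefully, by the commutation of principal extension with contraction by sets avoiding the relevant closures, $H/(X\cup Y)$ is exactly the matroid obtained from $(M\oplus N')/(X\cup Y) = (M/X)\oplus(N'/Y)$ by adding each $q_i$ freely to $\cl(\{t_i,s_i\})$; that is, $H/(X\cup Y)$ is the matroid $H'$ that plays the role of $H$ in the construction of $B_T(M/X, N/Y)$. Note that $N'/Y$ is obtained from $N/Y$ by relabeling $T$ as $S$, as required, and that $T$ remains in the ground set of both $M/X$ and $N/Y$ since $X,Y$ avoid $T$.

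Given that identification, I would finish by a short minor-calculation. We have
$$B_T(M,N)/(X\cup Y) = (H/Q\del S)/(X\cup Y) = H/(Q\cup X\cup Y)\del S = \bigl(H/(X\cup Y)\bigr)/Q\del S = H'/Q\del S = B_T(M/X,N/Y),$$
where the middle equalities use that deletion and contraction commute and that $X\cup Y$ is disjoint from $S\cup Q$. The only subtle point is the very first reduction of the previous paragraph, namely that the freeness of the principal extensions is preserved under contracting by $X\cup Y$; I would justify this using the explicit rank formula for $M+_Z e$ from Section~\ref{sec:background} (the rank of $Z$ in the contraction of $M\oplus N'$ by $X\cup Y$ is computed from $r_{M\oplus N'}$, and $\{t_i,s_i\}$ does not collapse into $\cl(X\cup Y)$ since $t_i,s_i\notin\cl_{M\oplus N'}(X\cup Y)$). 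Equivalently, one may invoke the remark in Section~\ref{sec:background} that $M+_Z e = M+_{\cl(Z)}e$ and that principal extension commutes with contraction by sets whose closure does not swallow $\cl(Z)$.

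The main obstacle is precisely verifying this commutation of principal extension with contraction — making sure the freeness is not destroyed, and handling any degenerate cases (e.g., $t_i$ becoming a loop or parallel to $s_i$ after contracting, or $X$ containing enough of $E(M)$ that $\{t_i,s_i\}$ behaves unexpectedly). Once that is established cleanly, the rest is the routine minor-manipulation displayed above. I would therefore devote most of the proof to the identification $H/(X\cup Y)=H'$ and treat the final string of equalities as a one-line calculation.
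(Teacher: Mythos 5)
Your overall approach matches the paper's: unwind $B_T(M,N)$ as $H/Q\del S$, reorder deletions and contractions, and observe that $H/(X\cup Y)$ is exactly the auxiliary matroid used to construct $B_T(M/X,N/Y)$. The paper's proof is essentially just that three-line minor computation, leaving the identification $H/(X\cup Y)=H'$ implicit. You are right to flag that this identification depends on principal extension commuting with contraction, and that is the genuine content here.

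However, the specific justification you give for that commutation is flawed. You assert that $t_i,s_i\notin\cl_{M\oplus N'}(X\cup Y)$, and that this is what protects the freeness. That claim is false in general: $X$ is an arbitrary subset of $E(M)-T$, so there is nothing preventing $t_i$ from lying in $\cl_M(X)$ (e.g., $t_i$ could become a loop of $M/X$). The parenthetical ``being in different components'' is also wrong: $t_i$ and $X$ both sit inside the $M$-summand of $M\oplus N'$. Fortunately, the commutation you need does not require that hypothesis. The correct statement is simply: if $L'=L+_Ze$ and $W\subseteq E(L)$ with $W\cap(Z\cup\{e\})=\emptyset$, then $L'/W=(L/W)+_Ze$; this follows directly from the rank formula for principal extension, since for $A\subseteq E(L)-W$ one has $Z\subseteq\cl_L(A\cup W)$ if and only if $Z\subseteq\cl_{L/W}(A)$. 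No assumption about $Z$ avoiding $\cl(W)$ is needed, and indeed the lemma must cover the degenerate case where $t_i$ becomes a loop of $M/X$; the bonding construction (as the paper notes right after its definition) is set up precisely to make sense in that case, with $q_i$ then parallel to $s_i$. So: same route as the paper, correct conclusion, but replace the ``$t_i,s_i\notin\cl(X\cup Y)$'' reasoning with the unconditional rank-formula argument.
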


\begin{proof}
  This holds since deletion and contraction commute:
  \begin{align*}
    B_T(M,N)/(X\cup Y)
    & =H/Q\del S/(X\cup Y)\\
    & =H/(X\cup Y)/Q\del S\\
    & = B_T(M/X,N/Y)
  \end{align*}
  since, to construct $ B_T(M/X,N/Y)$, the matroid $H/(X\cup Y)$ plays
  the role of $H$.  
\end{proof}

The next lemma relates $B_T(M,N)/P$ and $B_{T-P} (M/P,N/P)$ when
$\emptyset\ne P\subsetneq T$.

\begin{lemma}\label{lem:contractinP}
  If $\emptyset\ne P\subsetneq T$, then
  $B_T(M,N)/P = B_{T-P} (M/P,N/P)$.
\end{lemma}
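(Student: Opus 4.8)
The plan is to mirror the proof of Lemma~\ref{lem:contP}, pushing the contraction of $P$ through the construction of $B_T(M,N)$ and checking that what is left is exactly the matroid produced by the recipe for $B_{T-P}(M/P,N/P)$. After reindexing $T$ we may write $T=\{t_1,\dots,t_k\}$ with $P=\{t_1,\dots,t_m\}$ and $1\le m<k$, and set $S_P=\{s_1,\dots,s_m\}$, $Q_P=\{q_1,\dots,q_m\}$, $S'=S\setminus S_P$, $Q'=Q\setminus Q_P$. Since deletion and contraction commute, $B_T(M,N)/P=H/(Q\cup P)\del S$, and because $Q'$ and $S'$ are disjoint from $Q_P\cup P\cup S_P$, it suffices to show that $H/(Q_P\cup P)\del S_P$ equals the matroid $H'$ obtained from $(M/P)\oplus(N/P)'$ by adding each $q_i$, for $i>m$, freely to $\cl_{(M/P)\oplus(N/P)'}(\{t_i,s_i\})$; indeed, then $B_T(M,N)/P=H'/Q'\del S'=B_{T-P}(M/P,N/P)$, once one notes that $(N/P)'=N'/S_P$ (relabelling commutes with contraction), so that $H'$ really is the matroid called $H$ in the construction of $B_{T-P}(M/P,N/P)$.

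First I would establish the analogue of the closure identity used for Lemma~\ref{lem:contP}: for each $i\le m$ the elements $t_i,s_i,q_i$ lie in each other's closures in $H$ (this is where the cases ``$t_i$ a loop of $M$ and/or of $N$'' are absorbed, using that a loop lies in every closure), so $\cl_H(Q_P\cup P)=\cl_H(S_P\cup P)$. Hence $H/(Q_P\cup P)\del S_P=H/(S_P\cup P)\del Q_P=H\del Q_P/(S_P\cup P)$. Next, writing $H$ as $M\oplus N'$ with all $q_i$ freely added (the order being immaterial) and placing $q_1,\dots,q_m$ last, deleting $Q_P$ undoes exactly those $m$ free additions, so $H\del Q_P=G^*$, where $G^*$ is $M\oplus N'$ with each $q_i$, for $i>m$, freely added to $\cl_{M\oplus N'}(\{t_i,s_i\})$.

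It then remains to compute $G^*/(S_P\cup P)$. Here I would use that principal extension commutes with contraction, namely $(L+_Xe)/Y=(L/Y)+_{X\setminus Y}e$ whenever $e\notin Y$, applied repeatedly to peel the points $q_{m+1},\dots,q_k$ off $G^*$; this yields that $G^*/(S_P\cup P)$ is $(M\oplus N')/(S_P\cup P)$ with each $q_i$, for $i>m$, freely added to $\cl_{M\oplus N'}(\{t_i,s_i\})\setminus(S_P\cup P)$. Since $P\subseteq E(M)$ and $S_P\subseteq E(N')$ lie in different summands, $(M\oplus N')/(S_P\cup P)=(M/P)\oplus(N'/S_P)=(M/P)\oplus(N/P)'$. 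A short computation with $\cl_{L/Y}(Z)=\cl_L(Z\cup Y)\setminus Y$, using that $\{t_i,s_i\}$ spans $\cl_{M\oplus N'}(\{t_i,s_i\})$, then gives $\cl_{(M\oplus N')/(S_P\cup P)}\bigl(\cl_{M\oplus N'}(\{t_i,s_i\})\setminus(S_P\cup P)\bigr)=\cl_{(M/P)\oplus(N/P)'}(\{t_i,s_i\})$. As adding a point freely depends only on the flat, this shows $G^*/(S_P\cup P)=H'$, completing the argument.

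The step I expect to be the main obstacle is the bookkeeping in the last paragraph: verifying that deletion and contraction interact with the iterated principal extensions exactly as claimed, and in particular that the flat to which each $q_i$ with $i>m$ gets attached transforms from $\cl_{M\oplus N'}(\{t_i,s_i\})$ into $\cl_{(M/P)\oplus(N/P)'}(\{t_i,s_i\})$. Everything else is a routine transcription of the proof of Lemma~\ref{lem:contP}, with the various loop cases handled uniformly because all the identities involved are phrased via closure.
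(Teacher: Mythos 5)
Your proof is correct, and it takes a genuinely different route from the paper's. The paper reduces to the single-element case $P=\{t_1\}$ by iterating, and then proves that case by (a) showing, via a chain of four equivalent closure statements, that each $q_i$ with $i\ge 2$ remains freely placed on the right flat, (b) a rank computation, and (c) a case analysis on whether $t_1$ is a loop of $M$ and/or $N$ to verify that $M/t_1$ and $N'/s_1$ are restrictions of $H/q_1\del s_1/t_1$. You instead handle all of $P$ at once: after the same swap trick ($\cl_H(Q_P\cup P)=\cl_H(S_P\cup P)$ lets you replace contracting $Q_P$ by deleting it, exactly as in the proof of Lemma~\ref{lem:contP}), you peel off the free points $q_1,\dots,q_m$ using the commutativity $(L+_Xe)+_Yf=(L+_Yf)+_Xe$ already noted in the paper, and then push $/(S_P\cup P)$ past the remaining free additions with the identity $(L+_Xe)/Y=(L/Y)+_{X\setminus Y}e$. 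That identity packages the paper's four-statement closure chain into a reusable lemma; it also absorbs the loop case analysis, since it is stated purely in terms of closures. What you give up is that you need to verify this extra, admittedly standard, principal-extension lemma and track that $\cl_{(M\oplus N')/(S_P\cup P)}(\cl_{M\oplus N'}(\{t_i,s_i\})\setminus(S_P\cup P))=\cl_{(M/P)\oplus(N/P)'}(\{t_i,s_i\})$; what you gain is a shorter, more uniform argument with no induction and no rank computation. One small imprecision: the elements $t_i,s_i,q_i$ do not literally lie in each other's closures when $t_i$ is a loop (e.g.\ $s_i\notin\cl_H(\{t_i\})$); the correct and sufficient statement is the one used for Lemma~\ref{lem:bflat}, that any two of the three have the third in their closure, which does give $\cl_H(Q_P\cup P)=\cl_H(S_P\cup P)$.
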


\begin{proof}
  It suffices to prove that $B_T(M,N)/t_1 = B_{T-t_1} (M/t_1,N/t_1)$
  for any $t_1\in T$, so we focus on that.  Since deletion and
  contraction commute,
  $$B_T(M,N)/t_1=H/Q\del S/t_1=H/q_1 \del s_1
  /t_1/(Q-q_1)\del (S-s_1).$$ Thus, it suffices to show that
  $H/q_1\del s_1/t_1$ is the counterpart of $H$ in the construction of
  $B_{T-t_1} (M/t_1,N/t_1)$, that is, it is the direct sum of $M/t_1$
  and $N'/s_1$ (the copy of $N/t_1$ where $T-t_1$ is replaced by
  $S-s_1$), to which, for $i$ with $2\leq i\leq k$, the element $q_i$
  is added freely to the flat
  $\cl_{(M/t_1)\oplus (N'/s_1)}(\{t_i,s_i\})$.  For a set
  $X\subseteq E(H)-\{q_1, s_1,t_1,q_i\}$ where $2\leq i\leq k$, the
  following statements are equivalent:
  \begin{itemize}
  \item $q_i\in\cl_{H/q_1\del s_1/t_1}(X)$,
  \item $q_i\in\cl_H(X\cup \{q_1,t_1\})$,
  \item $t_i,s_i\in\cl_H(X\cup \{q_1,t_1\})$,
  \item $t_i,s_i\in\cl_{H/q_1\del s_1/t_1}(X)$.
  \end{itemize}
  Thus, $q_i$ is added freely to the flat
  $\cl_{(M/t_1)\oplus (N'/s_1)}(\{t_i,s_i\})$ of
  $(M/t_1)\oplus (N'/s_1)$.  Since
  $r_H(\{q_1,t_1\}) = r_H(\{s_1,t_1\})$, we have
  \begin{align*}
    r(H/q_1\del s_1/t_1)
    & = r(H)-r_H(\{q_1,t_1\})\\
    & =r(M)+r(N')- r_H(\{s_1,t_1\})\\
    & = r(M/t_1)+r(N'/s_1).
  \end{align*}
  If $r_M(t_1)=1=r_N(t_1)$, then $q_1$ is in the span of neither
  $E(M)$ nor $E(N')$ in $H$, so $M$ and $N'$ are restrictions of
  $H/q_1$, and so, since $t_1$ and $s_1$ are parallel in $H/q_1$, both
  $M/t_1$ and $N'/s_1$ are restrictions of $H/q_1\del s_1/t_1$. If
  $r_M(t_1)=0$ and $r_N(t_1)=1$, then $q_1$ and $s_1$ are parallel in
  $H$, so $H/q_1\del s_1=H\del q_1/s_1$, and so, with $t_1$ being a
  loop, $N'/s_1$ is a restriction of $H/q_1\del s_1/t_1$, as is
  $M/t_1$.  The other two options for $r_M(t_1)$ and $r_N(t_1)$
  similarly imply that $M/t_1$ and $N'/s_1$ are restrictions of
  $H/q_1\del s_1/t_1$. So, to complete the proof, observe that with
  the rank equality above, we get
  $H/q_1\del s_1/t_1\del(Q-q_1)= (M/t_1)\oplus(N'/s_1)$.
\end{proof}
  
By the next lemma, $B_T(M,N)$ is an amalgam of $M$ and $N$ if $T$ is
independent in both $M$ and $N$.  Figures \ref{fig:bondex1} (with $T$
dependent) and \ref{fig:bondrev} (with $T$ independent) illustrate the
lemma.

\begin{lemma}\label{lem:indeprest}
  If $T$ is independent in $M$, then the restriction $B_T(M,N)|E(N)$
  is $N$.  If $T$ is dependent in $M$ and no element of $T$ is a loop
  of $N$, then $B_T(M,N)|E(N)$ is a proper quotient of $N$.
\end{lemma}

\begin{proof}
  Let $T_0$ be a basis of $M|T$.  Set $Q_0=\{q_i\,:\,t_i\in T_0\}$. For
  all $j$, the sets $\{s_j,q_j\}$ and $\{s_j,t_j\}$ have the same
  closure in $H$, so the four sets
  $$E(N')\cup Q,\quad E(N')\cup T, \quad E(N')\cup T_0, \quad
  \text{and} \quad E(N')\cup Q_0$$ have the same closure in $H$.  The
  rank of that closure is
  $$r_H(E(N')\cup Q)=r(N)+r_M(T)=r(N)+|T_0|=r(N)+|Q_0|.$$ Thus, $Q_0$ is
  independent in $H$ and $H/Q_0|E(N')=N'$.
     
  If $T$ is independent in $M$, then $T_0=T$, so $Q_0=Q$, and so
  $H/Q|E(N')$ is $N'$.  The first assertion follows since $t_i$ and
  $s_i$ are either parallel in $H/Q$ or, if $t_i$ is a loop of $N$,
  then $t_i$ and $q_i$ are parallel in $H$, so $t_i$ is a loop of
  $H/Q$.

  Now assume that $T$ is dependent in $M$ and contains no loops of
  $N$.  Since $|T_0|<|T|$, we have $Q-Q_0\ne \emptyset$, so
  $H|(E(N')\cup Q)/Q_0$ is a rank-$r(N)$ extension of $N'$.  If
  $q_j\in Q-Q_0$, then $s_j\not\in \cl_H(Q_0)$, for otherwise there
  would be a circuit $C$ so that $s_j\in C\subseteq Q_0\cup s_j$,
  which would give $q_i\in \cl_H(E(N')\cup (Q_0-q_i))$ for any
  $q_i\in C$, contrary to the equality
  $r(E(N')\cup Q_0)= r(N')+|Q_0|$; so $q_j\not\in\cl_H(Q_0)$ since
  $q_j$ was added freely to the flat spanned by $s_j$ and $t_j$.
  Thus, $\cl_H(Q_0)\cap (Q-Q_0)=\emptyset$, so $Q-Q_0$ has positive
  rank in the extension $H|(E(N')\cup Q)/Q_0$ of $N'$.  To get
  $B_T(M,N)|E(N)$ from that extension of $N'$, besides the relabeling
  relating $N$ and $N'$, we contract $Q-Q_0$, so $B_T(M,N)|E(N)$ is a
  proper quotient of $N$.
\end{proof}

Thus, in general, $B_T(M,N)$ is an amalgam of quotients of $M$ and
$N$.

\begin{cor}\label{cor:flatscomefromflats}
  Assume that the set $T$ is independent in both $M$ and $N$.  If $F$
  is a flat of $B_T(M,N)$, then $F_M$ is a flat of $M$ and $F_N$ is a
  flat of $N$.  If both $M$ and $N$ are connected, then so is
  $B_T(M,N)$.
\end{cor}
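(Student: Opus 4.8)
The plan is to derive both statements from Lemma~\ref{lem:indeprest}. First I would record that, since $T$ is independent in $N$ and $B_T(M,N)=B_T(N,M)$, Lemma~\ref{lem:indeprest} gives $B_T(M,N)|E(M)=M$, and symmetrically, since $T$ is independent in $M$, it gives $B_T(M,N)|E(N)=N$. Thus both $M$ and $N$ occur as restrictions of $B_T(M,N)$, and the corollary reduces to two general facts about restrictions.

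For the statement about flats, I would use that if $K'=K|A$ for a matroid $K$ and $A\subseteq E(K)$, then $\cl_{K'}(X)=\cl_K(X)\cap A$ for every $X\subseteq A$; consequently $F\cap A$ is a flat of $K'$ whenever $F$ is a flat of $K$. Applying this with $K=B_T(M,N)$ and $A=E(M)$ shows that $F_M$ is a flat of $M$, and with $A=E(N)$ shows that $F_N$ is a flat of $N$.

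For the connectedness statement, I would argue as follows. Let $E=E(M)\cup E(N)$ and suppose $Z$ is a separator of $B_T(M,N)$; I will show $Z$ is trivial. Using the complementary fact that a separator $Z$ of a matroid $K$ meets any restriction $K|A$ in a separator of $K|A$ (immediate from $r_K(X)=r_K(X\cap Z)+r_K(X\setminus Z)$ for all $X$), we get that $Z\cap E(M)$ is a separator of $M$ and $Z\cap E(N)$ is a separator of $N$; since $M$ and $N$ are connected, $Z\cap E(M)\in\{\emptyset,E(M)\}$ and $Z\cap E(N)\in\{\emptyset,E(N)\}$. As $E-Z$ is also a separator of $B_T(M,N)$ and $T=E(M)\cap E(N)\ne\emptyset$, after replacing $Z$ by $E-Z$ if necessary we may fix $t\in T\cap Z$. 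Then $Z\cap E(M)=E(M)$ and $Z\cap E(N)=E(N)$, so $Z\supseteq E$ and $Z$ is trivial. Hence $B_T(M,N)$ has no nontrivial separator, i.e.\ it is connected.

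The step needing the most care --- though not a genuine obstacle --- is keeping straight which half of Lemma~\ref{lem:indeprest} is being applied on which side, and noting that the connectedness argument uses only $T\ne\emptyset$, so it covers degenerate cases such as $T=E(M)$ or $T=E(N)$ automatically.
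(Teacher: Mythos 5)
Your proof is correct and takes essentially the same approach as the paper: both halves rest on Lemma~\ref{lem:indeprest} to view $M$ and $N$ as restrictions of $B_T(M,N)$, and for connectedness the paper simply cites the well-known exercise that the union of two intersecting connected sets is connected, which you prove inline via the separator argument.
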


\begin{proof}
  Lemma \ref{lem:indeprest} gives the result about flats.  For
  distinct $e,f\in E(M)\cup E(N)$, if $T\cap\{e,f\}\ne\emptyset$, then
  $e$ and $f$ are in a circuit of either $M$ or $N$; otherwise each of
  $e$ and $f$ is in a circuit of either $M$ or $N$ with some $t\in T$.
  Such circuits are circuits of $B_T(M,N)$, so $e$ and $f$ are in the
  same component of $B_T(M,N)$.  Thus, $B_T(M,N)$ is connected.
\end{proof}

If $T$ is dependent in both $M$ and $N$, then $B_T(M,N)$ can be
disconnected even if both $M$ and $N$ are connected; see Figure
\ref{fig:discbond} for an example.  Also, $B_T(M,N)$ may be connected
even if both $M$ and $N$ are disconnected and $T$ is independent in
both: this occurs, for instance, when $M$ is the direct sum of a
$4$-circuit on $\{1,2,a,b\}$ and a coloop $3$, while $N$ is the direct
sum of a $4$-circuit on $\{2,3,c,d\}$ and a coloop $1$. Thus, when $F$
is a connected flat of $B_T(M,N)$, without additional hypotheses (as
in Lemmas \ref{lem:conrestP} and \ref{lem:conrestP2} below), we cannot
deduce that either $F_M$ is connected in $M$ or $F_N$ is connected in
$N$.

\begin{figure}
  \centering
  \begin{tikzpicture}[scale=0.9]
    \draw[very thick](0,0)--(1.2,1.5);%
    \draw[very thick](0,0)--(-1.2,1.5);%
    \filldraw (-0.1,0) node[left=1] {\small$1$} circle (2.75pt);%
    \filldraw (0.1,0) node[right=1] {\small$2$} circle (2.75pt);%
    \filldraw (-0.6,0.75) node[left =1] {\small$a$} circle (2.75pt);%
    \filldraw (-1.2,1.5) node[left=1] {\small$b$} circle (2.75pt);%
    \filldraw (0.6,0.75) node[right=1] {\small$c$} circle (2.75pt);%
    \filldraw (1.2,1.5) node[right =1] {\small$d$} circle (2.75pt);%
    \node at (0,-0.5) {$M$};%

    \draw[very thick](4,0)--(2.8,1.5);%
    \draw[very thick](4,0)--(5.2,1.5);%
    \filldraw (3.9,0) node[left=1] {\small$1$} circle (2.75pt);%
    \filldraw (4.1,0) node[right=1] {\small$2$} circle (2.75pt);%
    \filldraw (3.4,0.75) node[left =1] {\small$e$} circle (2.75pt);%
    \filldraw (2.8,1.5) node[left=1] {\small$f$} circle (2.75pt);%
    \filldraw (4.6,0.75) node[right=1] {\small$g$} circle (2.75pt);%
    \filldraw (5.2,1.5) node[right =1] {\small$h$} circle (2.75pt);%
    \node at (4,-0.5) {$N$};%
  \end{tikzpicture}
  \caption{Two connected rank-$3$ matroids whose bonding at the
    circuit $T=\{1,2\}$ is disconnected; $B_T(M,N)$ has six connected
    components, namely, $\{a,b\}$, $\{c,d\}$, $\{e,f\}$, $\{g,h\}$,
    $\{1\}$, and $\{2\}$.  }
  \label{fig:discbond}
\end{figure}

The next result gives the ranks of the flats of $B_T(M,N)$.

\begin{lemma}\label{lem:flatrankinB}
  Assume that $T$ is independent in $M$.  The rank $r(F)$ of a flat
  $F$ of $B_T(M,N)$ is $r_M(F_M)+r_N(F_N)-|F\cap T|$.
\end{lemma}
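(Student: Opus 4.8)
The plan is to compute $r_{B_T(M,N)}(F)$ inside the auxiliary matroid $H$, using the description of $B_T(M,N)|F$ supplied by Lemma~\ref{lem:bflat} together with the direct-sum structure $H|(E(M)\cup E(N'))=M\oplus N'$, and to invoke the independence of $T$ exactly once, to control the rank of $Q_F$.

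First I would apply Lemma~\ref{lem:bflat}, which gives $B_T(M,N)|F=H|(F\cup Q_F\cup S_F)/Q_F\del S_F$, so $r_{B_T(M,N)}(F)=r\bigl(H|(F\cup Q_F\cup S_F)/Q_F\bigr)$; deleting $S_F$ does not change the rank, because for each $t_i\in F\cap T$ we have $s_i\in\cl_H(\{t_i,q_i\})$, so in $H/Q_F$ the element $s_i$ is parallel to $t_i\in F$ and hence not a coloop, and deleting non-coloops one at a time preserves rank. Therefore $r_{B_T(M,N)}(F)=r_H(F\cup Q_F\cup S_F)-r_H(Q_F)$.

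Next I would evaluate the two terms on the right. Since $T$ is independent in both $M$ and $N$, the set $\{t_i,s_i:t_i\in F\cap T\}$ is independent in $M\oplus N'$; moreover each $q_i$ (for $t_i\in F\cap T$) was added freely to $\cl_{M\oplus N'}(\{t_i,s_i\})$, and $t_i\notin\cl_M(T-t_i)$, so $q_i\notin\cl_H(\{t_j,s_j:t_j\in F\cap T,\ j\ne i\})\supseteq\cl_H(Q_F\setminus q_i)$. Hence $Q_F$ is independent in $H$ and $r_H(Q_F)=|Q_F|=|F\cap T|$. For the other term, each such $q_i$ lies in $\cl_H(\{t_i,s_i\})\subseteq\cl_H(F\cup S_F)$ (using $t_i\in F_M$ and $s_i\in S_F$), so $r_H(F\cup Q_F\cup S_F)=r_H(F\cup S_F)$; and since $F\cup S_F\subseteq E(M)\cup E(N')$, where $H$ restricts to $M\oplus N'$, this rank equals $r_M(F_M)+r_{N'}\bigl((F_N\setminus T)\cup S_F\bigr)$. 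Because $F\cap S=\emptyset$, the set $(F_N\setminus T)\cup S_F$ is precisely the image of $F_N$ under the relabeling that turns $N$ into $N'$, so the last summand is $r_N(F_N)$. Combining everything gives $r_{B_T(M,N)}(F)=r_M(F_M)+r_N(F_N)-|F\cap T|=r(F_M)+r(F_N)-|F\cap T|$.

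The routine work is the manipulation of $\cl_H$ and the bookkeeping of the relabeling $N\leftrightarrow N'$; the one place where genuine care is needed, and where the hypothesis is used essentially, is the claim that $Q_F$ is independent in $H$, since without independence of $T$ in both $M$ and $N$ the rank $r_H(Q_F)$ could drop below $|F\cap T|$ and the stated formula would fail.
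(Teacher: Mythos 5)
Your proof is correct and follows essentially the same route as the paper's: both start from Lemma~\ref{lem:bflat}, compute the rank inside $H$ using the contraction formula, and then use the fact that $H$ restricts to $M\oplus N'$ and that the $q_i$'s were freely placed on the lines $\cl_{M\oplus N'}(\{t_i,s_i\})$. The paper is slightly more compact (it combines your two closure observations into the single identity $\cl_H(F\cup Q_F)=\cl_H(F\cup S_F)$ and silently takes $r_H(Q_F)=|Q_F|$), whereas you make the independence of $Q_F$ — the one place the hypothesis is genuinely invoked — explicit, which is a reasonable thing to do.
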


\begin{proof}
  Lemma \ref{lem:bflat} gives
  $B_T(M,N)|F= H|(F\cup Q_F\cup S_F)/Q_F\del S_F$ where $Q_F$ and
  $S_F$ are defined in that lemma.  Since
  $\cl_H(F\cup Q_F)=\cl_H(F\cup S_F) $, we have
  $$r(F) =r(B_T(M,N)|F) = r_H(F\cup S_F)-r_H(Q_F).$$ By how $H$ is
  constructed, $H|(F\cup S_F)$ is the direct sum of $M|F_M$ and the
  restriction of $N'$ that is a relabeling of $N|F_N$.  As shown in
  the proof of Lemma \ref{lem:indeprest}, having $T$ independent in
  $M$ implies that $Q$ is independent in $H$.  Thus,
  $r(F) = r_M(F_M)+r_N(F_N)-|Q_F|$. The equality in the lemma follows
  since $|Q_F|=|T\cap F|$.
\end{proof}

This lemma and Theorem \ref{thm:free} have the following corollary.

\begin{cor}\label{cor:famal}
  If $T$ is independent in both $M$ and $N$, then $B_T(M,N)$ is the
  free amalgam of $M$ and $N$.
\end{cor}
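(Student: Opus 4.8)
The plan is to assemble the corollary from three earlier results: Lemma~\ref{lem:indeprest}, Lemma~\ref{lem:flatrankinB}, and Theorem~\ref{thm:free}. Since the substantive analysis of the construction is already packaged in those lemmas, I do not expect a genuine obstacle here; the one point requiring a little care is making sure the rank formula of Lemma~\ref{lem:flatrankinB} lines up exactly with the hypothesis of Theorem~\ref{thm:free}, and this is where the assumption that $T$ is independent in \emph{both} $M$ and $N$ gets used.

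First I would confirm that $B_T(M,N)$ is an amalgam of $M$ and $N$, that is, that $B_T(M,N)|E(M)=M$ and $B_T(M,N)|E(N)=N$. Lemma~\ref{lem:indeprest} gives $B_T(M,N)|E(N)=N$ because $T$ is independent in $M$; applying that lemma to $B_T(M,N)=B_T(N,M)$, with the roles of $M$ and $N$ interchanged, gives $B_T(M,N)|E(M)=M$ because $T$ is independent in $N$. So $B_T(M,N)$ is indeed an amalgam of $M$ and $N$, and in particular $T$ is independent in $B_T(M,N)$.

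Next I would check the rank hypothesis of Theorem~\ref{thm:free} with $K=B_T(M,N)$. For a flat $F$ of $K$, Lemma~\ref{lem:flatrankinB} gives $r(F)=r(F_M)+r(F_N)-|F\cap T|$. Since $T$ is independent in $B_T(M,N)$, its subset $F\cap T$ is independent, so $|F\cap T|=r(F\cap T)$, and the displayed identity becomes
$$r(F)=r(F\cap E(M))+r(F\cap E(N))-r(F\cap T),$$
which is precisely the condition in Theorem~\ref{thm:free}. Invoking that theorem then identifies $B_T(M,N)$ as the free amalgam of $M$ and $N$ (and in particular shows the free amalgam exists), completing the argument.
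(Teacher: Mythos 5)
Your proof is correct and takes essentially the same route as the paper, which simply remarks that Lemma~\ref{lem:flatrankinB} together with Theorem~\ref{thm:free} yields the corollary. You are a bit more explicit in verifying the amalgam hypothesis via Lemma~\ref{lem:indeprest} and in matching $|F\cap T|$ with $r(F\cap T)$, but that is just spelling out what the paper leaves implicit.
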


The terminology of free amalgams is well established in the
literature, so we will use that name when $T$ is independent in both
$M$ and $N$, but, noting Lemma \ref{lem:contrinB} and the remarks
before it, we cannot work solely with free amalgams; we sometimes need
the more general setting of bonding.

The next result follows from Corollary \ref{cor:famal} when
$r_M(p)=1=r_N(p)$.  The other cases are easy to verify directly from the
definitions of $B_p(M,N)$ and $P(M,N)$.

\begin{cor}\label{cor:scbpc}
  If $E(M)\cap E(N)=\{p\}$, then $B_p(M,N)$ is the parallel connection
  $P(M,N)$ of $M$ and $N$ at the base point $p$.
\end{cor}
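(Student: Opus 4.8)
The plan is to split into four cases according to whether $p$ is a loop of $M$, of $N$, of both, or of neither, and in each case to unwind the construction of $B_p(M,N)$ (here $T=\{p\}$, $S=\{s_1\}$, $Q=\{q_1\}$) and compare it with the description of $P(M,N)$ recorded in Section \ref{sec:background}.

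First I would dispose of the case in which $p$ is a loop of neither $M$ nor $N$, so $r_M(p)=1=r_N(p)$ and $T=\{p\}$ is independent in both $M$ and $N$. Then Corollary \ref{cor:famal} gives that $B_p(M,N)$ is the free amalgam of $M$ and $N$, and, as noted in the discussion of parallel connections in Section \ref{sec:background}, the free amalgam of two matroids whose ground sets meet in a single point $p$ with $r_M(p)=r_N(p)$ is precisely $P(M,N)$.

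For the remaining cases I would argue straight from the construction. Suppose $p$ is a loop of $M$ only; then $t_1=p$ is a loop of $M\oplus N'$ while $s_1$ is not, so $\cl_{M\oplus N'}(\{t_1,s_1\})=\cl_{M\oplus N'}(\{s_1\})$ and hence $q_1$ is parallel to $s_1$ in $H$. Since parallel elements are clones, the transposition $(q_1,s_1)$ carries $H\del s_1$ onto $H\del q_1=M\oplus N'$, so
$$B_p(M,N)=(H\del s_1)/q_1\;\cong\;(M\oplus N')/s_1=M\oplus(N'/s_1);$$
undoing the relabeling $s_1\mapsto p$ identifies this with $M\oplus(N/p)=P(M,N)$, and the ground set comes out as $E(M)\cup(E(N)-p)=E(M)\cup E(N)$ since $p\in E(M)$. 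The case of $p$ a loop of $N$ only follows by the symmetries $B_p(M,N)=B_p(N,M)$ and $P(M,N)=P(N,M)$. Finally, if $p$ is a loop of both $M$ and $N$, then $q_1$ is a loop of $H$, so $H/q_1=H\del q_1=M\oplus N'$ and thus $B_p(M,N)=(M\oplus N')\del s_1=M\oplus(N'\del s_1)$, which undoing the relabeling is $M\oplus(N\del p)=M\oplus(N/p)$.

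I do not anticipate a conceptual obstacle; after the first case the work is just bookkeeping with the relabeling $t_1\leftrightarrow s_1$ and with how $/q_1$ and $\del s_1$ collapse $H$ back to a direct sum. The one point to check with a little care is that the two formulas $M\oplus(N/p)$ and $(M/p)\oplus N$ for $P(M,N)$ agree when $p$ is a loop of both, which they do, since each equals $(M\del p)\oplus(N\del p)$ with $p$ adjoined as a loop.
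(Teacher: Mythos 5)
Your proposal is correct and follows the same approach as the paper: invoke Corollary~\ref{cor:famal} for the case $r_M(p)=1=r_N(p)$, and verify the remaining cases directly from the definitions of $B_p(M,N)$ and $P(M,N)$. The paper compresses those remaining cases into the remark that they are ``easy to verify directly''; you simply carry out that verification (correctly), including the small but necessary observation that the two formulas $M\oplus(N/p)$ and $(M/p)\oplus N$ agree when $p$ is a loop of both.
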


We are most interested in connected flats.  The next corollary narrows
the collection of flats that we must consider.

\begin{cor}\label{cor:discflat}
  Assume that $T$ is independent in both $M$ and $N$.  If $F$ is a
  flat of $B_T(M,N)$ with $F\cap T=\emptyset$ and neither $F_M$ nor
  $F_N$ is empty, then $F$ is disconnected.
\end{cor}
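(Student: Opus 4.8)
The plan is to compute the restriction $B_T(M,N)|F$ explicitly and recognize it as a direct sum supported on two nonempty disjoint sets. The first step: since $F$ is a flat of $B_T(M,N)$ with $F\cap T=\emptyset$, the sets $S_F$ and $Q_F$ of Lemma~\ref{lem:bflat} are both empty, so that lemma collapses to $B_T(M,N)|F=H|F$.

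Next I would unwind $H|F$ inside $H$, which is $M\oplus N'$ with the points $q_1,\dots,q_k$ added freely. Since $F\subseteq E(M)\cup E(N)$ and $F$ avoids $T$, in the ground set $E(H)=E(M)\cup E(N')\cup Q$ the set $F$ is disjoint from $Q$, from $S$, and from $T$; in particular $F\subseteq E(H)-Q$, so $H|F=(H\del Q)|F=(M\oplus N')|F$. Writing $F=F_M\cup F_N$ with $F_M\subseteq E(M)$ and $F_N\subseteq E(N)-T\subseteq E(N')$, these two pieces lie in different direct summands of $M\oplus N'$, so $(M\oplus N')|F=(M|F_M)\oplus (N'|F_N)$; and $N'|F_N=N|F_N$ because $N'$ and $N$ differ only by the relabeling of $T$, which does not touch $F_N$. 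Hence $B_T(M,N)|F=(M|F_M)\oplus (N|F_N)$. Since $F_M$ and $F_N$ are both nonempty and disjoint, $F_M$ is a nontrivial separator of this matroid, so $B_T(M,N)|F$ is disconnected, which is exactly what is claimed. (Note this argument does not even use that $T$ is independent in $M$ and $N$, nor does it need $F_M,F_N$ to be flats, although Corollary~\ref{cor:flatscomefromflats} guarantees they are.)

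There is no genuinely hard step here. The one point worth flagging is that one should resist the temptation to quote Lemma~\ref{lem:discflats} with the obvious choice $X=F_M$, $Y=\emptyset$: those are both \emph{trivial} separators of $M|F_M$ and $N|F_N$, and the hypothesis of Lemma~\ref{lem:discflats} deliberately excludes that case — it is phrased precisely to force $X\cup Y$ to be a nontrivial separator of $B_T(M,N)|F$ — so the short direct computation through Lemma~\ref{lem:bflat} is the clean route. Beyond that, the only care needed is the relabeling bookkeeping: that $F_N$, being disjoint from $T$, is untouched in the passage from $N$ to $N'$, and that $F$ meets none of $Q$, $S$, $T$. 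Both are immediate from the definitions.
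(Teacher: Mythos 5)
Your proof is correct. The paper states this corollary without proof, and the citation a reader would naturally reach for --- Lemma~\ref{lem:discflats} with $X=F_M$ and $Y=\emptyset$ --- is blocked, as you rightly flag, by that lemma's ``not both trivial'' hypothesis, which was inserted precisely to guarantee that $X\cup Y$ and $F-(X\cup Y)$ are both nonempty. (Here both are nonempty by assumption, so the \emph{argument} in the proof of Lemma~\ref{lem:discflats} goes through with that choice of $X,Y$ even though its \emph{statement} does not literally cover the case.) The other route the paper has on hand is Lemma~\ref{lem:flatrankinB}: for $F\cap T=\emptyset$ it gives $r(F)=r(F_M)+r(F_N)$, which, since $M$ and $N$ are restrictions of $B_T(M,N)$ when $T$ is independent, is exactly the statement that $F_M$ is a nontrivial separator of $B_T(M,N)|F$. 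Your derivation --- collapsing $B_T(M,N)|F$ to $H|F$ via Lemma~\ref{lem:bflat}, then to $(M\oplus N')|F=(M|F_M)\oplus(N|F_N)$ because $F$ is disjoint from $Q$, $S$, and $T$ and so splits cleanly between the two summands $E(M)$ and $E(N')$ --- is the most economical of these. It still uses that $F$ is a flat of $B_T(M,N)$ (Lemma~\ref{lem:bflat} needs this to justify removing $Q$ as coloops), but, as you note, it uses neither the independence of $T$ in $M$ and $N$ nor the fact that $F_M$ and $F_N$ are flats, so it is strictly more general than what the hypotheses allow.
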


The bonding $B_T(M,N)$ has additional useful properties when $T$ is a
set of clones in both $M$ and $N$.  We first show that clones in both
$M$ and $N$ are clones in $B_T(M,N)$.

\begin{lemma}
  If $t_i,t_j\in T$ are clones in both $M$ and $N$, then they are
  clones in $B_T(M,N)$. Thus, if $T$ is a set of clones in both $M$
  and $N$, then $T$ is a set of clones in $B_T(M,N)$.
\end{lemma}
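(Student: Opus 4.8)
The plan is to exhibit an explicit automorphism of $B_T(M,N)$ that acts as the transposition $(t_i,t_j)$. Recall that $B_T(M,N)=H/Q\del S$, where $H$ is the principal extension of $M\oplus N'$ obtained by adding each $q_\ell$ freely to the flat $X_\ell:=\cl_{M\oplus N'}(\{t_\ell,s_\ell\})$. Consider the permutation $\sigma$ of $E(H)=E(M)\cup E(N')\cup Q$ that interchanges $t_i$ with $t_j$, interchanges $s_i$ with $s_j$, interchanges $q_i$ with $q_j$, and fixes every other element. Then $\sigma$ fixes $Q$ and $S$ setwise, and a permutation that is an automorphism of $H$ and fixes both $Q$ and $S$ setwise restricts to an automorphism of the minor $H/Q\del S$ (it preserves the rank function of $H$, hence that of the minor). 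So it suffices to prove that $\sigma$ is an automorphism of $H$: its restriction to $E(B_T(M,N))=E(M)\cup E(N)$ is then precisely the transposition $(t_i,t_j)$, which gives the claim.

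First I would observe that $\sigma$ restricts to an automorphism $\tau$ of $M\oplus N'$. Indeed, $\sigma|_{E(M)}$ is the transposition $(t_i,t_j)$, which is an automorphism of $M$ because $t_i$ and $t_j$ are clones in $M$; and $\sigma|_{E(N')}$ is the transposition $(s_i,s_j)$, which is an automorphism of $N'$ because $t_i,t_j$ being clones in $N$ makes $s_i,s_j$ clones in $N'$. Since automorphisms commute with taking closures, and $\tau$ interchanges $\{t_i,s_i\}$ with $\{t_j,s_j\}$ while fixing each $\{t_\ell,s_\ell\}$ with $\ell\notin\{i,j\}$ setwise, we get $\tau(X_i)=X_j$, $\tau(X_j)=X_i$, and $\tau(X_\ell)=X_\ell$ for $\ell\notin\{i,j\}$; that is, $\tau$ permutes the flats $X_1,\dots,X_k$ exactly as $\sigma$ permutes the corresponding new elements $q_1,\dots,q_k$.

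It remains to promote $\tau$ to an automorphism of $H$. Because multiple principal extensions commute and $M+_Xe=M+_{\cl(X)}e$, the matroid $H$ is determined by $M\oplus N'$ together with the assignment $q_\ell\mapsto X_\ell$ of its new elements to flats of $M\oplus N'$. Applying $\tau$ returns $M\oplus N'$ and carries this assignment to $q_j\mapsto X_j$, $q_i\mapsto X_i$, and $q_\ell\mapsto X_\ell$ for $\ell\notin\{i,j\}$ — the same assignment; hence the permutation $\sigma$, which agrees with $\tau$ on $E(M)\cup E(N')$ and permutes $Q$ accordingly, is an automorphism of $H$. This finishes the argument, and the ``Thus'' clause is then immediate: any two distinct elements of a set $T$ of clones of both $M$ and $N$ are clones in $B_T(M,N)$ by what was just shown.

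I expect the last step to be the main obstacle: one must make precise (and verify) that the multiple principal extension construction is equivariant under automorphisms of the base matroid that compatibly permute the target flats. This is routine from the commutativity of principal extensions — for instance, by adding $q_i$ and $q_j$ last and tracking the effect of $\sigma$ on the rank function one extension at a time — but it is the only place where one needs more than the formal definitions.
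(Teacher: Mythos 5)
Your proof is correct, and it takes a genuinely different route from the paper. You work directly from the definition of clones as elements whose transposition is an automorphism: you lift that transposition to the permutation $\sigma$ of $E(H)$ swapping $t_i\leftrightarrow t_j$, $s_i\leftrightarrow s_j$, $q_i\leftrightarrow q_j$, show $\sigma$ is an automorphism of $H$ (via equivariance of principal extensions under automorphisms of the base that permute the target flats correspondingly), and note that an automorphism of $H$ fixing $Q$ and $S$ setwise descends to an automorphism of the minor $H/Q\backslash S=B_T(M,N)$. The paper instead uses the alternative characterization of clones as elements lying in the same cyclic flats: it takes a cyclic flat $F$ of $B_T(M,N)$ containing $t_i$, lifts $F$ to a flat $F\cup Q\cup S_F$ of $H$ via Lemma~\ref{lem:bflat}, and then uses Lemma~\ref{lem:discflats} to argue that at least one of $t_i,s_i$ is a non-coloop of $H|(F\cup S_F)$, forcing its clone $t_j$ or $s_j$ (and hence $t_j$) into $F$. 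Your argument is somewhat more self-contained — it doesn't invoke Lemmas~\ref{lem:bflat} or \ref{lem:discflats} — but the equivariance step you flag as the main burden does need to be spelled out carefully (it follows, as you note, from the rank formula for $M+_Xe$ together with the fact that automorphisms commute with closure). The paper's route is shorter given the machinery already built and fits the ``clones = same cyclic flats'' viewpoint that recurs throughout Section~\ref{sec:bonding}.
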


\begin{proof}
  By symmetry, it suffices to show that if $F$ is a cyclic flat of
  $B_T(M,N)$ and $t_i\in F$, then $t_j\in F$.  Set
  $S_F=\{s_h\,:\,t_h\in F\cap T\}$. By Lemma \ref{lem:bflat}, the set
  $F\cup Q\cup S_F$ is a flat of $H$.  At least one of $t_i$ and $s_i$
  is not a coloop of $H|(F\cup S_F)$, for otherwise $t_i$ would be a
  coloop of both $M|F_M$ and $N|F_N$, so $t_i$ would be a coloop of
  $B_T(M,N)|F$ by Lemma \ref{lem:discflats}, contrary to the flat $F$
  being cyclic.  If $t_i$ is not a coloop of $H|(F\cup S_F)$, then its
  clone $t_j$ in $M$ must be in the flat $F\cup Q\cup S_F$, so
  $t_j\in F$; otherwise $s_i$ is not a coloop of $H|(F\cup S_F)$, so
  its clone $s_j$ must be in the flat $F\cup Q\cup S_F$, and so
  $t_j\in F$.
\end{proof}
  
Since two elements are clones in $M$ if and only if they are in the
same nonsingleton connected flats of $M$, we get the next result.

\begin{lemma}\label{lem:BTClone}
  If $T$ is a set of clones in both $M$ and $N$, and if $F$ is a
  connected flat of $B_T(M,N)$ with $|F|\geq 2$, then either
  $T\subseteq F$ or $F\cap T=\emptyset$.
\end{lemma}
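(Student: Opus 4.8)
The plan is to obtain this as an immediate consequence of the preceding lemma together with the description of clones in terms of cyclic flats recalled in Section~\ref{sec:background}. By the preceding lemma, since $T$ is a set of clones in both $M$ and $N$, it is a set of clones of $B_T(M,N)$. Now fix a connected flat $F$ of $B_T(M,N)$ with $|F|\ge 2$. Since, as noted in the background, every connected flat with at least two elements is a cyclic flat, $F$ is a cyclic flat of $B_T(M,N)$.

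Next I would invoke the fact that two elements of a matroid are clones if and only if they lie in exactly the same cyclic flats. So suppose $F\cap T\ne\emptyset$, and pick $t_i\in F\cap T$. For any other $t_j\in T$, the elements $t_i$ and $t_j$ are clones of $B_T(M,N)$, and $F$ is a cyclic flat containing $t_i$, so $t_j\in F$ as well; hence $T\subseteq F$. This yields the claimed dichotomy: either $F\cap T=\emptyset$ or $T\subseteq F$.

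There is no genuine obstacle here; the one point meriting a word of care is that $B_T(M,N)$ need not be loopless (for instance, $t_i$ is a loop of $B_T(M,N)$ whenever it is a loop of both $M$ and $N$), so I would use the loop-free form of the clone characterization --- sameness of cyclic flats --- rather than the connected-flat form stated just before the lemma. With that small adjustment the argument above is complete, the substantive content having been established already in the preceding lemma.
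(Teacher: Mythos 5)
Your argument is essentially the same as the paper's, which derives the lemma in one sentence from the preceding lemma together with the fact that clones lie in exactly the same non-singleton connected flats (equivalently cyclic flats). Your remark that one should use the cyclic-flat formulation rather than the connected-flat formulation when $B_T(M,N)$ might have loops is a valid and slightly more careful reading, though in the paper's applications $M$ and $N$ are loopless, which forces $B_T(M,N)$ to be loopless as well, so both formulations apply there.
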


We have seen that, when $T$ is independent in $M$ and $N$, if $F$ is a
connected flat of $B_T(M,N)$ and $F\cap T=\emptyset$, then $F$ is a
connected flat of one of $M$ or $N$.  The next lemma treats the case
of $T\subsetneq F$, assuming that $T$ is a set of clones.  One can
give a shorter proof of this lemma in the case of a connected flat by
applying Lemma \ref{lem:discflats} (and that would suffice for our
applications of the next lemma), but the next lemma treats connected
sets, not just connected flats.  Note also that the lemma does not
assume that $T$ is independent in $M$ and $N$, so $M$ and $N$ might
not be restrictions of $B_T(M,N)$.

\begin{lemma}\label{lem:conrestP}
  Assume that $T$ is a set of clones in $M$ and in $N$.  Let $X$ be a
  connected set in $B_T(M,N)$ with $T\subsetneq X$.  If $X_M\ne T$,
  then $M|X_M$ is connected, and likewise for $N|X_N$ if $X_N\ne T$.
\end{lemma}

\begin{proof}
  By symmetry, it suffices to prove the result for $M|X_M$ when
  $X_M\ne T$.  Lemma \ref{lem:brest} gives
  $B_T(M,N)|X = B_T(M|X_M,N|X_N)$. Since $T$ is a set of clones in
  $M$, either (i) $|T|\geq 2$ and $T$ contains only loops or only
  coloops of $M|X_M$, or (ii) $T\subseteq K$ for some connected
  component $K$ of $M|X_M$.  Since $T\ne X_M$, if option (i) held,
  then $M|X_M$ would have more than $|T|$ connected components, so
  Lemmas \ref{lem:brest} and \ref{lem:bds} would give the
  contradiction that $B_T(M,N)|X$, which is connected, is a direct sum
  of at least two matroids.  So option (ii) holds.  By Lemmas
  \ref{lem:brest} and \ref{lem:bds}, if $M|X_M$ had any connected
  components besides $K$, then $B_T(M,N)|X$ would be a direct sum of
  at least two matroids, but $B_T(M,N)|X$ is connected. Thus, $M|X_M$ is
  connected.
\end{proof}

We turn to the applications to positroids.

\begin{thm}\label{thm:bondmain1}
  Let $M$ and $N$ be positroids with no loops, and let $E(M)\cap E(N)$
  be $T$.  If $T$ is nonempty, $T$ is independent in both $M$ and $N$,
  and $T$ is a set of clones in both $M$ and $N$, then the free
  amalgam $B_T(M,N)$ is a positroid.
\end{thm}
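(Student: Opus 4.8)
The plan is to exhibit an explicit positroid order for $B_T(M,N)$ and check that it has the cyclic interval property, invoking Theorem~\ref{thm:char}. First I would use Corollary~\ref{cor:clone} to choose a positroid order $\leq_M$ for $M$ in which the clonal class containing $T$ is a cyclic interval, and then reorder the (clonal, hence freely permutable) elements of $T$ within that interval so that $T$ itself is a cyclic interval; likewise choose $\leq_N$ for $N$. After a cyclic shift of each (positroid orders are shift-invariant) and a further permutation of $T$ in one of them, I may assume $T$ is a filter of $\leq_M$, an ideal of $\leq_N$, and that $\leq_M$ and $\leq_N$ agree on $T$. Let $\leq$ be the order on $E(B_T(M,N))=(E(M)-T)\cup T\cup(E(N)-T)$ obtained by listing $E(M)-T$ in $\leq_M$-order, then $T$, then $E(N)-T$ in $\leq_N$-order; then $\leq$ restricts to $\leq_M$ on $E(M)$ and to $\leq_N$ on $E(N)$. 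The single feature of $\leq$ that matters is that the block $T$ lies between the block $E(M)-T$ and the block $E(N)-T$: thus $E(M)-T$ is an ideal of $\leq$, both $E(N)-T$ and $E(N)$ are filters of $\leq$, and for any cyclic interval $U$ of $\leq_M$ that contains $T$, the set $U\cup(E(N)-T)$ is again a cyclic interval of $\leq$ (an honest interval of $\leq$, possibly after a shift, because the $T$-end of $U$ abuts the suffix $E(N)-T$).

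To verify the cyclic interval property I would take a proper connected flat $F$ of $B_T(M,N)$ with $|F|\geq 2$ and a connected component $K$ of $B_T(M,N)/F$ with $|E(K)|\geq 2$, and locate $E(K)$ inside a cyclic interval of $\leq$ disjoint from $F$. Since $T$ is a set of clones in $B_T(M,N)$, either $T\subseteq F$ or $F\cap T=\emptyset$. In the first case Lemma~\ref{lem:contP} gives $B_T(M,N)/F=(M/F_M)\oplus(N/F_N)$, so $K$ is a component of $M/F_M$ or of $N/F_N$; by Corollary~\ref{cor:flatscomefromflats} and Lemma~\ref{lem:conrestP}, $F_M$ is a flat of $M$ and $F_N$ a flat of $N$, each connected unless it equals $T$, so I can transfer the interval housing $E(K)$ from condition~(3) in Theorem~\ref{thm:char} for $\leq_M$ or $\leq_N$. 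That transferred interval misses $T$ (because $T\subseteq F_M$ or $T\subseteq F_N$), hence is an honest interval of $\leq$ inside $E(M)-T$ or $E(N)-T$ and is disjoint from $F$; and when $F_M=T$ or $F_N=T$ I simply use the ideal $E(M)-T$ or the filter $E(N)-T$ of $\leq$.

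In the second case, $F\cap T=\emptyset$, Corollary~\ref{cor:discflat} lets me assume (using $B_T(M,N)=B_T(N,M)$) that $F\subseteq E(M)$; then $F$ is a connected flat of $M$ disjoint from $T$, and by Lemma~\ref{lem:contrinB}, $B_T(M,N)/F=B_T(M/F,N)$. Here $M/F$ is loopless and $T$ is still a set of clones of $M/F$, so $M/F=\widehat K\oplus M''$ with $\widehat K$ the component of $M/F$ containing $T$ (or, if $T$ is a set of coloops of $M/F$, $M/F=(\bigoplus_{t\in T}U_{1,1})\oplus M''$), and Lemma~\ref{lem:bds} gives $B_T(M/F,N)=B_T(\widehat K,N)\oplus M''$, with $B_T(\bigoplus U_{1,1},N)=N$ in the coloop case. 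So $K$ is either a component of $M''$, hence a component of $M/F$ disjoint from $T$, and condition~(3) for $\leq_M$ puts $E(K)$ inside a maximal cyclic interval $U$ of $\leq_M$ disjoint from $F$, whereupon $U$ (if $U\cap T=\emptyset$) or $U\cup(E(N)-T)$ (if $T\subseteq U$) is a cyclic interval of $\leq$ disjoint from $F$; or $K$ is a component of $B_T(\widehat K,N)$, in which case (by the clone lemma) $T\subseteq E(K)$ or $E(K)\cap T=\emptyset$, and, writing $U$ for the maximal cyclic interval of $\leq_M$ disjoint from $F$ containing $E(\widehat K)$, either $E(K)\subseteq E(\widehat K)\cup E(N)\subseteq U\cup(E(N)-T)$, or (splitting $B_T(\widehat K,N)|E(K)$ via Lemma~\ref{lem:bflat} into restrictions of $\widehat K$ and of $N$) $E(K)$ is a connected flat of $N$ inside $E(N)-T$, so $E(K)\subseteq E(N)$, the filter of $\leq$. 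Every case yields $E(K)$ inside a cyclic interval of $\leq$ that avoids $F$, so $\leq$ has the cyclic interval property and $B_T(M,N)$ is a positroid.

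The hard part is exactly this second case: since $T$ need not be independent in $M/F$, the bonding $B_T(M/F,N)$ is in general neither an amalgam of $M/F$ and $N$ nor a positroid, so no induction on $|E(M)|+|E(N)|$ is available. The resolution is that one does not need $B_T(M/F,N)$ to be a positroid --- only to know where its connected components sit relative to $\leq$ --- and the decomposition $B_T(M/F,N)=B_T(\widehat K,N)\oplus M''$, combined with condition~(3) placing $E(\widehat K)$ within a single cyclic interval of $\leq_M$ and with the adjacency of the $T$-block to the suffix $E(N)-T$ in $\leq$, is precisely what confines each component to a cyclic interval of $\leq$ disjoint from $F$.
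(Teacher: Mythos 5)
Your proposal is correct and takes essentially the same route as the paper: the same positroid orders $\leq_M, \leq_N$ with $T$ as a filter/ideal glued into a single order $\leq$, the same dichotomy on $T\subseteq F$ versus $F\cap T=\emptyset$, and the same use of Lemmas~\ref{lem:contP}, \ref{lem:contrinB}, \ref{lem:bds}, and \ref{lem:conrestP} to locate the connected components of $B_T(M,N)/F$ inside cyclic intervals of $\leq$. The only differences are cosmetic --- the paper disposes of the case in which $T$ is a set of coloops by an initial reduction to $B_T(M,N)=(M\del T)\oplus N$, whereas you fold it into the contraction argument, and you appeal to Lemma~\ref{lem:contP} uniformly in the $T\subseteq F$ case rather than splitting into $F\subseteq E(M)$, $F\subseteq E(N)$, and neither.
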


\begin{proof}
  Let $T=\{t_1,t_2,\ldots,t_k\}$.  Observe that if the elements of $T$
  are coloops of $M$, then $B_T(M,N)=(M\del T)\oplus N$, in which case
  the result follows since the class of positroids is closed under
  deletion and direct sum.  Thus, by symmetry and the assumption that
  $T$ is a set of clones of both $M$ and $N$, we may assume that no
  element of $T$ is a coloop of either $M$ or $N$.  By Lemma
  \ref{lem:autclone} and Corollary \ref{cor:clone}, some positroid
  order for $M$ and some positroid order for $N$ have the same
  restriction to $T$ and have $T$ being an interval.  Let
  $$e_1<_Me_2<_M\cdots<_M e_m<_Mt_1<_Mt_2<_M\cdots <_Mt_k$$
  and
  $$t_1<_Nt_2<_N\cdots <_Nt_k<_Nf_1<_Nf_2<_N\cdots<_N f_n$$
  be such positroid orders for $M$ and $N$, respectively.  We claim
  that the linear order
  $$e_1<e_2<\cdots< e_m<t_1<t_2<\cdots <t_k <f_1<f_2<\cdots< f_n$$
  of $E(B_T(M,N))$ satisfies the cyclic interval property, so
  $B_T(M,N)$ is a positroid.  Note that each of $\leq_M$ and $\leq_N$
  is a restriction of $\leq$ to an interval.  The following
  observations about cyclic intervals in these orders are used below.
  \begin{itemize}
  \item[(I$_1$)] Assume that $F\subseteq E(M)$ and $F\cap T=\emptyset$,
    and that $I$ is a cyclic interval in $\leq_M$ with
    $I\cap F=\emptyset$.  If $I\cap T=\emptyset$, then $I$ is also a
    cyclic interval in $\leq$; if $I\cap T\ne\emptyset$, then
    $I\cup E(N)$ is a cyclic interval in $\leq$ and it is disjoint
    from $F$.  The same statement holds with the roles of $M$ and $N$
    reversed.
  \item[(I$_2$)] If $T\subseteq F\subseteq E(M)\cup E(N)$, then any
    cyclic interval in $\leq_M$ that is disjoint from $F$ is a cyclic
    interval in $\leq$, as is any cyclic interval in $\leq_N$ that is
    disjoint from $F$.
  \end{itemize}

  Let $F$ be a connected flat of $B_T(M,N)$ with $|F|\geq 2$.  By the
  hypotheses about clones and Lemma \ref{lem:BTClone}, either
  $F\cap T=\emptyset$ or $T\subsetneq F$; we address each case below.

  First assume that $F\cap T=\emptyset$.  By Corollaries
  \ref{cor:flatscomefromflats} and \ref{cor:discflat}, the set $F$ is
  a connected flat of either $M$ or $N$, say of $M$.  No element of
  $T$ is a coloop of $M$, so no element of $T$ is a coloop of $M/F$ by
  Lemma \ref{lem:cocofcon}, and so all elements of $T$, which are
  clones in $M/F$, are in the same connected component, say $X$, of
  $M/F$.  Lemma \ref{lem:contrinB} gives
  $B_T(M,N)/F=B_T(M/F,N)$. (Unless $T$ is independent in $M/F$, there
  is no amalgam of $M/F$ and $N$, but their bonding is defined.  While
  $M/F$ is a restriction of $B_T(M/F,N)$, only when $T$ is independent
  in $M/F$ will $N$ be a restriction of $B_T(M/F,N)$.)  Let $K$ be a
  connected component of $B_T(M,N)/F$ with $|K|\geq 2$.  Since $T$ is
  a set of clones of $B_T(M,N)$ and so of $B_T(M,N)/F$, either
  $T\subseteq K$ or $T\cap K=\emptyset$.  Now either
  \begin{itemize}
  \item[(a)] $K\subseteq E(N)$,
  \end{itemize}
  or, by applying Lemma \ref{lem:bds} to $B_T(M/F,N)$, one of the
  following statements holds:
  \begin{itemize}
  \item[(b)] $K$ is a connected component of $M/F$ for which
    $K\cap T=\emptyset$, or
  \item[(c)] $K$ is a connected component of the bonding
    $B_T(M/F|X,N|Y)$ for some set $Y$ with
    $T\subseteq Y\subseteq E(N)$.
  \end{itemize}
  The cyclic interval property holds in case (a) since $E(N)$ is a
  cyclic interval in $\leq$.  In case (b), there is a cyclic interval
  $I$ in $\leq_M$ that contains $K$ and has $I\cap F=\emptyset$, so
  observation (I$_1$) above shows that the cyclic interval property
  holds.  In case (c), for any cyclic interval $I$ in $\leq_M$ that
  contains $X$ and has $I\cap F=\emptyset$, by observation (I$_1$),
  the cyclic interval $I\cup E(N)$ in $\leq$ shows that the cyclic
  interval property holds.
  
  Now assume that $T\subsetneq F$.  By Lemma \ref{lem:contP}, we have
  $B_T(M,N)/T=(M/T)\oplus (N/T)$.  If $F\subseteq E(M)$, then $F$ is a
  connected flat of $M$ and $B_T(M,N)/F=(M/F)\oplus (N/T)$, so each
  connected component $K$ of $B_T(M,N)/F$ is a connected component of
  either $M/F$ or $N/T$; thus, $K$ is contained either in a cyclic
  interval in $\leq_M$ that is disjoint from $F$, or in the interval
  $E(N)-T$; such cyclic intervals are cyclic intervals in $\leq$, so
  the cyclic interval property holds.  The same idea applies if
  $F\subseteq E(N)$.  Now assume that $F\not\subseteq E(M)$ and
  $F\not\subseteq E(N)$.  By Corollary \ref{cor:flatscomefromflats}
  and Lemma \ref{lem:conrestP}, the sets $F_M$ and $F_N$ are connected
  flats of $M$ and $N$, respectively.  Also,
  $B_T(M,N)/F=(M/F_M)\oplus (N/F_N)$. Thus, each connected component
  of $B_T(M,N)/F$ is a connected component of one of $M/F_M$ and
  $N/F_N$, and the cyclic interval property holds since it holds in
  $M$ and $N$.
\end{proof}

With the assumptions above, while $B_T(M,N)$ is a positroid, the
matroid $H$ that we use to construct $B_T(M,N)$ need not be a
positroid.  For instance, $H$ is not a positroid when $M$ and $N$ are
both the uniform matroid $U_{3,4}$ and $|T|=3$.
 
Corollary \ref{cor:scbpc} and Theorem \ref{thm:bondmain1} give one
case of the next result on parallel connection; the other case, when
$p$ is a loop of one or both of $M$ and $N$, holds since contractions
and direct sums of positroids are positroids.  Recall that duals of
positroids are positroids, and series connections are dual to parallel
connections: $S(M,N) = \bigl(P(M^*,N^*)\bigr)^*$.  Blum
\cite[Corollary 4.2]{blum} treated the special cases of parallel and
series extension.

\begin{cor}\label{cor:parcon}
  If $M$ and $N$ are positroids and $|E(M)\cap E(N)|=1$, then the
  parallel connection $P(M,N)$ and the series connection $S(M,N)$ are
  positroids.
\end{cor}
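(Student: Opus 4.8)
The plan is to deduce the parallel connection statement from Theorem~\ref{thm:bondmain1} by way of Corollary~\ref{cor:scbpc}, and then obtain the series connection statement by dualizing. Write $\{p\}=E(M)\cap E(N)$ and split on whether $p$ is a loop. If $p$ is a loop of at least one of $M$ and $N$, then the description of parallel connection in Section~\ref{sec:background} expresses $P(M,N)$ as a direct sum of one of $M$, $M/p$ with one of $N$, $N/p$ (together with $p$ as a loop when $p$ is a loop of both). Every summand is a minor of a positroid, hence a positroid, and the class of positroids is closed under direct sum, so $P(M,N)$ is a positroid.

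Now suppose that $p$ is a loop of neither $M$ nor $N$, so that $r_M(p)=1=r_N(p)$ and $\{p\}$ is independent in both. Since positroid-ness is unaffected by the presence of loops (as recorded at the start of Section~\ref{sec:char}), and since $M$ and $N$ are restrictions of $P(M,N)$, deleting the loops of $M$ and of $N$ deletes exactly the loops of $P(M,N)$; by Corollary~\ref{cor:scbpc} and Lemma~\ref{lem:brest} the resulting deletion of $P(M,N)$ is the parallel connection of the loopless matroids $M\del\cl_M(\emptyset)$ and $N\del\cl_N(\emptyset)$, so I may assume that $M$ and $N$ have no loops. Then $T=\{p\}$ is nonempty, is independent in both $M$ and $N$, and is vacuously a set of clones in both (being a singleton), so Theorem~\ref{thm:bondmain1} shows that $B_T(M,N)$ is a positroid. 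By Corollary~\ref{cor:scbpc}, $B_T(M,N)=P(M,N)$, which settles the parallel connection.

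For the series connection, recall that $S(M,N)=\bigl(P(M^*,N^*)\bigr)^*$ and that duals of positroids are positroids. Thus $M^*$ and $N^*$ are positroids, their ground sets still meet in the single element $p$, the parallel connection case gives that $P(M^*,N^*)$ is a positroid, and therefore so is $S(M,N)$. I expect no real obstacle beyond bookkeeping: the one point requiring care is that Theorem~\ref{thm:bondmain1} is stated for loopless matroids, so the cases in which $p$ is a loop must be peeled off and, in the remaining case, $M$ and $N$ reduced to loopless matroids before the theorem can be applied.
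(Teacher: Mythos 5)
Your proposal is correct and follows essentially the same route as the paper: reduce the non-loop case to Theorem~\ref{thm:bondmain1} via Corollary~\ref{cor:scbpc}, handle the loop case via direct-sum/contraction closure, and dualize for $S(M,N)$. The one addition you make — explicitly peeling off the loops of $M$ and $N$ (using Lemma~\ref{lem:brest} to identify the loopless restriction of $P(M,N)$ with the parallel connection of the loopless restrictions) so that Theorem~\ref{thm:bondmain1}'s loopless hypothesis applies — is a small gap the paper silently elides, and your handling of it is valid.
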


A \emph{series-parallel network} is a graph that can be obtained from
a graph with a single edge (perhaps a loop) by repeatedly applying
parallel extension and series extension.  Duffin \cite{duffin} proved
that a $2$-connected graph is a series-parallel network if and only if
it has no $K_4$-minor.  Binary matroids with no $M(K_4)$ minor are
graphic since the excluded minors for graphic matroids, other than the
excluded minor for binary matroids, have $M(K_4)$ as a minor. (See
Tutte \cite{tutte} for the excluded minors.)  So a matroid is the
cycle matroid of a series-parallel network if and only if it is
connected, binary, and has no $M(K_4)$-minor.  The next corollary,
which is also \cite[Theorem 5.1]{blum}, follows since the class of
positroids is closed under parallel extension and series extension,
and $M(K_4)$ is not a positroid.

\begin{cor}
  A binary matroid is a positroid if and only if the restriction to
  each of its connected components is the cycle matroid of a
  series-parallel network.
\end{cor}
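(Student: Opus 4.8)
The plan is to prove the two directions separately, assembling facts already in hand: the class of positroids is closed under direct sums and minors (Section~\ref{sec:intro}), under parallel extension and series extension (Corollary~\ref{cor:parcon} and the surrounding remarks); a matroid is a positroid if and only if each of its connected components is; $M(K_4)$ is not a positroid (the remark after Corollary~\ref{cor:convinterval}); and the quoted structure theorem that a connected matroid is the cycle matroid of a series-parallel network exactly when it is binary with no $M(K_4)$-minor.

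First I would handle the forward direction. Suppose $M$ is a binary positroid. Since $M$ is a positroid if and only if each of its connected components $M_i$ is, each $M_i$ is a positroid, and each $M_i$ is connected and binary (a minor of a binary matroid is binary). As the class of positroids is minor-closed and $M(K_4)$ is not a positroid, no $M_i$ has an $M(K_4)$-minor, so by the structure theorem each $M_i$ is the cycle matroid of a series-parallel network.

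Next I would handle the converse. Suppose every connected component $M_i$ of $M$ is the cycle matroid of a series-parallel network. Then $M_i$ is obtained from the cycle matroid of a single-edge graph --- that is, from $U_{1,1}$ (a coloop) or $U_{0,1}$ (a loop) --- by repeatedly applying parallel extension and series extension. Both $U_{1,1}$ and $U_{0,1}$ are positroids, and positroids are closed under parallel extension and series extension, so each $M_i$ is a positroid; since the class of positroids is closed under direct sums, $M=M_1\oplus M_2\oplus\cdots$ is a positroid.

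I do not expect a genuine obstacle here: every ingredient has been set up in the preceding corollaries and remarks. The only points requiring a little care are the base case of the series-parallel induction (the single edge may be a loop) and the observation, made at the start of Section~\ref{sec:char}, that loops have no bearing on whether a matroid is a positroid; neither causes difficulty.
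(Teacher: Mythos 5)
Your proof is correct and takes essentially the same approach as the paper, which cites exactly the same facts (closure of positroids under parallel and series extension, minor-closedness, and $M(K_4)$ not being a positroid) together with the stated structure theorem for series-parallel networks. You have simply written out the short induction the paper leaves implicit.
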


In particular, cycle matroids of series-parallel networks are positroids.

\begin{figure}
  \centering
  \begin{tikzpicture}[scale=0.55]
    \draw[thick, black!30](16,-0.5)--(20,0.5)--(20,6.25)--(16,5.25)--(12,6.25)
    --(12,0.5)--(16,-0.5);%
    \draw[thick, black!30](16,-0.5)--(16,5.25);%

    \draw[very thick](0,2)--(3,0)--(3,4)--(0,2);%
    \filldraw (3,2) node[right=1] {\small$10$} circle (5pt);%
    \filldraw (3,0) node[right=1] {\small$5$} circle (5pt);%
    \filldraw (1.5,1) node[below left] {\small$4$} circle (5pt);%
    \filldraw (0,2) node[left=1] {\small$3$} circle (5pt);%
    \filldraw (1.5,3) node[above left] {\small$2$} circle (5pt);%
    \filldraw (3,4) node[right=1] {\small$1$} circle (5pt);%

    \draw[very thick](6,0)--(6,4)--(9,2)--(6,0);%
    \filldraw (6,2) node[left =1] {\small$10$} circle (5pt);%
    \filldraw (6,0) node[left =1] {\small$5$} circle (5pt);%
    \filldraw (7.5,1) node[below right] {\small$6$} circle (5pt);%
    \filldraw (9,2) node[right =1] {\small$7$} circle (5pt);%
    \filldraw (7.5,3) node[above right] {\small$8$} circle (5pt);%
    \filldraw (6,4) node[left =1] {\small$9$} circle (5pt);%

    \draw[very thick](16,0.5)--(16,4.5);%
    \draw[very thick](16,0.5)--(19,3)--(16,4.5);%
    \draw[very thick](16,0.5)--(13,2)--(16,3.5);%
    
    \filldraw (16,2) node[right =1] {\small$10$} circle (5pt);%
    \filldraw (16,0.5) node[below left =-1] {\small$5$} circle (5pt);%
    \filldraw (16,3.5) node[right =1] {\small$1$} circle (5pt);%
    \filldraw (16,4.5) node[left =1] {\small$9$} circle (5pt);%
    
    \filldraw (14.5,1.25) node[below left] {\small$4$} circle (5pt);%
    \filldraw (14.5,2.75) node[above left] {\small$2$} circle (5pt);%
    \filldraw (13,2) node[left =1] {\small$3$} circle (5pt);%
    \filldraw (17.5,1.75) node[below right] {\small$6$} circle (5pt);%
    \filldraw (19,3) node[right =1] {\small$7$} circle (5pt);%
    \filldraw (17.5,3.75) node[above right] {\small$8$} circle (5pt);%

    \node at (1.5,-1.5) {$M$};%
    \node at (7.5,-1.5) {$N$};%
    \node at (16,-1.5) {$B_T(M,N)$};%
  \end{tikzpicture}
  \caption{The free amalgam $B_T(M,N)$ of the rank-$3$ positroids $M$
    and $N$, where $T=\{5,10\}$.  The free amalgam is a positroid, but
    $5$ and $10$ are not clones.  The set $P$ in the hypothesis of
    Theorem \ref{thm:bondmain2} is $\{5\}$.}
  \label{fig:bondrevvar}
\end{figure}

In the proof of Theorem \ref{thm:bondmain1}, to get a linear order $<$
that extends both $<_M$ and $<_N$, we can start with $E(M)-T$ as an
interval followed by $E(N)-T$, and then put $T$ either between them
(as we did) or after $E(N)-T$.  Having two options suggests that a
similar argument may work if $T$ is the union of two sets of clones;
we may be able to put one set of clones between $E(M)-T$ and $E(N)-T$,
and the other after $E(N)-T$.  Theorem \ref{thm:bondmain2} below,
which we illustrate in Figure \ref{fig:bondrevvar}, treats that.  The
conditions on $T$ in Theorem \ref{thm:bondmain2} are stronger than
those in Theorem \ref{thm:bondmain1} in that $M|\cl_M(T)$ and
$N|\cl_N(T)$ are required to be connected, but weaker in that, instead
of requiring $T$ to be a set of clones, $T$ is partitioned into two
sets of clones, $P$ and $T-P$, where all nonsingleton connected flats
of either $M$ or $N$ that contain at least one element of $T-P$
contain all of $T$.

Many positroids satisfy the hypotheses of Theorem \ref{thm:bondmain2}.
For instance, start with an $n$-spoke wheel, delete at most $n-2$
spokes (see Figure \ref{fig:modifiedwhirl}), take the cycle matroid,
and relax the circuit-hyperplane of rim edges.  Fix two consecutive
spokes that remain, let $P$ contain one of those spokes, and let $T-P$
be the set of rim edges between those spokes (e.g., $T$ can be the set
of the thick edges in Figure \ref{fig:modifiedwhirl}).  The set $T$ is
independent, its closure is connected (it is the circuit consisting of
the two consecutive spokes and the rim edges between them), and all
nonsingleton connected flats that contain one of those rim edges
contain that circuit (relaxing the circuit-hyperplane is crucial for
that property).  To get examples for which both $|P|$ and $|T-P|$ are
arbitrarily large, start with the matroids just constructed and apply
the operation of $t$-expansion, from \cite{gaps}, which, given a
matroid $M$, produces a matroid $M^t$ with similar geometric
structure, but of larger rank and size.  To get $M^t$, where
$t\in \mathbb{N}$ and $t\geq 2$, first extend $M$ to $M'$ by, for each
$e\in E(M)$, adding $t-1$ elements parallel to $e$ if $r(e)=1$, or as
loops if $r(e)=0$; then $M^t$ is the matroid union of $t$ copies of
$M'$.  By \cite[Theorem 3.15]{gaps} (another corollary of Theorem
\ref{thm:char}), a matroid $M$ is a positroid if and only if its
$t$-expansion $M^t$ is a positroid.

We will need the following variation on Lemma \ref{lem:conrestP}.

\begin{lemma}\label{lem:conrestP2}
  Assume that $T$ is independent in both $M$ and $N$, that
  $\emptyset\ne P\subsetneq T$, and that $P$ is a set of clones in
  both $M$ and $N$.  Let $F$ be a connected flat of $B_T(M,N)$ with
  $F\cap T=P$.  If $F\subseteq E(M)$, then the flat $F$ of $M$ is
  connected, and likewise if $F\subseteq E(N)$.  If
  $F\not\subseteq E(M)$ and $F\not\subseteq E(N)$, then the flat $F_M$
  of $M$ is connected, as is the flat $F_N$ of $N$.
\end{lemma}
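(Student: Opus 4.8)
The plan is to express the restriction $B_T(M,N)|F$ as a bonding of $M|F_M$ and $N|F_N$ and then invoke Lemma~\ref{lem:conrestP}. Since $T$ is independent in both $M$ and $N$, Corollary~\ref{cor:flatscomefromflats} already gives that $F_M$ is a flat of $M$ and $F_N$ is a flat of $N$, so in each case only connectedness remains to be shown. First I would dispose of the cases $F\subseteq E(M)$ and $F\subseteq E(N)$, which need no clone hypothesis. If $F\subseteq E(M)$, then, since $T$ is independent in $N$ and $B_T(M,N)=B_T(N,M)$, Lemma~\ref{lem:indeprest} gives $B_T(M,N)|E(M)=M$; hence $B_T(M,N)|F=M|F$, which is connected because $F$ is a connected flat of $B_T(M,N)$. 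The case $F\subseteq E(N)$ is symmetric.

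Now suppose $F\not\subseteq E(M)$ and $F\not\subseteq E(N)$. Any element of $F$ lying outside $E(N)$ belongs to $E(M)-T$, hence to $F_M-P$, so $P\subsetneq F_M$; symmetrically $P\subsetneq F_N$. Also $F_M\cap F_N=F\cap E(M)\cap E(N)=F\cap T=P$. The crux is to show that
$$B_T(M,N)|F=B_P(M|F_M,N|F_N).$$
For this I would invoke Lemma~\ref{lem:bflat}, which gives $B_T(M,N)|F=H|(F\cup Q_F\cup S_F)/Q_F\del S_F$ with $S_F=\{s_i\,:\,t_i\in P\}$ and $Q_F=\{q_i\,:\,t_i\in P\}$, and then trace through the construction of $H$: the restriction $H|(F\cup Q_F\cup S_F)$ is obtained from the direct sum $(M|F_M)\oplus(N|F_N)'$, the second summand being $N|F_N$ with each $t_i\in P$ relabeled as $s_i$, by adding each $q_i$ with $t_i\in P$ freely to the flat $\cl(\{t_i,s_i\})$, while the points $q_j$ with $t_j\in T-P$ do not appear in $F\cup Q_F\cup S_F$. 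That is precisely the matroid playing the role of $H$ in the construction of $B_P(M|F_M,N|F_N)$, and contracting $Q_F$ and deleting $S_F$ then yields the displayed identity.

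With that identity established, I would finish by applying Lemma~\ref{lem:conrestP} to the bonding $B_P(M|F_M,N|F_N)$, whose ground set is $F$: here $P=F_M\cap F_N$ is a set of clones in both $M|F_M$ and $N|F_N$ (a set of clones of $M$, resp.\ of $N$, remains one in any restriction containing it), the set $F$ is connected since it equals $B_T(M,N)|F$, we have $P\subsetneq F$ because $P\subsetneq F_M$, and $F\not\subseteq E(M|F_M)=F_M$ and $F\not\subseteq E(N|F_N)=F_N$ because $F\not\subseteq E(M)$ and $F\not\subseteq E(N)$. Lemma~\ref{lem:conrestP} then yields that $(M|F_M)|F_M=M|F_M$ and $(N|F_N)|F_N=N|F_N$ are connected, completing the proof. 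The step I expect to be the main obstacle is verifying the bonding identity $B_T(M,N)|F=B_P(M|F_M,N|F_N)$ — in particular, keeping careful track of the relabeling of $P$ to $\{s_i\}$ inside $N'$ and checking that the freely added points $q_j$ with $t_j\in T-P$ genuinely drop out of the restriction $H|(F\cup Q_F\cup S_F)$; once that is in place, the remainder is a direct appeal to Lemma~\ref{lem:conrestP} together with Corollary~\ref{cor:flatscomefromflats}.
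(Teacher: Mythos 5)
Your proof is correct, but it takes a genuinely different route from the paper's. The paper argues the mixed case ($F\not\subseteq E(M)$, $F\not\subseteq E(N)$) by contradiction: if $M|F_M$ were disconnected, the clone hypothesis on $P$ and $F\not\subseteq E(N)$ would produce a nontrivial separator $X$ of $M|F_M$ with $P\subseteq X\subsetneq F_M$, hence $X\cap T=P=F_N\cap T$, and then Lemma~\ref{lem:discflats} (with $Y=F_N$) forces $B_T(M,N)|F$ to be disconnected, a contradiction; the case $N|F_N$ follows by symmetry. You instead establish the identity $B_T(M,N)|F=B_P(M|F_M,N|F_N)$ by tracing through Lemma~\ref{lem:bflat} and the construction of $H$, and then read off connectedness of $M|F_M$ and $N|F_N$ directly from Lemma~\ref{lem:conrestP} applied to that smaller bonding. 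Your bonding identity is correct (the restriction of a principal extension $L+_Xe$ to a set $Y$ with $X\cup\{e\}\subseteq Y$ is $(L|(Y-e))+_Xe$, and the $q_j$ with $t_j\in T-P$ indeed drop out), and it is a clean structural fact not stated in the paper: it extends Lemma~\ref{lem:brest}, which treats restrictions to supersets of $T$, to restrictions to flats meeting $T$ properly, with the bonding set shrinking to $F\cap T$. What your route buys is that the lemma becomes a special case of the already-proved Lemma~\ref{lem:conrestP} rather than requiring a separate separator argument; what the paper's route buys is directness, since it invokes only Lemma~\ref{lem:discflats} and needs no new identity about bonding and restriction. Both proofs rely on Corollary~\ref{cor:flatscomefromflats} to know that $F_M$ and $F_N$ are flats, and your handling of the easy cases $F\subseteq E(M)$ and $F\subseteq E(N)$ via Lemma~\ref{lem:indeprest} matches the paper's one-line disposal of them.
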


\begin{figure}
  \centering
  \begin{tikzpicture}[scale=1.4]
   \tikzset{VertexV/.style = {circle, scale=0.8, fill=black!30,
        draw=black, thick}}

    \node[VertexV] (0) at (0:0) {};%
    \node[VertexV] (1) at (0:1) {};%
    \node[VertexV] (2) at (30:1) {};%
    \node[VertexV] (3) at (60:1) {};%
    \node[VertexV] (4) at (90:1) {};%
    \node[VertexV] (5) at (120:1) {};%
    \node[VertexV] (6) at (150:1) {};%
    \node[VertexV] (7) at (180:1) {};%
    \node[VertexV] (8) at (210:1) {};%
    \node[VertexV] (9) at (240:1) {};%
    \node[VertexV] (10) at (270:1) {};%
    \node[VertexV] (11) at (300:1) {};%
    \node[VertexV] (12) at (330:1) {};%

    \foreach \from/\to in
    {1/2,2/3,3/4,4/5,5/6,6/7,7/8,8/9,9/10,10/11,11/12,12/1,0/1,0/4,0/7,0/10}%
    \draw[thick](\from)--(\to);

        \foreach \from/\to in {1/2,2/3,3/4,0/1}%
    \draw[line width=1mm](\from)--(\to);

  \end{tikzpicture}
  \caption{A wheel with some spokes deleted.  The thick edges are the
    rim edges between two consecutive spokes along with one of those
    spokes.}\label{fig:modifiedwhirl}
\end{figure}

\begin{proof}
  The first assertions are immediate since $M$ and $N$ are
  restrictions of $B_T(M,N)$ by Lemma \ref{lem:indeprest}.  Now assume
  that $F\not\subseteq E(M)$ and $F\not\subseteq E(N)$, and, contrary
  to what we must show, that $M|F_M$ is disconnected.  The elements of
  $P$, which are clones in $M$, either are all coloops of $M|F_M$ or
  are all in the same connected component of $M|F_M$.  Now
  $F\not\subseteq E(N)$, so in either case we have
  $M|F_M=(M|X)\oplus (M|(F_M-X))$ for some set $X$ where
  $P\subseteq X\subsetneq F_M$.  Lemma \ref{lem:discflats} gives the
  contradiction that $B_T(M,N)|F$ is disconnected (as $Y$ in that
  lemma, use $F_N$).  Thus, $M|F_M$ is connected.  By symmetry,
  $N|F_N$ is connected.
\end{proof}

\begin{thm}\label{thm:bondmain2}
  Let $M$ and $N$ be positroids with no loops and let $E(M)\cap E(N)$
  be $T$.  Assume that
  \begin{itemize}
  \item[\emph{(1)}] $T$ is independent in both $M$ and $N$,
  \item[\emph{(2)}] $M|\cl_M(T)$ and $N|\cl_N(T)$ are connected, and
  \item[\emph{(3)}] there is a nonempty proper subset $P$ of $T$ for
    which
    \begin{itemize}
    \item[\emph{(3a)}] $P$ is a set of clones in both $M$ and $N$, and
    \item[\emph{(3b)}] if $F$ is a nonsingleton connected flat of
      either $M$ or $N$ with $F\cap (T-P)\ne\emptyset$, then
      $T\subseteq F$.
    \end{itemize}
  \end{itemize}
  If some positroid order for $M$ has some element in $P$ and some
  element in $T-P$ cyclically consecutive, and some positroid order
  for $N$ has some element in $P$ and some element in $T-P$ cyclically
  consecutive, then the free amalgam $B_T(M,N)$ is a positroid.
\end{thm}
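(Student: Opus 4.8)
The plan is to mimic the proof of Theorem~\ref{thm:bondmain1}: exhibit a linear order $\leq$ on $E(B_T(M,N))=E(M)\cup E(N)$ and check that it has the cyclic interval property, so that $B_T(M,N)$ is a positroid by Theorem~\ref{thm:char}; by Corollary~\ref{cor:famal} this matroid is the free amalgam, since $T$ is independent in both $M$ and $N$. First I would record some preliminary facts. Because $M|\cl_M(T)$ is connected with $|\cl_M(T)|\geq|T|\geq 2$, it has no coloops, so no element of $T$ is a coloop of $M$, and likewise none is a coloop of $N$. Since $P$ is a set of clones in both $M$ and $N$, it is a set of clones in $B_T(M,N)$ by the clone lemma preceding Lemma~\ref{lem:conrestP2}. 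Combining (3b) with (2), any two distinct elements of $T-P$ lie in exactly the same non-singleton connected flats of $M$, namely those containing $T$ (of which $\cl_M(T)$ is one); hence $T-P$ is a set of clones of $M$, likewise of $N$, and therefore of $B_T(M,N)$.

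Next I would build the order. Applying Corollary~\ref{cor:clone} with the pairwise disjoint sets of clones $X_1=P$ and $X_2=T-P$ --- whose cyclic-consecutiveness hypothesis is exactly what is assumed --- gives positroid orders $\leq_M$ for $M$ and $\leq_N$ for $N$ in each of which $P$, $T-P$, and $T$ are cyclic intervals. Using that any permutation fixing the complement of a set of clones is an automorphism, together with cyclic shifts and order duals, I would adjust $\leq_M$ and $\leq_N$ so that they agree on $T$, with $T$ a filter of $\leq_M$ and an ideal of $\leq_N$ and with $P$ immediately preceding $T-P$ within $T$; concatenating these gives a linear order $\leq$ in which the blocks $A:=E(M)-T$, $P$, $T-P$, $C:=E(N)-T$ occur in that cyclic order. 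In particular $E(M)$, $E(N)$, $T$, $P$, and $T-P$ are all cyclic intervals of $\leq$, and $\leq_M$, $\leq_N$ are the restrictions of $\leq$ to $E(M)$, $E(N)$.

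The core of the argument is a structural dichotomy: every connected flat $F$ of $B_T(M,N)$ with $|F|\geq 2$ satisfies $F\cap T\in\{\emptyset,P,T\}$. Indeed $F\cap P\in\{\emptyset,P\}$ and $F\cap(T-P)\in\{\emptyset,T-P\}$ by the clone facts, so $F\cap T\in\{\emptyset,P,T-P,T\}$; the value $T-P$ is excluded using (3b) when $F\subseteq E(M)$ or $F\subseteq E(N)$, and using Lemma~\ref{lem:discflats} applied to separators of $M|F_M$ and $N|F_N$ obtained from the clone set $T-P$ (which lies in a single component of $M|F_M$ or consists of coloops there, and symmetrically for $N$) otherwise. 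Furthermore: if $F\cap T=\emptyset$ then $F$ is a connected flat of $M$ or of $N$ (Corollaries~\ref{cor:flatscomefromflats} and~\ref{cor:discflat}); if $F\cap T=P$ then $F_M$ and $F_N$ are connected flats of $M$ and $N$ by Lemma~\ref{lem:conrestP2}; and if $T\subseteq F$ then $F_M$ is a flat of $M$ containing the connected set $\cl_M(T)$, and a further application of Lemma~\ref{lem:discflats} forces $M|F_M$ --- and likewise $N|F_N$ --- to be connected. With this in hand, the verification of the cyclic interval property when $F\cap T=\emptyset$ or $T\subseteq F$ runs as in Theorem~\ref{thm:bondmain1}: the relevant contraction is $B_T(M/F,N)$ or $(M/F_M)\oplus(N/F_N)$ by Lemmas~\ref{lem:contrinB} and~\ref{lem:contP}, and each of its components with at least two elements either lies wholly in $E(M)$ or $E(N)$ (where the ambient half, a cyclic interval of $\leq$, serves), or is a component of $M/F_M$ (or $N/F_N$) disjoint from $T$, in which case a cyclic interval of $\leq_M$ containing $E(K)$ and avoiding $F_M$, intersected with the block $A$ (resp.\ $C$), is the required cyclic interval of $\leq$, or is a ``bonded'' component, handled exactly as there.

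The hard part will be the case $F\cap T=P$. Here $B_T(M,N)/F=B_{T-P}(M/F_M,N/F_N)$ by Lemmas~\ref{lem:contractinP} and~\ref{lem:contrinB}, its components are described by Lemma~\ref{lem:bds}, and the only delicate ones are those meeting $T-P$. For such a component $K$ I would invoke Lemma~\ref{lem:connflcontr}: since $E(K)$ itself cannot be a connected flat of $B_T(M,N)$ (that would give $E(K)\cap T=T-P$, against the dichotomy), $E(K)\cup F$ is a connected flat of $B_T(M,N)$, and it contains $T$, so it lies in the already-settled case; unwinding $B_T(M,N)|(E(K)\cup F)/F=K$ together with the positroid orders $\leq_M$ and $\leq_N$ (whose restrictions to $E(K)$ are then controlled, the block $P$ being confined inside $F$) should produce a cyclic interval of $\leq$ containing $E(K)$ and disjoint from $F$. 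I expect this reduction --- replacing an unwieldy component of the $F\cap T=P$ contraction by the connected flat $E(K)\cup F$, which contains all of $T$ --- to be where almost all of the care lies, with the remaining cases being the arguments of Theorem~\ref{thm:bondmain1} with $A$ and $C$ playing the roles that $E(M)-T$ and $E(N)-T$ play there.
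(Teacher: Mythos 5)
Your gluing order is wrong, and this is a genuine gap rather than a detail. You take both $\leq_M$ and $\leq_N$ to have $T$ ordered as $P$-then-$(T-P)$ and glue so that the cyclic block order is $A,\,P,\,T-P,\,C$ (equivalently, you make $T=P\cup(T-P)$ a cyclic interval of $\leq$). The paper does something different: it takes $\leq_M$ with $T-P$ a filter and $\leq_N$ with $T-P$ an ideal, and glues to obtain the cyclic block order $A,\,P,\,C,\,T-P$. In the paper's $\leq$, the set $T$ is \emph{not} a cyclic interval; instead $P$ and $T-P$ sit on opposite sides of $C$. That placement is precisely what makes the case $F\cap T=P$, $F\not\subseteq E(M)$, $F\not\subseteq E(N)$ go through: one then has $I^M=(T-P)\cup\{e_1,\dots,e_j\}$ (wrapping around $\leq_M$) and $I^N=(T-P)\cup\{f_{j'},\dots,f_n\}$ (wrapping around $\leq_N$), and $I^M\cup I^N$ is a cyclic interval of $\leq$ because the tail of $C$ abuts $T-P$ and $T-P$ wraps to the head of $A$. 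In your order the two chunks $\{e_1,\dots,e_j\}$ and $(T-P)\cup\{f_1,\dots,f_{j'}\}$ are separated by $P$ on one side and by the rest of $C$ on the other, so they cannot be gathered into a single cyclic interval avoiding $F\supseteq P$.

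This is not just a failure of the proof strategy; your $\leq$ is in general not a positroid order for $B_T(M,N)$. Take $M$ to be the rank-$3$ parallel connection of two copies of $U_{2,3}$ at $p$, on ground set $\{a_1,a_2,a_3,p,t\}$ with $3$-point lines $\{a_1,p,t\}$ and $\{a_2,a_3,p\}$; take $N$ analogously on $\{c_1,c_2,c_3,p,t\}$ with lines $\{c_1,p,t\}$ and $\{c_2,c_3,p\}$; let $T=\{p,t\}$ and $P=\{p\}$. All the hypotheses of the theorem hold. Your construction produces (for suitable intra-block orderings) $\leq$ given by $a_1<a_2<a_3<p<t<c_1<c_2<c_3$, whose restrictions to $E(M)$ and $E(N)$ are indeed positroid orders. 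But $F=\{a_2,a_3,p,c_2,c_3\}$ is a flat of $B_T(M,N)$ with $B_T(M,N)|F=P(U_{2,3},U_{2,3})$ connected and $B_T(M,N)/F=U_{1,3}$ on $\{a_1,t,c_1\}$ connected, so by Corollary~\ref{cor:convinterval} any positroid order must make $F$ a cyclic interval; in your order $F$ sits at positions $\{2,3,4,7,8\}$ and is not one. The paper's order $a_1<a_2<a_3<p<c_2<c_3<c_1<t$ does make $F$ an interval. Since the gluing is wrong, the subsequent reduction you sketch via Lemma~\ref{lem:connflcontr} (passing to $E(K)\cup F$) cannot rescue the argument: no cyclic interval of your $\leq$ does the job. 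The fix is exactly the paper's: insist only that $\leq_M|_T$ and $\leq_N|_T$ agree as \emph{cyclic} orders, and glue so that the block $T-P$ wraps around past $C$ rather than sitting between $P$ and $C$.
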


\begin{proof}
  Let $P=\{p_1,p_2,\ldots,p_h\}$ and $T-P=\{t_1,t_2,\ldots,t_k\}$.
  Assumption (3b) implies that $T-P$ is a set of clones in both $M$
  and $N$.  By that observation, assumption (3a), Lemma
  \ref{lem:autclone}, and the hypothesis in the last sentence of the
  theorem, $p_h$ and $t_1$ are cyclically consecutive in some
  positroid order for $M$, as are $t_k$ and $p_1$ in some positroid
  order for $N$.  With $P$ and $T-P$ being sets of clones in both $M$
  and $N$, Lemma \ref{lem:autclone}, Corollary \ref{cor:clone}, and
  the observation that the order dual of a positroid order is a
  positroid order, we can assume that the following linear orders are
  positroid orders for $M$ and $N$, respectively:
  $$e_1<_Me_2<_M\cdots<_M e_m<_Mp_1<_Mp_2<_M\cdots<_Mp_h
  <_Mt_1<_Mt_2<_M\cdots <_Mt_k$$ 
  and
  $$t_1<_Nt_2<_N\cdots
  <_Nt_k<_Np_1<_Np_2<_N\cdots<_Np_h<_Nf_1<_Nf_2<_N\cdots<_N f_n.$$ The
  linear order
  $$e_1<e_2<\cdots< e_m<p_1<p_2<\cdots<p_h<f_1<f_2<\cdots<
  f_n<t_1<t_2<\cdots <t_k$$ has both $\leq_M$ and a shift of $\leq_N$
  as the induced orders on $E(M)$ and $E(N)$, respectively.  We claim
  that the linear order $\leq$ on $E(B_T(M,N))$ satisfies the cyclic
  interval property, so $B_T(M,N)$ is a positroid.  Note that
  observations (I$_1$) and (I$_2$) from the proof of Theorem
  \ref{thm:bondmain1} apply to the linear order $<$ just defined on
  $E(B_T(M,N))$.

  Let $F$ be a connected flat of $B_T(M,N)$ with $|F|\geq 2$. If $F$
  contained some but not all elements of $T-P$, then, by assumption
  (3b), each $t_i \in F\cap (T-P)$ would be a coloop of $M|F_M$ and of
  $N|F_N$, so setting $X=Y=\{t_i\}$ in Lemma \ref{lem:discflats} would
  give the contradiction that $B_T(M,N)|F$ is disconnected.  Thus, by
  the hypotheses, there are three options: $F\cap T=\emptyset$,
  $T\subsetneq F$, or $F\cap T=P$.
  
  The cyclic interval property holds when $T\subsetneq F$ by the same
  argument as in the proof of Theorem \ref{thm:bondmain1} once we make
  the following observations.  Since $F$ contains the connected flats
  $\cl_M(T)$ of $M$ and $\cl_N(T)$ of $N$, there are connected
  components $X$ of $M|F_M$ and $Y$ of $N|F_N$ that contain $T$.
  Since $B_T(M,N)|F$, which is $B_T(F_M,F_N)$, is connected, Lemma
  \ref{lem:bds} implies that $F_M=X$ and $F_N=Y$, so $F_M$ is a
  connected flat of $M$, as is $F_N$ in $N$.  We turn to the other two
  cases.
      
  Assume that $F\cap T=\emptyset$.  As before, $F$ is a connected flat
  of either $M$ or $N$, say of $M$.  Thus, $B_T(M,N)/F=B_T(M/F,N)$.
  It follows from Lemma \ref{lem:connflcontr} that assumption (3b)
  also holds for $M/F$.  Also, since $\cl_M(T)$ is connected, no
  element of $T$ is a coloop of $M$, so no element of $T$ is a coloop
  of $M/F$.  Thus, all elements of $T$ are in the same connected
  component, say $X$, of $M/F$.  It follows that any connected
  component $K$ of $B_T(M,N)/F$ with $|K|\geq 2$ falls under one of
  cases (a)--(c) in the proof of Theorem \ref{thm:bondmain1}.  The
  cyclic interval property holds in each case by the argument given in
  that proof.
  
  Now assume that $F\cap T=P$.  We first treat the case with
  $F\not\subseteq E(M)$ and $F\not\subseteq E(N)$, in which case, by
  Lemma \ref{lem:conrestP2}, the restrictions $M|F_M$ and $N|F_N$ are
  connected.  By applying Lemma \ref{lem:contractinP} and then Lemma
  \ref{lem:contrinB}, we have
  \begin{align*}
    B_T(M,N)/F
    & =(B_{T-P}(M/P,N/P))/(F-T)\\
    & =B_{T-P}(M/F_M,N/F_N).
  \end{align*}
  Since no element of $T$ is a coloop of $M$, no element of $T-P$ is a
  coloop of $M/F_M$.  Also, $T-P$ is a set of clones of $M$ and so of
  $M/F_M$.  Therefore, some connected component $X$ of $M/F_M$ has
  $T-P\subseteq X$.  Similarly, some connected component $Y$ of
  $N/F_N$ has $T-P\subseteq Y$.  Thus, any connected component of
  $B_{T-P}(M,N)/F$ is either
  \begin{itemize}
  \item[(a)] a connected component of $M/F_M$ or of $N/F_N$ that is
    disjoint from $T-P$, or
  \item[(b)] a connected component of the bonding
    $B_{T-P}(M/F_M|X,N/F_N|Y)$.
  \end{itemize}
  For case (a), say $K$ is a connected component of $M/F_M$; then any
  cyclic interval in $\leq_M$ that contains $K$ and is disjoint from
  $F_M$ is a cyclic interval in $\leq$, so the cyclic interval
  property holds.  For a connected component $K$ that falls under case
  (b), note that the union of a cyclic interval $I$ in $\leq_M$ for
  which $I\cap T=T-P$ and a cyclic interval $J$ in $\leq_N$ for which
  $J\cap T=T-P$ is a cyclic interval in $\leq$.  Choosing such $I$ and
  $J$ for which $X\subseteq I$, $Y\subseteq J$, and
  $F\cap (I\cup J)=\emptyset$ shows that the cyclic interval property
  holds in case (b).

  Finally, to complete the case with $F\cap T=P$, we can assume, by
  symmetry, that $F\subseteq E(M)$, so $F$ is a connected flat of $M$.
  Now $B_T(M,N)/P=B_{T-P}(M/P,N/P)$ by Lemma \ref{lem:contractinP}, so
  $$B_T(M,N)/F =B_{T-P}(M/F,N/P).$$  As above, all elements of
  $T-P$ are in the same connected component of $M/F$, say $X$.  Thus,
  any connected component $K$ of $B_T(M,N)/F$ falls under one of three
  cases:
  \begin{itemize}
  \item[(a)] $K\subseteq E(N)-P$,
  \item[(b)] $K$ is a connected component of $M/F$ with
    $K\cap (T-P) = \emptyset$, or
  \item[(c)] $K$ is a connected component of $B_{T-P}(M/F|X,N/P|Y)$
    for some set $Y$ for which $T-P\subseteq Y\subseteq E(N)-P$.
  \end{itemize}
  The cyclic interval property holds in case (a) since $E(N)-P$ is a
  cyclic interval in $\leq$.  The argument in case (b) is exactly as
  for case (a) in the previous paragraph.  In case (c), $X$ is a
  subset of a cyclic interval $I$ in $\leq_M$ with $I\cap F=\emptyset$
  and so $I\cap T=T-P$.  Therefore $I\cup (E(N)-P)$ is a cyclic
  interval in $\leq$, and it shows that the cyclic interval property
  holds in this case, thereby completing the proof of the theorem.
\end{proof}

\subsection{Further properties of bonding}
It may be possible to prove other results in the spirit of Theorems
\ref{thm:bondmain1} and \ref{thm:bondmain2}.  In this section, we
treat some results about bonding that may be useful for the further
development of this operation and its applications.

The connected components of bondings play crucial roles above.  Even
if $M$ and $N$ are connected, $B_T(M,N)$ might be disconnected; see
Figure \ref{fig:discbond}.  The next lemma relates the connected
components of $M$ and $N$ to those of $B_T(M,N)$ under certain
hypotheses.

\begin{lemma}\label{lem:compa1}
  Let $K$ be a connected component of $M$. If $T\subseteq K$ and $T$
  is independent in $N$, then $K$ is contained in a connected
  component of $B_T(M,N)$, say $X$, and $X_M=K$.
\end{lemma}

\begin{proof}
  We have $B_T(M,N)|E(M)=M$ by Lemma \ref{lem:indeprest} since $T$ is
  independent in $N$.  Thus, any two elements of $K$ are contained in
  a circuit of $B_T(M,N)$, and so are in the same connected component,
  say $X$, of $B_T(M,N)$.  Lemma \ref{lem:bds} gives $X_M=K$.
\end{proof}

The next lemma treats a partial converse of the first assertion in
Corollary \ref{cor:flatscomefromflats}.  Recall that $(X,Y)$ is a
\emph{modular pair} in a matroid $M$ if
$r(X)+r(Y)=r(X\cup Y)+r(X\cap Y)$.  When $Y$ is independent, this
equality can be rewritten as $r(X\cup Y)=r(X)+|Y-X|$.

\begin{lemma}\label{lem:flatunion}
  Assume that $T$ is independent in both $M$ and $N$.  Fix
  $F\subseteq E(M)\cup E(N)$ where $F_M$ is a flat of $M$ and $F_N$ is
  a flat of $N$.  If $(F_M,T)$ is a modular pair in $M$ and $(F_N,T)$
  is a modular pair in $N$, then $F$ is a flat of $B_T(M,N)$.  In
  particular, if $|T-F|\leq 1$, then $F$ is a flat of $B_T(M,N)$.
\end{lemma}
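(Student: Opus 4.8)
The plan is to prove the (clearly equivalent) statement that $F$ equals its own closure $\bar F:=\cl_{B_T(M,N)}(F)$. Since $T$ is independent in both $M$ and $N$, Corollary~\ref{cor:famal} makes $B_T(M,N)$ the free amalgam of $M$ and $N$; hence, by Corollary~\ref{cor:flatscomefromflats}, the sets $\bar F_M$ and $\bar F_N$ are flats of $M$ and $N$, and, by Lemma~\ref{lem:flatrankinB}, $r(\bar F)=r_M(\bar F_M)+r_N(\bar F_N)-|\bar F\cap T|$. Because $F_M\subseteq\bar F_M$ and $F_N\subseteq\bar F_N$ are nested flats and $F\cap T\subseteq\bar F\cap T$, the quantities
$$a=r_M(\bar F_M)-r_M(F_M),\qquad b=r_N(\bar F_N)-r_N(F_N),\qquad c=|\bar F\cap T|-|F\cap T|$$
are all nonnegative, and it suffices to show they are all $0$: then $\bar F_M=F_M$ and $\bar F_N=F_N$ (a flat cannot properly contain a flat of the same rank), whence $\bar F=\bar F_M\cup\bar F_N=F_M\cup F_N=F$, the last equality holding because $F\cap T\subseteq F_M$.

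First I would establish the inequality $c\ge a+b$. Since $T$ is independent in $M$ and in $N$, both $M$ and $N$ are restrictions of $B_T(M,N)$ by Lemma~\ref{lem:indeprest}. I would choose a basis $B_M$ of $M|F_M$ and a basis $B_N$ of $N|F_N$ with $F_N\cap T\subseteq B_N$, which is possible as $F_N\cap T$ is independent in $N$. Using that, for a restriction, the closure of a subset of its ground set is the ambient closure intersected with that ground set, one checks that $B_M\cup(B_N-T)$ spans $F_M$, hence $F_N\cap T$, hence $B_N$, hence $F_N$, hence $F$, inside $B_T(M,N)$; since $|B_M\cup(B_N-T)|=r_M(F_M)+r_N(F_N)-|F\cap T|$, this gives $r(F)\le r_M(F_M)+r_N(F_N)-|F\cap T|$. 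As $r(\bar F)=r(F)$, combining with the displayed formula for $r(\bar F)$ yields $a+b-c\le 0$.

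The heart of the argument, and the only place the modular hypotheses enter, is the pair of inequalities $c\le a$ and $c\le b$. For the hypothesis on $M$: since $T$ is independent, the equality $r_M(F_M)+r_M(T)=r_M(F_M\cup T)+r_M(F_M\cap T)$ rearranges to $r_{M/F_M}(T-F_M)=|T-F_M|$, i.e., $T-F_M$ is an independent set of the contraction $M/F_M$. Now $\bar F\cap T=\bar F_M\cap T$ and $F\cap T=F_M\cap T$ because $T\subseteq E(M)$, and $F_M\subseteq\bar F_M$, so $c=|(T-F_M)\cap\bar F_M|$; this is an independent set of $M/F_M$ contained in the flat $\bar F_M-F_M$ of $M/F_M$, which has rank $r_M(\bar F_M)-r_M(F_M)=a$, so $c\le a$. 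The same argument with $N$ gives $c\le b$. Then $a+b\le c\le a$ forces $b=0$ and $a+b\le c\le b$ forces $a=0$, and then $c\le a=0$ gives $c=0$, finishing the proof. For the ``in particular'' clause, I would observe that if $T\subseteq F$ both modular-pair equalities are trivial, while if $T-F=\{t\}$ they assert exactly that $t\notin\cl_M(F_M)=F_M$ and $t\notin\cl_N(F_N)=F_N$, which hold since $t\notin F$. The step I expect to require the most care is the spanning-set bookkeeping in the second paragraph; the contraction reformulation of the modular hypothesis in the third paragraph is the substantive idea, and it is short.
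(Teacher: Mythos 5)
Your proof is correct, but it takes a genuinely different route from the paper's. The paper works directly inside the auxiliary matroid $H$ used to construct $B_T(M,N)$: it shows $F\cup S_h\cup Q$ is a flat of $H$ by a contradiction argument that pits a hypothetical circuit against the rank equality
$r_H(F\cup T\cup S)=r_H(F\cup S_h)+2(k-h)$
furnished by the modular-pair hypotheses. You instead work entirely at the level of $B_T(M,N)$ itself, showing $F=\cl_B(F)$ by a rank-counting argument: you bound $r_B(F)$ from above by exhibiting an explicit spanning set of size $r_M(F_M)+r_N(F_N)-|F\cap T|$, compare with the flat rank formula of Lemma~\ref{lem:flatrankinB} applied to $\bar F=\cl_B(F)$, and then use the modular-pair hypotheses in the reformulated shape ``$T-F_M$ is independent in $M/F_M$'' to bound the increments $a,b,c$ against one another and conclude they all vanish. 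The tradeoff: the paper's proof is self-contained in the bonding machinery (it needs only Lemma~\ref{lem:bflat} and the raw construction), while yours leans on the free-amalgam consequences (Corollaries~\ref{cor:famal} and~\ref{cor:flatscomefromflats} and Lemma~\ref{lem:flatrankinB}) but rewards the reader with a cleaner conceptual picture of where modularity enters, namely as independence of $T-F_M$ in the contraction. I checked the bookkeeping in your spanning-set argument (disjointness of $B_M$ and $B_N-T$, and the chain $\cl_B(B_M\cup(B_N-T))\supseteq F_M\supseteq F_N\cap T$, then $\supseteq B_N$, then $\supseteq F_N$) and the identity $c=|(T-F_M)\cap\bar F_M|$, and both are sound; the ``in particular'' clause is also handled correctly.
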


\begin{proof}
  Recall that $q_i$, for each $i\in[k]$, is added freely to the line
  $\cl_{M\oplus N'}(\{t_i,s_i\})$ of $M\oplus N'$ to form $H$.  We may
  assume that $F\cap T$ is $T_h=\{t_1,t_2,\ldots,t_h\}$, where $h=0$
  and $T_0=\emptyset$ if $F\cap T=\emptyset$.  Set
  $S_h=\{s_1,s_2,\ldots,s_h\}$ and $Q_h=\{q_1,q_2,\ldots,q_h\}$, which
  are empty if $h=0$.  Thus, $F\cup S_h\cup Q_h$ is a flat of $H$.
  Let $F_{N'}=(F\cap E(N'))\cup S_h$. By the two assumed modular
  pairs,
  \begin{align*}
    r_H(F\cup T\cup S)
    & = r_M(F_M\cup T)+r_{N'}(F_{N'}\cup S)\\
    & = r_M(F_M)+|T-T_h|+r_{N'}(F_{N'})+|S-S_h|\\
    & = r_H(F\cup S_h)+2(k-h).
    \end{align*}
  We claim that $F\cup S_h\cup Q$ is a flat of $H$.  Assume instead
  that $F\cup S_h\cup Q$ is not a flat of $H$, so for some
  $e\not \in F\cup S_h\cup Q$, there is a circuit $C$ with
  $e\in C\subseteq F\cup S_h\cup Q\cup e$.  Let
  $I=\{i\,:\,q_i\in C\cap(Q-Q_h)\}$ and $J=[k]-([h]\cup I)$.  Now
  $I\ne\emptyset$ since $F\cup S_h\cup Q_h$ is a flat of $H$.  If
  $i\in I$, then $\{t_i,s_i\}\subseteq \cl_H(C)$, so
  $$\cl_H(F\cup S_h\cup C\cup \{t_j,s_j\,:j\in J\})=\cl_H(F\cup T\cup
  S).$$ The union of $\{q_i\,:\,i\in I\}\cup \{ t_j,s_j\,:j\in J\}$
  and a basis of $F\cup S_h$ spans $F\cup T\cup S$, so
  $r_H(F\cup T\cup S)\leq r_H(F\cup S_h)+|I|+2|J|$.  However,
  $|I|+2|J|<2(k-h)$, contrary to what we just showed.  Thus,
  $F\cup S_h\cup Q$ is a flat of $H$, and so $F$ is a flat of
  $B_T(M,N)$.
\end{proof}

The set $F=\{4,6,9,11\}$ in the example in Figure \ref{fig:bondrev}
shows that the hypothesis about modular pairs in Lemma
\ref{lem:flatunion} is needed.  In the proofs of the corollaries
below, we will use two observations: (1) if $(X,Y)$ is a modular pair
in $M$, then so is $(\cl_M(X),Y)$, and (2) if, in addition, $Y$ is
independent, then $X\cap Y=\cl_M(X)\cap Y$.

\begin{cor}\label{cor:closureB}
  Assume that $T$ is independent in both $M$ and $N$, and
  $X\subseteq E(M)\cup E(N)$. If $(X_M,T)$ is a modular pair in $M$
  and $(X_N,T)$ is a modular pair in $N$, then the closure $\cl_B(X)$
  of $X$ in $B_T(M,N)$ is $\cl_M(X_M)\cup\cl_N(X_N)$.
\end{cor}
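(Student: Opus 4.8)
The plan is to show that the set
$$F \;:=\; \cl_M(X_M)\cup\cl_N(X_N)$$
is exactly $\cl_B(X)$, where I abbreviate $B = B_T(M,N)$, by establishing the two inclusions $\cl_B(X)\subseteq F$ and $F\subseteq \cl_B(X)$. The first of these reduces to checking that $F$ is a flat of $B$ that contains $X$, and for that I intend to apply Lemma \ref{lem:flatunion}; the second will come quickly from the fact that $M$ and $N$ are restrictions of $B$.

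First I would do the bookkeeping needed to apply Lemma \ref{lem:flatunion}. Since $T\subseteq E(M)\cap E(N)$, the sets $X_M\cap T$, $X\cap T$, and $X_N\cap T$ all coincide; call this common set $P$. By observation (2) before the corollary (applied with $Y=T$, which is independent), $\cl_M(X_M)\cap T = X_M\cap T = P$ and $\cl_N(X_N)\cap T = P$, and since $P\subseteq X_M\subseteq\cl_M(X_M)$ we obtain
$$F_M = F\cap E(M) = \cl_M(X_M)\cup\bigl(\cl_N(X_N)\cap T\bigr) = \cl_M(X_M),$$
and symmetrically $F_N=\cl_N(X_N)$. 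Thus $F_M$ is a flat of $M$ and $F_N$ is a flat of $N$, and by observation (1) the hypothesis that $(X_M,T)$ is a modular pair in $M$ upgrades to $(\cl_M(X_M),T)=(F_M,T)$ being a modular pair in $M$, and likewise $(F_N,T)$ is a modular pair in $N$. Lemma \ref{lem:flatunion} then gives that $F$ is a flat of $B_T(M,N)$. Since $X = X_M\cup X_N\subseteq\cl_M(X_M)\cup\cl_N(X_N)=F$ and $F$ is a flat, $\cl_B(X)\subseteq F$. For the reverse inclusion, apply Lemma \ref{lem:indeprest} to $M$ (using that $T$ is independent in $N$) and, via $B_T(M,N)=B_T(N,M)$, to $N$ (using that $T$ is independent in $M$): both $M$ and $N$ are restrictions of $B_T(M,N)$. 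Hence $\cl_M(X_M)=\cl_B(X_M)\cap E(M)\subseteq\cl_B(X)$ and $\cl_N(X_N)=\cl_B(X_N)\cap E(N)\subseteq\cl_B(X)$, so $F\subseteq\cl_B(X)$. Combining the two inclusions yields $\cl_B(X)=F=\cl_M(X_M)\cup\cl_N(X_N)$.

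The only delicate point is the identification $F_M=\cl_M(X_M)$ and its mirror $F_N=\cl_N(X_N)$, where one must use the independence of $T$ (together with the modular-pair hypothesis) to replace $X_M\cap T$ by $\cl_M(X_M)\cap T$ before intersecting $F$ with $E(M)$; without this, $F_M$ need not even be a flat of $M$. Once that identification is in place, the argument is a direct appeal to Lemmas \ref{lem:flatunion} and \ref{lem:indeprest}, and I do not anticipate any further obstacle.
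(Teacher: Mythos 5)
Your proof is correct and follows essentially the same route as the paper: one inclusion comes from $B|E(M)=M$ and $B|E(N)=N$ (Lemma \ref{lem:indeprest}), and the other from applying Lemma \ref{lem:flatunion} to $\cl_M(X_M)\cup\cl_N(X_N)$ after upgrading the modular pairs via observations (1) and (2). The one difference is cosmetic but welcome: you spell out explicitly that $F_M=\cl_M(X_M)$ (by checking $\cl_N(X_N)\cap T=X\cap T\subseteq\cl_M(X_M)$), a verification the paper leaves implicit when it invokes Lemma \ref{lem:flatunion}.
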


\begin{proof}
  We shorten $B_T(M,N)$ to $B$.  Now $\cl_M(X_M)\subseteq \cl_B(X)$
  since $B|E(M)$ is $M$. Likewise, $\cl_N(X_N)\subseteq \cl_B(X)$.
  Thus, $\cl_M(X_M)\cup\cl_N(X_N) \subseteq \cl_B(X)$.  Since
  $(X_M,T)$ is a modular pair in $M$, so is $(\cl_M(X_M),T)$, and
  $\cl_M(X_M)\cap T = X\cap T$.  Likewise, $(\cl_N(X_N),T)$ is a
  modular pair in $N$ and $\cl_N(X_N)\cap T = X\cap T$.  Thus, by
  Lemma \ref{lem:flatunion}, the union $\cl_M(X_M)\cup\cl_N(X_N)$ is a
  flat of $B$.  Since this flat of $B$ contains $X$, we have
  $\cl_B(X) \subseteq \cl_M(X_M)\cup\cl_N(X_N)$.  With the inclusion
  proven above, equality follows.
\end{proof}

\begin{cor}
  Assume that $T$ is independent in both $M$ and $N$, and
  $X\subseteq E(M)\cup E(N)$.  If $(X_M,T)$ is a modular pair in $M$
  and $(X_N,T)$ is a modular pair in $N$, then the rank $r_B(X)$ of
  $X$ in $B_T(M,N)$ is $r_M(X_M)+r_N(X_N)-|X\cap T|$.
\end{cor}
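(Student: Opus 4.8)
The plan is to combine Corollary \ref{cor:closureB} with Lemma \ref{lem:flatrankinB}. Write $B$ for $B_T(M,N)$. First I would pass from $X$ to its closure: since rank is constant on closures, $r_B(X)=r_B(\cl_B(X))$. Because $(X_M,T)$ is a modular pair in $M$ and $(X_N,T)$ is a modular pair in $N$, Corollary \ref{cor:closureB} applies and gives $\cl_B(X)=F$, where we set $F:=\cl_M(X_M)\cup\cl_N(X_N)$; in particular $F$ is a flat of $B$. So the task reduces to computing $r_B(F)$, and Lemma \ref{lem:flatrankinB} evaluates the rank of any flat of $B$ in terms of its $M$-part, its $N$-part, and its intersection with $T$.

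The next step is to identify those three pieces of $F$. Since $\cl_M(X_M)\subseteq E(M)$ while $\cl_N(X_N)\cap E(M)\subseteq E(N)\cap E(M)=T$, we get $F_M=\cl_M(X_M)\cup\bigl(\cl_N(X_N)\cap T\bigr)$. Here I would invoke the two observations recorded just before Corollary \ref{cor:closureB}: since $(X_N,T)$ is a modular pair in $N$ with $T$ independent, $\cl_N(X_N)\cap T=X_N\cap T=X\cap T$, and this set is contained in $X_M\subseteq\cl_M(X_M)$, so $F_M=\cl_M(X_M)$. Symmetrically, $F_N=\cl_N(X_N)$, and $F\cap T=X\cap T$.

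Finally, applying Lemma \ref{lem:flatrankinB} to the flat $F$ of $B$ yields
$$r_B(X)=r_B(F)=r_M(F_M)+r_N(F_N)-|F\cap T|=r_M\bigl(\cl_M(X_M)\bigr)+r_N\bigl(\cl_N(X_N)\bigr)-|X\cap T|=r_M(X_M)+r_N(X_N)-|X\cap T|,$$
which is the claimed formula. There is essentially no genuine obstacle here; the only point that requires care is verifying that $F_M$ and $F_N$ are exactly $\cl_M(X_M)$ and $\cl_N(X_N)$ rather than something larger, which is precisely where the modular-pair hypotheses and the independence of $T$ are used.
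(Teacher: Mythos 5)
Your proof is correct and follows essentially the same route as the paper's: pass to $\cl_B(X)$, identify it as $\cl_M(X_M)\cup\cl_N(X_N)$ via Corollary \ref{cor:closureB}, and evaluate the rank of that flat with Lemma \ref{lem:flatrankinB}. You spell out the verification that $F_M=\cl_M(X_M)$, $F_N=\cl_N(X_N)$, and $F\cap T=X\cap T$ in more detail than the paper does, but that is the same observation the paper cites implicitly.
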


\begin{proof}
  We again shorten $B_T(M,N)$ to $B$.  Corollary \ref{cor:closureB},
  Lemma \ref{lem:flatrankinB}, and the equality
  $\bigl(\cl_M(X_M)\cup\cl_N(X_N)\bigr)\cap T=X\cap T$ give
  \begin{align*}
    r_B(X)
    & =r_B( \cl_B(X))\\
    & = r_B\bigl(\cl_M(X_M)\cup\cl_N(X_N)\bigr)\\
    &= r_M\bigl(\cl_M(X_M)\bigr)+r_N\bigl(\cl_N(X_N)\bigr)-|X\cap T|\\
    &= r_M\bigl(X_M\bigr)+r_N\bigl(X_N\bigr)-|X\cap T|. \qedhere
  \end{align*}
\end{proof}
  
\section{Some excluded minors for the class of positroids}\label{sec:exmin}

As \cite{blum,ARW} show, the class of positroids is minor-closed, that
is, every minor of a positroid is a positroid.  Thus, the class of
positroids is characterized by its excluded minors, which are the
matroids that are not positroids, but all of their proper minors are
positroids.  Blum \cite[Corollary 4.12]{blum} found the excluded
minors for the class of positroids of rank at most three.  In this
section, we use Theorem \ref{thm:char} and its corollaries to identify
infinitely many excluded minors for the class of positroids, focusing
on excluded minors of rank greater than three.  We do not identify all
excluded minors for the class of positroids; indeed, more are given in
Park \cite{Park}.  We start with the truncation to rank $4$ of the
positroid in Example \ref{rank5example}.  (The truncation to rank $3$
is treated in \cite{blum}.)

\begin{figure}
  \centering
  \begin{tikzpicture}[scale=1]
    \draw[thick,gray!60](0,0)--(0,2)--(1.5,3)--(1.5,1)--(0,0)
    --(-1.5,1)--(-1.5,3)--(0,2);%
    \draw[very thick](0,0.3)--(1.3,1.2);%
    \draw[very thick](0,0.3)--(0,1.7);%
    \draw[very thick](0.5,3.3)--(0,1.7);%
    \draw[very thick](0,1)--(-1.3,1.8);%
    \filldraw (0,0.3) node[above left =-1] {\small$a$} circle
    (2.75pt);%
    \filldraw (0,1) node[above right=-1] {\small$b$} circle (2.75pt);%
    \filldraw (1.3,1.2) node[above=1] {\small$t$} circle (2.75pt);%
    \filldraw (0.65,0.75) node[above=1] {\small$s$} circle (2.75pt);%
    \filldraw (0,1.7) node[right=1] {\small$c$} circle (2.75pt);%
    \filldraw (-1.3,1.8) node[above =1] {\small$p$} circle (2.75pt);%
    \filldraw (-0.65,1.4) node[above =1] {\small$q$} circle (2.75pt);%
    \filldraw (0.5,3.3) node[left =1] {\small$v$} circle (2.75pt);%
    \filldraw (0.25,2.5) node[left =1] {\small$u$} circle (2.75pt);%
    \node at (0,-0.5) {$M$};%
  \end{tikzpicture}
  \hspace{1cm}
  \begin{tikzpicture}[scale=1]
    \draw[very thick](0,2)--(0,0)--(2,0);%
    \draw[very thick](0,1)--(2,1);%
    \filldraw (0,0) node[above right =1] {\small$a$} circle (2.75pt);%
    \filldraw (0,1) node[above right=1] {\small$b$} circle (2.75pt);%
    \filldraw (2,0) node[above=1] {\small$t$} circle (2.75pt);%
    \filldraw (1,0) node[above=1] {\small$s$} circle (2.75pt);%
    \filldraw (-0.1,2) node[left=1] {\small$c$} circle (2.75pt);%
    \filldraw (2,1) node[above =1] {\small$p$} circle (2.75pt);%
    \filldraw (1,1) node[above =1] {\small$q$} circle (2.75pt);%
    \filldraw (0.1,2) node[right =1] {\small$v$} circle (2.75pt);%
    \node at (1,-1) {$M/u$};%
  \end{tikzpicture}
    \hspace{1cm}
  \begin{tikzpicture}[scale=1]
    \draw[very thick](1,2)--(0,0)--(-1,2);%
    \filldraw (-0.1,0) node[left =1] {\small$a$} circle (2.75pt);%
    \filldraw (0.1,0) node[right=1] {\small$b$} circle (2.75pt);%
    \filldraw (-1,2) node[left=1] {\small$t$} circle (2.75pt);%
    \filldraw (-0.5,1) node[left=1] {\small$s$} circle (2.75pt);%
    \filldraw (-0.1,2.3) node[left=1] {\small$u$} circle (2.75pt);%
    \filldraw (1,2) node[right =1] {\small$p$} circle (2.75pt);%
    \filldraw (0.5,1) node[right =1] {\small$q$} circle (2.75pt);%
    \filldraw (0.1,2.3) node[right =1] {\small$v$} circle (2.75pt);%
    \node at (0,-1) {$M/c$};%
  \end{tikzpicture}
  \caption{The rank-$4$ truncation of the positroid in Example
    \ref{rank5example} and two of its contractions.}
  \label{fig:ex1cont}
\end{figure}

\begin{example}\label{ex:ex1cont}
  Let $M$ be the rank-$4$ truncation of the positroid in Example
  \ref{rank5example}, labeled as in Figure \ref{fig:ex1cont}.  Let
  $A=\{a,s,t\}$, $B=\{b,p,q\}$, $C=\{c,u,v\}$, and $X=\{a,b,c\}$.  The
  proper connected flats $F$ of $M$ with $|F|\geq 2$ are those four
  sets along with $A\cup X$, $B\cup X$, and $C\cup X$.  For each such
  $F$, the contraction $M/F$ is connected, so by Corollary
  \ref{cor:convinterval}, if $M$ were a positroid, then there would be
  a linear order on $E(M)$ in which each of those flats is a cyclic
  interval.  By Lemma \ref{lem:noco}, there is no such linear order,
  so $M$ is not a positroid.

  The automorphism group of $M$ is transitive on $\{s,t,p,q,u,v\}$ and
  on $\{a,b,c\}$, so in order to show that all proper minors of $M$
  are positroids, it suffices to show that $M\del u$, $M/u$,
  $M\del c$, and $M/c$ are positroids.  We show that Corollary
  \ref{cor:intervalsimpliespositroid} applies to these minors.
  \begin{itemize}[leftmargin=*]
  \item The proper connected flats $F$ of $M\del u$ with $|F|\geq 2$
    are $A$, $B$, $X$, $A\cup X$, and $B\cup X$, which are intervals
    in the linear order $s<t<a<c<b<p<q<v$.
  \item The proper connected flats $F$ of $M\del c$ with $|F|\geq 2$
    are $A$, $B$, and $(C\cup X)-c$, which are intervals in the linear
    order $s<t<a<u<v<b<p<q$.
  \item The proper connected flats $F$ of $M/u$ with $|F|\geq 2$ are
    $A$, $B$, $C-u$, and $X\cup v$, which are intervals in the
    linear order $s<t<a<c<v<b<p<q$.
  \item The proper connected flats $F$ of $M/c$ with $|F|\geq 2$ are
    $A\cup b$, $B\cup a$, $C-c$, and $X-c$, which are intervals in the
    linear order $s<t<a<b<p<q<u<v$. 
  \end{itemize}
  Thus, $M$ is an excluded minor for the class of positroids.
  \hfill$\circ$
\end{example}

\begin{figure}
  \centering
  \begin{tikzpicture}[scale=0.55]
    \draw[thick, black!30](16,-0.5)--(20,0.5)--(20,6.25)--(16,5.25)--(12,6.25)
    --(12,0.5)--(16,-0.5);%
    \draw[thick, black!30](16,-0.5)--(16,5.25);%

    \draw[very thick](0,2)--(3,0)--(3,4)--(0,2);%
    \filldraw (3,2) node[right=1] {\small$f$} circle (5pt);%
    \filldraw (3,0) node[right=1] {\small$e$} circle (5pt);%
    \filldraw (1.5,1) node[below left] {\small$d$} circle (5pt);%
    \filldraw (0,2) node[left=1] {\small$c$} circle (5pt);%
    \filldraw (1.5,3) node[above left] {\small$b$} circle (5pt);%
    \filldraw (3,4) node[right=1] {\small$a$} circle (5pt);%

    \draw[very thick](6,4)--(9,2)--(6,0);%
    \filldraw (6,0) node[left =1] {\small$e$} circle (5pt);%
    \filldraw (7.5,1) node[below right] {\small$g$} circle (5pt);%
    \filldraw (9,2) node[right =1] {\small$h$} circle (5pt);%
    \filldraw (7.5,3) node[above right] {\small$i$} circle (5pt);%
    \filldraw (6,4) node[left =1] {\small$a$} circle (5pt);%

    \draw[very thick](16,0.5)--(16,4.5);%
    \draw[very thick](16,0.5)--(19,2.5)--(16,4.5);%
    \draw[very thick](16,0.5)--(13,2.5)--(16,4.5);%
    
    \filldraw (16,0.5) node[below left =-1] {\small$e$} circle (5pt);%
    \filldraw (16,2.1) node[right =1] {\small$f$} circle (5pt);%
    \filldraw (16,4.5) node[above left =-1] {\small$a$} circle (5pt);%
    
    \filldraw (14.5,1.5) node[below left] {\small$d$} circle (5pt);%
    \filldraw (14.5,3.5) node[above left] {\small$b$} circle (5pt);%
    \filldraw (13,2.5) node[left =1] {\small$c$} circle (5pt);%
    \filldraw (17.5,1.5) node[below right] {\small$g$} circle (5pt);%
    \filldraw (19,2.5) node[right =1] {\small$h$} circle (5pt);%
    \filldraw (17.5,3.5) node[above right] {\small$i$} circle (5pt);%

    \node at (1.5,-1.5) {$M$};%
    \node at (7.5,-1.5) {$N$};%
    \node at (16,-1.5) {$B_T(M,N)$};%
  \end{tikzpicture}
  \caption{Two rank-$3$ positroids $M$ and $N$, and their free amalgam
    $B_T(M,N)$, which is a rank-$4$ excluded minor for the class of
    positroids.}
  \label{fig:bondrevvar2}
\end{figure}

Part of the interest in the next example is to show that the free
amalgam of positroids, when the hypotheses of neither Theorem
\ref{thm:bondmain1} nor Theorem \ref{thm:bondmain2} hold, need not
yield a positroid; indeed, it may produce an excluded minor for the
class of positroids.

\begin{example}
  Consider the free amalgam $B=B_T(M,N)$ of the matroids $M$ and $N$
  shown in Figure \ref{fig:bondrevvar2}.  There is no positroid order
  for $B$ since, if there were, then by Corollary
  \ref{cor:convinterval}, the sets $\{a,b,c\}$, $\{c,d,e\}$,
  $\{e,g,h\}$, and $\{h,i,a\}$ would have to be cyclic intervals in
  this linear order, but that is impossible since $f$ would have to be
  in one of those cyclic intervals.  Thus, while $M$ and $N$ are
  positroids, their free amalgam is not.  To show that $B$ is an
  excluded minor for the class of positroids, by the symmetry of $B$,
  it suffices to show that $B\del x$ and $B/x$ are positroids for each
  $x\in\{a,b,c,f\}$.

  Each contraction $B/x$ with $x\in\{a,b,c,f\}$ is a parallel
  extension of a matroid that either has at most two proper nonempty
  cyclic flats or satisfies the hypothesis of Corollary
  \ref{cor:3cyc}, so $B/x$ is a positroid by Corollary
  \ref{cor:parcon}.  The deletion $B\del f$ is the whirl
  $\mathcal{W}^4$, which is a multi-path matroid, and $B\del b$ is a
  parallel connection of $\mathcal{W}^3$ and $U_{2,3}$, and so is a
  positroid. The deletion $B\del a$ is a positroid since its four
  proper connected flats $F$ with $|F|\geq 2$, namely, $\{c,d,e\}$,
  $\{e,g,h\}$, $\{f,b,c,d,e\}$, $\{e,g,h,i,f\}$, are cyclic intervals
  in the linear order $b<c<d<e<g<h<i<f$.  Lastly, for $B\del c$, the
  proper connected flats $\{h,i,a\}$, $\{e,g,h\}$, $\{a,b,d,f,e\}$,
  and $\{f,e, g,h,i,a\}$ are cyclic intervals in the linear order
  $a<b<d<f<e<g<h<i$, and for $F=\{a,f,e\}$, the only proper connected
  flat with $|F|\geq 2$ that is not a cyclic interval, the two
  connected components of $(B\del c)/F$, both of rank $1$, are
  $\{b,d\}$ and $\{g,h,i\}$, which, as needed, are cyclic intervals.
  \hfill$\circ$
\end{example}

Examples \ref{example:genK4}--\ref{ex:last} give infinite families of
excluded minors for the class of positroids, most with multiple
parameters.  Example \ref{example:genK4} includes $M(K_4)$ and Example
\ref{ex:freeextwhirlgen} includes free extensions of rank-$3$
truncations of whirls.

\begin{example}\label{example:genK4}
  Let $\mathcal{L}$ be
  $\bigl\{\{1,3,5\},\,\{2,3,4\},\,\{4,5,6\},\,\{1,2,6\}\bigr\}$, the
  set of $3$-point lines of the cycle matroid $M(K_4)$ as labeled in
  Figure \ref{fig:whirl}.  Let $X_1,X_2,\ldots,X_6$ be pairwise
  disjoint nonempty sets that satisfy the following conditions.  Let
  $x_i=|X_i|$.  The set $X_2$ is arbitrary.  Pick $X_1$, $X_3$, $X_4$,
  and $X_6$ so that $x_3+x_4=x_1+x_6$.  Let $r$ be $x_2+x_3+x_4$,
  which is also $x_1+x_2+x_6$.  We may assume that
  $x_1+x_3\leq x_4+x_6$.  Choose $X_5$ so that
  $x_1+x_3+x_5\leq r\leq x_4+x_5+x_6$.  Let $E=X_1\cup\cdots\cup X_6$,
  and let $\mathcal{Z}$ consist of $\emptyset$, $E$, and the four sets
  $X_i\cup X_j\cup X_k$ for $\{i,j,k\}\in\mathcal{L}$, and let the
  ranks of these sets be as given in the table below.
  \begin{center}
    \begin{tabular}{|c|c|c|c|c|c|}
      \hline
      $\emptyset$
      & $X_1\cup X_3\cup X_5$
      & $X_2\cup X_3\cup X_4$
      & $X_4\cup X_5\cup X_6$
      & $X_1\cup X_2\cup X_6$
      & $E$\\ \hline 
      $0$ &$x_1+x_3+x_5-1$ &$r-1$&$r-1$&$r-1$& $r$\\\hline
    \end{tabular}
  \end{center}
  Properties (Z0) and (Z1) in Theorem \ref{thm:cfaxioms} clearly hold.
  The only case of property (Z2) that is not immediate is when
  $X=X_1\cup X_3\cup X_5$ and $Y=E$, in which case we need
  $r(Y)-r(X)<|Y-X|$, that is, $r-(x_1+x_3+x_5-1)<x_2+x_4+x_6$, that
  is, $r+1<|E|$, which is clear.  Thus, property (Z2) holds.  Each
  instance of the inequality in property (Z3) for two incomparable
  cyclic flats other than $X_1\cup X_3\cup X_5$ is an inequality of
  the form $r+x_i\leq 2(r-1)$, i.e., $x_i\leq r-2$, which holds since
  the sum of $x_i$ and two other positive integers is at most $r$. The
  inequality required for property (Z3) in each case that involves
  $X_1\cup X_3\cup X_5$ has the form $r+x_i\leq r-1+x_1+x_3+x_5-1$
  with $i\in\{1,3,5\}$, i.e., $x_i\leq x_1+x_3+x_5-2$, which holds
  since $x_1,x_3,x_5\in\mathbb{N}$.  Thus, the cyclic flats and ranks
  given above define a matroid $M$ on $E$.

  We now show that $M$ is an excluded minor for the class of
  positroids.  By Lemma \ref{lem:cyclicminor}, the restriction to and
  contraction by $X_i\cup X_j\cup X_k$, for each
  $\{i,j,k\}\in\mathcal{L}$, is a uniform matroid of positive nullity,
  and so is connected.  If $M$ were a positroid, then each of these
  connected flats would be a cyclic interval in any positroid order
  for $M$; that is impossible by Lemma \ref{lem:noco}, so $M$ is not a
  positroid.  Each cyclic flat $X_i\cup X_j\cup X_k$ for
  $\{i,j,k\}\in\mathcal{L}-\{\{4,5,6\}\}$ is a circuit
  ($X_4\cup X_5\cup X_6$ might be a circuit) and each cyclic flat
  $X_i\cup X_j\cup X_k$ for $\{i,j,k\}\in\mathcal{L}-\{\{1,3,5\}\}$ is
  a hyperplane ($X_1\cup X_3\cup X_5$ might be a hyperplane).  Thus,
  each element $e$ is in either one or two cyclic flats that are
  circuits, and not in either one or two cyclic flats that are
  hyperplanes.  So $M\del e$ either has only four cyclic flats or the
  hypotheses of Corollary \ref{cor:3cyc} apply, and likewise for
  $M/e$, so these minors of $M$ are positroids.  Thus, $M$ is an
  excluded minor for the class of positroids.  \hfill$\circ$
\end{example} 

In Example \ref{example:genK4}, the proper, nonempty cyclic flat (if
any) that is not a circuit and the one (if any) that is not a
hyperplane are not equal. In the next example, they are the same.

\begin{example}\label{example:genK4var1}
  Fix $a,b,c,s,r\in \mathbb{N}$ for which (i) $\max(a,b,c)<s<a+b+c$,
  (ii) $r$ has the same parity as $a+b+c$, and (iii)
  $\max(s,a+b-c,a+c-b,b+c-a)<r$.  Let $\mathcal{L}$ be as in Example
  \ref{example:genK4}. Pick pairwise disjoint sets
  $X_1,X_2,\ldots,X_6$ for which $|X_4|=a$, $|X_5|=c$, $|X_6|=b$, and
  $$|X_1|=\frac{r+a-b-c}{2}, \qquad |X_2|=\frac{r-a-b+c}{2} \qquad
  |X_3|=\frac{r-a+b-c}{2}.$$ Thus, $|X_4\cup X_5\cup X_6|=a+b+c$ and
  $|X_i\cup X_j\cup X_k|=r$, for
  $\{i,j,k\}\in\mathcal{L}-\{\{4,5,6\}\}$.  Let
  $E=X_1\cup\cdots\cup X_6$, and let $\mathcal{Z}$ consist of
  $\emptyset$, $E$, and the four sets $X_i\cup X_j\cup X_k$ for
  $\{i,j,k\}\in\mathcal{L}$, and let the ranks of these sets be as
  given in the table below.
  \begin{center}
    \begin{tabular}{|c|c|c|c|c|c|}
      \hline
      $\emptyset$
      & $X_1\cup X_3\cup X_5$
      & $X_2\cup X_3\cup X_4$
      &  $X_1\cup X_2\cup X_6$
      &  $X_4\cup X_5\cup X_6$
      & $E$\\ \hline 
      $0$ &$r-1$ &$r-1$&$r-1$&$s$& $r$\\\hline
    \end{tabular}
  \end{center}
  Properties (Z0) and (Z1) in Theorem \ref{thm:cfaxioms} clearly hold.
  The only case of property (Z2) that is not transparent is when
  $X=X_4\cup X_5\cup X_6$ and $Y=E$, for which we must show that
  $r(Y)-r(X)<|Y-X|$, that is, $r-s<|X_1\cup X_2\cup X_3|$.  That
  inequality simplifies to $a+b+c<r+2s$, which holds since
  $a+b+c< 3s<r+2s$ by conditions (i) and (iii).  Thus, property (Z2)
  holds.  Each instance of the inequality in property (Z3) for two
  incomparable cyclic flats other than $X_4\cup X_5\cup X_6$ is an
  inequality of the form $r+|X_i|\leq 2(r-1)$, i.e., $|X_i|\leq r-2$,
  which holds since $|X_i|$ and two other positive integers add to
  $r$. The inequality required for property (Z3) in each case that
  involves $X_4\cup X_5\cup X_6$ has the form $r+|X_i|\leq r-1+s$ with
  $i\in\{4,5,6\}$, i.e., $|X_i|<s$, which holds by the inequality
  $\max(a,b,c)<s$ in condition (i).  Thus, the cyclic flats and their
  ranks define a matroid $M$ on $E$.  The same argument as used in the
  last example shows that $M$ is an excluded minor for the class of
  positroids.  \hfill$\circ$
\end{example}

We next give three more infinite families of excluded minors for the
class of positroids, along with their duals, that are different
variations on the idea in Example \ref{example:genK4}.  We omit the
proofs that these are excluded minors since no new ideas are required.

\begin{example}
  Fix $a,b,c,k\in \mathbb{N}$ and let $X_1,\ldots,X_6$ be pairwise
  disjoint sets with $|X_3|=a$, $|X_6|=a+k$,  $|X_4|=b$, $|X_1|=b+k$,
  $|X_5|=c$, and $|X_2|=c+k$.  Fix an element $p$ that is in none of
  $X_1,\ldots,X_6$, and let $E=X_1\cup\cdots\cup X_6\cup p$.  Let the
  paving matroid $M$ of rank $a+b+c+k+1$ on $E$ have as its dependent
  hyperplanes the three circuit-hyperplanes
  $X_1\cup X_3\cup X_5\cup p$, $X_2\cup X_3\cup X_4\cup p$, and
  $X_4\cup X_5\cup X_6\cup p$, along with $X_1\cup X_2\cup X_6$, which
  has nullity $2k$.  It is easy to check that $M$ is an excluded minor
  for the class of positroids.  In the dual, $M^*$, of $M$, which has
  rank $a+b+c+2k$, the sets $X_2\cup X_4\cup X_6$,
  $X_1\cup X_5\cup X_6$, and $X_1\cup X_2\cup X_3$ are
  circuit-hyperplanes, while $X_3\cup X_4\cup X_5\cup p$ is a circuit
  but not a hyperplane.  \hfill$\circ$
\end{example}

\begin{example}
  Fix $a,b,c\in \mathbb{N}$ and let the sets $X_1,\ldots,X_6$ be
  pairwise disjoint and satisfy $|X_3|= |X_6|=a$, $|X_1|=b+1$,
  $|X_4|=b$, and $|X_2|=|X_5|=c$.  Fix elements $p$ and $q$ that are
  in none of $X_1,\ldots,X_6$, and let
  $E=X_1\cup\cdots\cup X_6\cup \{p,q\}$.  The sets
  $$X_1\cup X_3\cup X_5\cup p, \,\,
  X_2\cup X_3\cup X_4\cup \{p,q\}, \,\, X_4\cup X_5\cup X_6\cup
  \{p,q\},\,\, X_1\cup X_2\cup X_6\cup q$$ are the circuit-hyperplanes
  of a sparse paving matroid $M$ of rank $a+b+c+2$ on $E$.  It is easy
  to check that $M$ is an excluded minor for the class of positroids.
  The sparse paving matroid $M^*$ has rank $a+b+c+1$ and its
  circuit-hyperplanes are $X_2\cup X_4\cup X_6\cup q$,
  $X_1\cup X_5\cup X_6$, $X_1\cup X_2\cup X_3$, and
  $X_3\cup X_4\cup X_5\cup p$.  \hfill$\circ$
\end{example}

\begin{example}\label{ex:K4last}
  Fix $a,b,c\in \mathbb{N}$ and let the sets $X_1,\ldots,X_6$ be
  pairwise disjoint and satisfy $|X_3|= |X_6|=a$, $|X_1|=|X_4|=b$, and
  $|X_2|=|X_5|=c$.  Fix four elements $p,q,s,t$ that are in none of
  $X_1,\ldots,X_6$, and let $E=X_1\cup\cdots\cup X_6\cup \{p,q,s,t\}$.
  The sets
  $$X_1\cup X_3\cup X_5\cup \{q,s,t\}, \qquad X_2\cup X_3\cup X_4\cup
  \{p,s,t\},$$
  $$X_4\cup X_5\cup X_6\cup \{p,q,t\}, \qquad
  X_1\cup X_2\cup X_6\cup \{p,q,s\}$$ are the circuit-hyperplanes of a
  sparse-paving matroid $M$ on $E$ of rank $a+b+c+3$.  It is easy to
  check that $M$ is an excluded minor for the class of positroids.
  The sparse-paving matroid $M^*$ has the circuit-hyperplanes
  $X_2\cup X_4\cup X_6\cup p$, $X_1\cup X_5\cup X_6\cup q$,
  $X_1\cup X_2\cup X_3\cup s$, and $X_3\cup X_4\cup X_5\cup t$, and
  its rank is $a+b+c+1$.  \hfill$\circ$
\end{example}

Let $L$ be the lattice of flats of $M(K_4)$ and let $M$ be a loopless
matroid whose lattice of cyclic flats is isomorphic to $L$.  Note that
if the cyclic flats of a matroid $N$ are either (i) just $\emptyset$
and $E(N)$, or (ii) just $\emptyset$, $E(N)$, and three other sets,
none of which contains either of the other two, then $N$ is connected
since its lattice of cyclic flats is not a direct product of two other
lattices.  By Lemma \ref{lem:cyclicminor}, for any cyclic flat $F_1$
of $M$ that corresponds to a point of $M(K_4)$ and any cyclic flat
$F_2$ of $M$ that corresponds to a $3$-point line of $M(K_4)$,
condition (i) holds for $\mathcal{Z}(M|F_1)$ and $\mathcal{Z}(M/F_2)$
while condition (ii) holds for $\mathcal{Z}(M/F_1)$ and
$\mathcal{Z}(M|F_2)$, so all such minors are connected.  As above, it
follows that $M$ is not a positroid.  Thus, while every lattice is
isomorphic to the lattice of cyclic flats of some matroid
\cite{cycflats}, the same is not true for positroids.

The rank-$n$ whirl, $\mathcal{W}^n$, is a multi-path matroid, so it and
its rank-$3$ truncation are positroids, but the free extension of the
rank-$3$ truncation of $\mathcal{W}^n$ is an excluded minor for the
class of positroids.  The next example treats these excluded minors
and many more.

\begin{example}\label{ex:freeextwhirlgen}
  Fix $n,r,x_1,x_2,\ldots,x_{2n}\in\mathbb{N}$, with $n\geq 3$, that
  satisfy the properties below, where we interpret indices modulo $2n$
  and set $m_i=x_{2i-2}+x_{2i-1}+x_{2i}$ for all $i\in[n]$:
  \begin{itemize}
  \item[(i)] $3\leq m_i\leq r$ for all $i\in [n]$,
  \item[(ii)] if $n=3$, then all $m_i$ are $r$, while if $n>3$, then
    $m_i=r=m_j$ for at least two elements $i$ and $j$ of $[n]$ that
    are not cyclically consecutive, 
  \item[(iii)] if $i,j\in[n]$ with $j\ne i$ and $j\ne i+1$,
     then $\displaystyle r<x_{2i}+x_{2j}+\sum_{k=i+1}^jx_{2k-1}$.
  \end{itemize}
  (To be clear, the sum in property (iii) is over the odd terms in a
  cyclic interval in $[2n]$.)  Let $e_1,e_2,\ldots,e_{2n}$ be the
  elements of the $n$-whirl $\mathcal{W}^n$, labeled so that, for each
  $i\in[n]$, the set $\{e_{2i-2},e_{2i-1},e_{2i}\}$ is a $3$-circuit,
  where $e_0=e_{2n}$.  For each $i\in[2n]$, apply series extension
  $x_i-1$ times to $e_i$, so the $3$-circuit
  $\{e_{2i-2},e_{2i-1},e_{2i}\}$ of $\mathcal{W}^n$ yields an
  $m_i$-circuit $C_i$.  Let $M'$ denote the resulting matroid.
  Truncate the free extension $M'+f$ to rank $r$ to get a matroid $M$.
  We claim that $M$ is an excluded minor for the class of positroids.

  The circuits of the $n$-whirl $\mathcal{W}^n$ are the symmetric
  differences of consecutive $3$-circuits, so the circuits of $M'$ are
  the symmetric differences of the form
  $C_i\triangle C_{i+1}\triangle\cdots\triangle C_j$, where we
  interpret the subscripts modulo $n$.  Thus, by property (iii), the
  only circuits of $M$ of rank less than $r$ are the circuits $C_i$
  for $i\in [n]$, so they are the only proper connected flats $F$ of
  $M$ with $|F|\geq 2$.  Also, $M/C_i$ is connected by part (3) of
  Lemma \ref{lem:conncond}.  So if $M$ were a positroid, each circuit
  $C_i$ would have to be an interval in any positroid order for $M$,
  but, since $f$ is in no circuit $C_i$ and
  $C_i\cap C_{i+1}\ne\emptyset$ for all $i\in[n]$, there is no
  positroid order for $M$, so $M$ is not a positroid.

  For each $e\in E(M)$, at least one circuit $C_i$ of $M$ fails to be
  a circuit of $M/e$ by property (ii), so a now-routine use of
  Corollary \ref{cor:intervalsimpliespositroid} implies that $M/e$ is
  a positroid.  The same holds for $M\del e$ if $e\ne f$.  Adapting a
  positroid order for a whirl to $M\del f$ shows that it too is a
  positroid.  Thus, $M$ is an excluded minor for the class of
  positroids.  \hfill$\circ$
\end{example}

\begin{figure}
  \centering
  \begin{tikzpicture}[scale=1] 
      \draw[very thick](6,0)--(8,-1)--(8,1)--(6,0);%
      \draw[very thick](7.2,-0.6)--(7.2,0.6);%

      \foreach \point in
      {(8,0),(8,1),(8,-1),(7.2,-0.6),(7.2,0.6),(6,0),(7.2,0)}
      \filldraw \point circle (2.75pt);%
    \end{tikzpicture}
    \hspace{2cm}
    \begin{tikzpicture}[scale=0.6]      
      \filldraw[black!20] (210:1)--(210:3.3)--
      (330:3.3)--(330:1)--(210:1);%
      \filldraw[black!13] (210:1)--(210:3.3)--
      (90:3.3)--(90:1)--(210:1);%
      \filldraw[black!5] (330:1)--(330:3.3)--
      (90:3.3)--(90:1)--(330:1);%

      \draw[very thin] (210:1)--(210:3.3)--(330:3.3)--(330:1)--(210:1);%
      \draw[very thin] (210:1)--(210:3.3)--(90:3.3)--(90:1)--(210:1);%
      \draw[very thin] (330:1)--(330:3.3)--(90:3.3)--(90:1)--(330:1);%

      \draw[very thick] (40:0.8)--(140:0.8);%
      \draw[very thick] (190:1)--(350:1);%
      
      \filldraw (90:2.2) circle (4pt);%
      \filldraw (40:0.8) circle (4pt);%
      \filldraw (140:0.8) circle (4pt);%
      \filldraw (350:1) circle (4pt);%
      \filldraw (190:1) circle (4pt);%
      \filldraw (340:2) circle (4pt);%
      \filldraw (200:2) circle (4pt);%
      \filldraw (90:0.5) circle (4pt);%
      \filldraw (270:0.2) circle (4pt);%
    \end{tikzpicture}
    \caption{The matroids in Example \ref{ex:whirlvar} for ranks $3$
      and $4$, with the $4$-circuits in the rank-$4$ matroid drawn in
      the faces of a triangular prism.}
  \label{fig:P7etc}
\end{figure}

The next infinite family of excluded minors for the class of
positroids can be seen as a variation on a $3$-whirl, but with two
$r$-circuits and a $3$-circuit, and instead of taking a free
extension, another $3$-circuit is added.

\begin{example}\label{ex:whirlvar}
  Fix an integer $r\geq 3$, take the parallel connection of two
  $r$-circuits, $A$ and $B$, with base point $e$, truncate to rank
  $r$, fix $a_1,a_2\in A-e$ and $b_1,b_2\in B-e$, and then, using
  principal extension, add a point $p_1$ to the line $\{a_1,b_1\}$ and
  a point $p_2$ to the line $\{a_2,b_2\}$.  The resulting matroid $M$
  has rank $r$ and its proper, nonempty connected flats, each of which
  is a circuit, are $A$, $B$, $\{a_1,b_1,p_1\}$, and
  $\{a_2,b_2,p_2\}$.  (See Figure \ref{fig:P7etc} for the cases $r=3$
  and $r=4$.)  Contracting any of the proper, nonempty connected flats
  yields a connected matroid, and, by Lemma \ref{lem:noco}, no linear
  order on $E(M)$ has each of $A$, $B$, $\{a_1,b_1,p_1\}$, and
  $\{a_2,b_2,p_2\}$ being a cyclic interval, so $M$ is not a
  positroid.  It is routine to check that all proper minors of $M$ are
  positroids.  \hfill$\circ$
\end{example}

We close with two more infinite families of excluded minors for the
class of positroids.  With the ideas above, verifying that these are
excluded minors is routine.  Also, applying duality yields more
excluded minors.

\begin{example}
  For integers $n\geq k\geq 3$, take the parallel connection of two
  $n$-circuits and a $k$-circuit (or three $n$-circuits if $n=k$) at a
  common base point, and then truncate to rank $n$.  The matroid
  obtained is an excluded minor for the class of positroids.
\end{example}

\begin{example}\label{ex:last}
  For integers $n\geq k\geq 3$, take two $n$-circuits and a
  $k$-circuit (or three $n$-circuits if $n=k$) and a $3$-point line;
  for each of the first three circuits, take its parallel connection
  with the line, using three different base points on the line, and
  then truncate to rank $n$.  The resulting matroid is an excluded
  minor for the class of positroids.
\end{example}

\vspace{5pt}

\begin{center}
 \textsc{Acknowledgments}
\end{center}

\vspace{3pt}

The author thanks Kolja Knauer for pointing out the work of Blum and
the result of Lam and Postnikov that base-sortable matroids are the
same as positroids.  The author thanks Felipe Rinc\'{o}n for pointing
him to Theorems \ref{thm:arw2} and \ref{thm:pchar} and recasting those
results, which were originally stated in terms of matroid polytopes,
in a form closer to what is given here, and for a useful discussion
about Blum's conjecture that led to Theorem \ref{thm:equivchars}.  The
author also thanks both referees for their careful reading and helpful
comments that should result in making this paper accessible to a wider
audience.

\end{document}